\def\timenow{\@tempcnta\time
  \@tempcntb\@tempcnta
  \divide\@tempcntb60
  \ifnum10>\@tempcntb0\fi\number\@tempcntb
  \multiply\@tempcntb60
  \advance\@tempcnta-\@tempcntb
  :\ifnum10>\@tempcnta0\fi\number\@tempcnta}
\newtheorem{theo}{Theorem}[section]
\newtheorem{prop}[theo]{Proposition}
\newtheorem{lemme}[theo]{Lemma}
\newtheorem{cor}[theo]{Corollary}
\newtheorem{remark}[theo]{Remark}
\newtheorem{defi}[theo]{Definition}
\title{Combinatorics of ancestral lines for a Wright-Fisher diffusion with selection in a L\'evy environment}
\author{Gr\'egoire V\'echambre$^{1}$}
\address{$^1$ Hua Loo-Keng Center for Mathematical Sciences, Academy of Mathematics and Systems Science, Chinese Academy of Sciences, No. 55, Zhongguancun East Road, Haidian District, Beijing, China}
\email{vechambre@amss.ac.cn}
\subjclass[2010]{Primary:\, 82C22, 92D15  \ Secondary:\, 60J25, 60J27}
\keywords{Wright--Fisher diffusion, Moran model, random environment, ancestral selection graph, duality}
\begin{document} 

\maketitle

\begin{abstract}
Wright-Fisher diffusions describe the evolution of the type composition of an infinite haploid population with two types (say type $0$ and type $1$) subject to neutral reproductions, and possibly selection and mutations. 
In the present paper we study a Wright-Fisher diffusion in a L\'evy environment that gives a selective advantage to sometimes one type, sometimes the other. Classical methods using the Ancestral Selection Graph (ASG) fail in the study of this model because of the complexity, resulting from the two-sided selection, of the structure of the information contained in the ASG. We propose a new method that consists in encoding the relevant combinatorics of the ASG into a function. We show that the expectations of the coefficients of this function form a (non-stochastic) semigroup and deduce that they satisfy a linear system of differential equations. As a result we obtain a series representation for the fixation probability $h(x)$ (where $x$ is the initial proportion of individuals of type $0$ in the population) as an infinite sum of polynomials whose coefficients satisfy explicit linear relations. 
Our approach then allows to derive Taylor expansions at every order for $h(x)$ near $x=0$ and to obtain an explicit recursion formula for the coefficients. 
\end{abstract}


\pagestyle{myheadings}
\markboth{Right}{Combinatorics of ancestral lines for a Wright-Fisher diffusion with selection in a L\'evy environment}

\section{Introduction} \label{firstteps}

Wright-Fisher diffusions model the type-frequency evolution of an essentially infinite  haploid population. Individuals are of either one of two types, say type~$0$ or type~$1$; the biological interpretation usually being that of two different genotypes within the same species or of two different competing species. The basic reproduction mechanism in the population is neutral, i.e. it is independent of the type; but selective effects and mutations can be included in the model. Selective pressure can originate from the environment. In many biological situations, the environment is not stable and the effect of its fluctuations on the type-frequency process is complex. Models involving fluctuating selection have been extensively studied in the past (see e.g.~\cite{G72,KL74,KL74b,KL75,Bu87, BG02, SJV10}) and there is currently a renewed interest for such models (see e.g.~\cite{BCM19,CSW19, BEK19, ChK19, GJP18, GGP19, GPS19}). In \cite{cordvech}, the author studied Wright-Fisher diffusions with L\'evy environments in the case where selection always favors the same type. 
There, the selective advantage of fit individuals is boosted at punctual exceptional environmental events (that may represent peaks of temperature, precipitations, availability of resources, etc.) modeled by the jumps of the L\'evy environment, and those fit individuals may additionally have a permanent selective advantage that is expressed continuously. In many population genetics models that include environmental effects on selection, always the same type is favored by selection. This is less because it is more realistic, but rather due to the technical difficulties that arise in the analysis of models where both types can be favored. In practice, changing environmental situations may very well favor sometimes one type and sometimes the other (see Section \ref{motbiol} for examples). 

In the present paper we are interested in a generalization of the model of \cite{cordvech} but where the L\'evy environment has two types of jumps: jumps that give a selective advantage to individuals of type $0$ and jumps that give a selective advantage to individuals of type $1$. We call this feature \textit{two-sided selection}, as opposed to \textit{one-sided selection} where always the same type is favored by selection. More precisely, we study the following SDE: 
\begin{align}
dX(s) = X(s-)(1-X(s-)) dL(s) + \sqrt{2X(s)(1-X(s))} dB(s), \label{levymodelsdesimp}
\end{align}
where the L\'evy process $L$ is defined as the sum of a compound Poisson process with jumps in $(-1,1)$ and of a non-positive drift (see Section \ref{diffusion}) and $B$ is an independent Brownian motion. $X(s)$ represents the proportion of individuals of type $0$ at time $s$ in the infinite population. The diffusion term in \eqref{levymodelsdesimp} represents the effect of neutral reproductions. The L\'evy process $L$ models the effect of selection and is called the environment. Its non-positive drift component (let us denote it by $-\sigma$) represents the rate at which individuals of type $1$ are subject to selective reproductions, thus modeling their permanent selective advantage. The resulting drift term $-\sigma X(s)(1-X(s)) ds$ is classical for continuous Wright-Fisher diffusions. Each positive (resp. negative) jump of $L$ represents an exceptional environmental event that favors individuals of type $0$ (resp. $1$). Let $(t_k,j_k)_{k \in I}$ be the Poisson point process of the jumps of $L$ (so that $\Delta L(t_k)=j_k$). The effect of these jumps may be heuristically understood as follows: at each time~$t_k$ such that $j_k>0$ (resp. $j_k<0$), type $0$ (resp. $1$) gets a high fitness equal to $|j_k \epsilon^{-1}|$ on the time interval $[t_k,t_k+\epsilon)$, which means the SDE has an additional term $j_k \epsilon^{-1} X(s)(1-X(s)) ds$ in this interval. For infinitesimal~$\epsilon$, this amounts to $\Delta X(t_k) = X(s-)(1-X(s-)) \Delta L(t_k)$, leading to the jump term in \eqref{levymodelsdesimp}. For simplicity, we work in a setting without mutations (but see Section \ref{othermodgene} where extensions to more general models are discussed, in particular to the case with mutations). The model considered in~\cite{cordvech} (in the case without mutations) can be recovered from~\eqref{levymodelsdesimp} by considering $1-X(s)$ with $L(t) := -J(t) - \sigma t$, where $\sigma$ is non-negative and $J$ has only jumps in $(0,1)$. Applications of our results to this particular case are discussed in Section~\ref{exapponesidedenv}.

Biological motivations of the model include understanding the effect, on evolution, of random occurrence of extreme events that provoke shifts in the type distribution of a population, and also understanding the combined effect of different kinds of selective pressures possibly acting in opposite directions. This includes bi-directional selective pressure induced by extreme events and permanent environmental conditions. More details on the biological relevance of the model and examples are discussed in Section~\ref{motbiol}. 

As time goes to infinity, a solution of \eqref{levymodelsdesimp} almost surely has a limit that belongs to $\{0, 1\}$, as proved in Proposition \ref{limht} of Section \ref{ccl}. The main object of study of the paper is the fixation probability associated with the SDE with jumps \eqref{levymodelsdesimp}, that is, 
\begin{align}
h(x) := \mathbb{P} \left ( \lim_{t \rightarrow \infty} X(t) = 1 | X(0) = x \right ). \label{defh(x)}
\end{align}
In other words, $h(x)$ is the probability that type $0$ eventually takes over the entire population. Such a quantity is of interest for biologists in several contexts like evolutionary rescue, competition with an invasive species, or Muller's ratchet. In the study of $h(x)$, our main tool is a slight modification of the so-called Ancestral Selection Graph (ASG) associated with~\eqref{levymodelsdesimp}. Its combinatorial properties are at the center of our study and they allow us to circumvent difficulties caused by the unavailability of classical genealogical methods (see Section \ref{classicalmomdual} for more details) leading to results in the involved case of two-sided selection. More precisely, we proceed as follows. As a first step, we encode the relevant combinatorics of the ASG into a function. This allows to establish a duality between moments of the jump-diffusion \eqref{levymodelsdesimp} and coefficients defined in terms of the encoding function (Theorem \ref{momdiff}). We then establish a semigroup property for those coefficients (Proposition \ref{semigroupprop}) and determine their small-time behavior (Lemma \ref{asymptcoeffnew}). This allows to show that those coefficients satisfy a system of linear differential equations (Theorem \ref{equadiffcoeffnew}). We also prove that those coefficients converge as time goes to infinity (Theorem \ref{cvcoeff}). Combining the above steps we then obtain a series representation for $h(x)$ as an infinite sum of polynomials whose coefficients satisfy explicit linear relations (Theorem \ref{finalformula}). Finally, we derive Taylor expansions of every order for $h(x)$ near $x=0$ and provide an explicit recursion formula for the coefficients (Theorem \ref{taylorexp}). 

To sum up, our motivation to study models with two-sided selection is twofold. On the one hand, mathematical challenges arising in the study of these models require the development of new methods (see Section~\ref{classicalmomdual}). On the other hand, SDE~\eqref{levymodelsdesimp} captures realistic situations arising in important practical problems (see Section~\ref{motbiol}). 

\subsection{Limit of finite population models in L\'evy environment} \label{finitepopmodel}

The heuristic justification for the different terms in the SDE \eqref{levymodelsdesimp} can be made rigorous by passing through the Moran model in L\'evy environment. In the latter, the ASG also arises naturally~(see Section \ref{asg}). 
We now describe this finite population model and explain its relation with \eqref{levymodelsdesimp}. 

\smallskip

Consider a population of size $N$ with two types, type $0$ and type $1$, subject to random reproduction and environmental effects. The environment is modeled by a Poisson point process $(t_k,j_k)_{k \in I}$ on $[0,\infty) \times (-1,1)$ with intensity measure $dt \times \nu_N$, where $\nu_N$ is a finite measure on~$(-1,1)$. The population undergoes the following dynamic. Individuals of type $0$ reproduce (neutrally) at rate $1$. Type-$1$ individuals reproduce at rate $1+\sigma_N$, where $\sigma_N\geq0$. Here, the rate for \emph{neutral reproductions} is~$1$, and $\sigma_N$ for \emph{selective reproductions}. In addition, at each time $t_k$ for $k \in I$ such that $j_k >0$ (resp. $j_k <0$), each individual of type $0$ (resp. $1$) reproduces with probability $|j_k|$, independently from the others. At any reproduction time: (a) each individual produces at most one offspring which inherits the parent's type, and (b) if  $n$ individuals are born, $n$ individuals are randomly sampled without replacement from the extant population to die, hence keeping the size of the population constant. For any $s \geq 0$, let $L^N(s) := -\sigma_N s + \sum_{k:\,t_k \leq s} j_k$. Clearly, $L^N$ is the sum of a compound Poisson process with jumps in $(-1,1)$ and of a non-positive drift (and is thus a L\'evy process). A trajectory of $L^N$ contains all the information on $(t_k,j_k)_{k \in I}$; therefore, for convenience, we also refer to $L^N$ as the environment.  We see that positive (resp. negative) jumps of $L^N$ give a selective advantage to type $0$ (resp. $1$). 

The Moran model admits a classical graphical representation, see Fig.~\ref{particlepicture}. Here, each of the~$N$ individuals is represented by a horizontal line. Time runs from left to right. A (potential) reproduction is represented by an arrow from the (potential) parent to the (potential) offspring. There are three types of arrowheads: triangle, filled star-shaped, or unfilled star-shaped. Neutral reproductions are represented by the triangle heads. Potential selective reproductions that favor type~$0$ (resp. type $1$) are represented by the filled (resp. unfilled) star-shaped heads. Arrows with triangle arrowhead appear for each ordered pair of lines independently at rate $1/N$. Arrows with unfilled star-shaped arrowhead occur on each ordered pair of lines independently at rate $\sigma_N/N$. In addition, for each $k \in I$, each line is independently included into a (random) set $S_k$ with probability $|j_k|$. Let $\tilde S_k$ be a set of lines chosen uniformly at random among all sets of lines having the same cardinality as $S_k$. Among all matchings between $S_k$ and~$\tilde S_k$, choose one uniformly. Then, if $j_k >0$ (resp. $j_k <0$), draw at time $s=t_k$ arrows from elements of $S_k$ to elements $\tilde S_k$ according to the matching; the arrowheads are filled (resp. unfilled) star-shaped. Each arrow with filled (resp. unfilled) star-shaped arrowhead corresponds to an actual reproduction event only if the line at the arrowtail has type~$0$ (resp. type $1$), and it is void otherwise. 

\begin{figure}[t!]
    \centering
    \scalebox{0.8}{\begin{tikzpicture} 
        
        \draw[dashed, opacity=0.4] (1,-0.2) --(1,1) (1,2)--(1,3);
        \draw[dashed, opacity=0.4] (6.2,-0.2) --(6.2,0) (6.2,1) --(6.2,2);
        
        \node [right] at (-0.2,-0.5) {$0$};
        \node [right] at (0.8,-0.5) {$t_1$};
        \node [right] at (6,-0.5) {$t_2$};
        \node [right] at (8.3,-0.5) {$T$};
        
        \draw[-{triangle 45[scale=5]},thick] (4.5,2) -- (4.5,1) node[text=black, pos=.6, xshift=7pt]{};
        \draw[thick] (0,1) -- (2.35,1);
        \draw[thick] (2.65,1) -- (6.05,1);
        \draw[thick] (6.35,1) -- (8.5,1);
        \draw[thick] (0,2) -- (6.05,2);
        \draw[thick] (6.35,2) -- (8.5,2);
        \draw[thick] (8.5,3) -- (0,3);
        \draw[thick] (0,4) -- (1.55,4);
        \draw[thick] (1.85,4) -- (7.45,4);
        \draw[thick] (7.75,4) -- (8.5,4);
        \draw[thick] (0,0) -- (6.85,0);
        \draw[thick] (7.15,0) -- (8.5,0);
        
        \draw[-{triangle 45[scale=5]},thick] (.4,1) -- (.4,0);
        \draw (7.6,1) -- (7.6,3.9);
        \node [star, star points=5, star point ratio=2.25, draw][scale=0.3](key) at (7.6,4) {S};
        \draw (1.7,2) -- (1.7,3.9);
        \node [star, star points=5, star point ratio=2.25, draw][scale=0.3](key) at (1.7,4) {S};
        \draw (1,3) -- (1,4);
        \node [star, star points=5, star point ratio=2.25, fill=black, draw][scale=0.3](key) at (1,4) {S};
        \draw (1,1) -- (1,2);
        \node [star, star points=5, star point ratio=2.25, fill=black, draw][scale=0.3](key) at (1,2) {S};
        \draw (6.2,4) -- (6.2,2.2);
        \node [star, star points=5, star point ratio=2.25, draw][scale=0.3](key) at (6.2,2) {S};
        \draw (6.2,0) -- (6.2,0.9);
        \node [star, star points=5, star point ratio=2.25, draw][scale=0.3](key) at (6.2,1) {S};
        \draw (2.5,0) -- (2.5,0.9);
        \node [star, star points=5, star point ratio=2.25, draw][scale=0.3](key) at (2.5,1) {S};
        \draw (7,2) -- (7,0.2);
        \node [star, star points=5, star point ratio=2.25, draw][scale=0.3](key) at (7,0) {S};
        \draw[-{triangle 45[scale=5]},thick] (3,3) -- (3,1);
        
        \draw[-{angle 60[scale=5]}] (3.25,-0.6) -- (5.25,-0.6) node[text=black, pos=.5, yshift=6pt]{};
        
        
        \node [right] at (-0.5,0) {$1$};
        \node [right] at (-0.5,1) {$1$};
        \node [right] at (-0.5,2) {$1$};
        \node [right] at (-0.5,3) {$0$};
        \node [right] at (-0.5,4) {$1$};
        \node [right] at (8.5,0) {$1$};
        \node [right] at (8.5,1) {$1$};
        \node [right] at (8.5,2) {$1$};
        \node [right] at (8.5,3) {$0$};
        \node [right] at (8.5,4) {$0$};

        \end{tikzpicture}    }
    \caption{A realization of the Moran interacting particle system with $N=5$. Time $s$ runs forward from left to right. The environment has a jump at time $s=t_1$ that favors type $0$ and a jump at time $s=t_2$ that favors type $1$.}
    \label{particlepicture}
\end{figure}
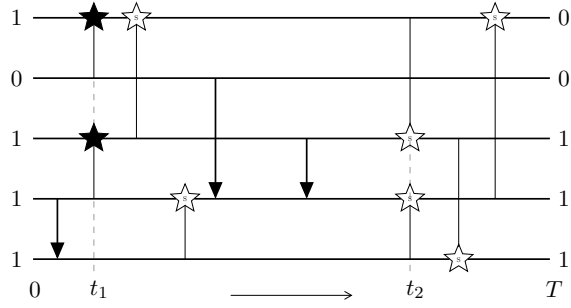

Let $X_N(s)$ denote the proportion of individuals of type $0$ at time $s$. Proposition~\ref{propcv} of Section~\ref{diffusion} says that there is a L\'evy process $L$ such that $X_N$ converges, as $N$ goes to infinity and after appropriate renormalization, to the solution of \eqref{levymodelsdesimp}. 

\subsection{Classical genealogical techniques and difficulties with two-sided selection} \label{classicalmomdual}

Classically, $h(x)$ is studied using the Ancestral Selection Graph (ASG).  
The intuition behind this object and its rigorous definition are given in Section \ref{asg}. 
In the case of one-sided selection studied in \cite{cordvech}, a \emph{moment duality} holds; that is,  
\begin{equation}\label{mdsm}
\forall\, x\in[0,1],\, n \geq 1,\qquad \mathbb{E}[X(t)^n|X(0)=x]=\mathbb{E}[x^{G_t}|G_0=n], 
\end{equation}
where $G_t$ denotes the number of lines in the ASG at instant $t$ \cite[Thm.~2.3 applied to $1-X$]{cordvech}. 
Such a relation usually forms the core of a \emph{genealogical technique}. On one hand establishing such a relation is classically the way to relate rigorously a Wright-Fisher diffusion with its ASG. On the other hand having such a relation allows to analyze the long-time behavior of $X$ through $(G_t)_{t \geq 0}$. Indeed, as $t$ goes to infinity, the left-hand side of \eqref{mdsm} converges to 
\[ \mathbb{E}[X(\infty)^n|X(0)=x]=\mathbb{P}(X(\infty)=1|X(0)=x)=h(x), \]
where we have written $X(\infty)$ for $\lim_{t \rightarrow \infty} X(t)$, whereas the right-hand side converges to $\mathbb{E}[x^{G_{\infty}}]$ with $G_{\infty}$ being a random variable that follows the stationary distribution of $(G_t)_{t \geq 0}$. This allows us to write 
\begin{eqnarray} 
h(x) = 1-\sum_{n \geq 0} a_n (1-x)x^{n}, \label{mdsm3}
\end{eqnarray}
where $a_n = \mathbb{P}(G_{\infty} > n)$. These coefficients are known to satisfy a recurrence relation \cite[Eq.~(2.18) with $\theta=0$]{cordvech}. 

For the general Wright-Fisher diffusion~\eqref{levymodelsdesimp} it is still possible to define an ASG; we do so in Section \ref{asg}. We can also rigorously relate \eqref{levymodelsdesimp} with its ASG, but in a way that is more abstract than \eqref{mdsm}; this is done in Section \ref{relwf-asgbis}, but see Section \ref{relwf-asg} for a heuristic. The difference with the model studied in \cite{cordvech} is that there are two types of branchings in the ASG of \eqref{levymodelsdesimp}: 
those favoring type $0$ and those favoring type $1$ (see Section \ref{asg}), while only one type of branchings in the ASG of the model of \cite{cordvech}. 
As a result, the structure of the information contained in the ASG of \eqref{levymodelsdesimp} is much more complicated and we have to take the whole combinatorics of this ASG into account. In fact, it is no longer possible to express the moments of $X(t)$ via the distribution of the number of lines in its ASG, not even after allowing modifications of the ASG. In situations with one-sided selection and mutations, modifications of the ASG were successfully used for deriving the common ancestor's type distribution (one would then work with a pruned LD-ASG \cite[Sect.~2.6 ]{cordvech}), or for the long run type distribution (one would then work with the killed ASG \cite[Sect.~2.5]{cordvech}). However, in general, it will not work to extend \eqref{mdsm} and derive a representation analogous to \eqref{mdsm3} for $h(x)$ in the setting of~\eqref{levymodelsdesimp}, with coefficients $a_n$ that are probabilities related to a modification of the ASG. Indeed, we show that
the coefficients appearing in a Taylor expansion of $h(x)$ are not always probabilities as some of them can be negative (see Theorem \ref{taylorexp} and Remark \ref{calcexprbk} in Section \ref{mainresults}). Moreover, it seems that the Taylor series of $h(x)$ at $0$ can in general be divergent (see Subsection \ref{numappl}). 

It is natural to study models with two-sided selection. However, as explained above, the lack of the classical moment duality in this general case prevents us from using genealogical techniques and leads to serious difficulties in the analysis. In particular, studying such models requires a new set of methods. In this spirit, we propose the combinatorial approach outlined above. 
This method seems relatively robust and can be extended to more general models. For example, in Section \ref{othermodgene} we explain how the ideas can be adapted to the case of an inhomogeneous environment, the case with mutations, and the case of a population divided into several colonies. 

\subsection{Biological motivations} \label{motbiol}

In this subsection we discuss some biological considerations that motivate the different aspects of~\eqref{levymodelsdesimp} where fluctuating selection is driven by a L\'evy environment with jumps of both signs. Moreover, we provide examples of biological situations that can be captured by the model and describe the corresponding parameters settings. 

Determining the impact of extreme events on evolution is of high relevance in biology~\cite{motbiol}. Here, extreme events refer to strong perturbations of the environment that are relatively rare and punctual, but that may influence long-term evolution. Examples include heat waves, freezing events, floods, droughts, exceptional rain falls, hurricanes, fires, pest outbreaks, etc. 
\cite{motbiol} makes the distinction between two types of environmental perturbations: \textit{pulses} that are episodic and \textit{presses} that are prolongated. This motivates having a model where there are two types of environmental influence that occur on two different time scales. In our case, the drift of the L\'evy process $L$ corresponds to the constant selective advantage of one type (due to the environment in normal conditions, or to a prolongated environmental perturbation). The Poisson point process $(t_k,j_k)_{k \in I}$ of the jumps of $L$ models punctual extreme events that have an immediate impact on the type frequency in the population (see below \eqref{levymodelsdesimp}). By nature, occurrences and effects of extreme events are random, so it makes sense to model them by a Poisson point process. 

Whether some events can be considered to be punctual depends on the time scale over which the population is observed, on the speed of evolution between those events, or also on the generation time of the species involved. There are documented examples of episodic events that caused non-negligible genetic shifts in populations, without immediate return of the population to its initial state after the event. One is the effect of a heat wave in Europe, in spring 2011, on Drosophila subobscura \cite{motbiol, droso}. Another is the effect of an algal bloom along the California coast, in 2011, on abalone \cite{motbiol, alga}. It seems that for such events, a mathematical model with instantaneous jumps of type frequencies is relevant. Even if, in a large time scale, an extreme event is considered as punctual, it may in practice span over a few generations. During this brief period, the least affected type has much higher fitness than the other type, of which many individuals do not survive long enough to reproduce. As explained below \eqref{levymodelsdesimp}, such a greatly enhanced fitness over a brief period may lead to the jump mechanism of \eqref{levymodelsdesimp}. 

In nature, it may very well happen that the selective pressure induced by extreme events acts opposite to the one induced by normal environmental conditions. In the model \eqref{levymodelsdesimp}, this corresponds to $\sigma>0$ 
and $L$ having positive jumps. 
An example that seems to correspond to such a situation is as follows. In Alaska, seeds of sedge Eriophorum vaginatum from the south were planted further north and compared with local plants \cite{motbiol, alaska}. It was first observed that normal conditions favored the local type, but that the southern type was then favored in turn, seemingly due to increasing frequency of heat waves. Another example is that of infectious agents getting resistant to medicine. Usually, normal conditions favor non-resistant individuals because their metabolism is optimally adapted. Then, massive use of medicine is an extreme event favoring resistant individuals. 

A particularly important feature of the model \eqref{levymodelsdesimp} is to allow both positive and negative jumps for $L$ (and, therefore, for type frequencies). A biological motivation for this is the abundance of situations where the environment is subject to several types of extreme events with opposing selective effects on populations. \cite{motbiol} mentions in particular cases with succession of abundant rains and intensive droughts and the effects of those events on allele frequencies in bird populations and on species frequencies in plant populations. We can also mention the evolution of the frequency of a type, in populations of Mediterranean wild thyme, that is frost-sensitive but summer drought-tolerant \cite{motbiol, thyme}. Two types of extreme events (freezing events and droughts) have opposing selective effects on such populations. Another example of extreme events of two types having opposing effects on plant populations is given by peaks of abundance or rarefaction of herbivores \cite{motbiol, herbivores1, herbivores2}. 

\subsection{Organization of the paper} \label{orgpap1}

The rest of the paper is organized as follows. In Section \ref{toolsmethres} we introduce the main objects that we use all along the paper and state our main results. In Section \ref{exappgen} we apply our results in some simple or particular cases and discuss some extensions to more general models. Section \ref{combinatorialfuct} is mainly dedicated to studying the combinatorics of the ancestral structure. In Section \ref{acotcfnew} we study thoroughly the coefficients that form the dual of the diffusion \eqref{levymodelsdesimp} and then prove the series representation of the fixation probability $h(x)$. Section \ref{dlh(x)} is dedicated to the Taylor expansions of $h(x)$. Some technical proofs are given in Appendix~\ref{append}. A table of notations is given in the end of the paper. More details about the content of sections are given in the end of Section \ref{toolsmethres}. 

\section{Main tools, methods, and results} \label{toolsmethres}

In this section we describe the main objects that we use in our analysis and state our main results. More precisely, in Subsection~\ref{diffusion} we state some general facts about the jump-diffusion \eqref{levymodelsdesimp}. In Subsection \ref{asg} we define the ASG and provide some intuition for it. In Subsection \ref{relwf-asg} we describe the relation between \eqref{levymodelsdesimp} and the ASG (the rigorous relation between the two objects is established later in Section \ref{relwf-asgbis}). In Subsection \ref{enlargedasg} we introduce an \textit{Enlarged ASG} 
and the main tool for our analysis, a function of the Enlarged ASG that encodes the relevant information of its combinatorics. In Subsection \ref{mainresults} we state our main results including the announced representation of the fixation probability and its Taylor expansions near $0$. 

\subsection{The jump-diffusion} \label{diffusion}

The relation between the Wright-Fisher diffusion \eqref{levymodelsdesimp} and the Moran model defined in Section \ref{finitepopmodel} is done by the following convergence result that can be obtained as a generalization of Theorem 2.2 of \cite{cordvech}: 
\begin{prop} \label{propcv}
Let $J$ be a compound Poisson process with jumps in $(-1,1)$. Assume that $X_N(0)\rightarrow x$ and $N \sigma_N\rightarrow \sigma$ for some $\sigma \geq 0$, as $N\to\infty.$
Then the type-frequency process $(X_N(s))_{s\geq 0}$ in environment $L^N(s):=-\sigma_N s + J(s/N)$ converges in distribution to $(X(s))_{s\geq 0}$ where $X$ is the solution of \eqref{levymodelsdesimp} with $X(0)=x$ and $L(s) := -\sigma s + J(s)$. 
\end{prop}
Throughout this paper, we fix $\sigma \geq 0$, $\nu$ a finite measure on $(-1,1)$, and $\lambda := \nu((-1,1))$. Let $(t_k,j_k)_{k \in I}$ be a Poisson point process on $[0,\infty) \times (-1,1)$ with intensity measure $dt \times \nu$. We define the L\'evy process $L$ by $L(s) := -\sigma s + \sum_{k:\,t_k \leq s} j_k$. We study the Wright-Fisher diffusion \eqref{levymodelsdesimp} with initial condition $X(0)=x \in (0,1)$. Existence and pathwise uniqueness of the solution to \eqref{levymodelsdesimp} are classical and can be proved similarly as in Proposition 3.3 of \cite{cordvech}. 
We define the annealed probability measure $\mathbb{P}(\cdot)$ as the law of $(X(s))_{s \geq 0}$. $\mathbb{E}[\cdot]$ denotes the associated expectation. 

Next, we define the jump-diffusion in a quenched setting, that is, in a fixed deterministic environment. Since $L$ is the sum of a compound Poisson process with jumps in $(-1,1)$ and of a non-positive drift, any realization of $L$ is a c\`ad-l\`ag piecewise linear function (with slope $-\sigma$) with finitely many jumps on finite intervals, and all jump sizes in $(-1,1)$. We fix such a function $\omega = (\omega(s))_{s \geq 0}$ and refer to it as a fixed environment. Let $(t^{\omega}_n)_{n \geq 1}$ be the discrete sequence of the jumping times of $\omega$. For convenience, set $t^{\omega}_0 := 0$. The jump-diffusion in the fixed environment $\omega$ is denoted by $(X(\omega,s))_{s \geq 0}$ and defined as follows: $X(\omega,t^{\omega}_0)=x$ and for $i \geq 1$, $(X(\omega,s))_{s \in [t^{\omega}_{i-1},t^{\omega}_i)}$ is distributed as a solution of 
\begin{align}
dX(r) = -\sigma X(r)(1-X(r)) dr + \sqrt{2X(r)(1-X(r))} dB(r), \label{sdewoj}
\end{align}
with initial value $X(\omega,t^{\omega}_{i-1})$ and $X(\omega,t^{\omega}_i):= X(\omega,t^{\omega}_i-)(1-X(\omega,t^{\omega}_i-)) \Delta \omega(t^{\omega}_i)$. The quenched probability measure $\mathbb{P}^{\omega}(\cdot)$ is defined as the law of $(X(\omega,s))_{s \geq 0}$. $\mathbb{E}^{\omega}[\cdot]$ denotes the associated expectation. Note that $\mathbb{P}^{\omega}(\cdot)$ is the law of \eqref{levymodelsdesimp} conditionally on $L=\omega$. More precisely, if $P(\cdot)$ denotes the law of $(L(s))_{s \geq 0}$, then 
\begin{eqnarray}
\int \mathbb{P}^{\omega}(\cdot) P(d \omega) = \mathbb{P}(\cdot). \label{qtoannealed}
\end{eqnarray}

\subsection{The ASG} \label{asg}

The ASG is a Markovian graph-valued process that was introduced by Krone and Neuhauser \citep{KroNe97,NeKro97}. The idea behind this object is to start at an instant $s=T$ with a finite number of lines that represent randomly chosen individuals in the infinite population and, by analogy with the Moran model, to draw lines of potential ancestors. 
Let us explain the intuition behind the ASG of the diffusion \eqref{levymodelsdesimp} using the Moran model defined in Section \ref{finitepopmodel} and represented in Figure \ref{particlepicture}. Consider a realization of the Moran model on $[0,T]$ and a sample of $l$ lines at time $T$. Then go backward in time, that is, from right to left in Figure \ref{particlepicture}, to trace the lines of their potential ancestors, ignoring the types. 

We observe the following dynamic. When two potential ancestors are connected by an arrow with triangle-shaped head, they are both replaced by the single line at the tail of the arrow; that is, the two lines coalesce. When a potential ancestor is connected by an arrow with triangle-shaped head to a line that is outside the set of current potential ancestors, the potential ancestor, if it is at the tip of the arrow, is replaced by the line at the tail of the arrow. When a potential ancestor is hit by an arrow with filled (resp. unfilled) star-shaped head, the ancestor of that potential ancestor is either the \textit{incoming line} at the tail or the \textit{continuing line} at the tip. Which one is the actual ancestor depends on the type of the incoming branch. For the moment, we ignore types so the incoming and continuing lines become (if not already) potential ancestors. If the incoming line was not already a potential ancestor, we observe a \emph{branching}, in the sense that the initial potential ancestor splits into two potential ancestors. If the incoming line was already a potential ancestor, we observe a \emph{collision}. This procedure defines a dynamical graph, the \textit{Moran-ASG in $[0,T]$}, that contains all the lines that are potentially ancestral to the $l$ lines chosen at time $s=T$. 

Intuitively, the ASG associated to~\eqref{levymodelsdesimp} traces back potential ancestors in the infinite population limit of the Moran model. It is thus natural to define this ASG as the Markovian graph-valued process whose transition rates are the limits of the transition rates of the Moran-ASG (after speeding up time by $N$ as in Proposition \ref{propcv}). This motivates the following definition. 

\begin{defi}[The quenched/annealed ASG] \label{defquenchedasg}
Let $\omega = (\omega(s))_{s \geq 0}$ be a fixed environment, $T > 0$ and $l\geq 1$. The quenched ASG on $[0,T]$ in environment $\omega$ starting with $l$ lines is the branching-coalescing particle system denoted by $(A^{\omega,T}_{s})_{s \in [0,T]}$ and defined as follows. It starts with $l$ lines at time $s=T$ (i.e. $A^{\omega,T}_{T}$ contains $l$ lines) and, between jumping times of $\omega$, has the following dynamic as $s$ decreases: 
\begin{itemize}
\item[(i)] Any pair of lines coalesces into a single line at rate $2$, independently from other pairs. 
\item[(ii)] Any line splits into two lines, an incoming line and a continuing line, at rate $\sigma$, independently from other lines. We refer to this as a \textit{single branching favoring type $1$}. 
\end{itemize}
Additionally, if at a time $s\in[0,T]$ we have $\Delta \omega(s)> 0$ (resp. $\Delta \omega(s)< 0$), then $A^{\omega,T}_{s-}$ is obtained from $A^{\omega,T}_{s}$ as follows: 
\begin{itemize}
\item[(iii)] Every line of $A^{\omega,T}_{s}$, independently from the others, splits with probability $|\Delta \omega(s)|$ into two lines, an incoming line and a continuing line.
We call this a \textit{simultaneous branching favoring type $0$} (resp. $1$). 
\end{itemize}
Let $l \geq 1$. The annealed ASG starting with $l$ lines is the branching-coalescing particle system denoted by $(A_{\beta})_{\beta \geq 0}$ and defined as follows. It starts with $l$ lines at time $\beta=0$ (i.e. $A_0$ contains $l$ lines) and, as $\beta$ increases, it satisfies (i), (ii) and 
\begin{itemize}
\item[(iii')] If there are currently $n$ lines in the system, for any $k \in \{1,...,n\}$ and any group of $k$ lines independently at rate $\int_{(0,1)}|y|^k (1-|y|)^{n-k}\nu(dy)$ (resp. $\int_{(-1,0)}|y|^k (1-|y|)^{n-k}\nu(dy)$), any line in the group branches into two: an incoming line and a continuing line. We refer to this as a \textit{simultaneous branching favoring type $0$} (resp. $1$).
\end{itemize}
\end{defi}
We denote by $\mathbb{P}^{\omega,T}_l(\cdot)$ (resp. $\mathbb{P}_l(\cdot)$) the probability measure associated with $(A^{\omega,T}_{s})_{s \in [0,T]}$ (resp. $(A_{\beta})_{\beta \geq 0}$), and $\mathbb{E}^{\omega,T}_l[\cdot]$ (resp. $\mathbb{E}_l[\cdot]$) is the associated expectation. For the quenched ASG, the environment is fixed and the process evolves backward in time (it starts at $s=T$ and ends at $s=0$). In the annealed case, the environment is random. Since the Poisson point process defining the environment has the same law in forward and backward directions, and since the dynamic of the annealed ASG does not depend on the starting time $T$, the annealed ASG is defined as a process $(A_{\beta})_{\beta \geq 0}$ that has its own timeline. Running $(A_{\beta})_{\beta \geq 0}$, starting with $l$ lines at time $\beta=0$, until time $\beta = T$, corresponds heuristically to choosing uniformly at random $l$ individuals at time $s=T$ in the infinite population of the model \eqref{levymodelsdesimp} and then tracing back their potential ancestors until time $s=0$. In other words, the timelines $s$ and $\beta$ run in opposite directions. 


In the graphical representation of the ASG in Figure~\ref{ASGpicture}, we use the same convention as for the Moran model: two lines involved into a coalescence event are joined by an arrow with triangle-shaped head, and branchings that favor type $0$ (resp. $1$) are represented by arrows with filled (resp. unfilled) star-shaped head. More precisely, a line subject to a branching turns into a continuing line and an incoming line appears. The arrow with star-shaped head goes from the incoming line to the continuing line. 

\begin{figure}[t!]
    \centering
    \scalebox{0.8}{\begin{tikzpicture}   
        \draw[dashed,thick,opacity=0.3] (1,-0.5) --(1,1) (1,2)--(1,3) (1,4)--(1,4.5);
        \draw[dashed,thick,opacity=0.3] (6.2,-0.5) --(6.2,0) (6.2,1) --(6.2,2) (6.2,4)--(6.2,4.5);
        \node [right] at (-0.2,-0.8) {$0$};
        \node [right] at (0.7,-0.8) {$t_1$};
        \node [right] at (5.9,-0.8) {$t_2$};
        \node [right] at (8.3,-0.8) {$T$};
        \node [right] at (3.5,-0.8) {$s$};
       
        \node [right] at (-0.2,4.8) {$T$};
        \node [right] at (0.4,4.8) {$T-t_1$};
        \node [right] at (5.6,4.8) {$T-t_2$};
        \node [right] at (8.3,4.8) {$0$};
        \node [right] at (3.5,4.8) {$\beta$};
       
        \draw[-{triangle 45[scale=5]}] (2.5,-0.5) -- (4.5,-0.5) node[text=black, pos=.6, xshift=7pt]{};
        \draw[-{triangle 45[scale=5]}] (4.5,4.5) -- (2.5,4.5) node[text=black, pos=.6, xshift=7pt]{};
        \draw[thick] (0,4) -- (6.2,4);
        \draw[thick] (0,2) -- (6.05,2);
        \draw[thick] (6.35,2) -- (8.5,2);
        \draw[thick] (4.5,1) -- (6.05,1);
        \draw[thick] (6.35,1) -- (8.5,1);
        \draw[thick] (0.4,0) -- (6.2,0);
        \draw[thick] (1,1) -- (0.0,1);
        \draw[thick] (1,3) -- (0.0,3);
        \draw[-{triangle 45[scale=5]},thick] (4.5,2) -- (4.5,1) node[text=black, pos=.6, xshift=7pt]{};
        \draw[-{triangle 45[scale=5]},thick] (0.4,1) -- (0.4,0);
        \draw[thick] (1,3) -- (1,4);
        \node [star, star points=5, star point ratio=2.25, fill=black, draw][scale=0.3](key) at (1,4) {S};
        \draw[thick] (1,1) -- (1,2);
        \node [star, star points=5, star point ratio=2.25, fill=black, draw][scale=0.3](key) at (1,2) {S};
        \draw[thick] (6.2,4) -- (6.2,2.2);
        \node [star, star points=5, star point ratio=2.25, draw][scale=0.3](key) at (6.2,2) {S};
        \draw[thick] (6.2,0) -- (6.2,0.9);
        \node [star, star points=5, star point ratio=2.25, draw][scale=0.3](key) at (6.2,1) {S};
        \end{tikzpicture}    }
    \caption{A realization of the ASG. The timeline of $X$ runs from left to right and the timeline of the ASG from right to left. The environment has a positive jump at time $s=t_1$ and a negative jump at time $s=t_2$.}
    \label{ASGpicture} 
\end{figure}
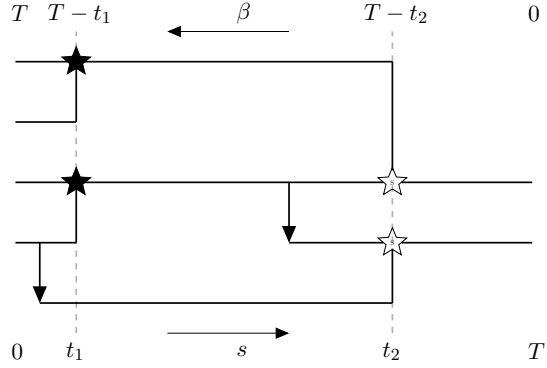

\subsection{Relation between Wright-Fisher diffusion and ASG} \label{relwf-asg}

In this subsection we explain in which way the Wright-Fisher diffusion \eqref{levymodelsdesimp} and the ASG from Definition \ref{defquenchedasg} are related. 

The interest in tracing back potential ancestors of a set of individuals via the ASG is that it allows to analyze their types. The analogy to the Moran model motivates the following type assignment procedure for lines of the ASG. 
\begin{defi}[Type assignment procedure for the ASG] \label{typeaspr}
For $T >0$ and $x \in [0,1]$, the type assignment procedure with initial condition $x$ for the annealed (resp. quenched) ASG on $[0,T]$ is defined as follows. 
\begin{itemize}
\item[(i)] At instant $\beta = T$ (resp. instant $s=0$) lines in the annealed (resp. quenched) ASG receive \textit{iid} types with law $x \delta_0 + (1-x) \delta_1$. 
\item[(ii)] Types propagate as $\beta$ decreases (resp. as $s$ increases). 
\item[(iii)] If a line resulting from a coalescence is of type $i \in \{0,1\}$, the two lines involved in the coalescence event receive type $i$. 
\item[(iv)] If, in a branching favoring type $i \in \{0,1\}$, the incoming line is of type $i$, then the line that branches receives type $i$. If the incoming line is of type $1-i$, then the line that branches receives the type of the continuing line. 
\end{itemize}
\end{defi}

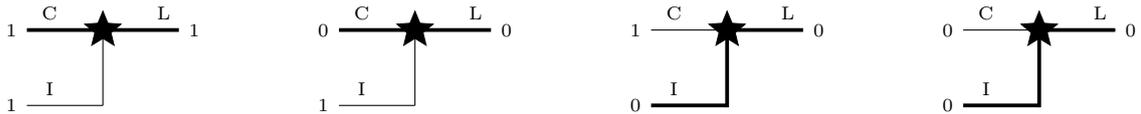
\begin{figure}[b!]
	\begin{minipage}{0.23 \textwidth}
		\centering
		\scalebox{1}{
			\begin{tikzpicture}
			\draw[line width=0.5mm] (0,1) -- (2,1);
			\draw[color=black] (0,0) -- (1,0);
			\node [star, star points=5, star point ratio=2.25, fill=black, draw][scale=0.3](key) at (1,1) {S};
			\draw[color=black] (1,0) -- (1,1) node[text=black, pos=.6, xshift=7pt]{};
			\node[above] at (1.8,1) {\tiny L};
			\node[above] at (0.3,1) {\tiny C};
			\node[above] at (0.3,0) {\tiny I};
			\node[left] at (0,1) {\tiny $1$};
			\node[left] at (0,0) {\tiny $1$};
			\node[right] at (2,1) {\tiny $1$};
			\end{tikzpicture}}
	\end{minipage}\hfill
	\begin{minipage}{0.23 \textwidth}
		\centering
		\scalebox{1}{
			\begin{tikzpicture}
			\draw[line width=0.5mm] (0,1) -- (2,1);
			\draw[color=black] (0,0) -- (1,0);
			\node [star, star points=5, star point ratio=2.25, fill=black, draw][scale=0.3](key) at (1,1) {S};
			\draw[color=black] (1,0) -- (1,1) node[text=black, pos=.6, xshift=7pt]{};
			\node[above] at (1.8,1) {\tiny L};
			\node[above] at (0.3,1) {\tiny C};
			\node[above] at (0.3,0) {\tiny I};
			\node[left] at (0,1) {\tiny $0$};
			\node[left] at (0,0) {\tiny $1$};
			\node[right] at (2,1) {\tiny $0$};
			\end{tikzpicture}}
	\end{minipage}\hfill
	\begin{minipage}{0.23 \textwidth}
		\centering
		\scalebox{1}{
			\begin{tikzpicture}
			\draw[] (0,1) -- (2,1);
			\draw[line width=0.5mm] (0,0) -- (1,0);
			\draw[line width=0.5mm] (1,1) -- (2,1);
			\node [star, star points=5, star point ratio=2.25, fill=black, draw][scale=0.3](key) at (1,1) {S};
			\draw[color=black,line width=0.5mm] (1,-0.025) -- (1,1) node[text=black, pos=.6, xshift=7pt]{};
			\node[above] at (1.8,1) {\tiny L};
			\node[above] at (0.3,1) {\tiny C};
			\node[above] at (0.3,0) {\tiny I};
			\node[left] at (0,1) {\tiny $1$};
			\node[left] at (0,0) {\tiny $0$};
			\node[right] at (2,1) {\tiny $0$};
			\end{tikzpicture}}
	\end{minipage}\hfill
	\begin{minipage}{0.23 \textwidth}
		\centering
		\scalebox{1}{
			\begin{tikzpicture}
			\draw[] (0,1) -- (2,1);
			\draw[line width=0.5mm] (0,0) -- (1,0);
			\draw[line width=0.5mm] (1,1) -- (2,1);
			\node [star, star points=5, star point ratio=2.25, fill=black, draw][scale=0.3](key) at (1,1) {S};
			\draw[color=black,line width=0.5mm] (1,-0.025) -- (1,1) node[text=black, pos=.6, xshift=7pt]{};
			\node[above] at (1.8,1) {\tiny L};
			\node[above] at (0.3,1) {\tiny C};
			\node[above] at (0.3,0) {\tiny I};
			\node[left] at (0,1) {\tiny $0$};
			\node[left] at (0,0) {\tiny $0$};
			\node[right] at (2,1) {\tiny $0$};
			\end{tikzpicture}}
	\end{minipage}
	\caption{Case of a branching favoring type $0$. The line that branches (L) splits into the continuing line (C) and the incoming line (I). Since the branching favors type $0$, the incoming line transmits its type if and only if it is of type $0$.}
	\label{fig:peckingorderposjump}
\end{figure}

The first point in Definition \ref{typeaspr} is related to the initial condition $X(0)=x$ for \eqref{levymodelsdesimp}. The propagation rules are illustrated in Figure \ref{fig:peckingorderposjump} for the case of a branching favoring type $0$. We define the \textit{backward type distribution} as follows. 
\begin{defi}[Backward type distribution] \label{defquenchedasgbtd}
Let $T >0, l \geq 1$ and $x \in [0,1]$. We consider the annealed ASG $(A_{\beta})_{\beta \in [0,T]}$ (resp. the quenched ASG $(A^{\omega,T}_{s})_{s \in [0,T]}$) starting with $l$ lines at time $\beta=0$ (resp. time $s=T$), and apply the type assignment procedure on $[0,T]$ with initial condition $x$ (see Definition \ref{typeaspr}). We define the annealed (resp. quenched) backward type distribution $h^l_T(x)$ (resp. $h^{l, \omega}_{0,T}(x)$) to be the $\mathbb{P}_l$-probability (resp. $\mathbb{P}^{\omega,T}_l$-probability) that all the $l$ lines from time $\beta = 0$ (resp. time $s=T$) receive type $0$ at the end of this procedure. For $0 < T_1 < T_2 $ we define $h^{l, \omega}_{T_1,T_2}(x)$ similarly as we defined $h^{l, \omega}_{0,T}(x)$. We similarly define $h^{l, \omega}_{T_1-,T_2}(x)$, $h^{l, \omega}_{T_1,T_2-}(x)$ and $h^{l, \omega}_{T_1-,T_2-}(x)$. 
\end{defi}
Note that, 
\begin{eqnarray}
\int h^{l, \omega}_{T_1,T_2}(x) P(d \omega) = h^l_{T_2-T_1}(x). \label{recoverannealed0}
\end{eqnarray}
Heuristically, the procedure defining $h^l_T(x)$ and $h^{l, \omega}_{0,T}(x)$ can be interpreted as choosing randomly~$l$ individuals at instant $s=T$ in the infinite population, tracing back their potential ancestors until time $s=0$, assigning \textit{iid} types to potential ancestors from time $s=0$ (taking into account that $X(0)=x$), and propagating the types forward as in the Moran model. Thus, $h^l_T(x)$ (resp. $h^{l, \omega}_{0,T}(x)$) can informally be understood as the annealed (resp. quenched) probability that $l$ randomly chosen individuals in the infinite population at time $s=T$ are all of type $0$, given that $X(0)=x$. We can therefore expect that the rigorous relation between the ASG and the diffusion \eqref{levymodelsdesimp} should be $h^l_T(x) = \mathbb{E} [ (X(T))^l | X(0)=x ]$ in the annealed setting and $h^{l, \omega}_{0,T}(x)=\mathbb{E}^{\omega} [ (X(\omega,T))^l \mid X(\omega,0)=x ]$ in the quenched setting. This turns out to be true and is the content of Proposition \ref{h(x)ht(x)} from Section \ref{relwf-asgbis}. That proposition rigorously relates $h^l_T(x)$ and $h^{l, \omega}_{0,T}(x)$, which are defined via the ASG, to $X$. Most of the time, in this paper, we do not work with the jump-diffusion $X$ itself but with the ASG (or a slightly modified version of it) and study the quantity $h^l_T(x)$. In particular, we will use it in Section \ref{ccl} to prove a series representation for $h(x)$ (see Theorem \ref{finalformula}). 

In the case of one-sided selection studied in \cite{cordvech}, $h^l_T(x)$ (resp. $h^{l, \omega}_{0,T}(x)$) turns out to be the generating function of the line counting process of the annealed (resp. quenched) ASG. In that case, Proposition \ref{h(x)ht(x)} results into the classical moment duality \eqref{mdsm}. 

\subsection{The Enlarged ASG and a useful function} \label{enlargedasg}

\subsubsection{Definition} \label{enlargedasgdef}
It will be convenient to work with a simple extension of the ASG, which we call \textit{Enlarged ASG} (E-ASG). 
\begin{defi}[The quenched/annealed E-ASG] \label{defquenchedeasg}
Let $\omega = (\omega(s))_{s \geq 0}$ be a fixed environment, $T > 0$ and $m \geq 1$. The quenched E-ASG on $[0,T]$ in environment $\omega$ starting with $m$ lines is the branching-coalescing particle system denoted by $(G^{\omega,T}_{s})_{s \in [0,T]}$ and defined as follows. It starts with $m$ ordered lines at time $s=T$ (i.e. $G^{\omega,T}_{T}$ contains $m$ ordered lines) and, between jumping times of $\omega$, has the following dynamic as $s$ decreases: 
\begin{itemize}
\item[(i)] Any pair of lines coalesces into a single line at rate $2$, independently from other pairs. 
\item[(ii)] Any line splits into two lines, an incoming line and a continuing line, at rate $\sigma$, independently from other lines, and such a branching is assigned the \textit{weight} $-1$. 
\end{itemize}
Additionally, if at a time $s\in[0,T]$ we have $\Delta \omega(s)\neq 0$, then $G^{\omega,T}_{s-}$ is obtained from $G^{\omega,T}_{s}$ as follows: 
\begin{itemize}
\item[(iii)] All lines of $G^{\omega,T}_{s}$ simultaneously split into two: an incoming line and a continuing line for each line of $G^{\omega,T}_{s}$. Each branching that is part of this simultaneous branching event is assigned the \textit{weight} $\Delta \omega(s)$. 
\end{itemize}
Let $m \geq 1$. The annealed E-ASG starting with $m$ lines is the branching-coalescing particle system denoted by $(G_{\beta})_{\beta \geq 0}$ and defined as follows. 
It starts with $m$ ordered lines at time $\beta=0$ (i.e. $G_0$ contains $m$ ordered lines) and, as $\beta$ increases, it satisfies (i), (ii), and 
\begin{itemize}
\item[(iii')] At rate $\lambda$, all lines split simultaneously into two: an incoming line and a continuing line for each existing line. A common \textit{weight}, chosen according to the distribution $\nu(\cdot)/\lambda$, is assigned to each branching that is part of this simultaneous branching event. 
\end{itemize}
\end{defi}
In particular, in the E-ASG all lines split when there is a jump of the environment (as opposed to just a subset of lines in the ASG). This is independent of the jump size. However, the E-ASG keeps track of the jump size at each such branching. Except at time $s=T$/$\beta=0$, the order of lines in Definition \ref{defquenchedeasg} is irrelevant. The purpose of the ordering is that it will allow to define without ambiguity a function of the E-ASG in Section~\ref{enlargedasgencodingfct}. 

We still denote by $\mathbb{P}^{\omega,T}_m(\cdot)$ (resp. $\mathbb{P}_m(\cdot)$) the probability measure associated with $(G^{\omega,T}_{s})_{s \in [0,T]}$ (resp. $(G_{\beta})_{\beta \geq 0}$), and $\mathbb{E}^{\omega,T}_m[\cdot]$ (resp. $\mathbb{E}_m[\cdot]$) the associated expectation. 

\subsubsection{Line counting process} \label{enlargedasglcp}
The line counting process of the annealed E-ASG is a continuous-time Markov process with values on $\mathbb{N}=\{1,2,...\}$ and infinitesimal rates: 
\[ q(i,j):=\left\{\begin{array}{ll}
            i(i-1) &\text{if $j=i-1$},\\
            i\sigma &\text{if $j=i+1$},\\
            \lambda &\text{if $j=2i$}. 
            \end{array}\right. \]
We can see that it is a positive recurrent irreducible Markov chain (see for example \cite[Lem.~5.2 with $\theta=0$ and $\mu=\lambda \delta_1$]{cordvech}). In particular, it admits a stationary distribution that we denote by~$\pi$. The probabilities $\pi(k)$ satisfy a recursion formula and the right tail of $\pi$ can be controlled. These results are gathered in the following proposition. 

\begin{prop} \label{recpilambda}
\begin{align}
\forall k \geq 2, \ \pi(k) & = \frac{\sigma}{k} \pi(k-1) + \frac{\lambda}{k(k-1)} \left ( \pi\left (\left \lfloor \frac{k+1}{2} \right \rfloor \right ) + ... + \pi(k-1) \right ). \label{recpik} 
\end{align}
There are two explicit positive constants $C_1 := C_1(\sigma,\lambda)$ and $C_2 := C_2(\sigma,\lambda)$ such that we have 
\begin{align}
\forall k \geq 1, \ \sum_{j \geq k} \pi(j) \leq C_1 e^{-C_2 (\log(k))^2}. \label{majoqueuepiknew}
\end{align}
\end{prop}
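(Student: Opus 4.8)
The plan is to treat the two assertions separately. The recursion \eqref{recpik} will come from a ``cut'' (flux--balance) argument for $\pi$, and the tail bound \eqref{majoqueuepiknew} will then be obtained by summing \eqref{recpik} into a self-improving inequality for the tails $\Pi(k):=\sum_{j\ge k}\pi(j)$ and iterating it. For the recursion, note that the line-counting chain is, as recalled, irreducible and positive recurrent, hence has a unique stationary law $\pi$. Rather than using the pointwise global balance equations (which involve $\pi(k+1)$ and $\pi(k/2)$), I would use that in stationarity the probability flux across the partition $\{1,\dots,k-1\}\sqcup\{k,k+1,\dots\}$ vanishes. Downward crossings of this cut can occur only through a coalescence $i\mapsto i-1$ with $i=k$, contributing rate $\pi(k)\,k(k-1)$. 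Upward crossings occur through a single branching $i\mapsto i+1$ with $i=k-1$, contributing $\pi(k-1)(k-1)\sigma$, and through a doubling $i\mapsto 2i$ for those $i\le k-1$ with $2i\ge k$, i.e.\ $\lceil k/2\rceil\le i\le k-1$, contributing $\lambda\sum_{i=\lceil k/2\rceil}^{k-1}\pi(i)$. Equating the two fluxes, dividing by $k(k-1)$, and using $\lceil k/2\rceil=\lfloor(k+1)/2\rfloor$ gives \eqref{recpik} for every $k\ge2$.

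Next I would reduce the tail bound to a halving inequality. Summing \eqref{recpik} over $k\ge K$ (all terms are nonnegative, so this is legitimate), bounding $\sigma/k\le\sigma/K$ in the first term and, in the double sum, $\sum_{i=\lfloor(k+1)/2\rfloor}^{k-1}\pi(i)\le\Pi(\lfloor(K+1)/2\rfloor)$ together with $\sum_{k\ge K}\frac1{k(k-1)}=\frac1{K-1}$, one obtains
\[
\Pi(K)\ \le\ \frac{\sigma}{K}\,\Pi(K-1)+\frac{\lambda}{K-1}\,\Pi\!\left(\left\lfloor\tfrac{K+1}{2}\right\rfloor\right).
\]
Since $\lfloor(K+1)/2\rfloor\le K-1$ for $K\ge3$ and $\Pi$ is nonincreasing, the term $\Pi(K-1)$ is itself $\le\Pi(\lfloor(K+1)/2\rfloor)$, so for all $K\ge3$,
\[
\Pi(K)\ \le\ \frac{C}{K}\,\Pi\!\left(\left\lfloor\tfrac{K+1}{2}\right\rfloor\right),\qquad C:=\sigma+2\lambda .
\]

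Then I would iterate. Put $K_0:=K$ and $K_{j+1}:=\lfloor(K_j+1)/2\rfloor$, and let $N$ be the first index with $K_N\le2$. Iterating the last inequality down to level $N$ and using $\Pi(K_N)\le1$ yields $\Pi(K)\le\prod_{j=0}^{N-1}C/K_j$. An elementary induction gives $(K+1)/2^{\,j}\le K_j+1<(K+1)/2^{\,j}+2$, from which $N=\log_2(K+1)+O(1)$ and $K_j\ge(K+1)/2^{\,j+1}$ for $j\le N-1$; hence $\prod_{j=0}^{N-1}C/K_j\le C^{N}2^{\,N(N+1)/2}/(K+1)^{N}$. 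Taking logarithms, the dominant contribution is $\tfrac{N(N+1)}{2}\log2-N\log(K+1)$, which with $N=\log_2(K+1)+O(1)$ equals $-\tfrac{(\log(K+1))^2}{2\log 2}+O(\log K)$, the remaining terms being $O(\log K)$. Thus $\Pi(K)\le\exp\!\big(-\tfrac{(\log K)^2}{2\log2}+O(\log K)\big)$ for large $K$; choosing $C_2$ slightly smaller than $1/(2\log2)$ absorbs the $O(\log K)$ error, and enlarging $C_1$ (using $\Pi(K)\le1$ for the finitely many small $K$) gives \eqref{majoqueuepiknew} with $C_1,C_2$ explicit and depending only on $\sigma,\lambda$.

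The first part is routine once one decides to use the cut/flux balance rather than the pointwise balance equations. The real content, and the main obstacle, is the tail estimate: one must recognize that the quadratic coalescence rate $i(i-1)$, competing against the doubling moves, forces Gaussian-in-$\log k$ decay, and one must set up the iteration so that the product $\prod 1/K_j$ telescopes into the factor $2^{N(N+1)/2}/(K+1)^N\approx e^{-(\log K)^2/(2\log2)}$. The bookkeeping with the floors, the precise range of $N$, and the collection of the constants is elementary but needs to be carried out carefully.
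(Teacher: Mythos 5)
Your proof is correct, and it arrives at the same two identities as the paper by partly different means. For the recursion \eqref{recpik}, the paper introduces a Siegmund dual process $(D_\beta)_{\beta\ge0}$, proves the duality \eqref{duality}--\eqref{limitduality}, and obtains \eqref{r1} as a first-step decomposition of the dual's absorption probability at $1$; your cut/flux-balance argument produces exactly the same identity (your balance equation is \eqref{r1} rewritten via $\pi(k)=\pi([k,\infty))-\pi([k+1,\infty))$) without introducing the dual, which is more elementary but forgoes the probabilistic representation \eqref{limitduality} of the tails of $\pi$ as absorption probabilities (not needed for this proposition anyway). The one point you should make explicit is that the cut identity follows from summing the global balance equations $\pi Q=0$ over the finite set $\{1,\dots,k-1\}$ --- each such equation has finitely many terms, so there is no summability issue despite the unbounded coalescence rates --- and that $\pi Q=0$ holds because the line-counting chain is irreducible, positive recurrent and non-explosive; this is standard, not a gap. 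For the tail bound \eqref{majoqueuepiknew}, the paper runs a pointwise induction on $\pi(k)$ itself, yielding the explicit product bound \eqref{recmajo} and then optimizing over $n\approx\log_2 k$; you instead sum the recursion into a halving inequality $\Pi(K)\le (C/K)\,\Pi(\lfloor(K+1)/2\rfloor)$ for the tails and iterate. The two iterations exploit the identical mechanism --- a gain of order $1/K_j$ at each of roughly $\log_2 K$ halvings, so the product telescopes to $C^N 2^{N(N+1)/2}/(K+1)^N\approx e^{-(\log K)^2/(2\log 2)}$ --- and give the same exponent; working directly with the tails has the minor advantage of producing \eqref{majoqueuepiknew} without a final summation over $j\ge k$, at the cost of a slightly larger constant ($\sigma+2\lambda$ in place of the paper's $\sigma+\lambda$), which is immaterial for the statement.
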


The proof of Proposition \ref{recpilambda} is quite computational and thus we shift it to Appendix~\ref{A2}. The ratios $\pi(j)/\pi(1)$ can be recursively computed via \eqref{recpik} for all $j \geq 2$. Then, using \eqref{majoqueuepiknew} for some large $k$, together with $\sum_{j \geq 1} \pi(j) = 1$, one can subsequently deduce a good approximation for~$\pi(1)$, and then for $\pi(j)$ and $\sum_{k \geq j} \pi(k)$. 

In several of our proofs, it is crucial that the tail distribution of the line counting process of the E-ASG decays faster than polynomially. Thus, the bound \eqref{majoqueuepiknew} is key.
Moreover \eqref{majoqueuepiknew} will allow to control approximations of $h(x)$ (see \eqref{boundfiniteapprox} in Theorem \ref{finalformula}). 

\subsubsection{Relation with ASG and backward type distribution} \label{enlargedasgrelasg}
We now relate the E-ASG to $h^l_T(x)$ and $h^{l, \omega}_{0,T}(x)$. To this end, we define a type assignment procedure for the E-ASG: 
\begin{defi}[Type assignment procedure for the E-ASG] \label{typeaspreasg}
For $T >0$ and $x \in [0,1]$, the type assignment procedure with initial condition $x$ for the annealed (resp. quenched) E-ASG on $[0,T]$ is defined as follows. For each branching we draw a Bernoulli random variable with parameter the absolute value of its weight. The branching is labeled \textit{real} or \textit{virtual} depending on whether the random variable equals $1$ or $0$. The Bernoulli random variables associated to the different branchings are independent. Note that at an event corresponding to (iii) or (iii') in Definition \ref{defquenchedeasg}, there are several branchings that occur simultaneously and that have a common weight, so we emphasis that independent Bernoulli random variables are assigned to them. Then, types are assigned according to the rules (i),(ii),(iii) of Definition \ref{typeaspr} but, instead of the rule (iv) of that definition, we have 
\begin{itemize}
\item In a branching labeled real, with weight of sign $(-1)^i$, $i \in \{0,1\}$, we have: If the incoming line is of type $i$, then the line that branches receives type $i$. If the incoming line is of type $1-i$, then the line that branches receives the type of the continuing line. 
\item In a branching labeled virtual the line that branches receives the type of the continuing line. 
\end{itemize}
\end{defi}

The labels of the E-ASG are defined so that for $m \geq l \geq 1$, the annealed (resp. quenched) E-ASG starting with $m$ lines contains the annealed (resp. quenched) ASG starting with $l$ lines. Indeed, in the annealed (resp. quenched) E-ASG starting with $m$ lines, let us color in grey the last $m-l$ lines from time $\beta=0$ (resp. time $s=T$), the incoming lines that arise from branchings labeled virtual, and all lines that arise from branchings of grey lines. If two grey (resp. non-grey) lines coalesce we set the resulting line to be grey (resp. non-grey). If a grey line coalesces with a non-grey line, we set the resulting line to be non-grey. Then the system of non-grey lines is a realization of the annealed (resp. quenched) ASG starting with $l$ lines. Moreover, the rules from Definition \ref{typeaspreasg} ensure that, for the realization of the ASG starting with $l$ lines that is contained in the E-ASG starting with $m$ lines, the type assignment procedure is the same as the one given by Definition \ref{typeaspr}, and the types are not influenced by the grey lines. Consequently, we obtain the following: 
\begin{lemme} \label{backwardtypedistribcoincide}
If we consider the annealed (resp. quenched) E-ASG on $[0,T]$, starting with $m$ lines at time $\beta=0$ (resp. $s=T$), and apply the type assignment procedure on $[0,T]$ with initial condition $x$ from Definition \ref{typeaspreasg}, then $h^l_T(x)$ (resp. $h^{l, \omega}_{0,T}(x)$) is the probability that the first $l$ lines from time $\beta=0$ (resp. $s=T$) all receive type $0$. 
\end{lemme}

The E-ASG has several advantages over the ASG. It will become apparent in Section~\ref{mainresults} that many of our expressions decompose according to the number of lines in the ancestral graph that we use (see for example \eqref{decompattendueent}). In the proofs we will require bounds on the tail distribution of this number of lines and, in the final results, these bounds also allow to measure the quality of the approximation of $h(x)$ by finitely many terms from its series representation (see \eqref{boundfiniteapprox} in Theorem \ref{finalformula}). The simplicity of the structure and of the line counting process of the E-ASG makes it possible to derive useful explicit formulas and bounds in Proposition~\ref{recpilambda}. Moreover, a disadvantage of the ASG is that, at simultaneous branchings, both the number of branchings and the combinatorics need to be taken into account. In this sense, working with the E-ASG and keeping track of the weights of branchings is simpler. This significantly reduces the number of cases that have to be considered in the analysis of the small-time behavior of the ancestral graph in Section~\ref{behaviour01}, and the number of terms in the ODE system coding for the combinatorics of the graph (see Theorem \ref{equadiffcoeffnew}). The information contained in the weights of branchings is not too bothersome and it will lead to coefficients $\tau(i,j)$ from Definition \ref{defcoefedo}. We suspect that our methodology extends to the case of an infinite measure $\nu$, at the cost of more complexity because one then probably needs to work directly with the ASG.

Consistency is another useful property of the E-ASG. More precisely, being able to run the process with $m$ lines for some $m \geq l$, while we are only interested in the types of $l$ lines in order to determine $h^l_T(x)$ or $h^{l, \omega}_{0,T}(x)$, makes it possible to formulate a crucial branching property (see Lemma \ref{markpropf} of Section \ref{behaviour02}) and to define some coefficients for which we can establish a semigroup property (see Proposition \ref{semigroupprop} of Section \ref{mainresults}). 


\subsubsection{Some definitions and notations} \label{enlargedasgdefnot}
We use the terminology of trees for the E-ASG: For $n \geq 1$ (resp. $n=0$), we define generation $n$ of the E-ASG as the set of pieces of lines present in the E-ASG just after its $n^{th}$ transition (resp. starting time). When looking at a realization of the E-ASG as such a pseudo-tree, we view pieces of lines from a given generation as vertices and forget about their lengths. If a line is unaffected by the $n^{th}$ transition, we say that the part of this line lying in generation $n-1$ (resp. $n$) is \textit{parent} (resp. \textit{son}) to the part of this line lying in generation $n$ (resp. $n-1$). From now, we abusively refer to pieces of lines from a given generation as \textit{lines}. We say that a branching line is \textit{parent} to the incoming line and the continuing line and that the laters are its \textit{sons}. Similarly, we say that two coalescing lines are \textit{parents} to the resulting line, and that the later is their \textit{son}. For $m \geq 1$, let $\mathbb{G}_m$ denote the family of finite graphs containing possible realizations of the E-ASG on finite time-intervals, starting with $m$ ordered lines. A graph $G\in \mathbb{G}_m$ has $m$ ordered lines in generation $0$, finitely many generations, and only three possible types of generations that we call \textit{multiple branching generations}, \textit{single branching generations}, and \textit{coalescing generations}. 

For $n \geq 0$, the $n^{th}$ generation of $G\in \mathbb{G}_m$ is assigned a \textit{weight} $S_n\in[-1,1)$. This weight is always $0$ (resp. $ -1$) if the $n^{th}$ generation is a coalescing (resp. a single branching) generation. In each branching generation (multiple or single), we distinguish two kinds of lines: \textit{incoming lines} and \textit{continuing lines}. Each pair of sons of an line from the previous generation contains an incoming line and a continuing line. When a line from the previous generation has a single son, the son is a continuing line. 

Clearly, for any $t \geq 0$, the random finite graph generated by the annealed E-ASG $(G_{\beta})_{\beta \in [0,t]}$ starting with $m$ ordered lines at time $\beta=0$ is an element of $\mathbb{G}_m$ that we identify with $G_t$ (formally $G_t$ is a more complicated object containing information about lengths of lines but we will not need it, so we can make this identification). Similarly, for any fixed environment $\omega$, $T > 0$ and $r \in [0,T]$, the random finite graph generated by the quenched E-ASG $(G^{\omega,T}_{s})_{s \in [r,T]}$ starting with $m$ ordered lines at time $s=T$ is an element of $\mathbb{G}_m$ that we identify with $G^{\omega,T}_{r}$. In both cases, the weight of a multiple branching generation is set to be the common weight assigned to the corresponding simultaneously occurring branchings (in Definition \ref{defquenchedeasg}). 

For $G \in \mathbb{G}_m$, let $\mathsf{depth}(G)$ denote the number of generations of $G$, not counting generation $0$. For any integer $n\geq0$, $\mathbb{G}_m^n$ denotes the set of elements of $\mathbb{G}_m$ with depth $n$. For $n \leq \mathsf{depth}(G)$, let $\pi_n(G)$ denote the projection of $G \in \mathbb{G}_m$ on $\mathbb{G}_m^n$, that is, the graph obtained by restricting $G$ to its first $n+1$ generations (including generation $0$). 
For each $G \in \mathbb{G}_m$, let $V_G$ denote the set of lines in the last generation of $G$ 
and let $\mathcal{P}(V_G)$ denote the family of non-empty subsets of $V_G$. Note that, if $\mathsf{depth}(G) \geq 1$, $V_{\pi_{\mathsf{depth}(G)-1}(G)}$ is the set of lines in the generation before the last generation of $G$. For $A \in \mathcal{P}(V_G)$, let $P(A) \in \mathcal{P}(V_{\pi_{\mathsf{depth}(G)-1}(G)})$ be the set of parents of elements of $A$. For $A \in \mathcal{P}(V_{\pi_{\mathsf{depth}(G)-1}(G)})$, let $D(A) \in \mathcal{P}(V_G)$ be the set of sons of elements of $A$ in $V_G$. For a set of lines $A \in \mathcal{P}(V_G)$, $|A|$ denotes its cardinality. 

\begin{defi}[Type $0$ events] \label{type0events}
Let $T>0$, $t \in [0,T]$, $x \in [0,1]$, $(G_{\beta})_{\beta \in [0,T]}$ be the annealed E-ASG on $[0,T]$, and $A \in \mathcal{P}(V_{G_{t}})$. We apply the type assignment procedure from Definition \ref{typeaspreasg} on $[0,T]$ with initial condition $x$. We define $E(t,T,A,x)$ to be the event where all the lines belonging to $A$ receive type $0$ after this procedure. 
\end{defi} 
Note from Definition \ref{type0events} and Lemma \ref{backwardtypedistribcoincide} that for $m \geq l \geq 1$, $h^l_T(x)=\mathbb{P}_m(E(0,T,\{ L_1,..., L_l \},x))$, if $L_1,..., L_l$ denote the first $l$ lines in the annealed E-ASG at time $\beta=0$. The events from Definition \ref{type0events} will be useful to study the expression of $h^l_T(x)$ (see the proof of Theorem \ref{propagationformule}). 

Let $(\mathcal{F}_t)_{t \geq 0}$ be the filtration generated by the annealed E-ASG, i.e. $\mathcal{F}_t := \sigma((G_{\beta})_{\beta \in [0,t]})$ for any $t \geq 0$. Note that $\mathcal{F}_t$ contains the information of the weights assigned at transitions of the E-ASG on~$[0,t]$, but no information related to the type assignment procedure from Definition \ref{typeaspreasg} (i.e. no information on labels 
or types). In the quenched setting, for any $0 \leq r \leq t$, let $\mathcal{F}^{\omega,t}_{r} := \sigma((G^{\omega,t}_{s})_{s \in [r,t]})$. We also define $\mathcal{F}^{\omega,t}_{r-} := \cap_{s<r}\mathcal{F}_s^{\omega,t}$. 

For $\beta \geq 0$, $(|V_{G_{\beta}}|)_{\beta \geq 0}$ is just the line counting process of the E-ASG (recall $V_{G_{\beta}}$ is the set of lines of the E-ASG at instant $\beta$). In particular, its stationary distribution is $\pi$ (defined in Section~\ref{enlargedasglcp}). This implies the following estimate, which is useful for bounds that are uniform in~$\beta$. 
\begin{lemme} \label{lemmemajointemporelle}
For any $m, k \geq 1$ and $\beta \geq 0$ we have 
\begin{eqnarray}
\mathbb{P}_m \left ( |V_{G_{\beta}}| = k \right ) \leq \frac{\pi(k)}{\pi(m)}. \label{majointemporelle}
\end{eqnarray}
\end{lemme}

\subsubsection{Encoding function} \label{enlargedasgencodingfct}
The idea is to express $h^l_T(x)$ as a function of $G_T$. To this end, we introduce an object that is one of the main tools for our analysis. More precisely, to each $G \in \mathbb{G}_m$ we deterministically associate a function $F^l_G : \mathcal{P}(V_G) \rightarrow \mathbb{R}$ that contains the relevant information about the combinatorics of $G$. Let $m \geq 1$ and $1 \leq l \leq m$. Given a graph $G \in \mathbb{G}_m$, let $\{ L_1,..., L_m \}$ denote the ordered lines in generation $0$ of $G$. Then, $F^l_G$ is defined recursively on $\mathsf{depth}(G)$. If $G \in \mathbb{G}_m^0$ (i.e. $\mathsf{depth}(G)=0$), then $V_G = \{ L_1,..., L_m \}$ and $F^l_{G}(A) := \mathds{1}_{A=\{ L_1,..., L_l \}}$ for $A \in \mathcal{P}(V_G)$. If, for some $n\geq 0$, $F^l_G$ is defined for all $G \in \mathbb{G}_m^n$, then for $G \in \mathbb{G}_m^{n+1}$, we define $F^l_G$ as follows. 
\begin{itemize}
\item If $n+1$ is a multiple branching generation of $G$, then for any $A \in \mathcal{P}(V_G)$ we set 
\begin{align}
F^l_G(A) := F^l_{\pi_n(G)}(P(A)) \times (1+S_{n+1} \mathds{1}_{S_{n+1} < 0})^{\alpha(A)} \times (S_{n+1} \mathds{1}_{S_{n+1} > 0})^{\beta(A)} \times (-S_{n+1})^{\gamma(A)}, \label{defFjump}
\end{align}
where $\alpha(A)$ is the number of continuing lines in $A$ whose incoming brothers are in $A^c$, $\beta(A)$ is the number of incoming lines in $A$ whose continuing brothers are in $A^c$, and $\gamma(A)$ is the number of pairs of brothers that are both in $A$. In \eqref{defFjump} we use the convention $0^0=1$. 
\item If $n+1$ is a single branching generation of $G$, then for any $A \in \mathcal{P}(V_G)$ we set 
\begin{align}
F^l_{G}(A) := F^l_{\pi_n(G)}(P(A)) \times \mathds{1}_{N(A)\in \{0,2\}}, \label{defFcontsel}
\end{align}
where $N(A) \in \{0,1,2\}$ is the number of sons of the branching line in $A$. 
\item If $n+1$ is a coalescing generation of $G$, then for any $A \in \mathcal{P}(V_G)$ we set 
\begin{align}
F^l_G(A) := \sum_{B \in \mathcal{P}(V_{\pi_n(G)}) ; \, D(B) = A} F^l_{\pi_n(G)}(B). \label{defFcoal}
\end{align}
\end{itemize}

Note that under $\mathbb{P}_m$, for any $t \geq 0$, $F^l_{G_t}$ is a deterministic function of the random object $(G_{\beta})_{\beta \in [0,t]}$ (which contains information on the weights of transitions of $(G_{\beta})_{\beta \in [0,t]}$) and does not depend on labels and types that are assigned in the type assignment procedure from Definition~\ref{typeaspreasg}. 

The updating rules of $F^l_{G}(\cdot)$ are chosen such that they allow deriving expression \eqref{propagationformule1mix} for the probability $h^l_T(x)$ as an expectation involving the coefficients $F^l_{G_{T}}(A)$ (see Theorem~\ref{propagationformule} of Section \ref{mainresults}). The idea is that~\eqref{propagationformule1mix} is obtained by updating the expression of $h^l_T(x)$ along events of the E-ASG. It will become apparent in the proof of Theorem \ref{propagationformule} that \eqref{defFjump}--\eqref{defFcoal} form the only possible definition so that $F^l_{G}(\cdot)$ is updated accordingly at each event of the E-ASG, in order for \eqref{propagationformule1mix} to hold true. 

Finally, let us mention that an extension of the function $F^l_{G}(\cdot)$ will be defined in Section \ref{behaviour021}, in order to formulate and establish a renewal property for $(F^{i}_{G_{\beta}}(\cdot))_{\beta \geq 0}$ in Section \ref{behaviour022}.

\subsection{Main results} \label{mainresults}

Our approach is based on deriving an exact expression of the probability $h^l_T(x)$ in terms of the E-ASG. The next theorem is a key point in our analysis. It relates $h^l_T(x)$ to $F^l_G(\cdot)$ introduced in the previous subsection. 

\begin{theo} \label{propagationformule}
For any $l \geq 1, m \geq l, x \in [0,1]$ and $T \geq 0$ we have the following two expressions for $h^l_T(x)$. 
\begin{align}
h^l_T(x) = \mathbb{E}_m \left [ \sum_{A \in \mathcal{P}(V_{G_{T}})} F^l_{G_{T}}(A) \mathds{1}_{E({T},T,A,x)} \right ] = \mathbb{E}_m \left [ \sum_{A \in \mathcal{P}(V_{G_T})} F^l_{G_T}(A) x^{|A|} \right ], \label{propagationformule1mix}
\end{align}
where $E({T},T,A,x)$ is as in Definition \ref{type0events}. Moreover, we have $\mathbb{P}_m$-almost surely 
\begin{align}
\sum_{A \in \mathcal{P}(V_{G_{T}})} F^l_{G_{T}}(A) \mathds{1}_{E(T,T,A,x)} \in [0,1], \ \text{and} \ \sum_{A \in \mathcal{P}(V_{G_{T}})} F^l_{G_{T}}(A) x^{|A|} \in [0,1], \label{entre0et10mix}
\end{align}
and 
\begin{eqnarray}
\sum_{A \in \mathcal{P}(V_{G_{T}})} F^l_{G_{T}}(A) = 1. \label{sommetotale=1}
\end{eqnarray}
\end{theo}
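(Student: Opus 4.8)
The plan is to prove all three claims simultaneously by induction on $\mathsf{depth}(G_T)$, exploiting the recursive definition of $F^l_G$. To make the induction go through it is convenient to prove the following strengthened statement, holding $\mathbb{P}_m$-almost surely and for every fixed realization $G \in \mathbb{G}_m$: for all $y \in [0,1]$,
\begin{align}
\sum_{A \in \mathcal{P}(V_G)} F^l_G(A)\, y^{|A|} = \mathbb{E}\Big[\,\textstyle\sum_{A}F^l_G(A)\mathds{1}_{E(A,y)}\,\Big] \in [0,1], \qquad \sum_{A \in \mathcal{P}(V_G)} F^l_G(A) = 1, \label{plan-strengthened}
\end{align}
where $E(A,y)$ is the event that all lines of $A$ receive type $0$ when types are assigned iid with parameter $y$ and propagated through $G$ with the random labels. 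The base case $\mathsf{depth}(G)=0$ is immediate: $F^l_G(A) = \mathds{1}_{A = \{L_1,\dots,L_l\}}$, so the sum over $\mathcal{P}(V_G)$ is $1$, the weighted sum is $y^l \in [0,1]$, and $E(\{L_1,\dots,L_l\},y)$ has probability exactly $y^l$ since the $l$ starting lines get iid type-$0$ with probability $y$ each.

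For the inductive step I would treat the three cases of generation $n+1$ separately. In each case the strategy is: (i) verify $\sum_A F^l_G(A) = 1$ by pushing the sum over $\mathcal{P}(V_G)$ down to a sum over $\mathcal{P}(V_{\pi_n(G)})$; (ii) verify the probabilistic identity $\sum_A F^l_G(A)y^{|A|} = \mathbb{E}[\sum_A F^l_G(A)\mathds{1}_{E(A,y)}]$ by a one-step conditioning on the labels/types at generation $n+1$ and matching it to the inductive hypothesis for $\pi_n(G)$; (iii) conclude membership in $[0,1]$ from (ii) and the inductive $[0,1]$-bound. For the \emph{coalescing} case, \eqref{defFcoal} expresses $F^l_G(A)$ as the sum of $F^l_{\pi_n(G)}(B)$ over $B$ with $D(B)=A$; since $B \mapsto D(B)$ is a bijection from $\mathcal{P}(V_{\pi_n(G)})$ to $\mathcal{P}(V_G)$ (the two coalescing lines are always descended together, so a nonempty parent set has a nonempty image and vice versa), $\sum_A F^l_G(A) = \sum_B F^l_{\pi_n(G)}(B) = 1$; and at the type level, a line in $V_G$ has type $0$ iff its (unique) parent in $V_{\pi_n(G)}$ does, so $E(A,y)$ and $E(D^{-1}(A),y)$ coincide, giving (ii). For the \emph{single-branching} case, \eqref{defFcontsel} multiplies by $\mathds{1}_{N(A)\in\{0,2\}}$; summing over $A$, group the $A$'s by their parent set $P(A) \in \mathcal{P}(V_{\pi_n(G)})$: for a parent set not containing the branching line there is exactly one $A$ above it, while for a parent set containing the branching line the two $A$'s with $N(A)\in\{0,2\}$ (namely include both descents or neither)—but only one of these has $P(A)$ equal to that set in the right way—here one must check the bookkeeping carefully, but the upshot is $\sum_A F^l_G(A) = \sum_B F^l_{\pi_n(G)}(B) = 1$. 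At the type level, the type-propagation rule for a single branching (favoring type $1$) makes the branching line type $0$ iff both incoming and continuing are type $0$, which is exactly the event encoded by requiring $N(A)\in\{0,2\}$; a short case analysis on whether the branching is labeled real or virtual and on the types of the two new lines matches $\mathbb{E}[\mathds{1}_{E(A,y)}]$ against $y^{|A|}$ times the inductive quantity, but actually the cleanest route is to observe $\sum_A F^l_G(A)y^{|A|} = \sum_B F^l_{\pi_n(G)}(B)\,(\text{contribution of the two descents})$ and identify the contribution with $y^{|B|}$ directly since the extra descent either forces type $0$ on both or is consistent with the parent's type.

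The \emph{multi-branching} case is where the real content lies, and I expect it to be the main obstacle. Here \eqref{defFjump} introduces the weight factors $(1+S_{n+1})^{\alpha(A)}$, $S_{n+1}^{\beta(A)}$, $(-S_{n+1})^{\gamma(A)}$ (with the indicator restrictions on the sign of $S_{n+1}$), and one must show that summing these over all $A$ above a fixed parent set $B$ collapses to $1$, and that the probabilistic identity holds. The key algebraic fact is that for each parent line in $B$, its two descendants (one incoming, one continuing) contribute a local factor: if we track which descendants are in $A$, the three exponents decompose as sums over the parent lines in $B$, so $\sum_{A : P(A)=B} (\text{weight factor}) = \prod_{\text{parents in } B}(\text{sum of local factors over the }\{00,01,10,11\}\text{ patterns of that pair})$. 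With $S_{n+1} = w \in [-1,1)$, if $w > 0$ the local sum over patterns for a single pair is $w$ (pattern $10$: incoming in, continuing out) $+\,1$ (pattern $01$: continuing in, incoming out) $+\,(-w)\cdot(\text{hmm})$ — the precise local weights are $w^{\beta}$-type for the ``incoming in, continuing out'' pattern, $1^{\alpha}=1$ for ``continuing in, incoming out'', and $(-w)^{1}$ for ``both in'', and these must sum to $1$; for $w<0$ the relevant local weights are $(1+w)$ for ``continuing in, incoming out'', $1$ for ``incoming in, continuing out'' (from $w^0$), and $(-w)$ for ``both in'', again summing to $1$. So the telescoping to $\sum_B F^l_{\pi_n(G)}(B) = 1$ works, but only after carefully matching the definitions of $\alpha,\beta,\gamma$ to the correct local patterns and checking both sign regimes. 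For the probabilistic identity, I would condition on the labels of the (simultaneous, independent) branchings at generation $n+1$ and on the types assigned to the new lines, and verify that for each pair of brothers the probability that a prescribed subset of $\{$incoming, continuing$\}$ are type $0$, weighted appropriately by the label distribution $\mathrm{Bernoulli}(|w|)$ and the type-propagation rule (real positive-weight: incoming transmits iff type $0$; real negative-weight: incoming transmits iff type $1$; virtual: continuing transmits), reproduces exactly the local weight factor times the right power of $y$. This is a finite but delicate case check; once it is done, Fubini/tower property over the independent pairs and the inductive hypothesis for $\pi_n(G)$ give \eqref{plan-strengthened}, and the $[0,1]$ membership follows because the right-hand side is an expectation of an indicator-weighted sum that, by the same argument applied with $m=l$ and then embedding, equals the probability $h^l_{\cdot}(y) \in [0,1]$ — or more directly, because each summand $\mathbb{E}[\sum_A F^l_G(A)\mathds{1}_{E(A,y)}]$ is visibly a probability. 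Specializing $G = G_T$, $y = x$ and taking $\mathbb{E}_m$ yields \eqref{propagationformule1mix}, \eqref{entre0et10mix}, \eqref{sommetotale=1}; the first expression for $h^l_T(x)$ in \eqref{propagationformule1mix} is the defining probability (via the embedding of the ASG in the E-ASG discussed before the theorem), and the second follows by taking expectation over the iid types.
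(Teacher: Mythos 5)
Your overall architecture coincides with the paper's: an induction over the transitions of the E-ASG, a three-way case analysis (coalescence, single branching, multi-branching), the observation that for each branching pair the local weight factors in \eqref{defFjump} sum to $1$, and a conditioning on the Bernoulli labels to match those factors against the type-propagation rules. The one structural difference is that you induct directly on the generations of the realized graph, whereas the paper discretizes $[0,T]$ dyadically and works on the event that each subinterval contains at most one transition before letting the mesh go to $0$; your version is a legitimate streamlining, since the graph a.s. has finitely many generations.

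There is, however, a genuine gap: the strengthened statement you propose to induct on does not contain the content of the theorem. Since $A$ ranges over the \emph{last} generation $V_G$, where types are assigned iid Bernoulli$(y)$, the identity $\mathbb{E}[\mathds{1}_{E(A,y)}\mid G]=y^{|A|}$ is immediate and the first equality of your induction hypothesis is vacuous. What must actually be propagated is the equality of $\mathbb{E}\bigl[\sum_{A\in\mathcal{P}(V_{\pi_k(G)})}F^l_{\pi_k(G)}(A)\mathds{1}_{\{\text{all lines of }A\text{ of type }0\}}\bigr]$ with the probability that the first $l$ lines of generation $0$ all receive type $0$, where the types at generation $k$ are the \emph{propagated} ones, not a fresh iid assignment; that probability is $h^l_T(x)$ by definition (via the embedding of the ASG into the E-ASG), and only at the final generation does the indicator collapse to $x^{|A|}$. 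Your closing assertion that the first expression in \eqref{propagationformule1mix} ``is the defining probability'' is false --- the defining probability is $\mathbb{E}_m[\mathds{1}_{E(0,T,\{L_1,\dots,L_l\},x)}]$, an event about generation $0$ --- and the claim that $\mathbb{E}[\sum_A F^l_G(A)\mathds{1}_{E(A,y)}]$ is ``visibly a probability'' is also false, since the coefficients $F^l_G(A)$ are signed and unbounded; that this signed sum lies in $[0,1]$ is precisely the nontrivial conclusion \eqref{entre0et10mix}, obtained in the paper as a by-product of the identification with a conditional probability, not assumed up front. Without the correct induction hypothesis your plan proves \eqref{sommetotale=1} but neither equality in \eqref{propagationformule1mix} nor \eqref{entre0et10mix}.

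Two smaller errors. In the coalescing case $D$ is not a bijection: if $L_1,L_2$ coalesce into $L$ then $D(\{L_1\})=D(\{L_2\})=D(\{L_1,L_2\})=\{L\}$; what you need is only that $D$ is a surjection whose fibers partition $\mathcal{P}(V_{\pi_n(G)})$, which already gives $\sum_A F^l_G(A)=\sum_B F^l_{\pi_n(G)}(B)$ via \eqref{defFcoal}. In the multi-branching case with $w<0$, the local weight of the pattern ``incoming in, continuing out'' is $(w\mathds{1}_{w>0})^1=0$, not $1$: your listed weights $(1+w)$, $1$, $-w$ sum to $2$, whereas the correct weights $(1+w)$, $0$, $-w$ sum to $1$ as required.
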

The combination of \eqref{entre0et10mix} with a combinatorial argument plays a crucial role in controlling the absolute values of the coefficients $F^l_{G_{T}}(A)$ and sums thereof (see Lemmas \ref{lemmecomb} and \ref{sgwelldef} for details).

\begin{remark} \label{propagationformulequenched0}
The proof of Theorem \ref{propagationformule} also works in the quenched setting. 
We thus obtain for any fixed environment $\omega$, $l \geq 1, m \geq l, x \in [0,1]$, $r\geq 0$ and $t>r$, 
\begin{eqnarray}
h^{l, \omega}_{r,t}(x) = \mathbb{E}^{\omega,t}_m \left [ \sum_{A \in \mathcal{P}(V_{G^{\omega,t}_r})} F^l_{G^{\omega,t}_r}(A) x^{|A|} \right ]. \label{propagationformulequenched}
\end{eqnarray}
\end{remark}

Let us fix $i \geq 1$ and $m \geq i$. From \eqref{propagationformule1mix} we have $h^i_T(x) = \mathbb{E}_m[ \sum_{j \geq 1} \sum_{A \in \mathcal{P}(V_{G_T}) ;\, |A| = j} F^i_{G_T}(A) x^{j}]$. One could be tempted to rewrite this as $\sum_{j \geq 1} \mathbb{E}_m[ \sum_{A \in \mathcal{P}(V_{G_T}) ;\, |A| = j} F^i_{G_T}(A)] x^{j}$ and 
thus expect that $h(x) = \sum_{j \geq 1} b_j x^j$ where $b_j := \lim_{T \rightarrow \infty} \mathbb{E}_m [ \sum_{A \in \mathcal{P}(V_{G_T}) ; |A| = j} F^i_{G_T}(A) ]$. 
The problem with this heuristic reasoning is that the inversion of the sum and expectation, and then passage to the limit, seem to be not valid in general (but they are in some particular cases, see Sections \ref{casenoenv} and \ref{exapponesidedenv}). Moreover, even though we show below that the coefficients $b_j$ are well-defined, it seems that 
the power series $\sum_{j \geq 1} b_j y^j$ may have null radius of convergence. Therefore, we decompose $h^i_T(x)$ in a more suitable basis of polynomials. We first define some useful coefficients: 
\begin{defi} [Duality coefficients] \label{defcoefsg}
For any $i,j\geq 1$ and $t \geq 0$, we define 
\begin{eqnarray}
Q_t(i,j) := \mathbb{E}_i \left [ \sum_{A \in \mathcal{P}(V_{G_{t}}) ; |A|=j} F^i_{G_{t}}(A) \right ]. \label{coeff1indice}
\end{eqnarray}
For any $m,k\geq 1, i \in \{ 1,...,m \}, j \in \{ 1,...,k \}$, $t \geq 0$, we define 
\begin{eqnarray}
R^{m,k}_t(i,j) := \mathbb{E}_m \left [ \mathds{1}_{|V_{G_{t}}| = k} \sum_{A \in \mathcal{P}(V_{G_{t}}) ; |A|=j} F^i_{G_{t}}(A) \right ]. \label{defcoeff}
\end{eqnarray}
For any fixed environment $\omega$, $m,k\geq 1, i \in \{ 1,...,m \}, j \in \{ 1,...,k \}$, $r\geq 0$ and $t>r$, we define 
\begin{eqnarray}
R^{m,k, \omega}_{r,t}(i,j) := \mathbb{E}^{\omega,t}_m \left [ \mathds{1}_{|V_{G^{\omega,t}_{r}}| = k} \sum_{A \in \mathcal{P}(V_{G^{\omega,t}_r}) ; |A|=j} F^i_{G^{\omega,t}_r}(A) \right ]. \label{defcoeffquenched}
\end{eqnarray}
We similarly define $R^{m,k, \omega}_{r-,t}(i,j)$, $R^{m,k, \omega}_{r,t-}(i,j)$, and $R^{m,k, \omega}_{r-,t-}(i,j)$. 
\end{defi} 
The expectations in Definition \ref{defcoefsg} are well-defined, we prove this in Lemma \ref{sgwelldef} of Section~\ref{dawd}. Let us show how the quantities $R^{m,k}_t(i,j)$ are related to $h^i_t(x)$. Using \eqref{propagationformule1mix} and that, by \eqref{entre0et10mix}, $\sum_{A \in \mathcal{P}(V_{G_t})} F^i_{G_t}(A) x^{|A|} \in [0,1]$ $\mathbb{P}_m$-almost surely, we get for all $i \geq 1$, $m \geq i$, and $t \geq 0$, 
\begin{align}
h^i_t(x) & = \sum_{k=1}^{\infty} \mathbb{E}_m \left [ \mathds{1}_{|V_{G_{t}}| = k} \sum_{A \in \mathcal{P}(V_{G_t})} F^i_{G_t}(A) x^{|A|} \right ] = \sum_{k=1}^{\infty} \sum_{j=1}^k R^{m,k}_t(i,j) x^{j}. \label{decompattendueent}
\end{align}
Similarly, for any fixed environment $\omega$, $i \geq 1, m \geq i$, and $0 \leq r < t$, 
\begin{align}
h^{i, \omega}_{r,t}(x) = \sum_{k=1}^{\infty} \sum_{j=1}^k R^{m,k, \omega}_{r,t}(i,j) x^{j}. \label{decompattendueentquenched}
\end{align}
Combining Proposition \ref{h(x)ht(x)} from Section \ref{relwf-asgbis}, Theorem \ref{propagationformule}, Remark \ref{propagationformulequenched0}, equations \eqref{decompattendueent} and \eqref{decompattendueentquenched}, we obtain annealed and quenched expressions for the moments of the Wright-Fisher diffusion \eqref{levymodelsdesimp}: 
\begin{theo} \label{momdiff} 
For any $l \geq 1, m \geq l, x \in [0,1]$ and $T \geq 0$ we have
\[ \mathbb{E} \left [ (X(T))^l | X(0)=x \right ] = \mathbb{E}_m \left [ \sum_{A \in \mathcal{P}(V_{G_T})} F^l_{G_T}(A) x^{|A|} \right ] = \sum_{k=1}^{\infty} \sum_{j=1}^k R^{m,k}_T(l,j) x^{j}. \]
For any fixed environment $\omega$, $l \geq 1, m \geq l, x \in [0,1]$ and $T \geq 0$ we have
\[ \mathbb{E}^{\omega} \left [ (X(\omega,T))^l \mid X(\omega,0)=x \right ] = \mathbb{E}^{\omega,T}_m \left [ \sum_{A \in \mathcal{P}(V_{G^{\omega,T}_0})} F^l_{G^{\omega,T}_0}(A) x^{|A|} \right ] = \sum_{k=1}^{\infty} \sum_{j=1}^k R^{m,k, \omega}_{0,T}(l,j) x^{j}. \]
\end{theo}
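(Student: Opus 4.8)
The plan is to obtain Theorem \ref{momdiff} as a synthesis of results already in hand, with no new computation. First I would fix $l \geq 1$, $m \geq l$, $x \in [0,1]$ and $T \geq 0$ once and for all, and note that the left-hand sides of the two displays are, by Proposition \ref{h(x)ht(x)}, nothing but $h^l_T(x)$ and $h^{l,\omega}_{0,T}(x)$: this is precisely \eqref{momentstoht} and \eqref{quencheddual0}. So the whole statement reduces to expressing $h^l_T(x)$ and $h^{l,\omega}_{0,T}(x)$ in the two announced forms.

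For the annealed display I would chain three equalities. The identity $h^l_T(x) = \mathbb{E}_m[\sum_{A \in \mathcal{P}(V_{G_T})} F^l_{G_T}(A) x^{|A|}]$ is the second equality in \eqref{propagationformule1mix} of Theorem \ref{propagationformule}. The identity $\mathbb{E}_m[\sum_{A} F^l_{G_T}(A) x^{|A|}] = \sum_{k=1}^\infty \sum_{j=1}^k R^{m,k}_T(l,j) x^j$ is \eqref{decompattendueent}; its derivation, spelled out in the excerpt, inserts $\sum_{k \geq 1}\mathds{1}_{|V_{G_T}| = k} = 1$, uses the almost-sure bound $\sum_A F^l_{G_T}(A) x^{|A|} \in [0,1]$ from \eqref{entre0et10mix} to move $\mathbb{E}_m$ inside the sum over $k$, and regroups the summands by $|A| = j$ to recognize each as $R^{m,k}_T(l,j) x^j$. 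Concatenating yields $\mathbb{E}[(X(T))^l \mid X(0)=x] = h^l_T(x) = \mathbb{E}_m[\sum_A F^l_{G_T}(A) x^{|A|}] = \sum_{k=1}^\infty \sum_{j=1}^k R^{m,k}_T(l,j) x^j$. Since this holds for every admissible $m$ while the two outer expressions do not depend on $m$, the statement is internally consistent.

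For the quenched display I would run the same argument with the quenched objects. Proposition \ref{h(x)ht(x)} gives $\mathbb{E}^{\omega}[(X(\omega,T))^l \mid X(\omega,0)=x] = h^{l,\omega}_{0,T}(x)$. Remark \ref{propagationformulequenched0} asserts that the proof of Theorem \ref{propagationformule} carries over verbatim to an arbitrary deterministic environment, so \eqref{propagationformulequenched} holds with $r = 0$, $t = T$, giving $h^{l,\omega}_{0,T}(x) = \mathbb{E}^{\omega,T}_m[\sum_{A \in \mathcal{P}(V_{G^{\omega,T}_0})} F^l_{G^{\omega,T}_0}(A) x^{|A|}]$. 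Finally \eqref{decompattendueentquenched}, obtained exactly as \eqref{decompattendueent} from the quenched propagation formula and the quenched analogue of \eqref{entre0et10mix}, yields $h^{l,\omega}_{0,T}(x) = \sum_{k=1}^\infty \sum_{j=1}^k R^{m,k,\omega}_{0,T}(l,j) x^j$, with $R^{m,k,\omega}_{0,T}(l,j) = \mathbb{E}^{\omega,T}_m[\mathds{1}_{|V_{G^{\omega,T}_0}| = k} \sum_{A ; |A|=j} F^l_{G^{\omega,T}_0}(A)]$ by definition. Chaining these completes the quenched display. I would also dispose of the degenerate case $T = 0$ directly: then $G_T$ has depth $0$, $F^l_{G_0}(A) = \mathds{1}_{A = \{L_1,\dots,L_l\}}$, and every expression collapses to $x^l$.

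I do not expect a substantive obstacle at this point: Theorem \ref{momdiff} is a bookkeeping corollary of Proposition \ref{h(x)ht(x)}, Theorem \ref{propagationformule}, and the expansions \eqref{decompattendueent}--\eqref{decompattendueentquenched}, which do the real work — the well-definedness of the coefficients $R^{m,k}_T(l,j)$ (Lemma \ref{sgwelldef}), the combinatorial control of $F^l_{G_T}(\cdot)$ and of its partial sums (Lemma \ref{lemmecomb}), and the absolute convergence of the double series are all settled there. The one thing requiring care is indexing discipline in the quenched case: one must cite \eqref{propagationformulequenched} and \eqref{decompattendueentquenched} in their $r = 0$, $t = T$ instance, which is legitimate since Remark \ref{propagationformulequenched0} and the derivation of \eqref{decompattendueentquenched} cover it, even though those formulas are displayed in the text only for $r > 0$.
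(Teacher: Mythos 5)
Your proposal is correct and follows essentially the same route as the paper: the paper itself presents Theorem \ref{momdiff} as the direct concatenation of Proposition \ref{h(x)ht(x)}, Theorem \ref{propagationformule}, Remark \ref{propagationformulequenched0}, and the expansions \eqref{decompattendueent}--\eqref{decompattendueentquenched}, with no additional argument. Your extra care about invoking the quenched formulas in their $r=0$, $t=T$ instance is a reasonable (if minor) point that the paper passes over silently.
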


For deriving expressions for $h(x)$, it will be necessary to understand the behavior of the coefficients $R^{m,k}_t(i,j)$ as $t$ goes to infinity. What makes this difficult is that $F^i_{G_t}(\cdot)$ is not explicit (in fact, it is quite complicated). However, 
we can establish the existence of a renewal structure for the E-ASG and $(F^i_{G_{\beta}}(\cdot))_{\beta \geq 0}$ in Section \ref{behaviour02}, which allows to show that $(R^{\cdot,\cdot}_t(\cdot,\cdot))_{t \geq 0}$ forms a semigroup. 

\begin{prop} [semigroup property] \label{semigroupprop}

For any $m,k\geq 1, i \in \{ 1,...,m \}, j \in \{ 1,...,k \}$, \\ $t \geq 0, r \geq 0$, the series $\sum_{\tilde k = 1}^{\infty} \sum_{\tilde j = 1}^{\tilde k} R^{m,\tilde k}_{t}(i, \tilde j) R^{\tilde k,k}_{r}(\tilde j,j)$ converges and we have 
\begin{eqnarray}
R^{m,k}_{t+r}(i,j) = \sum_{\tilde k = 1}^{\infty} \sum_{\tilde j = 1}^{\tilde k} R^{m,\tilde k}_{t}(i, \tilde j) R^{\tilde k,k}_{r}(\tilde j,j). \label{strongsgprop}
\end{eqnarray}
\end{prop}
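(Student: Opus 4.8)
The plan is to establish the semi-group property by conditioning the E-ASG started at $\beta=0$ with $m$ lines on its state at time $\beta=t$, and then invoking a branching/Markov property of the pair $(L,G)$ together with a matching branching property for the family of functions $F^i_{G_{\cdot}}(\cdot)$. Concretely, I would first run the E-ASG from $\beta=0$ to $\beta=t+r$ with $m$ lines, and split it at time $t$: write $G_{t+r}$ as the graph obtained by gluing, to each line $\ell$ of $G_t$, an independent copy of an E-ASG run over a time-interval of length $r$. The line-counting process being Markov, conditionally on $\{|V_{G_t}|=\tilde k\}$ and on $\mathcal{F}_t$, the continuation $(L(t+\cdot)-L(t), G_{t+\cdot})$ on $[0,r]$ is (by stationarity of the Lévy increments and the time-homogeneity of the dynamics in Definitions \ref{defannealdeasg}) distributed as an E-ASG started with $\tilde k$ ordered lines under $\mathbb{P}_{\tilde k}$. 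The ordering of the $\tilde k$ lines of $G_t$ is exactly the device that makes the continuation well-defined as an element of $\mathbb{G}_{\tilde k}$, so this is where the bookkeeping in Definition \ref{defquenchedeasg} pays off.

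The second ingredient is the recursive structure of $F$. The key observation is that the recursion \eqref{defFjump}--\eqref{defFcoal} is ``local in generations'': if $G \in \mathbb{G}_m$ is obtained by appending to $G' := \pi_n(G)$ (with $n = \mathsf{depth}(G')$) a sub-graph $H \in \mathbb{G}_{|V_{G'}|}$ whose generation-$0$ lines are identified with $V_{G'}$ in their given order, then for any $A \in \mathcal{P}(V_G)$ one has the composition identity
\begin{align}
F^i_G(A) = \sum_{B \in \mathcal{P}(V_{G'})} F^i_{G'}(B)\, F^{B}_{H}(A), \label{composF}
\end{align}
where $F^{B}_H(\cdot)$ denotes the analogue of $F$ built over $H$ but with the ``target set'' $B$ in place of $\{L_1,\dots,L_l\}$ at generation $0$ (i.e.\ $F^{B}_{\text{depth }0}(C) = \mathds{1}_{C=B}$, and the same three recursion rules thereafter). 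Identity \eqref{composF} is proved by induction on $\mathsf{depth}(H)$: for $\mathsf{depth}(H)=0$ it is immediate, and the inductive step is a routine check that each of the three rules \eqref{defFjump}, \eqref{defFcontsel}, \eqref{defFcoal} commutes with the sum over $B$ (the multiplicative weight factors and the indicator in \eqref{defFcontsel} depend only on $A$ and $H$, not on $B$, and the coalescence rule \eqref{defFcoal} is itself a sum over preimages). I would then specialize \eqref{composF} to $A$ with $|A|=j$, $|V_G|=k$, and $|V_{G'}|=\tilde k$, $|B| = \tilde j$, group the sum over $B$ by the value of $\tilde j$, and recognize the inner $B$-sum as producing, after taking expectations, exactly $R^{\tilde k, k}_r(\tilde j, j)$ for the appended piece.

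Combining the two ingredients: from \eqref{defcoeff},
\begin{align*}
R^{m,k}_{t+r}(i,j) &= \mathbb{E}_m\Bigl[ \mathds{1}_{|V_{G_{t+r}}|=k} \sum_{\substack{A \in \mathcal{P}(V_{G_{t+r}})\\ |A|=j}} F^i_{G_{t+r}}(A)\Bigr] \\
&= \mathbb{E}_m\Bigl[ \sum_{\tilde k \geq 1} \mathds{1}_{|V_{G_t}|=\tilde k} \sum_{\tilde j=1}^{\tilde k}\Bigl( \sum_{\substack{B \in \mathcal{P}(V_{G_t})\\ |B|=\tilde j}} F^i_{G_t}(B)\Bigr) \mathbb{E}_m\bigl[\, \cdots \mid \mathcal{F}_t\,\bigr] \Bigr],
\end{align*}
where the conditional expectation of the appended piece, by the Markov/branching property above, equals $R^{\tilde k, k}_r(\tilde j, j)$ (a deterministic number). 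Pulling it out of the inner expectation and using \eqref{defcoeff} again for the piece on $[0,t]$ gives \eqref{strongsgprop}, provided all the interchanges of sum and expectation are legitimate. That legitimacy — i.e.\ absolute convergence of the double series $\sum_{\tilde k}\sum_{\tilde j} R^{m,\tilde k}_t(i,\tilde j) R^{\tilde k, k}_r(\tilde j, j)$ and Fubini's theorem — is the main obstacle, and it is precisely what \eqref{entre0et10mix} together with the combinatorial bounds of Lemmas \ref{lemmecomb} and \ref{sgwelldef} and the uniform-in-time tail bound \eqref{majointemporelle} (via \eqref{majoqueuepiknew}) are designed to furnish: one bounds $\sum_{\tilde j} |R^{m,\tilde k}_t(i,\tilde j)|$ by a constant times $\mathbb{P}_m(|V_{G_t}|=\tilde k)$ up to a factor controlled by $|V_{G_t}|$, and the super-polynomial decay of $\pi$ absorbs the combinatorial growth. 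I expect that once the composition identity \eqref{composF} and the conditional-law statement are in hand, the remaining work is entirely in these summability estimates.
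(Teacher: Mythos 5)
Your two structural ingredients are exactly the ones the paper uses: your composition identity \eqref{composF} is Lemma \ref{markpropf}, equation \eqref{markpropf1} (your $F^B_H$ is the paper's $f_{G_t,G_{t+r}}(B,\cdot)=F^{|B|}_{G_{t+r}\setminus_B G_t}(\cdot)$), and your conditional-law statement identifying the appended piece's conditional expectation with $R^{\tilde k,k}_r(\tilde j,j)$ is \eqref{markpropf2}. Up to that point the argument matches the paper's proof step for step.

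The gap is in the summability step, which you correctly single out as the main obstacle but then resolve incorrectly. You propose to get absolute convergence of the double series by bounding $\sum_{\tilde j}|R^{m,\tilde k}_t(i,\tilde j)|$ via the combinatorial estimates and letting the decay of $\pi$ absorb the growth. But the bound available from \eqref{majonouveausgnew}/\eqref{borneFps} is $|R^{m,\tilde k}_t(i,\tilde j)|\leq \frac{(\tilde j\tilde k)^{\tilde j}}{\tilde j!}\,\mathbb{P}_m(|V_{G_t}|=\tilde k)$, and summing over $\tilde j\leq\tilde k$ produces a factor at least $\frac{\tilde k^{2\tilde k}}{\tilde k!}\sim (e\tilde k)^{\tilde k}$, i.e.\ super-exponential in $\tilde k$; the tail bound \eqref{majoqueuepiknew} gives only $\pi(\tilde k)\leq C_1 e^{-C_2(\log\tilde k)^2}$, which is quasi-polynomial decay and does not absorb $\tilde k^{2\tilde k}$. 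So this route diverges — indeed the paper explicitly warns (after \eqref{interv}) that such brute-force interchanges fail for these series. The correct fix, which is what the paper does, is to avoid absolute convergence of the doubly-indexed family altogether: bound the time-$(t+r)$ integrand $\mathds{1}_{|V_{G_{t+r}}|=k}\sum_{|A|=j}F^i_{G_{t+r}}(A)$ deterministically by the \emph{constant} $(jk)^j/j!$ (only the fixed target indices $j,k$ enter), partition the expectation over the events $\{|V_{G_t}|=\tilde k\}$, $\tilde k\geq 1$, and apply dominated convergence to this grouped decomposition. The inner sum over $\tilde j$ is then a finite sum for each $\tilde k$, and the series in \eqref{strongsgprop} converges as an iterated sum ($\tilde j$ inner, $\tilde k$ outer), which is all the proposition asserts. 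With that replacement your argument goes through.
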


\begin{remark} \label{generalizationofmomdual}
Proposition \ref{semigroupprop} allows to interpret Theorem \ref{momdiff} as a moment duality between \eqref{levymodelsdesimp} and a non-stochastic semigroup. In this sense, it is a generalization of the classical moment duality \eqref{mdsm} between the Wright-Fisher diffusion and the stochastic semigroup of the line counting process of the ASG (the later holds only in the case of one-sided selection). Let us note that $(R^{\cdot,\cdot}_t(\cdot,\cdot))_{t \geq 0}$ is indeed not a stochastic semigroup since coefficients can be negative. 
\end{remark}
The semigroup property (more precisely, the ingredients for its proof) and the study of the behavior of the coefficients $R^{m,k}_t(i,j)$ as $t$ goes to $0$ in Section \ref{behaviour01} allow, in Section \ref{diffeqsyst}, to establish a system of ODEs satisfied by the coefficients $R^{m,k}_t(i,j)$ that is analogous to the Kolmogorov forward equations. Let us first define some notations. 
\begin{defi} \label{defcoefedo}
For $j,k \geq 1$ we set $d_j := \lambda + j(j-1) + j\sigma$, $e_{k,j} := (k+1)k - j(j-1)$, $f_j := (j-1) \sigma$, $f_{k,j} := (k-1-j) \sigma$, and for $i,j \geq 1$ we set 
\[ \tau(i,j):=\left\{\begin{array}{ll}
            0 &\text{if $j < i-1$},\\
            i(i-1) &\text{if $j=i-1$},\\
            \binom{i}{j-i} \int_{(-1,1)} (1+z)^{2i-j} (-z)^{j-i} \nu(dz) &\text{if $i \leq j \leq 2i$},\\
            0 &\textrm{if $j > 2i$}.
            \end{array}\right. \]
\end{defi}
The following result provides the system of ODEs satisfied by the coefficients $R^{m,k}_t(i,j)$. 
\begin{theo} \label{equadiffcoeffnew}
For any $m,k\geq 1, i \in \{ 1,...,m \}, j \in \{ 1,...,k \}$, $t \geq 0$, we have 
\begin{align*}
\frac{d}{dt} R^{m,k}_t(i,j) &= \tau(j+1,j) R^{m,k+1}_t(i,j+1) + e_{k,j} R^{m,k+1}_t(i,j) + \mathds{1}_{k \geq 2} f_j R^{m,k-1}_t(i,j-1) \\
& + \mathds{1}_{j \leq k-1} f_{k,j} R^{m,k-1}_t(i,j) - d_k R^{m,k}_t(i,j) + \mathds{1}_{\{k\text{ is even}\}} \sum_{l =1}^{j \wedge (k/2)} \tau(l,j) R^{m,k/2}_t(i,l). 
\end{align*}
\end{theo}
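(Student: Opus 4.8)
The plan is to derive the differential equation for $R^{m,k}_t(i,j)$ by combining the semi-group property of Proposition \ref{semigroupprop} with the short-time behaviour of the coefficients, i.e. by computing the generator of the semi-group $(R^{\cdot,\cdot}_t(\cdot,\cdot))_{t \geq 0}$. Concretely, for small $r > 0$ I would write, using \eqref{strongsgprop},
\begin{align*}
R^{m,k}_{t+r}(i,j) = \sum_{\tilde k \geq 1} \sum_{\tilde j = 1}^{\tilde k} R^{m,\tilde k}_t(i,\tilde j)\, R^{\tilde k, k}_r(\tilde j, j),
\end{align*}
and then subtract $R^{m,k}_t(i,j)$ and divide by $r$. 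The key input is the expansion of $R^{\tilde k, k}_r(\tilde j, j)$ as $r \to 0$, which I expect to be established in Subsection \ref{behaviour01}: up to $o(r)$, the E-ASG started with $\tilde k$ lines either does nothing (probability $1 - d_{\tilde k} r + o(r)$, contributing $R^{\tilde k, k}_r(\tilde j, j) = \mathds{1}_{\tilde k = k}\mathds{1}_{\tilde j = j}(1 - d_k r) + o(r)$ on the diagonal), or it undergoes exactly one transition in time $r$: a coalescence (rate $\tilde k(\tilde k - 1)$, takes $\tilde k \to \tilde k - 1$), a single branching (rate $\tilde k \sigma$, takes $\tilde k \to \tilde k + 1$), or a multi-branching (rate $\lambda$, takes $\tilde k \to 2\tilde k$). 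For each such one-step transition, the defining recursions \eqref{defFjump}, \eqref{defFcontsel}, \eqref{defFcoal} for $F$ give exactly how a sum $\sum_{|A|=j} F^{\tilde j}_{G}(A)$ is transformed, which is what produces the coefficients $\tau(\cdot,\cdot)$, $e_{k,j}$, $f_j$, $f_{k,j}$.

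The key steps, in order, are as follows. First, I would record the first-order expansion of $R^{\tilde k, k}_r(\tilde j, j)$ in $r$: identify which pairs $(\tilde k, \tilde j)$ can, via a single E-ASG transition of rate $O(1)$, land on $|V_G| = k$ with a subset of size $j$ carrying the relevant $F$-mass, and compute the transition "matrix elements''. For a \emph{coalescing} generation: if before coalescence there are $k+1$ lines and we look at subsets of size $j$ after, by \eqref{defFcoal} the mass $\sum_{|A|=j} F_G(A)$ pulls back to $\sum_{|B|} F_{\pi_n}(B)$ over $B$ with $D(B)$ of size $j$; a uniformly chosen coalescing pair makes this combine subsets of size $j$ and $j+1$ upstairs, with multiplicities giving $e_{k,j} = (k+1)k - j(j-1)$ for the size-$j$ piece and $\tau(j+1,j)$ for the size-$(j+1)$ piece (noting $\tau(j+1,j) = (j+1)j$ from the $j = i-1$ branch of the definition of $\tau$). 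For a \emph{single-branching} generation taking $k-1$ lines to $k$: by \eqref{defFcontsel} only subsets with $N(A) \in \{0,2\}$ survive, and counting which parent subsets of size $j-1$ or $j$ upstairs produce a size-$j$ subset downstairs gives $f_j = (j-1)\sigma$ and $f_{k,j} = (k-1-j)\sigma$. For a \emph{multi-branching} generation (rate $\lambda$, $k' \to 2k'$), one needs $2k' = k$, i.e. $k$ even and $k' = k/2$; by \eqref{defFjump} with weight $z = \Delta L$, integrating over $\nu(dz)$ and summing over the number $l$ of parent lines in the pulled-back set produces exactly $\sum_{l=1}^{j \wedge (k/2)} \tau(l,j) R^{m,k/2}_t(i,l)$, the combinatorial factor $\binom{i}{j-i}(1+z)^{2i-j}(-z)^{j-i}$ in $\tau(i,j)$ matching the $\alpha,\beta,\gamma$ exponents in \eqref{defFjump} after summing over all subsets downstairs with a fixed parent set. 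Second, I would assemble these pieces: the $o(r)$ remainder must be controlled uniformly so that the limit $r \to 0$ of the difference quotient exists; here I would invoke the tail bound \eqref{majoqueuepiknew} on $\pi$ together with \eqref{majointemporelle} and the combinatorial control on $|F^i_{G_t}(A)|$ coming from \eqref{entre0et10mix} (as flagged after Theorem \ref{propagationformule}) to dominate the tail of the sum over $\tilde k$ and justify exchanging the limit with the sum. Third, taking $r \to 0$ yields the right-hand side of the claimed formula, and then one checks that $t \mapsto R^{m,k}_t(i,j)$ is genuinely differentiable (not merely right-differentiable) — this follows from continuity of the right-hand side in $t$, which in turn follows from the semi-group property and a standard argument, or alternatively by running the same computation at $t+r$ and $t-r$.

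The main obstacle I anticipate is \emph{not} the combinatorial bookkeeping of the one-step transitions — that is routine, if tedious, from the recursions \eqref{defFjump}--\eqref{defFcoal} — but rather the \emph{uniform control of the $o(r)$ term across all $\tilde k$}, i.e. making the formal passage in \eqref{interv}-style interchanges rigorous. The coefficients $F^i_{G_t}(A)$ are not explicit, can be negative, and can exceed $1$ in absolute value, so the series $\sum_{\tilde k} \sum_{\tilde j} R^{m,\tilde k}_t(i,\tilde j) R^{\tilde k,k}_r(\tilde j,j)$ converges only because of subtle cancellations plus the a priori bounds. One must show that the contribution to $R^{m,k}_{t+r}(i,j) - R^{m,k}_t(i,j)$ from configurations undergoing \emph{two or more} transitions in $[t, t+r]$ is $o(r)$ uniformly — this requires bounding $\sum_{\tilde k} (\text{something summable in }\tilde k) \cdot (\text{rate of leaving state }\tilde k) \cdot r^2$, where the "something summable'' packages the $F$-mass bound $\sum_{A}|F^i_{G}(A)| \le C(\text{depth or }|V_G|)$ from Lemma \ref{lemmecomb}/\ref{sgwelldef} against the stationary tail $\pi$. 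Provided those lemmas give a bound on $\sum_A |F^i_{G_t}(A)|$ that grows slowly enough in $|V_{G_t}|$ to be beaten by \eqref{majoqueuepiknew}, the interchange and the $r \to 0$ limit go through; verifying this compatibility is the crux of the argument.
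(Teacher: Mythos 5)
There is a genuine gap at exactly the point you flag as the crux. You propose to differentiate the semi-group identity \eqref{strongsgprop} directly and to control the tail of the double sum over $(\tilde k,\tilde j)$ by playing the bound on the $F$-mass against the tail estimate \eqref{majoqueuepiknew} on $\pi$. That compatibility does not hold. The available bounds are $|R^{m,\tilde k}_t(i,\tilde j)|\leq \frac{(\tilde j\tilde k)^{\tilde j}}{\tilde j!}\,\mathbb{P}_m(|V_{G_t}|=\tilde k)$ (from \eqref{majonouveausgnew}, i.e.\ from \eqref{borneFps}) together with $\mathbb{P}_m(|V_{G_t}|=\tilde k)\leq\pi(\tilde k)/\pi(m)$ and the quadratic-in-$r$ bounds of Lemma \ref{asymptcoeffnew}, whose constants grow polynomially in $\tilde k$. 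Summing $\frac{(\tilde j\tilde k)^{\tilde j}}{\tilde j!}$ over $\tilde j=1,\dots,\tilde k$ produces a term of order $\tilde k^{\tilde k}e^{\tilde k}$, whereas $\pi(\tilde k)$ decays only like $e^{-C_2(\log\tilde k)^2}$; the series diverges, so the interchange of limit and sum cannot be justified this way. The paper states this obstruction explicitly at the opening of Subsection \ref{diffeqsyst}: the semi-group property \emph{cannot} be used to differentiate $R^{m,k}_t(i,j)$.

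The missing idea is to organize the difference quotient by the \emph{size of the subset at time $t$} rather than by the number of lines, and to truncate that size at $j+1$ pathwise. Concretely, the paper applies the branching identity \eqref{markpropf1} inside the expectation, splits the resulting sum over $B\in\mathcal{P}(V_{G_t})$ into the finitely many classes $|B|=l$ for $l\leq j+1$ (terms $M(l,t,t+h)$) and a remainder $L(t,t+h)$ over $|B|\geq j+2$. Lemma \ref{atleast2coal} — the observation that $f_{G_t,G_{t+h}}(B,A)=0$ when $|B|\geq|A|+2$ unless at least two coalescences occur in $[t,t+h]$ — shows $L(t,t+h)$ is supported on an event of probability $O(h^2)$, so a crude deterministic bound suffices there. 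For each fixed $l\leq j+1$, conditioning via \eqref{markpropf2} yields \eqref{reldesgnew}, a sum over the number of lines $r$ only, in which the off-diagonal tail is bounded by $h\cdot C\cdot\mathbb{E}_m[|V_{G_t}|^{l+4}]$ — a \emph{fixed} polynomial moment, finite by \eqref{majoqueuepiknew}, precisely because $l$ is no longer summed up to $r$. Your identification of the one-step transition coefficients (the combinatorial origin of $\tau$, $e_{k,j}$, $f_j$, $f_{k,j}$, matching Lemma \ref{asymptcoeffnew}) is correct; but without the size-truncation device your scheme does not close.
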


The next step is to show the convergence of the coefficients $R^{m,k}_t(i,j)$ as $t$ goes to infinity. This is done in Section \ref{behaviourinfty} via the renewal structure of $(F^i_{G_{\beta}}(\cdot))_{\beta \geq 0}$ and coupling arguments. 
Combining this convergence with Theorem \ref{equadiffcoeffnew}, 
we deduce linear relations satisfied by the limit coefficients. These results are gathered in the following theorem. 
\begin{theo} \label{cvcoeff}
For $m,k\geq 1, i \in \{ 1,...,m \}, j \in \{ 1,...,k \}$, 
\begin{eqnarray}
a^k_j := \underset{t \rightarrow \infty}{\lim} R^{m,k}_t(i,j) \label{cvcoeff1new}
\end{eqnarray}
exists and does not depend on $m,i$. Convergence is exponentially fast in $t$ and uniform in $m,i$. 
Moreover, $a^1_1=\pi(1)$ (where $\pi$ is defined in Section \ref{enlargedasglcp}) and for all $k \geq 1$ and $j \in \{ 1,...,k\}$, 
\begin{align}
\tau(j+1,j) a^{k+1}_{j+1} + e_{k,j} a^{k+1}_{j} & = d_k a^{k}_{j} - \mathds{1}_{k \geq 2} f_j a^{k-1}_{j-1} - \mathds{1}_{j \leq k-1} f_{k,j} a^{k-1}_j - \mathds{1}_{\{k\text{ is even}\}} \sum_{l =1}^{j \wedge (k/2)} \tau(l,j) a^{k/2}_{l}. \label{relreccoefnew} 
\end{align}
\end{theo}

\begin{remark} \label{extrarelation}
Let $k \geq 0$. Via \eqref{defcoeff} and \eqref{sommetotale=1}, we obtain $\sum_{l=1}^{k+1} R^{1,k+1}_t(1,l) = \mathbb{P}_{1} \left ( |V_{G_{t}}| = k+1 \right )$, which, as $t$ goes to infinity, yields the relation $\sum_{l =1}^{k+1} a^{k+1}_{l} = \pi(k+1)$. However, for $k \geq 1$, the latter turns out to be a linear combination of the $k$ equations given by \eqref{relreccoefnew} and therefore does not provide additional information. 
\end{remark}

\begin{remark}
Relations \eqref{relreccoefnew} can be stated in matrix form: 
for each $k \geq 1$, we have $$A_{k} . (a_1^{k+1}, a_2^{k+1}, \cdots, a_{k+1}^{k+1})^T = v_{k},$$
where $A_{k}$ is a matrix of size $k \times(k+1)$ and $v_{k}$ is a vector of size $k$ that depends on the coefficients $(a^l_j)_{1 \leq l \leq k, 1 \leq j \leq l}$. 
\end{remark}

Finally, we are ready to state the announced series representation for $h(x)$. 
\begin{theo} \label{finalformula}
Define the sequence of polynomials $(P_k(x))_{k \geq 1}$ via $P_k(x):= \sum_{j=1}^k a^k_j x^{j}$ (note that $P_k(0)=0$). 
$\mathbb{P}$-almost surely, $\lim_{t \rightarrow \infty} X(t)$ exists and belongs to $\{0,1\}$. Moreover, 
\begin{eqnarray}
h(x) = \sum_{k=1}^{\infty} P_k(x), \label{mainformula}
\end{eqnarray}
where the series is normally convergent on $[0,1]$. Moreover, for any $m \geq 2$ and $x \in [0,1]$, 
\begin{align}
\left | h(x) - \sum_{k=1}^{m-1} P_k(x) \right | \leq \sum_{j \geq m} \pi(j) \leq C_1 e^{-C_2 (\log(m))^2}, \label{boundfiniteapprox}
\end{align}
where $C_1 = C_1(\sigma,\lambda)$ and $C_2 = C_2(\sigma,\lambda)$ are the explicit positive constants from Proposition \ref{recpilambda}. 
\end{theo}

%


Even if it does not seem possible to have a power series decomposition for $h(x)$ in the full generality of \eqref{levymodelsdesimp}, we can establish Taylor expansions of every order for $h(x)$ near $x=0$. Recall the coefficient $Q_t(i,j)$ defined in \eqref{coeff1indice}. Provided that it is well-defined, we see from \eqref{propagationformule1mix} that for any $n \geq 1$, 
\begin{align*}
h^i_t(x) = \sum_{j=1}^n Q_t(i,j) x^{j} + \mathbb{E}_i \left [ \sum_{j > n} \sum_{A \in \mathcal{P}(V_{G_t}) ; |A| = j} F^i_{G_t}(A) x^{j} \right ]. 
\end{align*}
Thus, to obtain a Taylor expansion for $h(x)$, we need to consider the long-time behavior of the coefficients $Q_t(i,j)$ and of the remainder term. 
Similarly to the coefficients $R^{m,k}_t(i,j)$, the coefficients $Q_t(i,j)$ satisfy a system of ODEs analogous to the Kolmogorov forward equations:
\begin{theo} \label{equadiffcoeff1indice}

For any $i, j \geq 1$, $t \geq 0$, 
we have 
\[ \frac{d}{dt} Q_t(i,j) = - d_j Q_t(i,j) +f_j Q_t(i,j-1) + \sum_{l =1}^{j+1} \tau(l,j) Q_t(i,l). \]
\end{theo}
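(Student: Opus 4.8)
The plan is to derive the differential equation for $Q_t(i,j)$ by the same method that produced Theorem \ref{equadiffcoeffnew}, i.e. by decomposing the evolution of the E-ASG over an infinitesimal time interval $[t,t+dt]$ into the possible transitions (a coalescence, a single branching, or a multi-branching coming from a jump of the environment). Since $Q_t(i,j) = \sum_{k \geq j} R^{i,k}_t(i,j)$ (summing the defining expression \eqref{defcoeff} over all possible values $k$ of $|V_{G_t}|$, which is licit by the $L^1$-control coming from \eqref{entre0et10mix} and Lemma \ref{sgwelldef}), one natural route is simply to sum the identity of Theorem \ref{equadiffcoeffnew} over $k \geq j$ with $m=i$ and collect terms; I expect this to be the cleanest derivation, provided the termwise summation and differentiation are justified.

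Concretely, first I would fix $m=i$ in Theorem \ref{equadiffcoeffnew} and write, for each $k \geq j$,
\[
\frac{d}{dt} R^{i,k}_t(i,j) = \tau(j+1,j) R^{i,k+1}_t(i,j+1) + e_{k,j} R^{i,k+1}_t(i,j) + \mathds{1}_{k \geq 2} f_j R^{i,k-1}_t(i,j-1) + \mathds{1}_{j \leq k-1} f_{k,j} R^{i,k-1}_t(i,j) - d_k R^{i,k}_t(i,j) + \mathds{1}_{k \ \text{even}} \sum_{l=1}^{j \wedge (k/2)} \tau(l,j) R^{i,k/2}_t(i,l).
\]
Then I would sum over $k \geq j$. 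The two ``$R^{i,k+1}$'' terms reindex to sums over $k \geq j+1$ resp. $k \geq j$; combined with the $-d_k R^{i,k}_t$ term, the $k$-dependent coefficients $e_{k,j}$, $f_{k,j}$ and the $\lambda + k(k-1) + k\sigma$ pieces of $d_k$ must telescope/cancel so that only the $\beta$-independent part $d_j = \lambda + j(j-1) + j\sigma$ survives against $Q_t(i,j)$, the $f_j$-term survives against $Q_t(i,j-1)$, and the multi-branching term reorganizes into $\sum_{l=1}^{j+1} \tau(l,j) Q_t(i,l)$. The bookkeeping for the multi-branching term is the point to be careful with: summing $\mathds{1}_{k \text{ even}} \sum_{l=1}^{j\wedge (k/2)} \tau(l,j) R^{i,k/2}_t(i,l)$ over all even $k \geq j$ amounts, after the substitution $k' = k/2$, to $\sum_{k' \geq \lceil j/2 \rceil} \sum_{l=1}^{j \wedge k'} \tau(l,j) R^{i,k'}_t(i,l)$; using that $\tau(l,j)=0$ unless $l \geq j/2$ (indeed $\tau(l,j)\neq 0$ forces $j \leq 2l$), the constraint $l \leq k'$ becomes automatic for the relevant $k'$, and one is left with $\sum_{l} \tau(l,j) \sum_{k' \geq l} R^{i,k'}_t(i,l) = \sum_{l} \tau(l,j) Q_t(i,l)$, the range of $l$ being $1 \le l \le j+1$ since $\tau(l,j)=0$ for $l > j+1$ (as $\tau(l,j)\neq0$ needs $j \ge l-1$).

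The main obstacle I anticipate is not the algebra but the analytic justification: one must check that $\sum_{k \geq j} \frac{d}{dt} R^{i,k}_t(i,j)$ converges uniformly (locally in $t$) so that it equals $\frac{d}{dt} Q_t(i,j)$, and that all the rearranged series converge absolutely. For this I would invoke the uniform-in-$k$ bounds on $|F^i_{G_t}(A)|$ and on $\sum_{|A|=j} |F^i_{G_t}(A)|$ coming from Lemmas \ref{lemmecomb} and \ref{sgwelldef} together with the tail estimate \eqref{majointemporelle}–\eqref{majoqueuepiknew} on $\mathbb{P}_i(|V_{G_t}|=k)$, which make $\sum_k |R^{i,k}_t(i,j)|$ and the analogous sums for the right-hand side dominated by a convergent series uniformly on compact $t$-intervals; a short dominated-convergence argument then licenses the interchange. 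Alternatively, if one prefers to avoid relying on Theorem \ref{equadiffcoeffnew}, the same result can be obtained directly by the infinitesimal-generator computation on $(G_\beta)_{\beta \geq 0}$: over $[t,t+dt]$ a pair of lines coalesces at rate $|V_{G_t}|(|V_{G_t}|-1)$, a single branching occurs at rate $|V_{G_t}|\sigma$, and a jump of size $z$ occurs at rate $\nu(dz)$; applying the recursive definitions \eqref{defFjump}, \eqref{defFcontsel}, \eqref{defFcoal} of $F^i_G$ at each such transition and taking $\mathbb{E}_i$ reproduces the three terms on the right-hand side, the coalescence and single-branching contributions assembling into $-d_j Q_t(i,j) + f_j Q_t(i,j-1)$ and the jump contribution into $\sum_{l=1}^{j+1}\tau(l,j) Q_t(i,l)$ via the binomial expansion defining $\tau$. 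Either way, once the interchanges are justified the identity follows, and I would close by remarking that well-definedness of $Q_t(i,j)$ is exactly Lemma \ref{sgwelldef}.
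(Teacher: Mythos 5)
Your proposal follows essentially the same route as the paper: the paper also writes $Q_t(i,j)=\sum_{k\geq 1}R^{i,k}_t(i,j)$ (its Lemma \ref{reconstitutioncoeff}), sums the identity of Theorem \ref{equadiffcoeffnew} with $m=i$ over $k$, checks that the boundary terms such as $\bigl((n+1)n-j(j-1)\bigr)R^{i,n+1}_t(i,j)$ vanish using \eqref{majonouveausgnew}, \eqref{majointemporelle} and \eqref{majoqueuepiknew}, and concludes by uniform convergence of the differentiated series (its Lemma \ref{reconstitutiondercoeff}). The algebraic bookkeeping and the analytic justifications you outline match the paper's; the only cosmetic difference is that the $l=j+1$ term in $\sum_{l=1}^{j+1}\tau(l,j)Q_t(i,l)$ comes from reindexing the coalescence term $\tau(j+1,j)R^{i,k+1}_t(i,j+1)$ rather than from the multi-branching sum, which you in fact account for earlier in your sketch.
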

The coefficients $Q_t(i,j)$ also converge as $t$ goes to infinity and their limits satisfy the following linear relations: 
\begin{theo} \label{cvcoeff1indice}

For $i, j \geq 1$, 
\begin{eqnarray}
b_j := \underset{t \rightarrow \infty}{\lim} Q_t(i,j) \label{cvcoeff1}
\end{eqnarray}
exists and does not depend on $i$. Moreover, $b_j = \sum_{k=1}^{\infty}a^k_j$. For all $j \geq 1$, we have 
\begin{eqnarray}
- d_j b_j + f_j b_{j-1} + \sum_{l=1}^{j+1} \tau(l,j) b_l = 0. \label{relreccoef}
\end{eqnarray}

\end{theo}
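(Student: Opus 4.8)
The plan is to reduce everything to the already-analysed coefficients $R^{i,k}_t(i,j)$ of \eqref{defcoeff}: first express $Q_t(i,j)$ in terms of them, then deduce convergence from Theorem~\ref{cvcoeff}, and finally obtain the linear relation \eqref{relreccoef} by letting $t\to\infty$ in the differential system of Theorem~\ref{equadiffcoeff1indice}. The first step is the elementary identity
\[ Q_t(i,j) \;=\; \sum_{k \geq j} R^{i,k}_t(i,j), \qquad t \geq 0 . \]
Indeed, for any fixed realisation of $(L(\beta),G_\beta)_{\beta\in[0,t]}$ one has $\sum_{A\in\mathcal{P}(V_{G_t}),\,|A|=j}F^i_{G_t}(A)=\sum_{k\geq j}\mathds{1}_{|V_{G_t}|=k}\sum_{A\in\mathcal{P}(V_{G_t}),\,|A|=j}F^i_{G_t}(A)$, since only the term $k=|V_{G_t}|$ is nonzero and $k\geq j$ is forced by the existence of a subset of size $j$. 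Starting from the definition \eqref{coeff1indice} of $Q_t(i,j)$ and interchanging the expectation $\mathbb{E}_i$ with the sum over $k$ — which is licit because $\sum_{A,\,|A|=j}|F^i_{G_t}(A)|$ is $\mathbb{P}_i$-integrable by Lemma~\ref{sgwelldef} — yields the identity. Note moreover that $R^{i,k}_t(i,j)=0$ whenever $j>k$, so the sum over $k\geq j$ coincides with $\sum_{k=1}^\infty$ under the convention $a^k_j=0$ for $j>k$.

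Next I would establish a domination of $R^{i,k}_t(i,j)$ that is uniform in $t$. Estimating $|R^{i,k}_t(i,j)|\leq \mathbb{E}_i\big[\mathds{1}_{|V_{G_t}|=k}\sum_{A,\,|A|=j}|F^i_{G_t}(A)|\big]$, the combinatorial control of the (non-explicit) coefficients $F^i_{G_t}(A)$ supplied by Lemma~\ref{lemmecomb}, combined with the time-uniform bound $\mathbb{P}_i(|V_{G_t}|=k)\leq\pi(k)/\pi(i)$ from \eqref{majointemporelle}, produces a sequence $(g_j(k))_{k\geq j}$ with $\sup_{t\geq 0}|R^{i,k}_t(i,j)|\leq g_j(k)$ and $\sum_{k\geq j}g_j(k)<\infty$, the summability resting on the super-exponential tail bound \eqref{majoqueuepiknew} for $\pi$. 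Since Theorem~\ref{cvcoeff} gives $R^{i,k}_t(i,j)\to a^k_j$ as $t\to\infty$ for every $k$, with $a^k_j$ independent of $i$, dominated convergence for series applied to the identity above shows that $b_j:=\lim_{t\to\infty}Q_t(i,j)$ exists, does not depend on $i$, and equals $\sum_{k\geq j}a^k_j=\sum_{k=1}^\infty a^k_j$.

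For the linear relation I would pass to the limit $t\to\infty$ in Theorem~\ref{equadiffcoeff1indice}. Its right-hand side $-d_j Q_t(i,j)+f_j Q_t(i,j-1)+\sum_{l=1}^{j+1}\tau(l,j)Q_t(i,l)$ is a finite linear combination of quantities each converging (by the previous step) to $b_j$, $b_{j-1}$ and $b_l$, hence converges to $\ell_j:=-d_jb_j+f_jb_{j-1}+\sum_{l=1}^{j+1}\tau(l,j)b_l$. Thus $t\mapsto Q_t(i,j)$ is differentiable, converges to $b_j$, and has derivative converging to $\ell_j$; but a differentiable function possessing a finite limit at $+\infty$ and whose derivative converges must have derivative tending to $0$, as seen from the mean value theorem: $Q_{t+1}(i,j)-Q_t(i,j)$ equals the derivative of $s\mapsto Q_s(i,j)$ at some $\xi_t\in(t,t+1)$, which tends to $\ell_j$ as $t\to\infty$ while the left-hand side tends to $0$. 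Therefore $\ell_j=0$, which is precisely \eqref{relreccoef}.

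The step I expect to be the main obstacle is the uniform-in-$t$ domination of $R^{i,k}_t(i,j)$. This is precisely the place where the combinatorial control of Lemma~\ref{lemmecomb} over the non-explicit coefficients $F^i_{G_t}(A)$, derived from \eqref{entre0et10mix}, must be strong enough — in particular grow slowly enough in the number of lines $k$ — to be absorbed by the \emph{only} super-exponential (rather than exponential) decay of the stationary tail $\pi$ in \eqref{majoqueuepiknew}; without this interplay one could not exchange $\lim_{t\to\infty}$ with $\sum_k$, and the identity $b_j=\sum_k a^k_j$ itself would be unavailable. As an alternative to the last step, one could instead derive \eqref{relreccoef} by summing the relations \eqref{relreccoefnew} over $k\geq j$ and telescoping, using \eqref{2emerelreccoefnew} for the coalescence contribution; but that requires manipulating infinite sums with polynomially growing coefficients and is less transparent than the limiting argument in the differential system.
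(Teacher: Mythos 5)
Your proposal is correct and follows essentially the same route as the paper: the identity $Q_t(i,j)=\sum_{k}R^{i,k}_t(i,j)$ with a $t$-uniform summable domination (coming from \eqref{majonouveausgnew}, \eqref{majointemporelle} and \eqref{majoqueuepiknew}) is exactly the content of Lemma \ref{reconstitutioncoeff}, from which the paper likewise deduces $b_j=\sum_{k\geq 1}a^k_j$ and the independence of $i$ via Theorem \ref{cvcoeff}. The derivation of \eqref{relreccoef} by letting $t\to\infty$ in Theorem \ref{equadiffcoeff1indice} and observing that a convergent function with convergent derivative must have derivative tending to $0$ is also the paper's (tersely stated) argument, which you merely make explicit.
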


Our result on the Taylor expansion of $h(x)$ then reads as follows. 

\begin{theo} \label{taylorexp}
For any $n \geq 1$, we have $h(x)=\sum_{k=1}^{n} b_k x^k+\underset{x \rightarrow 0}{o}(x^n)$. 
\end{theo}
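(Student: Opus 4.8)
The plan is to start from the identity $h^i_t(x) = \sum_{j=1}^n Q_t(i,j) x^j + \mathbb{E}_i [ \sum_{j > n} \sum_{A \in \mathcal{P}(V_{G_t}) ; |A| = j} F^i_{G_t}(A) x^j ]$, which was already derived before the statement from \eqref{usefulfordl}, and then to let $t \to \infty$ followed by $x \to 0$. The first step is to recall (from Proposition \ref{h(x)ht(x)} and the limiting argument for the diffusion, as in Theorem \ref{finalformula}) that $h^1_t(x) \to h(x)$ as $t \to \infty$ for each fixed $x$, and more generally that $h^i_t(x)$ converges; since we only need the case $i=1$ (recall $h(x)$ corresponds to $l=1$ in Proposition \ref{h(x)ht(x)}, and by Theorem \ref{cvcoeff1indice} the limit $b_j$ does not depend on $i$, so any $i \geq 1$ works), fix $i=1$. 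By Theorem \ref{cvcoeff1indice} we have $Q_t(1,j) \to b_j$ for each $j$, so the finite sum $\sum_{j=1}^n Q_t(1,j) x^j \to \sum_{j=1}^n b_j x^j$ as $t \to \infty$. Hence $h(x) = \sum_{j=1}^n b_j x^j + R_n(x)$ where $R_n(x) := \lim_{t \to \infty} \mathbb{E}_1 [ \sum_{j > n} \sum_{A \in \mathcal{P}(V_{G_t}) ; |A| = j} F^1_{G_t}(A) x^j ]$, provided this limit exists; its existence follows because the other two terms in the identity have limits.

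The core of the argument is then to show $R_n(x) = o(x^n)$ as $x \to 0$. The idea is to factor out $x^{n+1}$: for $x \in [0,1)$ and $j \geq n+1$ we have $x^j \leq x^{n+1}$, so if we can bound $\mathbb{E}_1 [ \sum_{j > n} \sum_{A : |A|=j} |F^1_{G_t}(A)| ]$ uniformly in $t$ by a finite constant $C(n)$, then $|R_n(x)| \leq C(n) x^{n+1} = o(x^n)$. To control $\sum_{j > n} \sum_{A : |A|=j} |F^1_{G_t}(A)|$, I would invoke the combinatorial bound on the absolute values of the coefficients $F^l_G(A)$ and their sums that is announced right after Theorem \ref{propagationformule} (Lemmas \ref{lemmecomb} and \ref{sgwelldef}); concretely, these give that $\sum_{A \in \mathcal{P}(V_{G_t})} |F^1_{G_t}(A)|$ is controlled in terms of $|V_{G_t}|$ (say by some deterministic function $\Phi(|V_{G_t}|)$), and that only sets $A$ with $|A| \leq |V_{G_t}|$ contribute. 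Combining with the tail bound \eqref{majointemporelle}, $\mathbb{P}_1(|V_{G_t}| = k) \leq \pi(k)/\pi(1)$, and the super-exponential decay \eqref{majoqueuepiknew} of $\sum_{j \geq k}\pi(j)$, the quantity $\mathbb{E}_1 [ \sum_{j > n} \sum_{A : |A|=j} |F^1_{G_t}(A)| ]$ is bounded by $\sum_{k > n} \Phi(k)\, \pi(k)/\pi(1)$, which is finite and independent of $t$. This yields the desired uniform bound $C(n)$.

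The main obstacle is obtaining the right uniform-in-$t$ control on $\sum_{A : |A|=j} |F^1_{G_t}(A)|$ with a bound whose dependence on $k = |V_{G_t}|$ grows slowly enough (e.g.\ at most exponentially, or polynomially) to be summable against the super-exponentially small tail of $\pi$. Since the weights $S_n$ lie in $[-1,1)$, each factor in the recursive definition \eqref{defFjump}--\eqref{defFcoal} of $F^l_G$ has absolute value at most $1$ except that the coalescing step \eqref{defFcoal} sums several terms, and the number of sets $A$ of a given size can be large; so the naive bound is a factorial or exponential in the depth, not in $k$. The resolution — which is exactly what Lemmas \ref{lemmecomb} and \ref{sgwelldef} are designed to provide, using crucially the a priori bound \eqref{entre0et10mix} that $\sum_A F^1_{G_t}(A) x^{|A|} \in [0,1]$ to cancel the contributions of opposite sign — gives a bound depending only on $k$ and not on the depth. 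Granting those lemmas, the rest is the routine interchange-and-estimate argument sketched above, and one concludes $h(x) = \sum_{k=1}^n b_k x^k + o(x^n)$, which is the claim.
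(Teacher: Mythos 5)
Your setup is fine: the identity $h^1_t(x)=\sum_{j=1}^{n}Q_t(1,j)x^j+\mathbb{E}_1[\sum_{j>n}\sum_{|A|=j}F^1_{G_t}(A)x^j]$ is exactly the paper's starting point, and passing to the limit $t\to\infty$ in the finite part via Theorem \ref{cvcoeff1indice} and Proposition \ref{limht} is correct. The gap is in your treatment of the remainder. You propose to bound it by $x^{n+1}\sup_{t}\mathbb{E}_1[\sum_{j>n}\sum_{|A|=j}|F^1_{G_t}(A)|]$ and assert that Lemmas \ref{lemmecomb} and \ref{sgwelldef} make this supremum finite. They do not. Those lemmas give, for each \emph{fixed} $j$, the bound $\sum_{|A|=j}|F^1_{G_t}(A)|\le \frac{j^j}{j!}|V_{G_t}|^{j}$; summing this over $n<j\le |V_{G_t}|$ produces a quantity of order $(e|V_{G_t}|)^{|V_{G_t}|}$, i.e.\ super-exponential in $k=|V_{G_t}|$, while the tail of $\pi$ decays only like $e^{-C_2(\log k)^2}$ by Proposition \ref{recpilambda}. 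Since $e^{k\log k-C_2(\log k)^2}\to\infty$, the resulting series over $k$ diverges and no uniform constant $C(n)$ is obtained. This is not a technicality: the paper explicitly warns that the power series $\sum_j b_j y^j$ may have \emph{zero} radius of convergence, which is incompatible with the kind of absolute, $j$-summable control your argument requires.

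The paper's actual proof circumvents this by never summing absolute values over $j$. It cuts at $2n$ rather than $n$, keeps the remainder in the form $S_T(n,x)=\mathbb{E}_1[\sum_{j>2n}\sum_{|A|=j}F^1_{G_T}(A)\mathds{1}_{E(T,T,A,x)}]$ (indicators, not $x^{|A|}$), and observes that this sum vanishes unless more than $2n$ of the lines in $V_{G_T}$ receive type $0$ --- an event whose probability it controls by comparing $|V_{G_T}|$ with a sum of $2n+1$ iid geometric$(x)$ variables. On that rare event the \emph{full signed} sum over all $A$ is bounded by $1$ thanks to \eqref{entre0et10mix}, and only the finitely many terms with $|A|\le 2n$ need the crude bound \eqref{borneFps}; this yields $|S_T(n,x)|\le x^{n+1/4}$ uniformly in $T$ (Proposition \ref{taylorexp4}), after which the terms $b_jx^j$ for $n<j\le 2n$ are absorbed into $o(x^n)$. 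You would need to supply an argument of this probabilistic type (exploiting the cancellations certified by \eqref{entre0et10mix}) for your proof to close; the termwise absolute-value route you describe does not.
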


\begin{remark} \label{calcexprbk}
Relation~\eqref{relreccoef} allows to compute recursively explicit expressions for $b_k/b_1$. In particular, we get $b_2 = (\sigma-\int_{(-1,1)} z \nu(dz))b_1/2$ and 
\[ b_3 = \left (2\sigma-2\int_{(-1,1)} z \nu(dz)-\int_{(-1,1)} z^2 \nu(dz) \right ) \left (\sigma-\int_{(-1,1)} z \nu(dz) \right )b_1/12. \]
\end{remark}

The series representation (in Theorem \ref{finalformula}) and the estimates (in Theorem \ref{taylorexp}) for the fixation probability of a Wright-Fisher diffusion in L\'evy environment with jumps of both signs (a case where the classical moment duality does not hold) are, to the best of our knowledge, new. Novelty also lies in the strategy of encoding the combinatorics of the ASG into a function, leading to an extension of the classical moment duality \eqref{mdsm} to a moment duality between \eqref{levymodelsdesimp} and a non-stochastic semigroup (see Remark \ref{generalizationofmomdual}). Moreover, we establish analytical analogues to the usual probabilistic properties of the dual (Proposition \ref{semigroupprop} and Theorems \ref{equadiffcoeffnew}, \ref{cvcoeff}, \ref{equadiffcoeff1indice}, \ref{cvcoeff1indice}), even though that object is quite complex and not very explicit in the present case. 


The content of the remaining sections has been described in Section \ref{orgpap1}. Now that our main results have been stated we can be more specific about Sections \ref{combinatorialfuct}, \ref{acotcfnew}, \ref{dlh(x)} and Appendix~\ref{append}. In Section \ref{combinatorialfuct} we prove Theorem \ref{propagationformule} and some estimates about the function $F^l_G(\cdot)$. In Section \ref{acotcfnew} we show that the coefficients $R^{m,k}_t(i,j)$ are well-defined, make appear a renewal structure in the E-ASG and in $(F^i_{G_{\beta}}(\cdot))_{\beta \geq 0}$, prove Proposition \ref{semigroupprop}, study the small-time behavior of $R^{m,k}_t(i,j)$ and prove Theorems \ref{equadiffcoeffnew} and \ref{cvcoeff} (for the sake of brevity, we prove Theorem \ref{equadiffcoeffnew} only in the subcase $\sigma=0$). Finally, we combine these results to prove Theorem~\ref{finalformula}. In Section \ref{dlh(x)} we prove Theorems \ref{equadiffcoeff1indice}--\ref{taylorexp}. Appendix~\ref{append} contains some technical proofs, including the proof of Proposition \ref{recpilambda}, and some technical lemmas. The section dependency is as follows. The definitions from Sections \ref{firstteps} and \ref{toolsmethres}, as well as Lemmas \ref{backwardtypedistribcoincide}, \ref{lemmemajointemporelle} and Proposition \ref{recpilambda}, are used all along the paper. Sections \ref{combinatorialfuct}, \ref{acotcfnew}, \ref{dlh(x)}, in this order, are built on each others and contain the proofs of the main results stated in Section \ref{mainresults}. Section \ref{exappgen} relies on the results proved in the other sections. Appendix \ref{A2} (resp. \ref{A3}, \ref{A1}, \ref{A4}) can be considered as being part of Section \ref{enlargedasglcp} (resp. \ref{behaviour01}, \ref{relwf-asgbis}, \ref{dlh(x)subsec1}). 

\section{Examples, applications, and generalizations} \label{exappgen}

\subsection{Examples and applications} \label{exapp}

\subsubsection{Case without random environment} \label{casenoenv}

As to illustrate the basic ideas of the method outlined in Section \ref{mainresults}, we here apply it in the classical case, where it simplifies, of no random environment, i.e. $L(t) := - \sigma t$ in~\eqref{levymodelsdesimp}. 
By definition of $F^l_{G}(A)$ in Section \ref{enlargedasgencodingfct}, we have in this case $F^i_{G_t}(A)\geq 0$ for any $t$. This and Theorem \ref{propagationformule} in particular imply that the heuristic reasoning just after Remark \ref{propagationformulequenched0} is now valid. Thus, $h^i_t(x) = \sum_{k\geq 1} Q_t(i,k) x^{k}$. Since the integrand in \eqref{coeff1indice} is null on $\{|V_{G_{t}}|<k\}$, using \eqref{sommetotale=1}, the non-negativity of coefficients $F^i_{G_t}(A)$, Lemma \ref{lemmemajointemporelle}, and Proposition \ref{recpilambda} we get $|Q_t(i,k)| \leq \mathbb{P}_i \left ( |V_{G_t}| \geq k \right ) \leq \sum_{j \geq k} \pi(j)/\pi(i) \leq C_1 e^{-C_2 (\log(k))^2}/\pi(i)$. Combining with \eqref{cvcoeff1} from Theorem \ref{cvcoeff1indice} and the convergence of $h^i_t(x)$ to $h(x)$ (from Proposition \ref{limht}), and applying dominated convergence, we get $h(x)=\sum_{k\geq 1} b_k x^k$, which improves Theorem \ref{taylorexp} in this case. Since in the present case the coefficients $a^k_j$ are non-negative, $h(x)=\sum_{k\geq 1} b_k x^k$ alternatively follows directly from \eqref{mainformula} of Theorem \ref{finalformula}, together with $b_j = \sum_{k=1}^{\infty}a^k_j$ (see Theorem \ref{cvcoeff1indice}). Then, \eqref{relreccoef} of Theorem \ref{cvcoeff1indice} (which is a consequence of Theorem \ref{equadiffcoeff1indice}) yields 
\begin{align}
b_2 = \frac{\sigma}{2} b_1 \ \text{and} \ \forall k \geq 2, b_{k+1} = \left ( \frac{k-1}{k+1} + \frac{\sigma}{k+1} \right ) b_k - \frac{(k-1) \sigma}{k(k+1)} b_{k-1}. \label{relrecbk}
\end{align}
Since the sequence $(b_1 \sigma^{k-1}/k!)_{k \geq 1}$ satisfies \eqref{relrecbk} and has same initial term as $(b_k)_{k \geq 1}$, we get $b_k = b_1 \sigma^{k-1}/k!$ for any $k \geq 1$. Therefore, $h(x)=b_1(e^{\sigma x}-1)/\sigma$ and, since $h(1)=1$, we get $h(x) = (e^{\sigma x}-1)/(e^{\sigma}-1)$. 
We thus recover a classical result known in this case (see for example (7) of \cite{LKBW15} applied to $1-X$). 

\subsubsection{Case of one-sided selection} \label{exapponesidedenv}

Let us consider the case where the L\'evy process $L$ has only negative jumps, that is, $\nu((0,1))=0$. Then, type $0$ never expresses a selective advantage but type $1$ does. The definition of $F^l_{G}(A)$ in Section \ref{enlargedasgencodingfct} shows that in this case $F^i_{G_t}(A)\geq 0$ for any $t$. Reasoning as in Section \ref{casenoenv} yields the following improvement of Theorem \ref{taylorexp}. 
\begin{cor} \label{coroonesided}
If $\nu((0,1))=0$, then $h(x)=\sum_{k\geq 1} b_k x^k$, where the coefficients $b_k$ satisfy \eqref{relreccoef} and $\sum_{k\geq 1} b_k=1$. 
\end{cor}
This case is also covered by \cite[Cor.~2.7, applied to $1-X$]{cordvech}. That result yields in particular $h(x)=1-\sum_{k \geq 0} a_k (1-x)x^k = \sum_{k \geq 1} (a_{k-1}-a_k) x^k$, where the coefficients $(a_k)_{k\geq 0}$ are the ones from \cite[Th.~2.6, applied to $1-X$]{cordvech} (in the present case one needs to set $\theta=0$ in that theorem). Therefore, the coefficient $b_k$ from Corollary \ref{coroonesided} should equal $\tilde b_k:=a_{k-1}-a_k$. $(a_k)_{k\geq 0}$ satisfies a recursion relation given by (2.18) of \cite{cordvech}, which can be re-written as a recursion relation for $(\tilde b_k)_{k\geq 0}$. The later and \eqref{relreccoef} seem to indeed generate the same sequence. In particular, a straightforward calculation shows that the recursion relation for $(\tilde b_k)_{k\geq 0}$ provides the same values for $\tilde b_2/ \tilde b_1$ and $\tilde b_3/ \tilde b_1$ as those given by Remark \ref{calcexprbk} for $b_2/b_1$ and $b_3/b_1$. 

\subsubsection{Martingale case}

Recall from Theorem \ref{finalformula} that, almost surely, $\lim_{t \rightarrow \infty} X(t)$ exists and belongs to $\{0,1\}$. In the particular case where 
$\int_{(-1,1)} z \nu(dz)=\sigma$, $X$ is a bounded martingale so $\mathbb{E} [ \lim_{t \rightarrow \infty} X(t) | X(0)=x ]=x$. Therefore, $h(x)=x$ so that $b_1=1$ and $b_k=0$ for $k \geq 2$. This is in line with our results. Indeed, when $\int_{(-1,1)} z \nu(dz)=\sigma$, Remark \ref{calcexprbk} yields that $b_2=0$. Since $\tau(1,j)=0$ for $j\geq 3$ (see Definition \ref{defcoefedo}), using \eqref{relreccoef} we obtain by induction that also $b_k=0$ for all $k \geq 2$. 

\subsubsection{A numerical application} \label{numappl}


Let us apply the formulas provided by Theorems \ref{cvcoeff1indice} and \ref{taylorexp} to some particular parameter choices. Assume $\sigma > 0$ and $\nu$ is of the form $\nu(dx) = \lambda \delta_a(dx)$ for some $\lambda \geq 0$ and $a \in [0,1)$. For a given choice of $(\sigma, \lambda, a)$, the numbers $b_k/b_1$ can be computed recursively thanks to \eqref{relreccoef} from Theorem \ref{cvcoeff1indice}. 
For large $k$, those numbers explode for many parameter choices. This suggests that the Taylor expansion from Theorem~\ref{taylorexp} cannot be improved into a power series decomposition. However, it turns out that if the parameters are sufficiently small, $\lvert b_k\rvert $ decays rather fast towards $0$. In such cases, we expect $h(x)=\sum_{k\geq 1} b_k x^k$, similarly to the case with one-sided selection (see Section \ref{exapponesidedenv}). Assuming this is indeed true, $1/b_1$ could be approximated by summing the previously computed values of $b_k/b_1$. For $\sigma=0.8$, $\lambda=0.8$, and $a\in\{0,0.1,0.2,0.3\}$, this leads to the approximated values of coefficients $b_k$ given in Table \ref{tableofvalues} and to the graphical representations of $h(x)$ given in Figure \ref{represgraphich(x)}. Note that the case $a=0$ is the classical case without random environment, considered in Section~\ref{casenoenv}. 
\begin{table}[h!]
\centering
\begin{tabular}{ |c|c|c|c|c|c|c|c| }
\hline
 & $b_1 \approx$ & $b_2 \approx$ & $b_3 \approx$ & $b_4 \approx$ & $b_5 \approx$ & $b_6 \approx$ & $b_7 \approx$ \\
\hline
$a=0\phantom{.1}$ & $0.6527730$ & $0.2611092$ & $0.0696291$ & $0.0139258$ & $0.0022281$ & $0.0002971$ & $0.0000340$ \\
\hline
$a=0.1$ & $0.6830193$ & $0.2458870$ & $0.0586850$ & $0.0106059$ & $0.0015752$ & $0.0002021$ & $0.0000229$ \\
\hline
$a=0.2$ & $0.7145930$ & $0.2286698$ & $0.0475633$ & $0.0078140$ & $0.0011734$ & $0.0001641$ & $0.0000201$ \\
\hline
$a=0.3$ & $0.7473968$ & $0.2092711$ & $0.0365527$ & $0.0056493$ & $0.0009582$ & $0.0001497$ & $0.0000206$ \\
\hline
\end{tabular}
\caption{Calculated approximations of coefficients $b_1$,..., $b_7$ when $\sigma=0.8$, $\lambda=0.8$, and $a\in\{0,0.1,0.2,0.3\}$.}
\label{tableofvalues}
\end{table}

\begin{figure}
\centering
\includegraphics[scale=0.40]{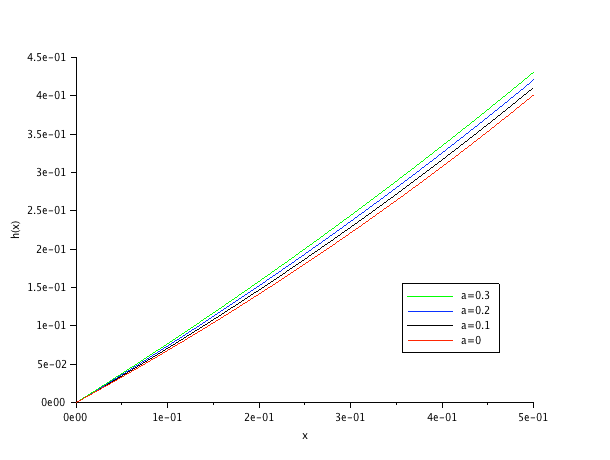} 
\caption{$h(x)$ as a function of $x$ when $\sigma=0.8$, $\lambda=0.8$, and $a\in\{0,0.1,0.2,0.3\}$. For $a=0$, the curve is generated from the classical formula $h(x) = (e^{\sigma x}-1)/(e^{\sigma}-1)$ (see Section \ref{casenoenv}).}
\label{represgraphich(x)}
\end{figure}

To estimate $h(x)$ for small $x$ and under parameter choices that make $b_k/b_1$ explode, one can determine an approximation of $b_1$ (for example via simulations of the E-ASG) and then use Theorems \ref{cvcoeff1indice} and \ref{taylorexp}. In order to compute suitable approximations of $h(x)$ on $[0,1]$, one can determine approximations of a reasonable number of coefficients $a^k_j$ and then use Theorem~\ref{finalformula}. Let us mention that, even though the upper bound $C_1 e^{-C_2 (\log(m))^2}$ in \eqref{boundfiniteapprox} has theoretical interest, the upper bound $\sum_{j \geq m} \pi(j)$ is better and can be calculated numerically with high precision (see the discussion after Proposition~\ref{recpilambda}). Therefore, the later seems more useful in practice in measuring how well $h(x)$ is approximated by $\sum_{k=1}^{m-1} P_k(x)$. 

\subsection{Relation with other models and generalizations} \label{othermodgene}

\subsubsection{Other types of fluctuating selection} \label{otherfluctsel}

Different types of fluctuating selection exist in the literature. There are models without jumps, but with selection coefficient $\sigma$ fluctuating over time. For example, these fluctuations can be random~\cite{GGP19, GPS19} or be a function of the type-frequency process~\cite{GS18, CHS19} (this last setting is called frequency-dependent selection). Another setup arises when the L\'evy process $L$ in \eqref{levymodelsdesimp} is replaced by a continuous L\'evy process, i.e. a drifted Brownian motion, see e.g.~\cite{BEK19}. The effects of jumps can also be different or more general, i.e. the term $X(s-)(1-X(s-)) dL(s)$ in \eqref{levymodelsdesimp} can be replaced by a different or more general function of $X(s-)$ and of the jump of the environment at time $s$, see e.g. \cite{BCM19,CSW19}. In the following subsection, we consider \eqref{levymodelsdesimp}, but in a fluctuating (or rather in-homogeneous) environment, which is yet another type of fluctuating selection. 

\subsubsection{Partial generalization to the case of an in-homogeneous environment} \label{inhomenv}

In the model \eqref{levymodelsdesimp}, the environmental influence on selection is given by the jumps of the L\'evy process $L$, that is, by a Poisson point process $(t_k,j_k)_{k \in I}$ on $[0,\infty) \times (-1,1)$ with intensity measure $dt \times \nu$ where $\nu((-1,1))<\infty$. 
In other words, the distribution of the environment is homogeneous. It can also make sense to consider a model with an in-homogeneous distribution of the environment, which allows to take into account some time-dependent tendencies; for example, increasing frequency of extreme events over time and/or change of the typical intensity of those events. In this case, since the distribution and rate of environmental events changes over time, we model the environment by a Poisson point process $(t_k,j_k)_{k \in I}$ on $[0,\infty) \times (-1,1)$ with intensity measure $\tilde \nu$ which satisfies $\tilde \nu (\{t\} \times (-1,1))=0$ and $\tilde \nu([0,t] \times (-1,1)) < \infty$ for any $t\geq 0$. Such an environment is quite general, allowing different kinds of situations to fit into this framework. 
Consider $L$ being defined via $L(s) := -\sigma s + \sum_{k:\,t_k \leq s} j_k$ as in the homogeneous case. Then, according to Remark 9.9 in \cite{KenIti1999}, $L$ is an \textit{additive process} in the sense of Definition 1.6 of \cite{KenIti1999} (but, in general, no longer a L\'evy process). Thus, replacing the L\'evy process $L$ by the just-defined additive process turns SDE~\eqref{levymodelsdesimp} into a model with in-homogeneous environment. 


The quenched ASG (resp. E-ASG) generalizes to the in-homogeneous case using Definition~\ref{defquenchedasg} (resp.~\ref{defquenchedeasg}) given a realization of the environment. The annealed ASG (resp. E-ASG) on $[0,T]$ is the branching-coalescing particle system $(A^{T}_{s})_{s \in [0,T]}$ (resp. $(G^{T}_{s})_{s \in [0,T]}$) arising from the quenched ASG (resp. E-ASG) by randomizing the environment. We note that the annealed ASG and E-ASG need to have the same (complicated) time-line as the quenched ASG and E-ASG because of the non-homogeneity of the distribution of the environment. The quenched coefficients $R^{m,k, \omega}_{r,t}(i,j)$ can be defined via~\eqref{defcoeffquenched}. The annealed coefficients $R^{m,k}_{r,t}(i,j)$ arise by integration of the quenched coefficients with respect to the random environment. \eqref{majonouveausgnew} in Lemma \ref{sgwelldef} of Section \ref{dawd} translates to the in-homogeneous case and guarantees the well-definedness of the coefficients.

Our methodology partially generalizes to the in-homogeneous case. All our quenched results remain of course true. Integrating the quenched result from Theorem \ref{momdiff} with respect to the law of the in-homogeneous environment yields: 
\begin{theo} \label{momdiffinhom} 
In the in-homogeneous case, for any $l \geq 1, m \geq l, x \in [0,1]$ and $T \geq 0$, we have
\[ \mathbb{E} \left [ (X(T))^l \mid X(0)=x \right ] = \mathbb{E}^{T}_m \left [ \sum_{A \in \mathcal{P}(V_{G^{T}_0})} F^l_{G^{T}_0}(A) x^{|A|} \right ] = \sum_{k=1}^{\infty} \sum_{j=1}^k R^{m,k}_{0,T}(l,j) x^{j}. \]
\end{theo}
In other words, the moment duality between \eqref{levymodelsdesimp} and a non-stochastic semigroup also holds in the in-homogeneous case. Imposing some additional conditions on the measure~$\tilde \nu$, it seems possible to prove convergence of $X(t)$ (almost surely) and of the coefficients $R^{m,k}_{0,t}(i,j)$ as $t$ goes to infinity, and to generalize the representation~\eqref{mainformula} of $h(x)$ with the limit coefficients.
However, getting linear relations such as \eqref{relreccoefnew} for the limit coefficients is more challenging. In the homogenous case, Theorem \ref{equadiffcoeffnew} strongly relies on the homogeneity of the environment. 
Therefore, even if one finds conditions on $\tilde \nu$ that allow to deduce that $\frac{d}{dt}R^{m,k}_{0,t}(l,j)$ exists, it seems hard to generalize Theorem \ref{equadiffcoeffnew} (and therefore \eqref{relreccoefnew}) to the in-homogeneous case. 

\subsubsection{Generalization to the case with mutations} \label{casemut}

In this subsection we discuss how the methods and results of the present paper extend to the model with mutations. For the sake of brevity, we do so without proofs. Here,
\begin{equation}\label{WFD}
 dX(s)=\left[\theta\nu_0(1-X(s))-\theta\nu_1 X(s) \right]ds+ X(s-)(1-X(s-)) dL(s) + \sqrt{2X(s)(1-X(s))} dB(s), 
\end{equation}
where $L$ and $B$ are as in \eqref{levymodelsdesimp}, $\theta > 0$ and $\nu_0,\nu_1\in(0,1)$ with $\nu_0+\nu_1=1$. The extra term (compared to~\eqref{levymodelsdesimp}) reflects that individuals mutate at positive rate with the resulting type being~$0$ (resp. $1$) with probability $\nu_0$ (resp. $\nu_1$). We are interested in the stationary distribution of the solution of \eqref{WFD}. For $l \geq 1$, let $M_l$ denote the moment of order $l$ of that stationary distribution. 

In this model the ASG is defined as in Definition \ref{defquenchedasg}, but with the additional rule that each line is decorated with a mutation to type $0$ (resp. $1$) with rate $\theta \nu_0$ (resp. $\theta \nu_1$). We define the E-ASG as in Definition \ref{defquenchedeasg}, but with the additional rule that each line is decorated with a mutation to type $0$ (resp. $1$) with rate $\theta \nu_0$ (resp. $\theta \nu_1$) and is subsequently terminated. For the annealed E-ASG on $[0,\infty)$, there is then almost surely a finite time $T_{ext}$ at which the E-ASG enters (and remains in) the state of $0$ lines. 

Extending the encoding function $F_G^l(\cdot)$ to the mutation case requires modifying some definitions of Section~\ref{enlargedasgdefnot}. First, we need to allow two extra types of generations for elements of $\mathbb{G}_m$: when each line in generation $n$ has exactly one son, but exactly one line has no son and is decorated with a mutation to type $0$ (resp. $1$). Then, there is one less line in generation $n+1$ than in generation $n$. We refer to the new generation as \textit{type $0$ (resp. $1$) mutation generation} of $G$. Next, we need to include the empty set of lines into $\mathcal{P}(V_G)$. Other definitions from Section \ref{enlargedasgdefnot} are unchanged. For $l \geq 1$, we recursively define the function $\tilde F_G^l(\cdot)$ by \eqref{defFjump}--\eqref{defFcoal} as in Section \ref{enlargedasgencodingfct}, but with the following additional rules: 
\begin{itemize}
\item If $n+1$ is a type $0$ mutation generation of $G$, let us denote by $L_M$ the line from generation $n$ that is decorated with a mutation. Then, for any $A \in \mathcal{P}(V_G)$, we set 
\[ \tilde F^l_G(A)=\tilde F^l_{\pi_n(G)}(P(A))+\tilde F^l_{\pi_n(G)}(P(A)\cup \{L_M\}). \]
\item If $n+1$ is a type $1$ mutation generation of $G$, then we set $\tilde F^l_G(A)=\tilde F^l_{\pi_n(G)}(P(A))$ for any $A \in \mathcal{P}(V_G)$. 
\end{itemize}
These additional rules can be interpreted as follows. Sets of lines that contain a line subject to a mutation are removed. If the mutation is to type $1$, the values of $\tilde F^l_G$ on other sets of lines are unchanged. If the mutation is to type $0$, the contributions of removed sets of lines are added to the remaining sets of lines. Under this definition of $\tilde F_G^l(\cdot)$, we can possibly have $\tilde F^l_{G}(\emptyset) \neq 0$. 

Under the above definition of $\tilde F_G^l(\cdot)$, we claim $\tilde F_G^l(\cdot)$ satisfies \eqref{entre0et10mix}, and \eqref{WFD} satisfies the following generalization of Theorem \ref{momdiff}: For any $l \geq 1, m \geq l, x \in [0,1]$ and $T \geq 0$, 
\begin{align}
\mathbb{E} \left [ (X(T))^l | X(0)=x \right ] = \mathbb{E}_m \left [ \sum_{A \in \mathcal{P}(V_{G_T})} \tilde F^l_{G_T}(A) x^{|A|} \right ]. \label{momdiffmuteq} 
\end{align}

Similarly to Definition \ref{defcoefsg}, we define for any $i \geq 1,j\geq 0$ and $t \geq 0$, \\
\noindent $\tilde Q_t(i,j) := \mathbb{E}_i [ \sum_{A \in \mathcal{P}(V_{G_{t}}) ; |A|=j} \tilde F^i_{G_{t}}(A) ]$, 
and for any $j\geq 0$ and $t \geq 0$, $\tilde Q_t(0,j):=\mathds{1}_{j=0}$. 

Note that, $\mathbb{P}_m$-almost surely, we have for $t \geq T_{ext}$, $\sum_{A \in \mathcal{P}(V_{G_t})} \tilde F^l_{G_t}(A) x^{|A|}=\tilde F^l_{G_t}(\emptyset)$. From~\eqref{momdiffmuteq} and dominated convergence we deduce that 
\begin{align}
M_l = \mathbb{E}_l \left [ \tilde F^l_{G_{T_{ext}}}(\emptyset) \right ] = \lim_{t \rightarrow \infty} \mathbb{E}_l [ \tilde F^l_{G_t}(\emptyset) ] = \lim_{t \rightarrow \infty} \tilde Q_t(l,0), \label{mlequalqtl0}
\end{align}
where the last equality follows from the definition of $\tilde Q_t(i,j)$. It is then reasonable to expect the methodology from Section \ref{acotcfnew} to be applicable. This allows to prove that the coefficients $\tilde Q_t(i,j)$ satisfy a system of ODEs in which each $\frac{d}{dt} \tilde Q_t(i,j)$ is a linear combination of coefficients $\tilde Q_t(k,j)$ for finitely many indices $k \geq 0$. Here, the system of ODEs is analogous to the Kolmogorov backward equations. Letting $j=0$ in those equations and combining with \eqref{mlequalqtl0} leads to linear relations satisfied by the moments $(M_l)_{l \geq 1}$. 

%

\subsubsection{Generalization to the case with colonies and migrations} \label{casecolmig}

In this subsection we consider a population divided into $K$ colonies. Each of them has its own selection mechanism and there is migration between them. Fix $m(i,j) \geq 0$ (for $i \neq j$) such that $(m(i,j))_{1 \leq i,j \leq K}$ is an irreducible rate matrix. Let $(\rho_1,\dots,\rho_K)$ be the corresponding equilibrium probability distribution. If $(m(i,j))_{1 \leq i,j \leq K}$ represents the migration rates between colonies (i.e. on the infinitesimal time interval $(t,t+dt]$, a proportion $m(i,j) dt$ of individuals from colony $i$ have moved to colony $j$), then the sizes of the colonies (with respect to the total population size) are given at equilibrium by $\rho_1,\dots,\rho_K$. This model is then described by the system of SDEs 
\begin{align}
dX_i(t) & = X_i(t-)(1-X_i(t-)) dL_i(t) + \sqrt{\frac{2}{\rho_i}X_i(t)(1-X_i(t))} dB_i(t) \nonumber \\
& + \left ( \sum_{j=1 ; j \neq i}^{K} \tilde m(i,j) (X_j(t) - X_i(t)) \right ) dt, \label{sdecolonies}
\end{align}
where $i\in \{1,\ldots,K\} $, $\tilde m(i,j):= m(j,i) \rho_j/\rho_i$ are known as the \textit{backward migration rates}, $B_1,\dots,B_K$ are independent Brownian motions, $L=(L_1,\dots,L_K)$ is an independent L\'evy process built as the sum of a compound Poisson process in $\mathbb{R}^K$ with jumps in $(-1,1)^K$ and of a drift, where the drift vector $(-\sigma_1 t,\dots,-\sigma_K t)$ has non-positive components. $X_i(t)$ represents the proportion of type $0$ individuals in colony $i$ at time $t$. The last term in \eqref{sdecolonies} represents the effect of migrations. For the more classical version of this model without jumps, we refer to~\cite{10.1214/16-EJP3355}. 

The ancestral structure can be extended to this model. Here, lines of the ASG are partitioned into $K$ subsets that correspond to the $K$ colonies. Only lines in the same colony can coalesce (at rate $2/\rho_i$ for a pair in colony $i$). Single branchings favoring type $1$ occur in colony $i$ at rate $\sigma_i$. In the quenched setting, at each jump of the environment, each line from colony $i$ independently branches with probability $|J_i|$, where $J_i$ is the $i^{th}$ component of the jump. A line from the $i^{th}$ colony migrates to the $j^{th}$ colony at rate $\tilde m(i,j)$. For the classical model without jumps, this definition of the ASG reduces to the one in~\cite{10.1214/16-EJP3355}. 

The E-ASG arises from the ASG along the lines of Section \ref{enlargedasg}. Now, the E-ASG additionally has \textit{migration generations}. For $l=(l_1,\dots,l_K)$, we can define a function $\tilde F^l_{G}(\cdot)$ similarly as in Section \ref{enlargedasgencodingfct}. Though, here the mapping is from $\mathcal{P}(V^1_G) \times \dots \times \mathcal{P}(V^K_G)$ to $\mathbb{R}$, where $V^i_G$ is the set of lines in the last generation of $G$ that belong to colony $i$ and $\mathcal{P}(V^i_G)$ is the corresponding family of (possibly empty) subsets. Definitions \eqref{defFjump}--\eqref{defFcoal} straightforwardly generalize to this setting. In the case where the $n+1^{th}$ generation is a migration generation (where the migration is from colony $i$ to colony $j$ and $L_M$ denotes the migrated line), one should set
\begin{align*}
		&\tilde F_{G}(A_1,\dots,A_i,\dots,A_j,\dots,A_K)\\
		&=\begin{cases} \tilde F_{\pi_n(G)}(P(A_1),\dots,P(A_i) \cup \{L_M\},\dots,P(A_j \setminus \{L_M\}),\dots,P(A_K)), & \text{if} \ L_M \in A_j,  \\
		\tilde F_{\pi_n(G)}(P(A_1),\dots,P(A_i),\dots,P(A_j),\dots,P(A_K)),  & \text{if} \ L_M \notin A_j. \end{cases}  \end{align*}

Theorem \ref{momdiff} should be generalizable to cover product moments $\mathbb{E} [ \prod_{i=1}^K (X_i(T))^{l_i}]$ (for initial conditions of the form $X_i(0)=x_i, i=1,...,K$) with this choice of $\tilde F^l_{\cdot}(\cdot)$. Moreover, it should be possible to extend the other results from Section \ref{mainresults} by generalizing the coefficients $R^{m,k}_t(i,j)$ from Definition \ref{defcoefsg} to multi-index coefficients. Eventually, this should lead to a representation for the fixation probability $h(x_1,\dots,x_K)$, and linear relations for the coefficients of the monomials. 

\section{A function that catches the combinatorics of the ASG} \label{combinatorialfuct}

\subsection{Expression of $h_T^l(x)$: Proof of Theorem \ref{propagationformule}} \label{dualrel}

The idea is to update the expression of $h^l_T(x)$ along events of the E-ASG. More precisely we take a subdivision $((kT/2^n,(k+1)T/2^n])_{k < 2^n}$ of $(0,T]$ so that, for each interval of the subdivision there is, with high probability, either $0$ or $1$ transition of the E-ASG. Then, by induction on $k$ we prove \eqref{propagationformule2}, which essentially says that a quantity related to $h^l_T(x)$ is expressed in terms of the coefficients $F^l_{G_{kT/2^n}}(A)$ and of the indicator functions of the events $E({kT/2^n},T,A,x)$ (introduced in Definition \ref{type0events}) for sets $A \in \mathcal{P}(V_{G_{kT/2^n}})$. To do so we compute, for $A \in \mathcal{P}(V_{G_{kT/2^n}})$, the indicator function of $E({kT/2^n},T,A,x)$ in terms of the indicator functions of events $E({(k+1)T/2^n},T,B,x)$ for $B \in \mathcal{P}(V_{G_{(k+1)T/2^n}})$, depending on the evolution of the E-ASG on $(kT/2^n,(k+1)T/2^n]$. We then plug this into the induction hypothesis of order $k$ and use the definition of $F^l_{\cdot}(\cdot)$ to recognize $F^l_{G_{(k+1)T/2^n}}(\cdot)$ in the resulting expression, which yields the induction hypothesis of order $k+1$. Then, \eqref{propagationformule1mix} will follow. 

We fix $m\geq 1, l \in \{ 1,...,m \}$, $x \in [0,1]$ and $T > 0$. Let us consider the annealed E-ASG on $[0,T]$, starting with $m$ ordered lines $L_1,..., L_m$ at time $\beta=0$, and apply the type assignment procedure on $[0,T]$ with initial condition $x$ (see Definition \ref{typeaspreasg}). 
According to the discussion after Definition \ref{type0events}, we have $h^l_T(x) = \mathbb{E}_m [ \mathds{1}_{E(0,T,\{ L_1,..., L_l \},x)} ]$. 
For $n \geq 1$ let us define 
\[ E_{n} := \left \{ \forall i \in \{1,..., 2^n\}, \ \sharp \{ \text{transitions of E-ASG on} \ ((i-1)T/2^n, iT/2^n] \} \leq 1 \right \}. \]
Note that $\mathds{1}_{E_{n}}$ $\mathbb{P}_m$-almost surely increases to $1$ as $n$ goes to infinity. 
In particular we have 
\begin{eqnarray}
h^l_T(x) = \lim_{n \rightarrow \infty} \mathbb{E}_m [ \mathds{1}_{E_{n}} \mathds{1}_{E(0,T,\{ L_1,..., L_l \},x)} ]. \label{limitthejumps}
\end{eqnarray}
For $0 \leq a< b$, let us denote by respectively $I_0(a,b)$, $I_1(a,b)$, $I_2(a,b)$, and $I_3(a,b)$ the events where the E-ASG has, respectively, no transition on $(a,b]$, exactly one transition on $(a,b]$ that is a coalescence, exactly one transition on $(a,b]$ that is a multiple branching, and exactly one transition on $(a,b]$ that is a single branching. Let us fix $n \geq 1$ and prove by induction on $k \in \{0,1,...,2^n\}$ that 
$\mathbb{P}_m$-almost surely, 
\begin{eqnarray}
\mathds{1}_{E_{n}} \sum_{A \in \mathcal{P}(V_{G_{kT/2^n}})} F^l_{G_{kT/2^n}}(A) \mathds{1}_{E({kT/2^n},T,A,x)} \in [0,1], \label{entre0et1}
\end{eqnarray}
and that 
\begin{eqnarray}
\mathbb{E}_m [ \mathds{1}_{E_{n}} \mathds{1}_{E(0,T,\{ L_1,..., L_l \},x)} ] = \mathbb{E}_m \left [ \mathds{1}_{E_{n}} \sum_{A \in \mathcal{P}(V_{G_{kT/2^n}})} F^l_{G_{kT/2^n}}(A) \mathds{1}_{E({kT/2^n},T,A,x)} \right ]. \label{propagationformule2}
\end{eqnarray}

Note that $V_{G_{0}} = \{ L_1,...,L_m \}$ and, by definition of $F^l_{\cdot}(\cdot)$ in Section \ref{enlargedasgencodingfct}, $F^l_{G_0}(A) = \mathds{1}_{A=\{ L_1,..., L_l \}}$. This shows that \eqref{entre0et1} and \eqref{propagationformule2} are true for $k=0$. Let us now assume that \eqref{entre0et1} and \eqref{propagationformule2} are true for some $k \in \{0,1,...,2^n-1\}$ and prove them for $k+1$. By the induction hypothesis $\mathbb{E}_m [ \mathds{1}_{E_{n}} \mathds{1}_{E(0,T,\{ L_1,..., L_l \},x)} ]$ equals 
\begin{align}
\sum_{p=0}^3 \mathbb{E}_m \left [ \mathds{1}_{I_p(kT/2^n,(k+1)T/2^n)} \mathds{1}_{E_{n}} \sum_{A \in \mathcal{P}(V_{G_{kT/2^n}})} F^l_{G_{kT/2^n}}(A) \mathds{1}_{E({kT/2^n},T,A,x)} \right ] =: \sum_{p=0}^3 \mathbb{E}_m [H_p]. \label{notrans} 
\end{align}
On the event $I_0(kT/2^n,(k+1)T/2^n)$ we have $G_{kT/2^n} = G_{(k+1)T/2^n}$ and for any set \\ $A \in \mathcal{P}(V_{G_{kT/2^n}}) = \mathcal{P}(V_{G_{(k+1)T/2^n}})$, the lines of $A$ are, after assignment of types for the E-ASG on $[0,T]$ (see Definition \ref{typeaspreasg}), 
all of type $0$ at instant $\beta=kT/2^n$ if and only if they are at instant $\beta=(k+1)T/2^n$. Therefore the events $I_0(kT/2^n,(k+1)T/2^n) \cap E(t,T,A,x)$ are equal for $t=kT/2^n$ and $t=(k+1)T/2^n$. Therefore, $\mathbb{P}_m$-almost surely 
\begin{eqnarray}
H_0 = \mathds{1}_{I_0(kT/2^n,(k+1)T/2^n)} \mathds{1}_{E_{n}} \sum_{A \in \mathcal{P}(V_{G_{(k+1)T/2^n}})} F^l_{G_{(k+1)T/2^n}}(A) \mathds{1}_{E({(k+1)T/2^n},T,A,x)}. \label{notrans2}
\end{eqnarray}
By the induction hypothesis, $H_0 \in [0,1]$ so the right-hand side belongs $\mathbb{P}_m$-almost surely to $[0,1]$. 

On the event $I_1(kT/2^n,(k+1)T/2^n)$, $G_{(k+1)T/2^n}$ is obtained by adjunction of a coalescing generation to $G_{kT/2^n}$ (where the coalescence is made uniformly randomly among all pairs). Thus, there is exactly one pair of lines in $V_{G_{kT/2^n}}$ that share the same son in $V_{G_{(k+1)T/2^n}}$. For a set $A \in \mathcal{P}(V_{G_{kT/2^n}})$, recall that $D(A) \in \mathcal{P}(V_{G_{(k+1)T/2^n}})$ is the set of sons in $V_{G_{(k+1)T/2^n}}$ of lines of $A$. For any $A \in \mathcal{P}(V_{G_{kT/2^n}})$, $A$ contains either $0$, $1$, or $2$ of the two lines whose son is common. In either case we have that, after assignment of types for the E-ASG on $[0,T]$ (see Definition \ref{typeaspreasg}), all lines of $A$ are of type $0$ at instant $\beta=kT/2^n$ if and only if all lines of $D(A)$ are of type $0$ at instant $\beta=(k+1)T/2^n$. Therefore, 
\begin{align*}
H_1 = & \mathds{1}_{I_1(kT/2^n,(k+1)T/2^n)} \mathds{1}_{E_{n}} \sum_{A \in \mathcal{P}(V_{G_{kT/2^n}})} F^l_{G_{kT/2^n}}(A) \mathds{1}_{E({(k+1)T/2^n},T,D(A),x)} \\
= & \mathds{1}_{I_1(kT/2^n,(k+1)T/2^n)} \mathds{1}_{E_{n}} \sum_{B \in \mathcal{P}(V_{G_{(k+1)T/2^n}})} \left ( \sum_{A \in \mathcal{P}(V_{G_{kT/2^n}}) ; D(A) = B} F^l_{G_{kT/2^n}}(A) \right )  \mathds{1}_{E({(k+1)T/2^n},T,B,x)}. 
\end{align*}
In the above we have used that each $B \in \mathcal{P}(V_{G_{(k+1)T/2^n}})$ is equal to $D(A)$ for at least one set $A \in \mathcal{P}(V_{G_{kT/2^n}})$, since $D(P(B))=B$. 
We have by definition of $F^l_{\cdot}(\cdot)$ that, on the event $I_1(kT/2^n,(k+1)T/2^n)$, the expression inside the parenthesis equals $F^l_{G_{(k+1)T/2^n}}(B)$. We thus get 
\begin{align}
H_1 = \mathds{1}_{I_1(kT/2^n,(k+1)T/2^n)} \mathds{1}_{E_{n}} \sum_{B \in \mathcal{P}(V_{G_{(k+1)T/2^n}})} F^l_{G_{(k+1)T/2^n}}(B) \mathds{1}_{E({(k+1)T/2^n},T,B,x)}. \label{1coales20}
\end{align}
By the induction hypothesis, $H_1 \in [0,1]$ so the right-hand side belongs $\mathbb{P}_m$-almost surely to $[0,1]$. 

On the event $I_2(kT/2^n,(k+1)T/2^n)$, $G_{(k+1)T/2^n}$ is obtained by adjunction of a multiple branching generation to $G_{kT/2^n}$. Let $W\in(-1,1)$ denote its weight, as in Section \ref{enlargedasgdefnot}. For a set $A \in \mathcal{P}(V_{G_{kT/2^n}})$, let us denote the lines of $A$ by $L_1^A,..., L_{|A|}^A$ and let $L_{i,1}^A$ and $L_{i,2}^A$ denote the two sons in $V_{G_{(k+1)T/2^n}}$ of the line $L_i^A$, where $L_{i,1}^A$ is the continuing brother and $L_{i,2}^A$ is the incoming brother. 
We apply the type assignment procedure to the E-ASG on $[0,T]$ (see Definition \ref{typeaspreasg}) and denote by $l_i^A$ the label of the branching that occurs on the line $L_i^A$. 
By the type propagation rules: If $l_i^A =$ virtual then $L_i^A$ has the same type as $L_{i,1}^A$. If $l_i^A =$ real and $W > 0$, then $L_i^A$ has type $0$ if and only if either $L_{i,2}^A$ has type $0$ or $L_{i,2}^A$ and $L_{i,1}^A$ have respectively type $1$ and $0$. If $l_i^A =$ real and $W < 0$, then $L_i^A$ has type $0$ if and only if both $L_{i,1}^A$ and $L_{i,2}^A$ have type $0$. We thus get that $\mathds{1}_{E({kT/2^n},T, \{ L_i^A \},x)}$ equals 
\begin{align*}
& \mathds{1}_{W > 0} \left ( \mathds{1}_{\{ l_{i}^A = \text{ virtual} \}} \mathds{1}_{E({(k+1)T/2^n},T, \{ L_{i,1}^A \},x)} \right . \\
+ & \left . \mathds{1}_{\{ l_{i}^A = \text{ real} \}} (\mathds{1}_{E({(k+1)T/2^n},T, \{ L_{i,2}^A \},x)} + (1-\mathds{1}_{E({(k+1)T/2^n},T, \{ L_{i,2}^A \},x)}) \mathds{1}_{E({(k+1)T/2^n},T, \{ L_{i,1}^A \},x)}) \right ) \\
+ & \mathds{1}_{W < 0} \left ( \mathds{1}_{\{ l_{i}^A = \text{ virtual} \}} \mathds{1}_{E({(k+1)T/2^n},T, \{ L_{i,1}^A \},x)} + \mathds{1}_{\{ l_{i}^A = \text{ real} \}} \mathds{1}_{E({(k+1)T/2^n},T, \{ L_{i,1}^A \},x)} \mathds{1}_{E({(k+1)T/2^n},T, \{ L_{i,2}^A \},x)} \right ) \\ 
= & \left (1 - \mathds{1}_{W < 0} \mathds{1}_{\{ l_{i}^A = \text{ real} \}} \right ) \mathds{1}_{E({(k+1)T/2^n},T, \{ L_{i,1}^A \},x)} + \mathds{1}_{W > 0} \mathds{1}_{\{ l_{i}^A = \text{ real} \}} \mathds{1}_{E({(k+1)T/2^n},T, \{ L_{i,2}^A \},x)} \\ 
+ & \mathds{1}_{\{ l_{i}^A = \text{ real} \}} \left ( \mathds{1}_{W < 0} - \mathds{1}_{W > 0} \right ) \mathds{1}_{E({(k+1)T/2^n},T, \{ L_{i,1}^A \},x)} \mathds{1}_{E({(k+1)T/2^n},T, \{ L_{i,2}^A \},x)}. 
\end{align*}

For $B \in \mathcal{P}(V_{G_{(k+1)T/2^n}})$, recall that $P(B) \in \mathcal{P}(V_{G_{kT/2^n}})$ is the set of parents in $V_{G_{kT/2^n}}$ of lines of $B$. For our fixed $A \in \mathcal{P}(V_{G_{kT/2^n}})$, let $\mathcal{D}(A)$ denote the family of sets $B \in \mathcal{P}(V_{G_{(k+1)T/2^n}})$ that are such that $P(B) = A$. In other words, $B \in \mathcal{D}(A)$ if and only if each line of $A$ has at least one son in $B$ (possibly two) and each line of $B$ has its parent in $A$. Note that any $B \in \mathcal{D}(A)$ is included in $D(A)$. Since $\mathds{1}_{E({kT/2^n},T,A,x)} = \prod_{i=1}^{|A|} \mathds{1}_{E({kT/2^n},T, \{ L_i^A \},x)}$, we deduce from above that 
\begin{align} 
\mathds{1}_{E({kT/2^n},T,A,x)} & = \sum_{B \in \mathcal{D}(A)} \prod_{j=1}^{|B|} W(L_j^B) \mathds{1}_{E({(k+1)T/2^n},T, \{ L_{j}^B \},x)} = \sum_{B \in \mathcal{D}(A)} \mathds{1}_{E({(k+1)T/2^n},T,B,x)} \prod_{j=1}^{|B|} W(L_j^B), \label{devform}
\end{align}
where we denoted by $L_1^B,..., L_{|B|}^B$ the lines of any set $B \in \mathcal{D}(A)$ and where 
\[ W(L_j^B) := (1 - \mathds{1}_{W < 0} \mathds{1}_{\{ \text{branching from which} \ L_{j}^B \ \text{is issued has label real} \}} ) \] if $L_{j}^B$ is a continuing line whose associated incoming brother is in $B^c$, \[ W(L_j^B) := \mathds{1}_{W > 0} \mathds{1}_{\{ \text{branching from which} \ L_{j}^B \ \text{is issued has label real} \}} \] if $L_{j}^B$ is an incoming line whose associated continuing brother is in $B^c$,  \[ W(L_j^B) := \mathds{1}_{\{ \text{branching from which} \ L_{j}^B \ \text{is issued has label real} \}} ( \mathds{1}_{W < 0} - \mathds{1}_{W > 0} ) \] if $L_{j}^B$ is an incoming line whose associated continuing brother is in $B$, and $W(L_j^B) = 1$ if $L_{j}^B$ is a continuing line whose associated incoming brother is in $B$. Then, replacing $\mathds{1}_{E({kT/2^n},T,A,x)}$ by \eqref{devform} into the expression of $H_2$ (see \eqref{notrans}) we get 
\begin{align*}
H_2 = \mathds{1}_{I_2(kT/2^n,(k+1)T/2^n)} \mathds{1}_{E_{n}} \sum_{A \in \mathcal{P}(V_{G_{kT/2^n}})} \sum_{B \in \mathcal{D}(A)} F^l_{G_{kT/2^n}}(A) \left ( \prod_{j=1}^{|B|} W(L_j^B) \right ) \mathds{1}_{E({(k+1)T/2^n},T,B,x)}. 
\end{align*}
$(G_{\beta})_{\beta \in [0,T]}$ being given, note from Definition \ref{typeaspreasg} that the types of lines in $V_{G_{t}}$ (for any $t \in [0,T]$) is a deterministic function of $(G_{\beta})_{\beta \in [0,T]}$, the labels on $[t,T]$, and the types of lines in $V_{G_{T}}$. 
Also, recall that for any $t \in [0,T]$, $F^l_{G_{t}}(\cdot)$ is a deterministic function of $(G_{\beta})_{\beta \in [0,T]}$ and that it does not depend on the labels. 
Let us define the sigma-field 
\[ \mathcal{G}_k := \sigma ((G_{\beta})_{\beta \in [0,T]}, \ \text{labels on} \ [(k+1)T/2^n,T], \ \text{types of lines in} \ V_{G_{T}}). \]
Note that $\mathcal{G}_k$ contains all the information on the weights of the transitions of the E-ASG on $[0,T]$. 
Then, $\mathbb{E}_m[H_2]=\mathbb{E}_m[\mathbb{E}_m[H_2 | \mathcal{G}_k]]$. In the above expression, only the factors $W(L_j^B)$ are not measurable with respect to $\mathcal{G}_k$ so we get that $\mathbb{E}_m [H_2]$ equals 
\begin{align}
& \mathbb{E}_m \left [ \mathds{1}_{I_2(kT/2^n,(k+1)T/2^n)} \mathds{1}_{E_{n}}\sum_{A \in \mathcal{P}(V_{G_{kT/2^n}})} \sum_{B \in \mathcal{D}(A)} F^l_{G_{kT/2^n}}(A) \mathbb{E}_m \left [ \prod_{j=1}^{|B|} W(L_j^B) \big | \mathcal{G}_k \right ] \mathds{1}_{E({(k+1)T/2^n},T,B,x)} \right ]. \label{1jump0} 
\end{align}
Moreover, what is inside the above expectation $\mathbb{P}_m$-almost surely belongs to $[0,1]$ since it equals $\mathbb{E}_m[H_2 | \mathcal{G}_k]$ and, $\mathbb{P}_m$-almost surely, $H_2 \in [0,1]$. From Definition \ref{typeaspreasg}, the labels of the branchings occurring in $(kT/2^n,(k+1)T/2^n]$ are \textit{iid}, real with probability $|W|$ and virtual with probability $1-|W|$. Therefore, for any $B \in \mathcal{D}(A)$ we have 
\[ F^l_{G_{kT/2^n}}(A) \times \mathbb{E}_m \left [ \prod_{j=1}^{|B|} W(L_j^B) \big | \mathcal{G}_k \right ] = F^l_{G_{kT/2^n}}(A) \times (1+W \mathds{1}_{W < 0})^{\alpha(B)} \times (W \mathds{1}_{W > 0})^{\beta(B)} \times (-W)^{\gamma(B)}, \]
where $\alpha(B)$, $\beta(B)$ and $\gamma(B)$ are defined in Section \ref{enlargedasgencodingfct}. 
Recall that for any $B \in \mathcal{D}(A)$ we have $P(B)=A$. Therefore, the above equals
\[ F^l_{G_{kT/2^n}}(P(B)) \times (1+W \mathds{1}_{W < 0})^{\alpha(B)} \times (W \mathds{1}_{W > 0})^{\beta(B)} \times (-W)^{\gamma(B)}. \]
Using the definition of $F^l_{\cdot}(\cdot)$ in Section \ref{enlargedasgencodingfct}, we can identify the above with $F^l_{G_{(k+1)T/2^n}}(B)$. Plugging into \eqref{1jump0} we deduce 
\begin{align}
\mathbb{E}_m [H_2] = & \mathbb{E}_m \left [ \mathds{1}_{I_2(kT/2^n,(k+1)T/2^n)} \mathds{1}_{E_{n}} \sum_{A \in \mathcal{P}(V_{G_{kT/2^n}})} \sum_{B \in \mathcal{D}(A)} F^l_{G_{(k+1)T/2^n}}(B) \mathds{1}_{E({(k+1)T/2^n},T,B,x)} \right ] \label{1jump1} \\ 
= & \mathbb{E}_m \left [ \mathds{1}_{I_2(kT/2^n,(k+1)T/2^n)} \mathds{1}_{E_{n}} \sum_{B \in \mathcal{P}(V_{G_{(k+1)T/2^n}})} F^l_{G_{(k+1)T/2^n}}(B) \mathds{1}_{E({(k+1)T/2^n},T,B,x)} \right ]. \label{1jump2}
\end{align}
For the last equality we used that 
for each $B \in \mathcal{P}(V_{G_{(k+1)T/2^n}})$ there is exactly one $A \in \mathcal{P}(V_{G_{kT/2^n}})$ such that $B \in \mathcal{D}(A)$ (this $A$ is precisely $P(B)$). Therefore, for each $B \in \mathcal{P}(V_{G_{(k+1)T/2^n}})$, $F^l_{G_{(k+1)T/2^n}}(B)$ appears exactly once in the sum in the expression \eqref{1jump1}. Moreover, since the quantities inside the expectations \eqref{1jump0}, \eqref{1jump1} and \eqref{1jump2} are actually $\mathbb{P}_m$-almost surely equal, they belong $\mathbb{P}_m$-almost surely to $[0,1]$. 

On the event $I_3(kT/2^n,(k+1)T/2^n)$, $G_{(k+1)T/2^n}$ is obtained by adjunction of a single branching generation to $G_{kT/2^n}$. Thus, there is exactly one line in $V_{G_{kT/2^n}}$ that has two sons in $V_{G_{(k+1)T/2^n}}$. Let us denote this line by $L_0$ and by $L_{0,1}$ and $L_{0,2}$ the two sons in $V_{G_{(k+1)T/2^n}}$ of $L_0$, where $L_{0,1}$ is the continuing brother and $L_{0,2}$ is the incoming brother. For a set $A \in \mathcal{P}(V_{G_{kT/2^n}})$, let us denote the lines of $A$ (others than $L_0$) by $L_1^A, L_{2}^A,...$ and let $L_{i,1}^A$ denote the son in $V_{G_{(k+1)T/2^n}}$ of the line $L_i^A$. Note that after assignment of types for the E-ASG on $[0,T]$ (see Definition \ref{typeaspreasg}), we have 
$\mathds{1}_{E({kT/2^n},T, \{ L_0 \},x)} = \mathds{1}_{E({(k+1)T/2^n},T, \{ L_{0,1},L_{0,2} \},x)}$. 
Therefore $\mathds{1}_{E({kT/2^n},T,A,x)}$ equals 
\begin{align*}
& \mathds{1}_{L_0 \notin A} \prod_{i=1}^{|A|} \mathds{1}_{E({kT/2^n},T, \{ L_i^A \},x)} + \mathds{1}_{L_0 \in A} \mathds{1}_{E({kT/2^n},T, \{ L_0 \},x)} \prod_{i=1}^{|A|-1} \mathds{1}_{E({kT/2^n},T, \{ L_i^A \},x)} \\
= & \mathds{1}_{L_0 \notin A} \prod_{i=1}^{|A|} \mathds{1}_{E({(k+1)T/2^n},T, \{ L_{i,1}^A \},x)} + \mathds{1}_{L_0 \in A} \mathds{1}_{E({(k+1)T/2^n},T, \{ L_{0,1},L_{0,2} \},x)} \prod_{i=1}^{|A|-1} \mathds{1}_{E({(k+1)T/2^n},T, \{ L_{i,1}^A \},x)} \\
= & \sum_{B \in \mathcal{D}(A)} \mathds{1}_{N(B) \in \{0,2\}} \mathds{1}_{E({(k+1)T/2^n},T, B,x)}, 
\end{align*}
where $N(B)$ is defined in Section \ref{enlargedasgencodingfct}. Recall that for any $B \in \mathcal{D}(A)$ we have $P(B)=A$. Therefore $F^l_{G_{kT/2^n}}(A) \mathds{1}_{E({kT/2^n},T,A,x)}$ equals 
\begin{align*}
\sum_{B \in \mathcal{D}(A)} F^l_{G_{kT/2^n}}(P(B)) \mathds{1}_{N(B) \in \{0,2\}} \mathds{1}_{E({(k+1)T/2^n},T, B,x)} = \sum_{B \in \mathcal{D}(A)} F^l_{G_{(k+1)T/2^n}}(B) \mathds{1}_{E({(k+1)T/2^n},T, B,x)}, 
\end{align*}
where the last equality comes from the definition of $F^l_{\cdot}(\cdot)$ in Section \ref{enlargedasgencodingfct}. Proceeding as in \eqref{1jump1}-\eqref{1jump2} we can conclude 
\begin{align}
H_3 = \mathds{1}_{I_3(kT/2^n,(k+1)T/2^n)} \mathds{1}_{E_{n}} \sum_{B \in \mathcal{P}(V_{G_{(k+1)T/2^n}})} F^l_{G_{(k+1)T/2^n}}(B) \mathds{1}_{E({(k+1)T/2^n},T,B,x)}. \label{1singbranch2}
\end{align}
By the induction hypothesis, $H_3 \in [0,1]$ so the right-hand side belongs $\mathbb{P}_m$-almost surely to $[0,1]$. 

Plugging \eqref{notrans2}, \eqref{1coales20}, \eqref{1jump2} and \eqref{1singbranch2} into \eqref{notrans} we get that 
\eqref{propagationformule2} holds for $k+1$. Moreover, the terms in \eqref{notrans2}, \eqref{1coales20}, \eqref{1singbranch2}, as well as what are inside the expectations in \eqref{1jump2}, are $\mathbb{P}_m$-almost surely in $[0,1]$, and since at most one indicator $\mathds{1}_{I_p(kT/2^n,(k+1)T/2^n)}$ is not $0$, \eqref{entre0et1} holds for $k+1$. The induction is thus proved. Recall $(\mathcal{F}_t)_{t \geq 0}$ from Section \ref{enlargedasgdefnot}. We evaluate \eqref{propagationformule2} at $k=2^n$ and get that $\mathbb{E}_m [ \mathds{1}_{E_{n}} \mathds{1}_{E(0,T,\{ L_1,..., L_l \},x)} ]$ equals 
\begin{align}
& \mathbb{E}_m \left [ \mathds{1}_{E_{n}} \sum_{A \in \mathcal{P}(V_{G_{T}})} F^l_{G_{T}}(A) \mathds{1}_{E({T},T,A,x)} \right ] = \mathbb{E}_m \left [ \mathbb{E}_m \left [ \mathds{1}_{E_{n}} \sum_{A \in \mathcal{P}(V_{G_{T}})} F^l_{G_{T}}(A) \mathds{1}_{E({T},T,A,x)} \big | \mathcal{F}_T \right ] \right ] \nonumber \\
= & \mathbb{E}_m \left [ \mathds{1}_{E_{n}} \sum_{A \in \mathcal{P}(V_{G_{T}})} F^l_{G_{T}}(A) \mathbb{E}_m \left [ \mathds{1}_{E({T},T,A,x)} \big | \mathcal{F}_T \right ] \right ] = \mathbb{E}_m \left [ \mathds{1}_{E_{n}} \sum_{A \in \mathcal{P}(V_{G_{T}})} F^l_{G_{T}}(A) x^{|A|} \right ]. \label{egsuren}
\end{align}
Moreover, by \eqref{entre0et1} at $k=2^n$ we get that all the terms that are inside the above expectations $\mathbb{P}_m$-almost surely belong to $[0,1]$ which shows that 
\begin{eqnarray}
\mathds{1}_{E_{n}} \sum_{A \in \mathcal{P}(V_{G_{T}})} F^l_{G_{T}}(A) \mathds{1}_{E(T,T,A,x)} \in [0,1], \ \ \ \mathds{1}_{E_{n}} \sum_{A \in \mathcal{P}(V_{G_{T}})} F^l_{G_{T}}(A) x^{|A|} \in [0,1]. \label{inegsuren}
\end{eqnarray}
Combining \eqref{egsuren} with \eqref{limitthejumps} we get 
\[ h^l_T(x)= \lim_{n \rightarrow \infty} \mathbb{E}_m \left [ \mathds{1}_{E_{n}} \sum_{A \in \mathcal{P}(V_{G_{T}})} F^l_{G_{T}}(A) \mathds{1}_{E({T},T,A,x)} \right ] = \lim_{n \rightarrow \infty} \mathbb{E}_m \left [ \mathds{1}_{E_{n}} \sum_{A \in \mathcal{P}(V_{G_T})} F^l_{G_T}(A) x^{|A|} \right ]. \]
Recall that $\mathds{1}_{E_{n}}$ $\mathbb{P}_m$-almost surely increases to $1$ as $n$ goes to infinity, and thanks to \eqref{inegsuren} we can apply dominated convergence which yields 
\eqref{propagationformule1mix}. Combining \eqref{inegsuren} and the fact that $\mathds{1}_{E_{n}}$ increases to $1$ we get \eqref{entre0et10mix}. Finally, by choosing $x=1$ in \eqref{entre0et10mix} and \eqref{propagationformule1mix} we see that $\sum_{A \in \mathcal{P}(V_{G_{T}})} F^l_{G_{T}}(A)$ $\mathbb{P}_m$-almost surely belongs to $[0,1]$ and has expectation $1$, which yields \eqref{sommetotale=1}.

\subsection{An important estimate} \label{sectionlemmes}

The following lemma provides an estimate, based on \eqref{entre0et10mix} and on a combinatorial argument, that allows to bound deterministically the absolute value of the coefficients $F^l_{G_T}(B)$. This deterministic bound and its consequences (like Lemma \ref{sgwelldef}) will be used extensively all along the proofs in the paper. 

\begin{lemme} \label{lemmecomb}

Let us fix $T>0$ and $m \geq 1$. For any $l \in \{1,...,m\}$ and $B \in \mathcal{P}(V_{G_T})$ we have $\mathbb{P}_m$-almost surely 
\begin{eqnarray}
|F^l_{G_T}(B)| \leq |B|^{|B|}. \label{estimcomb}
\end{eqnarray}
Recall the measure $\pi$ from Section \ref{enlargedasglcp}. For all $M \geq 1$, $m \geq 1$, $k \geq 1$, and $T \geq 0$, 
\begin{eqnarray}
\mathbb{E}_m \left [ |V_{G_{T}}|^{k} \mathds{1}_{|V_{G_{T}}| \geq M} \right ] \leq \frac{1}{\pi(m)} \sum_{j \geq M} j^k \pi(j) < \infty. \label{bornemomentGtnew}
\end{eqnarray}

\end{lemme}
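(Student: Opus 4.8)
The plan is to prove the pointwise bound \eqref{estimcomb} by showing that \eqref{entre0et10mix} in fact pins down each individual coefficient $F^l_{G_T}(B)$, through a zeta/M\"obius inversion argument on the Boolean lattice of subsets of lines, and then to obtain \eqref{bornemomentGtnew} as a short consequence of \eqref{majointemporelle} together with the stationary tail bound \eqref{majoqueuepiknew}.

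For \eqref{estimcomb}, the first and crucial step is to upgrade \eqref{entre0et10mix} from a statement valid ``for each $x$'' to one valid ``for each subset of lines''. Working under $\mathbb{P}_m$ and conditioning on $\mathcal{F}_T$ --- which fixes the environment, the graph $G_T$, and hence the function $F^l_{G_T}(\cdot)$ --- the types of the ($\mathbb{P}_m$-a.s.\ finitely many) lines present at time $\beta=T$ are i.i.d., equal to $0$ with probability $x$; and for $A\in\mathcal{P}(V_{G_T})$ the event $E(T,T,A,x)$ is precisely $\{A\subseteq Z_T\}$, where $Z_T\subseteq V_{G_T}$ denotes the random set of type-$0$ lines. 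Fixing any $x\in(0,1)$, every subset $Z\subseteq V_{G_T}$ is then realized by $Z_T$ with positive (and $\mathcal{F}_T$-measurable) conditional probability, so \eqref{entre0et10mix} forces that, $\mathbb{P}_m$-a.s., the subset sums
\[
\Phi(Z)\ :=\ \sum_{\emptyset\neq A\subseteq Z}F^l_{G_T}(A)
\]
belong to $[0,1]$ simultaneously for \emph{all} $Z\subseteq V_{G_T}$; the passage from ``a.s.\ for the fixed $x$'' to ``a.s.\ for all $Z$'' is handled by selecting a bad $Z$ measurably on the exceptional (and $\mathcal{F}_T$-measurable) event and contradicting \eqref{entre0et10mix}. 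The second step is then immediate: extending $F^l_{G_T}$ to the empty set by $F^l_{G_T}(\emptyset):=0$, the map $Z\mapsto\Phi(Z)$ is the zeta transform of $F^l_{G_T}$ on $2^{V_{G_T}}$, so M\"obius inversion gives, for any $B\in\mathcal{P}(V_{G_T})$,
\[
F^l_{G_T}(B)\ =\ \sum_{A\subseteq B}(-1)^{|B|-|A|}\Phi(A)\ =\ \sum_{\emptyset\neq A\subseteq B}(-1)^{|B|-|A|}\Phi(A),
\]
using $\Phi(\emptyset)=0$ in the last step. Bounding each of the $2^{|B|}-1$ terms by $1$ in absolute value yields $|F^l_{G_T}(B)|\leq 2^{|B|}-1\leq|B|^{|B|}$, which is \eqref{estimcomb}.

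For \eqref{bornemomentGtnew}, since $(|V_{G_\beta}|)_{\beta\geq0}$ is the line-counting process of the E-ASG, \eqref{majointemporelle} gives $\mathbb{P}_m(|V_{G_T}|=j)\leq\pi(j)/\pi(m)$ for all $j\geq1$, whence
\[
\mathbb{E}_m\!\left[|V_{G_T}|^{k}\,\mathds{1}_{|V_{G_T}|\geq M}\right]\ =\ \sum_{j\geq M}j^{k}\,\mathbb{P}_m(|V_{G_T}|=j)\ \leq\ \frac{1}{\pi(m)}\sum_{j\geq M}j^{k}\pi(j).
\]
Finiteness of the last sum follows from \eqref{majoqueuepiknew} by a one-line summation by parts: writing $\bar\pi(j):=\sum_{i\geq j}\pi(i)\leq C_1e^{-C_2(\log j)^2}$ one gets $\sum_{j\geq M}j^{k}\pi(j)\leq M^{k}\bar\pi(M)+k\sum_{j> M}j^{k-1}\bar\pi(j)$, and this is finite since $j^{k-1}e^{-C_2(\log j)^2}$ is eventually smaller than $j^{-2}$ (because $(k+1)\log j\leq C_2(\log j)^2$ for $j$ large). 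This gives \eqref{bornemomentGtnew}.

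The main obstacle is the first step of \eqref{estimcomb}: \eqref{entre0et10mix} is phrased as a polynomial inequality in $x$, and the key insight is that, because the time-$T$ types are assigned independently line by line, this single inequality actually controls every subset sum $\Phi(Z)$; once that is available, the bound on a single coefficient $F^l_{G_T}(B)$ is pure M\"obius inversion, and \eqref{bornemomentGtnew} is then bookkeeping.
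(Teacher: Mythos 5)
Your proof is correct. The decisive first step --- conditioning on $\mathcal{F}_T$, observing that for $x\in(0,1)$ every type configuration at $\beta=T$ has positive conditional probability, and deducing from \eqref{entre0et10mix} that all subset sums $\Phi(Z)=\sum_{\emptyset\neq A\subseteq Z}F^l_{G_T}(A)$ lie in $[0,1]$ --- is exactly the mechanism the paper uses. Where you diverge is in extracting the single coefficient: the paper runs an induction on $|B|$, writing $|F^l_{G_T}(B)|\leq 1+\sum_{A\subsetneq B}|F^l_{G_T}(A)|$ and feeding in the inductive bound $|A|^{|A|}$ to land on $(k+1)^{k+1}$ via the binomial theorem, whereas you invert the zeta transform by M\"obius inversion and bound each of the $2^{|B|}-1$ nonempty terms by $1$. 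Your route is arguably cleaner and yields the strictly sharper estimate $|F^l_{G_T}(B)|\leq 2^{|B|}-1$, of which the stated $|B|^{|B|}$ is a weakening; the paper's induction avoids invoking M\"obius inversion but gives only the superexponential bound. Since all downstream uses of \eqref{estimcomb} (e.g.\ Lemma \ref{sgwelldef}) only need $|B|^{|B|}$, the two are interchangeable here, though your bound would improve the constants in estimates such as \eqref{borneFps}. Your treatment of \eqref{bornemomentGtnew} is the same bookkeeping the paper intends (it simply declares it an easy consequence of \eqref{majointemporelle} and Proposition \ref{recpilambda}); your explicit summation-by-parts verification of finiteness is a welcome addition.
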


\begin{proof}

Let us fix a realization $G_T$ of the E-ASG on $[0,T]$. We apply the type assignment procedure on $[0,T]$ with initial condition, say, $1/2$ (see Definition \ref{typeaspreasg}). For $B \in \mathcal{P}(V_{G_T})$, under $\mathbb{P}_m$ and conditionally on the fixed realization $G_T$ of the E-ASG, the event where only the lines that belong to $B$ are of type $0$ at instant $\beta=T$ while all other lines in $V_{G_T}$ are of type $1$ has positive probability. On this event we have 
\[ \sum_{A \in \mathcal{P}(V_{G_{T}})} F^l_{G_{T}}(A) \mathds{1}_{E(T,T,A,x)} = \sum_{A \subseteq B} F^l_{G_{T}}(A), \]
where $E(T,T,A,x)$ is as in Definition \ref{type0events}. Using \eqref{entre0et10mix} we deduce 
\begin{align}
\sum_{A \subseteq B} F^l_{G_{T}}(A) \in [0,1]. \label{lemmecomb1}
\end{align}

Note that \eqref{lemmecomb1} holds for any $B \in \mathcal{P}(V_{G_T})$. Let us now prove \eqref{estimcomb} by induction on the cardinality of $B \in \mathcal{P}(V_{G_T})$. If $|B|=1$, \eqref{lemmecomb1} yields $F^l_{G_{T}}(B) \in [0,1]$ so $B$ satisfies \eqref{estimcomb}. Now assume that for some $k \geq 1$, \eqref{estimcomb} is satisfied for all $B \in \mathcal{P}(V_{G_T})$ with cardinality $|B| \leq k$ and let us prove it for a $B \in \mathcal{P}(V_{G_T})$ with cardinality $|B|=k+1$. From \eqref{lemmecomb1} we get 
\[ |F^l_{G_{T}}(B)| \leq 1 + \sum_{A \subsetneq B} |F^l_{G_{T}}(A)| = 1 + \sum_{i=1}^{k} \sum_{A \subseteq B ; |A|=i} |F^l_{G_{T}}(A)|. \]
Using the induction hypothesis for sets $A \subseteq B$ with cardinality $i \leq k$ and using that there are exactly $\binom{k+1}{i}$ subsets $A \subseteq B$ with cardinality $i$, we get 
\[ |F^l_{G_{T}}(B)| \leq 1 + \sum_{i=1}^{k} \binom{k+1}{i} i^i \leq \sum_{i=0}^{k+1} \binom{k+1}{i} k^i = (k+1)^{k+1}. \]
Therefore, \eqref{estimcomb} holds for $B$ which proves the induction. Finally, \eqref{bornemomentGtnew} is an easy consequence of Lemma \ref{lemmemajointemporelle} and Proposition \ref{recpilambda}. 

\end{proof}

\section{Average coefficients of the encoding function and representation of $h(x)$} \label{acotcfnew}

\subsection{Well-definedness and some inequalities} \label{dawd}

The well-definedness of the coefficients $R^{m,k}_t(i,j)$ and $Q_t(i,j)$ from Definition \ref{defcoefsg} is guaranteed by the following lemma. Its last inequality will be useful in the rest of the paper to control coefficients $R^{m,k}_t(i,j)$. 

\begin{lemme} \label{sgwelldef}

For any $m \geq 1, i \in \{1,...,m\}, j \geq 1$, $t \geq 0$, we have $\mathbb{P}_m$-almost surely: 
\begin{align}
\sum_{A \in \mathcal{P}(V_{G_{t}}) ; |A|=j} |F^i_{G_{t}}(A)| & \leq \frac{j^j}{j!} |V_{G_{t}}|^{j}, \label{borneFps} \\
\mathbb{E}_m \left [ \sum_{A \in \mathcal{P}(V_{G_{t}}) ; |A|=j} |F^i_{G_{t}}(A)| \right ] & \leq \frac{j^j}{\pi(m) j!} \sum_{l \geq 1} l^j \pi(l) < \infty. \label{borneFesp} 
\end{align}
In particular the coefficients $R^{m,k}_t(i,j)$ and $Q_t(i,j)$ are well-defined and we have 
\begin{align}
|R^{m,k}_t(i,j)| & \leq \frac{(jk)^j}{j!} \mathbb{P}_m \left ( |V_{G_{t}}| = k \right ). \label{majonouveausgnew}
\end{align}

\end{lemme}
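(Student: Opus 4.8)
The plan is to deduce all three bounds from the deterministic estimate \eqref{estimcomb} of Lemma \ref{lemmecomb}, combined with the stationarity inequality \eqref{majointemporelle} and the tail estimate \eqref{majoqueuepiknew} of Proposition \ref{recpilambda}. First I would prove \eqref{borneFps}. Fix a realization of $(L(\beta),G_{\beta})_{\beta \in [0,t]}$; then $F^i_{G_t}(\cdot)$ is a well-defined deterministic function on $\mathcal{P}(V_{G_t})$, and by \eqref{estimcomb} every $A \in \mathcal{P}(V_{G_t})$ with $|A|=j$ satisfies $|F^i_{G_t}(A)| \leq j^j$. Since there are exactly $\binom{|V_{G_t}|}{j} \leq |V_{G_t}|^{j}/j!$ such subsets, summing yields $\sum_{A \in \mathcal{P}(V_{G_t}) ; |A|=j}|F^i_{G_t}(A)| \leq (j^j/j!)\,|V_{G_t}|^{j}$, which is \eqref{borneFps}, valid $\mathbb{P}_m$-almost surely.

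Next, for \eqref{borneFesp} I would take $\mathbb{E}_m$ of \eqref{borneFps}, obtaining $\mathbb{E}_m[\sum_{A ; |A|=j}|F^i_{G_t}(A)|] \leq (j^j/j!)\,\mathbb{E}_m[|V_{G_t}|^{j}]$. Writing $\mathbb{E}_m[|V_{G_t}|^{j}] = \sum_{l \geq 1} l^{j}\,\mathbb{P}_m(|V_{G_t}|=l)$ and bounding each probability by $\pi(l)/\pi(m)$ via \eqref{majointemporelle}, this is at most $(j^j/(\pi(m)\,j!))\sum_{l \geq 1}l^{j}\pi(l)$. The finiteness of $\sum_{l \geq 1}l^{j}\pi(l)$ follows from \eqref{majoqueuepiknew}: since $\pi(l) \leq \sum_{r \geq l}\pi(r) \leq C_1 e^{-C_2(\log l)^2}$, one has $l^{j}\pi(l) \leq C_1 e^{\,j\log l - C_2(\log l)^2}$, and the quadratic term in $\log l$ eventually dominates the linear one, so the series converges. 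This argument simultaneously establishes well-definedness of the coefficients $R^{m,k}_t(i,j)$ and $Q_t(i,j)$ from \eqref{defcoeff} and \eqref{coeff1indice}, since the random variables inside the expectations defining them are bounded in absolute value by $\sum_{A ; |A|=j}|F^i_{G_t}(A)|$, which we have just shown to be integrable under $\mathbb{P}_m$.

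Finally, for \eqref{majonouveausgnew} I would observe that on the event $\{|V_{G_t}|=k\}$, inequality \eqref{borneFps} gives $\bigl|\sum_{A ; |A|=j}F^i_{G_t}(A)\bigr| \leq \sum_{A ; |A|=j}|F^i_{G_t}(A)| \leq (j^j/j!)\,k^{j} = (jk)^{j}/j!$; multiplying by $\mathds{1}_{|V_{G_t}|=k}$ and taking $\mathbb{E}_m$ yields $|R^{m,k}_t(i,j)| \leq ((jk)^{j}/j!)\,\mathbb{P}_m(|V_{G_t}|=k)$, as required.

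There is no substantial obstacle in this argument; the only points requiring a little care are the convergence of $\sum_{l}l^{j}\pi(l)$ — which is precisely where the super-polynomial (rather than merely geometric) decay in \eqref{majoqueuepiknew} is needed — and the verification that the almost-sure bound \eqref{borneFps} is integrable, so that expectations may be taken legitimately and the coefficients $R^{m,k}_t(i,j)$ and $Q_t(i,j)$ are genuinely well-defined.
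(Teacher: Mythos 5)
Your proposal is correct and follows essentially the same route as the paper: \eqref{borneFps} from the deterministic bound \eqref{estimcomb} plus the count $\binom{|V_{G_t}|}{j}\leq |V_{G_t}|^j/j!$, then \eqref{borneFesp} by taking expectations (your use of \eqref{majointemporelle} and \eqref{majoqueuepiknew} just re-derives the paper's \eqref{bornemomentGtnew}), and \eqref{majonouveausgnew} directly from \eqref{borneFps} on the event $\{|V_{G_t}|=k\}$. No gaps.
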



\begin{proof}

Using \eqref{estimcomb} and that there are exactly $\binom{|V_{G_{t}}|}{j} \leq |V_{G_{t}}|^{j}/j!$ sets $A \in \mathcal{P}(V_{G_{t}})$ with cardinality $j$ we get 
\eqref{borneFps}. Then, taking expectation and using \eqref{bornemomentGtnew} we obtain \eqref{borneFesp}. \eqref{majonouveausgnew} is a simple consequence of the definition of $R^{m,k}_t(i,j)$ in Definition \ref{defcoefsg} and of \eqref{borneFps}. 

\end{proof}

\subsection{Renewal structure, semigroup property, proof of Proposition \ref{semigroupprop}} \label{behaviour02}

\subsubsection{Some definitions: an extension of the encoding function} \label{behaviour021}

In order to establish a renewal property for $(F^{i}_{G_{\beta}}(\cdot))_{\beta \geq 0}$ we first define an extension of the function $F^l_{G}$. Let $G, \tilde G \in \mathbb{G}_m$ be such that $\pi_{\mathsf{depth}(G)}(\tilde G)=G$. In other words, $G$ is the restriction of $\tilde G$ to its first $\mathsf{depth}(G)+1$ generations (including generation $0$). Let $B \in \mathcal{P}(V_{G})$. We define the \textit{shifted graph} $\tilde G \setminus_B G \in \mathbb{G}_{|V_{G}|}$ as follows. First let $\{ L_1,...,L_{|V_{G}|} \}$ be the lines of $V_{G}$, ordered in such a way so that the first $|B|$ lines are the lines of $B$ (i.e. $\{ L_1,...,L_{|B|} \} = B$). Then we let $\tilde G \setminus_B G$ be the graph in $\mathbb{G}_{|V_{G}|}$ obtained from removing generations $0,1,...,\mathsf{depth}(G)-1$ of $\tilde G$. In other words, for any $i \geq 0$, generation $i$ of $\tilde G \setminus_B G$ is generation $\mathsf{depth}(G) + i$ of $\tilde G$. In particular generation $0$ of $\tilde G \setminus_B G$ is $\{ L_1,...,L_{|V_{G}|} \}$. Now, for $A \in \mathcal{P}(V_{\tilde G})$ let us define 
\begin{align}
f_{G, \tilde G}(B,A) := F_{\tilde G \setminus_B G}^{|B|}(A). \label{defpetitfg}
\end{align}
Note that there are several possible choices for $\tilde G \setminus_B G$ (as many as the number of orderings of $V_{G}$ such that the first $|B|$ lines are the lines of $B$) but that all these choices yields the same value for $f_{G, \tilde G}(B,A)$. Indeed, $F_{\tilde G \setminus_B G}^{|B|}(\cdot)$ depends on the ordering of lines of generation $0$ of $\tilde G \setminus_B G$ only via the choice of which are the first $|B|$ lines. $f_{G, \tilde G}(B,A)$ is thus uniquely defined. 

\begin{lemme} \label{lemmebehaviour021}
For any $t \geq 0$ and $B \in \mathcal{P}(V_{G_{t}})$, the process $(f_{G_t,G_{t+r}}(B,\cdot))_{r \geq 0}$ is well-defined. Moreover, for all $0 \leq r_2 \leq r_1$, $B \in \mathcal{P}(V_{G_{r_2}})$ and $A \in \mathcal{P}(V_{G_{r_1}})$, we have $\mathbb{P}_m$-a.s. 
\begin{align}
f_{G_{r_2}, G_{r_1}}(B,A) \leq |A|^{|A|}. \label{markpropf3}
\end{align}
\end{lemme}

\begin{proof}
For any $u \geq t \geq 0$ we have $\pi_{\mathsf{depth}(G_t)}(G_u)=G_t$. For $t \geq 0$ and $B \in \mathcal{P}(V_{G_{t}})$ we can thus define the $\mathbb{G}_{|V_{G_t}|}$-valued process $(G_{t+r} \setminus_B G_t)_{r \geq 0}$ that can be considered as the E-ASG shifted at time $t$ and $(f_{G_t,G_{t+r}}(B,\cdot))_{r \geq 0}$ is well-defined. \eqref{markpropf3} is a consequence of \eqref{defpetitfg} and \eqref{estimcomb}. 
\end{proof}

Let $m \geq 1, i \in \{1,...,m\}$, and $L_1,..., L_i$ denote the first $i$ lines of $G_0$. We note that we have $\mathbb{P}_m$-a.s. $f_{G_0, G_t}(\{ L_1,...,L_i \},\cdot) = F^{i}_{G_t}(\cdot)$. 



\subsubsection{Renewal structure} \label{behaviour022}

The following lemma makes appear the renewal structure of $(F^{i}_{G_{\beta}}(\cdot))_{\beta \geq 0}$ and is a key ingredient in proving the semigroup property (Proposition \ref{semigroupprop}) and in deriving the system of differential equations satisfied by coefficients $R^{m,k}_t(i,j)$ (Theorem \ref{equadiffcoeffnew}). It contains 1) a formula, involving $f_{\cdot,\cdot}(\cdot,\cdot)$, that can be seen as a branching property for $(F^{i}_{G_{\beta}}(\cdot))_{\beta \geq 0}$, and 2) a formula for conditional expectations involving $f_{\cdot,\cdot}(\cdot,\cdot)$, which makes the renewal appear. 

\begin{lemme} \label{markpropf}

Let us fix $m \geq 1, i \in \{1,...,m\}, r_1 > r_2 \geq 0$, then $\mathbb{P}_m$-a.s. we have 
\begin{align}
\forall A \in \mathcal{P}(V_{G_{r_1}}), \ F^i_{G_{r_1}}(A) = \sum_{B \in \mathcal{P}(V_{G_{r_2}})} F^i_{G_{r_2}}(B) f_{G_{r_2}, G_{r_1}}(B,A). \label{markpropf1} 
\end{align}
Recall the filtration $(\mathcal{F}_t)_{t \geq 0}$ defined in Section \ref{enlargedasgdefnot}. For any $B \in \mathcal{P}(V_{G_{r_2}})$ we have 
\begin{align}
\mathbb{E}_m \left [ \mathds{1}_{|V_{G_{r_1}}| = k} \sum_{A \in \mathcal{P}(V_{G_{r_1}}) ; |A|=j} f_{G_{r_2}, G_{r_1}}(B,A) \big | \mathcal{F}_{r_2} \right ] = R^{|V_{G_{r_2}}|,k}_{r_1-r_2}(|B|,j). \label{markpropf2} 
\end{align}

\end{lemme}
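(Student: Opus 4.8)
Both identities in Lemma \ref{markpropf} should follow directly from the recursive definition of $F^l_G(\cdot)$ together with the definitions of $f_{\cdot,\cdot}(\cdot,\cdot)$ and $R^{\cdot,\cdot}_{\cdot}(\cdot,\cdot)$. The key structural observation is that the E-ASG has no memory: given $G_{r_2}$, the evolution of the graph on $[r_2,r_1]$ is, generation by generation, exactly the same type of branching-coalescing mechanism started from $|V_{G_{r_2}}|$ lines, and the function $F^i_{G_{r_1}}(\cdot)$ is built by the same recursion regardless of where one starts counting generations. So I would first prove \eqref{markpropf1} for a \emph{fixed} realization of $(L(\beta),G_\beta)_{\beta\in[0,r_1]}$ (the function $F^i_{G_{r_1}}(\cdot)$ is deterministic given this data), by induction on $\mathsf{depth}(G_{r_1})-\mathsf{depth}(G_{r_2})$, i.e. on the number of generations added between times $r_2$ and $r_1$. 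The base case is $\mathsf{depth}(G_{r_1})=\mathsf{depth}(G_{r_2})$, where $G_{r_1}=G_{r_2}$, $V_{G_{r_1}}=V_{G_{r_2}}$, and $f_{G_{r_2},G_{r_1}}(B,A)=F^{|B|}_{\tilde G\setminus_B G}(A)=\mathds{1}_{A=B}$ by the $\mathsf{depth}=0$ clause of the definition of $F$ (the first $|B|$ ordered lines of generation $0$ of $\tilde G\setminus_B G$ are exactly the lines of $B$); then the right-hand side of \eqref{markpropf1} collapses to $F^i_{G_{r_2}}(A)$, as desired.

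For the inductive step I would add one generation and split into the three cases of the $F$-recursion. Suppose \eqref{markpropf1} holds when $G_{r_1}$ is replaced by $\pi_n(G_{r_1})$ with $n=\mathsf{depth}(G_{r_1})-1$. If generation $n+1$ is a coalescing generation, both $F^i_{G_{r_1}}(A)=\sum_{A':D(A')=A}F^i_{\pi_n(G_{r_1})}(A')$ and $f_{G_{r_2},G_{r_1}}(B,A)=\sum_{A':D(A')=A}f_{G_{r_2},\pi_n(G_{r_1})}(B,A')$ obey the same last-generation rule \eqref{defFcoal}, so substituting the inductive hypothesis for $\pi_n(G_{r_1})$ and interchanging the two finite sums over $B$ and over preimages $A'$ of $A$ gives the claim. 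If generation $n+1$ is a multi-branching generation with weight $S_{n+1}$, then $F^i_{G_{r_1}}(A)=F^i_{\pi_n(G_{r_1})}(P(A))\cdot w(A)$ where $w(A)=(1+S_{n+1}\mathds{1}_{S_{n+1}<0})^{\alpha(A)}(S_{n+1}\mathds{1}_{S_{n+1}>0})^{\beta(A)}(-S_{n+1})^{\gamma(A)}$ depends only on the combinatorics of $A$ relative to the brother-pairs in that generation, and likewise $f_{G_{r_2},G_{r_1}}(B,A)=f_{G_{r_2},\pi_n(G_{r_1})}(B,P(A))\cdot w(A)$ with the \emph{same} factor $w(A)$, because $\tilde G\setminus_B G$ inherits this last generation unchanged with the same weight; again substitute the inductive hypothesis and factor out $w(A)$. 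The single-branching case is identical with the factor $\mathds{1}_{N(A)\in\{0,2\}}$ in place of $w(A)$. This completes the $\mathbb{P}_m$-a.s. proof of \eqref{markpropf1}.

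For \eqref{markpropf2}, I would condition on $\mathcal{F}_{r_2}$ (which records the environment and graph on $[0,r_2]$, hence determines $G_{r_2}$, $|V_{G_{r_2}}|$, and which lines form $B$), and use that, under $\mathbb{P}_m$ and conditionally on $\mathcal{F}_{r_2}$, the shifted process $(G_{r_2+r}\setminus_B G_{r_2})_{r\ge 0}$ is distributed exactly as the E-ASG run under $\mathbb{P}_{|V_{G_{r_2}}|}$ started from $|V_{G_{r_2}}|$ ordered lines whose first $|B|$ are designated as the ``first $|B|$ lines'' — this is the renewal/strong-Markov property of the pair $(L,G)$, using that $L$ has independent stationary increments and that the coalescence and $\sigma$-branching clocks are memoryless. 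Then, by definition of $f$ in \eqref{defpetitfg}, $f_{G_{r_2},G_{r_1}}(B,A)=F^{|B|}_{G_{r_2+\cdot}\setminus_B G_{r_2}}(A)$ evaluated at time $r_1-r_2$, so the conditional expectation on the left of \eqref{markpropf2} equals $\mathbb{E}_{|V_{G_{r_2}}|}[\mathds{1}_{|V_{G_{r_1-r_2}}|=k}\sum_{A:|A|=j}F^{|B|}_{G_{r_1-r_2}}(A)]$, which is precisely the definition \eqref{defcoeff} of $R^{|V_{G_{r_2}}|,k}_{r_1-r_2}(|B|,j)$. The integrability needed to make all these (finite, then conditional) sums legitimate is supplied by the a.s. bound \eqref{markpropf3}/\eqref{estimcomb} together with \eqref{bornemomentGtnew}, as in Lemma \ref{sgwelldef}.

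\emph{Main obstacle.} The delicate point is not the algebra of the $F$-recursion — that is bookkeeping — but justifying the renewal step cleanly: one must check that ``$\tilde G\setminus_B G$'' is the right notion of shifted graph (in particular that the arbitrary reordering of $V_{G_{r_2}}$ placing $B$ first does not affect $f$, which the text already notes), and that relabelling which lines are ``first'' in the shifted E-ASG genuinely produces, under $\mathbb{P}_m(\cdot\mid\mathcal{F}_{r_2})$, the law $\mathbb{P}_{|V_{G_{r_2}}|}$ with $|B|$ distinguished starting lines. Care is also needed because the \emph{labels} (real/virtual) are \emph{not} $\mathcal{F}_{r_2}$-measurable, but this is harmless here since $F$ and $f$ do not depend on labels; still, one should state this explicitly to avoid confusion with the type-propagation arguments used earlier.
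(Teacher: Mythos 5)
Your proposal is correct and follows essentially the same route as the paper: \eqref{markpropf1} is proved by a one-generation-at-a-time induction (your induction on added depth is the same as the paper's induction on the number of transitions in $(r_2,r_1]$, with $\pi_n(G_{r_1})$ playing the role of the graph at an intermediate time $r$), checking in each of the three generation types that $F$ and $f_{G_{r_2},\cdot}(B,\cdot)$ obey the same last-generation recursion; and \eqref{markpropf2} is obtained exactly as in the paper by conditioning on $\mathcal{F}_{r_2}$, identifying the law of the shifted process $(G_{r_2+t}\setminus_B G_{r_2})_{t\ge0}$ with $\mathbb{P}_{|V_{G_{r_2}}|}$ via the Markov property, and matching the definition \eqref{defcoeff}. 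Your remarks on the well-definedness of $f$ under reordering, on integrability via \eqref{markpropf3}, and on the irrelevance of the (non-$\mathcal{F}_{r_2}$-measurable) labels are all consistent with the paper's treatment.
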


The idea is to prove \eqref{markpropf1} by induction on the number of transitions of the E-ASG on $(r_2,r_1]$. If \eqref{markpropf1} holds when there are $n$ transitions on $(r_2,r_1]$, then we deduce \eqref{multpropf2} below, which shows that \eqref{markpropf1} still holds when a supplementary transition is added into $(r_2,r_1]$, provided that some variants of \eqref{markpropf1} hold. 
We then prove those variants of \eqref{markpropf1} (namely \eqref{multpropf3}, \eqref{multpropf4} and \eqref{multpropf5} below) by using the definitions of $f_{\cdot,\cdot}(\cdot,\cdot)$ and $F^{\cdot}_{\cdot}(\cdot)$ in all possible cases. \eqref{markpropf2} will be a consequence of the Markov property for the E-ASG. 

\begin{proof} [Proof of Lemma \ref{markpropf}]

Let us fix $A \in \mathcal{P}(V_{G_{r_1}})$. 
Let $n$ be the number of transitions of the E-ASG on $(r_2,r_1]$. We prove \eqref{markpropf1} by induction on $n$. If $n=0$ then $G_{r_1}=G_{r_2}$ so, for any $B \in \mathcal{P}(V_{G_{r_2}})$, $\mathsf{depth}(G_{r_1} \setminus_B G_{r_2})=0$ and we see, by the definition of $f_{G_{r_2}, G_{r_1}}(B,\cdot)$ in \eqref{defpetitfg} and by the definition of $F^{\cdot}_{\cdot}(\cdot)$ in Section \ref{enlargedasgencodingfct}, that for any $A \in \mathcal{P}(V_{G_{r_1}})$, $f_{G_{r_2}, G_{r_1}}(B,A) = \mathds{1}_{A=B}$. Therefore the right-hand side of \eqref{markpropf1} equals $F^{i}_{G_{r_2}}(A)$. Since $G_{r_1}=G_{r_2}$ we get that \eqref{markpropf1} holds if $n=0$. 

Let us now fix $n \geq 0$, assume that \eqref{markpropf1} holds for any interval $(r_2,r_1]$ containing $n$ transitions of the E-ASG, and prove that it holds for an interval $(r_2,r_1]$ containing $n+1$ transitions. Let us choose $r \in (r_2,r_1]$ that lies strictly between the $n^{th}$ and $n+1^{th}$ transition on $(r_2,r_1]$. By the induction hypothesis we can apply \eqref{markpropf1} with $r$ instead of $r_1$ and get, for $C \in \mathcal{P}(V_{G_{r}})$: 
\begin{eqnarray}
F^i_{G_{r}}(C) = \sum_{B \in \mathcal{P}(V_{G_{r_2}})} F^i_{G_{r_2}}(B) f_{G_{r_2}, G_r}(B,C). \label{multpropf1}
\end{eqnarray}
We now choose $A \in \mathcal{P}(V_{G_{r_1}})$, multiply both sides of \eqref{multpropf1} by $f_{G_r, G_{r_1}}(C,A)$ and sum over all $C \in \mathcal{P}(V_{G_{r}})$. We get 
\begin{align}
\sum_{C \in \mathcal{P}(V_{G_{r}})} F^i_{G_{r}}(C) f_{G_r, G_{r_1}}(C,A) & = \sum_{C \in \mathcal{P}(V_{G_{r}})} f_{G_r, G_{r_1}}(C,A) \sum_{B \in \mathcal{P}(V_{G_{r_2}})} F^i_{G_{r_2}}(B) f_{G_{r_2}, G_r}(B,C) \nonumber \\
& = \sum_{B \in \mathcal{P}(V_{G_{r_2}})} F^i_{G_{r_2}}(B) \sum_{C \in \mathcal{P}(V_{G_{r}})} f_{G_{r_2}, G_r}(B,C) f_{G_r, G_{r_1}}(C,A). \label{multpropf2}
\end{align}
We distinguish three cases. If the $n+1^{th}$ transition of the E-ASG in $(r_2,r_1]$ is a coalescence, then we see from the definitions of $f_{\cdot,\cdot}(\cdot,\cdot)$ and $F^{\cdot}_{\cdot}(\cdot)$ (in \eqref{defpetitfg} and in Section \ref{enlargedasgencodingfct}) that for any $C \in \mathcal{P}(V_{G_{r}})$, $f_{G_r, G_{r_1}}(C,A) = \mathds{1}_{D(C)=A}$. Recall that for $C \in \mathcal{P}(V_{G_{r}})$, $D(C) \in \mathcal{P}(V_{G_{r_1}})$ is the set of sons, in $V_{G_{r_1}}$, of lines of $C$. We also have that $f_{G_{r_2}, G_r}(B,C)=F_{G_r \setminus_B G_{r_2}}^{|B|}(C)$ by definition of $f_{\cdot,\cdot}(\cdot,\cdot)$. We thus have 
\begin{align}
& \sum_{C \in \mathcal{P}(V_{G_{r}})} f_{G_{r_2}, G_r}(B,C) f_{G_r, G_{r_1}}(C,A) = \sum_{C \in \mathcal{P}(V_{G_{r}})} F_{G_r \setminus_B G_{r_2}}^{|B|}(C) \times \mathds{1}_{D(C)=A} \nonumber \\
= & \sum_{C \in \mathcal{P}(V_{G_{r}}) ; D(C) =A} F_{G_r \setminus_B G_{r_2}}^{|B|}(C) = F_{G_{r_1} \setminus_B G_{r_2}}^{|B|}(A) = f_{G_{r_2}, G_{r_1}}(B,A), \label{multpropf3}
\end{align}
where the last two equalities come from respectively \eqref{defFcoal} and \eqref{defpetitfg}. Proceeding similarly for the left-hand side of \eqref{multpropf2} we get 
\begin{eqnarray}
\sum_{C \in \mathcal{P}(V_{G_{r}})} F^i_{G_{r}}(C) f_{G_r, G_{r_1}}(C,A) = F^i_{G_{r_1}}(A). \label{multpropf4}
\end{eqnarray}
Plugging \eqref{multpropf3} and \eqref{multpropf4} into \eqref{multpropf2} we obtain that \eqref{markpropf1} holds for this interval $(r_2,r_1]$ with $n+1$ transitions of the E-ASG, when the last transition is a coalescence. We now assume that the $n+1^{th}$ transition in $(r_2,r_1]$ is a multiple branching with weight $W$. We then see from the definitions of $f_{\cdot,\cdot}(\cdot,\cdot)$ and $F^{\cdot}_{\cdot}(\cdot)$ (in \eqref{defpetitfg} and in Section \ref{enlargedasgencodingfct}) that for any $C \in \mathcal{P}(V_{G_{r}})$ we have $f_{G_r, G_{r_1}}(C,A) = \mathds{1}_{P(A)=C} \times (1+W \mathds{1}_{W < 0})^{\alpha(A)} \times (W \mathds{1}_{W > 0})^{\beta(A)} \times (-W)^{\gamma(A)}$. We recall that for $A \in \mathcal{P}(V_{G_{r_1}})$, $P(A) \in \mathcal{P}(V_{G_{r}})$ is the set of parents, in $V_{G_{r}}$, of lines of $A$. We also have from the definition of $f_{\cdot,\cdot}(\cdot,\cdot)$ that $f_{G_{r_2}, G_r}(B,C)=F_{G_r \setminus_B G_{r_2}}^{|B|}(C)$. We thus get 
\begin{align}
& \sum_{C \in \mathcal{P}(V_{G_{r}})} f_{G_{r_2}, G_r}(B,C) f_{G_r, G_{r_1}}(C,A) \nonumber \\
= & \sum_{C \in \mathcal{P}(V_{G_{r}})} F_{G_r \setminus_B G_{r_2}}^{|B|}(C) \times \mathds{1}_{P(A)=C} \times (1+W \mathds{1}_{W < 0})^{\alpha(A)} \times (W \mathds{1}_{W > 0})^{\beta(A)} \times (-W)^{\gamma(A)} \nonumber \\
= & F_{G_r \setminus_B G_{r_2}}^{|B|}(P(A)) \times (1+W \mathds{1}_{W < 0})^{\alpha(A)} \times (W \mathds{1}_{W > 0})^{\beta(A)} \times (-W)^{\gamma(A)} \nonumber \\
= & F_{G_{r_1} \setminus_B G_{r_2}}^{|B|}(A) = f_{G_{r_2}, G_{r_1}}(B,A), \label{multpropf5}
\end{align}
where the last two equalities come from respectively \eqref{defFcoal} and \eqref{defpetitfg}. Proceeding similarly for the left-hand side of \eqref{multpropf2} we get that \eqref{multpropf4} also holds when the $n+1^{th}$ transition of the E-ASG on $(r_2,r_1]$ is a multiple branching. Plugging \eqref{multpropf5} and \eqref{multpropf4} into \eqref{multpropf2} we obtain that \eqref{markpropf1} holds for this interval $(r_2,r_1]$ with $n+1$ transitions of the E-ASG, when the last transition is a multiple branching. The proof of \eqref{markpropf1} for this interval $(r_2,r_1]$ with $n+1$ transitions of the E-ASG, in the case where the last transition is a single branching, is done similarly. This concludes the proof by induction. 


We now justify \eqref{markpropf2}. First, we see from \eqref{markpropf3} that the conditional expectation in \eqref{markpropf2} is well-defined. 
Using the definitions of $\cdot \setminus_B \cdot$ and $f_{\cdot,\cdot}(\cdot,\cdot)$ we get that, for any $B \in \mathcal{P}(V_{G_{r_2}})$, the term inside that conditional expectation equals 
\begin{align*}
\mathds{1}_{|V_{G_{r_1} \setminus_B G_{r_2}}| = k} \sum_{A \in \mathcal{P}(V_{G_{r_1} \setminus_B G_{r_2}}) ; |A|=j} F_{G_{r_1} \setminus_B G_{r_2}}^{|B|}(A), 
\end{align*}
which is only a function of the process $(G_{t+r_2} \setminus_B G_{r_2})_{t \geq 0}$. By the Markov property for $(G_{\beta})_{\beta \geq 0}$ we see that $(G_{t+r_2} \setminus_B G_{r_2})_{t \geq 0}$ under $\mathbb{P}_m (\cdot | \mathcal{F}_{r_2} )$ is equal in law to the E-ASG under $\mathbb{P}_{|V_{G_{r_2}}|}(\cdot)$. Therefore, using the definition of $R^{\cdot,\cdot}_t(\cdot,\cdot)$ in \eqref{defcoeff}, we get that the left-hand side of \eqref{markpropf2} equals $R^{|V_{G_{r_2}}|,k}_{r_1-r_2}(|B|,j)$, completing the proof. 



\end{proof}

\begin{remark} \label{markpropfquenchedrmk}
Note that \eqref{markpropf1} actually holds for any $G, \tilde G \in \mathbb{G}_m$ such that $\pi_{\mathsf{depth}(G)}(\tilde G)=G$. In particular, \eqref{markpropf1} holds in the quenched setting: for any fixed environment $\omega$ and $t > 0$ we have $\mathbb{P}^{\omega,t}_m$-a.s. 
\begin{align}
\forall 0\leq r_1<r_2 \leq t, \ \forall A \in \mathcal{P}(V_{G^{\omega,t}_{r_1}}), \ F^i_{G^{\omega,t}_{r_1}}(A) = \sum_{B \in \mathcal{P}(V_{G^{\omega,t}_{r_2}})} F^i_{G^{\omega,t}_{r_2}}(B) f_{G^{\omega,t}_{r_2}, G^{\omega,t}_{r_1}}(B,A). \label{markpropfquenched} 
\end{align}
Also, proceeding as in the proof of \eqref{markpropf2} we can see that for $0\leq r_1<r_2\leq t$ and $B \in \mathcal{P}(V_{G^{\omega,t}_{r_2}})$ we have 
\begin{align}
\mathbb{E}^{\omega,t}_m \left [ \mathds{1}_{|V_{G^{\omega,t}_{r_1}}| = k} \sum_{A \in \mathcal{P}(V_{G^{\omega,t}_{r_1}}) ; |A|=j} f_{G^{\omega,t}_{r_2}, G^{\omega,t}_{r_1}}(B,A) \big | \mathcal{F}^{\omega,t}_{r_2} \right ] = R^{|V_{G^{\omega,t}_{r_2}}|,k,\omega}_{r_1,r_2}(|B|,j). \label{markpropfquenched2}  
\end{align}
\end{remark}

\subsubsection{semigroup property: Proof of Proposition \ref{semigroupprop}} \label{behaviour023}

The semigroup property can now be derived thanks to Lemma \ref{markpropf}: Using the definition of $R^{\cdot,\cdot}_{\cdot}(\cdot,\cdot)$ in \eqref{defcoeff}, dominated convergence, \eqref{markpropf1} and \eqref{markpropf2} with $r_1=t+r$ and $r_2=t$, and again the definition of $R^{\cdot,\cdot}_{\cdot}(\cdot,\cdot)$, we get that $R^{m,k}_{t+r}(i,j)$ equals 
\begin{align*}
& \sum_{\tilde k = 1}^{\infty} \mathbb{E}_m \left [ \mathds{1}_{|V_{G_{t}}| = \tilde k} \mathds{1}_{|V_{G_{t+r}}| = k} \sum_{A \in \mathcal{P}(V_{G_{t+r}}) ; |A|=j} F^i_{G_{t+r}}(A) \right ] \\
= & \sum_{\tilde k = 1}^{\infty} \mathbb{E}_m \left [ \mathds{1}_{|V_{G_{t}}| = \tilde k} \mathds{1}_{|V_{G_{t+r}}| = k} \sum_{A \in \mathcal{P}(V_{G_{t+r}}) ; |A|=j} \left ( \sum_{B \in \mathcal{P}(V_{G_{t}})} F^i_{G_{t}}(B) f_{G_t, G_{t+r}}(B,A) \right ) \right ] \\
= & \sum_{\tilde k = 1}^{\infty} \mathbb{E}_m \left [ \mathds{1}_{|V_{G_{t}}| = \tilde k} \sum_{B \in \mathcal{P}(V_{G_{t}})} F^i_{G_{t}}(B) \mathbb{E}_m \left [ \mathds{1}_{|V_{G_{t+r}}| = k} \sum_{A \in \mathcal{P}(V_{G_{t+r}}) ; |A|=j} f_{G_t, G_{t+r}}(B,A) \big | \mathcal{F}_{t} \right ] \right ] \\
= & \sum_{\tilde k = 1}^{\infty} \mathbb{E}_m \left [ \mathds{1}_{|V_{G_{t}}| = \tilde k} \sum_{B \in \mathcal{P}(V_{G_{t}})} F^i_{G_{t}}(B) R^{\tilde k,k}_{r}(|B|,j) \right ]
\end{align*}
\begin{align*}
= & \sum_{\tilde k = 1}^{\infty} \sum_{\tilde j = 1}^{\tilde k} \mathbb{E}_m \left [ \mathds{1}_{|V_{G_{t}}| = \tilde k} \sum_{B \in \mathcal{P}(V_{G_{t}}) ; |B| = \tilde j} F^i_{G_{t}}(B) \right ] R^{\tilde k,k}_{r}(\tilde j,j) = \sum_{\tilde k = 1}^{\infty} \sum_{\tilde j = 1}^{\tilde k} R^{m,\tilde k}_{t}(i, \tilde j) R^{\tilde k,k}_{r}(\tilde j,j). 
\end{align*}

\subsubsection{Another lemma} \label{behaviour024}

The following lemma will be used in the proof of Theorem \ref{equadiffcoeffnew} where we differentiate $R^{m,k}_t(i,j)$ with respect to $t$. For this we deal with finitely many relevant terms and some remainders consisting of sums of infinitely many terms. The following lemma will allow to see that those infinitely many terms can be non-zero only on an event with negligible probability, so that we only need to use a simple deterministic bound for the remainders in order to neglect their expectations, and eventually work with finitely many terms. 

\begin{lemme} \label{atleast2coal}
Let us fix $m \geq 1, r_1 > r_2 \geq 0$. $\mathbb{P}_m$-a.s. we have that, if there are strictly less than two coalescences in the E-ASG on $(r_2,r_1]$, then $f_{G_{r_2}, G_{r_1}}(B,A) = 0$ for any $A \in \mathcal{P}(V_{G_{r_1}})$ and $B \in \mathcal{P}(V_{G_{r_2}})$ such that $|B| \geq |A|+2$. 

\end{lemme}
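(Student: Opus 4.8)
The plan is to track how the quantity $f_{G_{r_2},G_{r_1}}(B,A)$ depends on the cardinalities of $B$ and $A$ along the transitions of the E-ASG, and to show that a decrease of more than one in cardinality from generation $0$ to the final generation of $G_{r_1}\setminus_B G_{r_2}$ forces the function to vanish unless at least two coalescences have occurred. Recall from \eqref{defpetitfg} that $f_{G_{r_2},G_{r_1}}(B,A) = F^{|B|}_{G_{r_1}\setminus_B G_{r_2}}(A)$, so the statement is really a statement about the function $F^l_G(\cdot)$: if $G \in \mathbb{G}_l$ has strictly fewer than two coalescing generations, then $F^l_G(A) = 0$ whenever $|A| \leq l - 2$.

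\textbf{Key steps.} First I would set up an induction on $\mathsf{depth}(G)$, with $G \in \mathbb{G}_l$ having at most one coalescing generation, proving that $F^l_G(A) = 0$ for all $A \in \mathcal{P}(V_G)$ with $|A| \leq l-2$. The base case $\mathsf{depth}(G)=0$ is immediate since then $V_G$ has exactly $l$ vertices and $F^l_G(A) = \mathds{1}_{A = \{L_1,\dots,L_l\}}$, which is supported on $|A| = l$. For the inductive step, pass from $\pi_n(G)$ to $G \in \mathbb{G}_l^{n+1}$ and distinguish the three kinds of generation $n+1$ using \eqref{defFjump}, \eqref{defFcontsel}, \eqref{defFcoal}. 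The crucial observation is a cardinality bound: for a multi-branching or single-branching generation, $|P(A)| \leq |A|$, so if $|A| \leq l-2$ then $|P(A)| \leq l-2$ and the factor $F^l_{\pi_n(G)}(P(A))$ already vanishes by the induction hypothesis (here $\pi_n(G)$ still has at most one coalescing generation). For a coalescing generation, \eqref{defFcoal} gives $F^l_G(A) = \sum_{B: D(B) = A} F^l_{\pi_n(G)}(B)$; any such $B$ satisfies $|B| \leq |A| + 1 \leq l-1$, but now $\pi_n(G)$ has \emph{no} coalescing generation (we are in the ``at most one'' regime and this is the one), and for a graph with zero coalescences one checks directly that the number of vertices never decreases, so $|V_{\pi_n(G)}| \geq l$ and $F^l_{\pi_n(G)}$ is supported on sets of cardinality $\geq l$ — hence $F^l_{\pi_n(G)}(B) = 0$ for all the relevant $B$, and $F^l_G(A) = 0$.

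\textbf{Translating back.} Having the claim for $F^l_G(\cdot)$, I apply it with $l = |B|$ and $G = G_{r_1}\setminus_B G_{r_2}$, whose generations are exactly the generations of the E-ASG on $(r_2,r_1]$ (shifted), so that $G$ has strictly fewer than two coalescing generations precisely when the E-ASG has strictly fewer than two coalescences on $(r_2,r_1]$. The hypothesis $|B| \geq |A| + 2$ is exactly $|A| \leq |B| - 2 = l - 2$, so $F^l_G(A) = 0$, i.e.\ $f_{G_{r_2},G_{r_1}}(B,A) = 0$, $\mathbb{P}_m$-a.s.

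\textbf{Main obstacle.} The only delicate point is the sub-claim that a graph in $\mathbb{G}_l$ with no coalescing generation has $F^l$ supported on sets of full starting cardinality $\geq l$; this needs a clean auxiliary statement, most easily phrased as: \emph{if $G \in \mathbb{G}_l$ has no coalescing generation then $F^l_G(A) \neq 0$ implies $P^{(k)}(A) = \{L_1,\dots,L_l\}$ for the iterated-parent map, hence $|A| \geq l$.} This follows by the same induction using only \eqref{defFjump} and \eqref{defFcontsel} (the parent map is injective-up-to-the-one-branch structure, so $|A| \geq |P(A)| \geq \dots \geq |\{L_1,\dots,L_l\}| = l$). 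Once this lemma is isolated, the rest is bookkeeping with the three recursion cases and the elementary inequalities $|P(A)| \leq |A|$ and $|B| \leq |D(B)| + 1$.
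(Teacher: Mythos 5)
Your proposal is correct and rests on the same mechanism as the paper's proof: the minimal cardinality of a set in the support of $F^{|B|}_{G_{r_1}\setminus_B G_{r_2}}(\cdot)$ starts at $|B|$, cannot decrease across a branching generation (since a nonzero value forces a nonzero value at the parent set and $|P(A)|\leq |A|$), and decreases by at most one across a coalescing generation (since $|B|\leq |D(B)|+1$) — the paper phrases this as the evolution of a running minimum $M_\beta$ along the transitions, while you phrase it as an induction on $\mathsf{depth}$. The only cosmetic slip is writing $G\in\mathbb{G}_l$ for the shifted graph, which actually lies in $\mathbb{G}_{|V_{G_{r_2}}|}$ with $|V_{G_{r_2}}|\geq |B|=l$; this changes nothing since your induction and base case only use that $F^l_{G_0}$ is supported on $\{L_1,\dots,L_l\}$.
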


\begin{proof}

We fix $k \geq 1, j \in \{1,...,k\}$. Let $(\hat G_{\beta})_{\beta \geq 0}$ be a realization of the E-ASG under $\mathbb{P}_k$ and $\hat T_1<\hat T_2<...$ be its transition times. Let $M_{\beta} := \min \{ |A|, A \in \mathcal{P}(V_{\hat G_{\beta}}) \ \text{s.t.} \ F^j_{\hat G_{\beta}}(A) \neq 0 \}$. We see from the definition of $F^{j}_{\cdot}(\cdot)$ in Section \ref{enlargedasgencodingfct} that $M_0 = j$, and $M$ is constant between two transition times of $(\hat G_{\beta})_{\beta \geq 0}$. Moreover, at a transition time $\hat T_n$, if the transition is a single or a multiple branching then $M_{\hat T_n}\geq M_{\hat T_n-}$. If the transition at time $\hat T_n$ is a coalescence between two lines that both lie in a set $A \in \mathcal{P}(V_{\hat G_{\hat T_n-}})$ such that $|A|=M_{\hat T_n-}$, then $M_{\hat T_n}=M_{\hat T_n-}-1$. If the transition at time $\hat T_n$ is a coalescence, but there is no set $A \in \mathcal{P}(V_{\hat G_{\hat T_n-}})$ with $|A|=M_{\hat T_n-}$ that contains both lines involved in the coalescence, then $M_{\hat T_n}\geq M_{\hat T_n-}$. In any case we get that, if there are strictly less than two coalescences on $[0,t]$, then $M_t \geq j-1$. In this case we thus have $F^j_{\hat G_t}(A)=0$ for all $A \in \mathcal{P}(V_{\hat G_{t}})$ such that $|A| \leq j-2$. 

Now let us fix $m \geq 1, r_1 > r_2 \geq 0$, and let $(G_{\beta})_{\beta \geq 0}$ be a realization of the E-ASG under $\mathbb{P}_m$. We assume that $(G_{\beta})_{\beta \geq 0}$ has strictly less than two coalescences on $(r_2,r_1]$ and choose $A \in \mathcal{P}(V_{G_{r_1}})$ and $B \in \mathcal{P}(V_{G_{r_2}})$ such that $|B| \geq |A|+2$. By the definition of $f_{\cdot,\cdot}(\cdot,\cdot)$ in \eqref{defpetitfg} we have $f_{G_{r_2}, G_{r_1}}(B,A) = F_{G_{r_1} \setminus_B G_{r_2}}^{|B|}(A)$. Applying the above with $k := |V_{G_{r_2}}|, \hat G_{\beta} := G_{r_2 + \beta} \setminus_B G_{r_2}$ and $j := |B|$ we get $f_{G_{r_2}, G_{r_1}}(B,A)=0$. 

\end{proof}

\subsection{Small-time behavior} \label{behaviour01}

This subsection and the following are dedicated to the proof of Theorem \ref{equadiffcoeffnew}. The proof is a little heavy as it requires to consider separately several sub-cases. In order to make the proof lighter and more understandable, we make the choice to write the proof of Theorem \ref{equadiffcoeffnew} only in the case $\sigma=0$ (i.e. without single branchings). The general case with $\sigma > 0$ can be proved exactly in the same way and involves more sub-cases but it does not bring any additional technical issue. \textbf{In this Subsection and the following, we thus assume that $\sigma=0$}. 

As shown by Proposition \ref{semigroupprop}, $(R^{\cdot,\cdot}_t(\cdot,\cdot))_{t \geq 0}$ is a semigroup. 
To derive the system of differential equations satisfied by $(R^{\cdot,\cdot}_t(\cdot,\cdot))_{t \geq 0}$, we first need to establish the asymptotic behavior of $R^{m,k}_t(i,j)$ as $t$ goes to $0$, for all possible combinations of $m,k,i,j$. We do this in the following lemma. Recall the definitions of the coefficients $d_j, e_{k,j}$ and $\tau(i,j)$ in Definition \ref{defcoefedo} (but note that, since $\sigma=0$ here, we have $d_j=\lambda + j(j-1)$). We have 

\begin{lemme} \label{asymptcoeffnew}

\begin{align}
\forall k \ \text{even} \ \geq 2, j \in \{ 1,..., k \}, i \in \{ 1,..., j \wedge k/2 \}, \ & \lim_{\epsilon \rightarrow 0} \frac1{\epsilon} R^{k/2,k}_\epsilon(i,j) = \tau(i,j), \label{asymptcoeff1new} \\
\forall k \geq 1, j \in \{ 1,..., k \}, \ & \lim_{\epsilon \rightarrow 0} \frac1{\epsilon} R^{k+1,k}_\epsilon(j+1,j) = \tau(j+1,j), \label{asymptcoeff23new} \\
\forall k \geq 1, j \in \{ 1,..., k \}, \ & \lim_{\epsilon \rightarrow 0} \frac1{\epsilon} R^{k+1,k}_\epsilon(j,j) = e_{k,j}, \label{asymptcoeff27new} \\
\forall k \geq 1, j \in \{ 1,..., k \}, \ & \lim_{\epsilon \rightarrow 0} \frac1{\epsilon} (R^{k,k}_\epsilon(j,j)-1) = -d_k. \label{asymptcoeff26new}
\end{align}
For $k \geq 1, m \neq k/2, j \in \{ 1,..., k \}, i \in \{ 1, 2,..., (j-1) \wedge m \}$, and $\epsilon>0$ we have 
\begin{eqnarray}
\left | R^{m,k}_\epsilon(i,j) \right | \leq 16 m^4 \frac{(jk)^j}{j!} (1+\lambda)^2 \epsilon^2. \label{asymptcoeff13new}
\end{eqnarray}
For $k \geq 1, m \neq k+1, j \in \{ 1,..., k \wedge (m-1) \}$, and $\epsilon>0$ we have 
\begin{eqnarray}
\left | R^{m,k}_\epsilon(j+1,j) \right | \leq 16 m^4 \frac{(jk)^j}{j!} (1+\lambda)^2 \epsilon^2. \label{asymptcoeff14new}
\end{eqnarray}
For $k \geq 1, m \notin \{k/2, k, k+1\}, j \in \{ 1,..., k \wedge m \}$, and $\epsilon>0$ we have 
\begin{eqnarray}
\left | R^{m,k}_\epsilon(j,j) \right | \leq 16 m^4 \frac{(jk)^j}{j!} (1+\lambda)^2 \epsilon^2. \label{asymptcoeff15new}
\end{eqnarray}

\end{lemme}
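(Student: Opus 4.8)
The plan is to analyze $R^{m,k}_h(i,j)$ for small $h$ by conditioning on the number and type of transitions of the E-ASG on $[0,h]$. Since $\sigma=0$, between times $0$ and $h$ the E-ASG makes coalescences (each pair at rate $2$, total rate $m(m-1)$ when $m$ lines are present) and multi-branchings (at total rate $\lambda$, all lines splitting simultaneously). The probability of exactly one transition on $[0,h]$ is $O(h)$, and the probability of at least two transitions is $O(h^2)$; more precisely, the probability of two or more transitions is at most $\sum_{l\geq 2}\lambda^l h^l/l!$ plus polynomial-in-$m$ corrections coming from the coalescence rates bounded by $m(m-1)\leq m^2$. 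The first step is to record these elementary estimates on the holding times and transition probabilities of the line-counting chain, using that its rates are $q(i,i-1)=i(i-1)$, $q(i,2i)=\lambda$.

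The second step handles the "leading" cases \eqref{asymptcoeff1new}--\eqref{asymptcoeff26new}, which correspond to \emph{exactly one} transition occurring. For \eqref{asymptcoeff1new}, starting from $m=k/2$ lines, reaching $|V_{G_h}|=k$ requires a multi-branching (probability $\lambda h + o(h)$, with weight $W$ distributed as $\nu/\lambda$ conditioned on being the jump); then $F^i_{G_h}(A)$ for $|A|=j$ is computed directly from \eqref{defFjump} applied to the graph with one multi-branching generation, and summing $\sum_{|A|=j}F^i_{G_h}(A)$ over subsets $A$ of the $k$ descendants of the $k/2$ parents gives exactly the combinatorial expression defining $\tau(i,j)=\binom{i}{j-i}\int(1+z)^{2i-j}(-z)^{j-i}\nu(dz)$ when $i\leq j\leq 2i$, and $0$ otherwise (with the case $j=i-1$ impossible here since a branching cannot lose a line). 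For \eqref{asymptcoeff23new} and \eqref{asymptcoeff27new}, starting from $k+1$ lines, reaching $k$ lines requires exactly one coalescence (total rate $(k+1)k$, probability $(k+1)k\,h+o(h)$); applying \eqref{defFcoal} one sums $F^j_{\pi_0}(B)=\mathds{1}_{B=\{L_1,\dots,L_j\}}$ or $\mathds{1}_{B=\{L_1,\dots,L_{j+1}\}}$ over the preimages $D(B)=A$. Counting how the uniformly chosen coalescing pair interacts with the starting set $\{L_1,\dots,L_{j+1}\}$ (resp. $\{L_1,\dots,L_j\}$) against the $\binom{k+1}{j}$ target sets of size $j$ produces $\tau(j+1,j)=(j+1)j$ and $e_{k,j}=(k+1)k-j(j-1)$ respectively. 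For \eqref{asymptcoeff26new}, starting and ending with $k$ lines means \emph{no} transition with probability $1-d_k h+o(h)$ (in which case $F^j_{G_h}=F^j_{G_0}$ so the sum over $|A|=j$ equals $1$), plus the multi-branching-then-nothing contribution which is $O(h^2)$; subtracting $1$ and dividing by $h$ yields $-d_k$.

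The third step handles the "error" cases \eqref{asymptcoeff13new}--\eqref{asymptcoeff15new}. The key observation is that in each of these the specified combination $(m,k,i,j)$ \emph{cannot} be realized by a single transition: going from $m$ to $k$ lines with $k\neq m, m\pm$(the admissible values $2m$, $m-1$) needs two or more transitions; and for $m=k$ but $i<j$, a single transition (necessarily a coalescence, since $m=k$) would need $F^i$ evaluated with a drop of line count, but $F^i_{G_0}(B)=\mathds{1}_{B=\{L_1,\dots,L_i\}}$ and one coalescence from $i$ lines produces nonzero $F^i$ only on sets of size $i-1$ or $i$, not on any set of size $j\geq i+1$ unless — this is where Lemma \ref{atleast2coal} is used — there are at least two coalescences. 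So on the event of at most one transition the relevant sum $\sum_{|A|=j}F^i_{G_h}(A)$ vanishes. On the complementary event (probability $O(h^2)$, bounded explicitly by $h^2(m^4+4\lambda m^2+\sum_{l\geq2}\lambda^l h^{l-2}/l!)$ after a crude count of two-transition scenarios: coalescence then coalescence has rate $\leq m^2(m^2)$ hence probability $\leq m^4 h^2/2$, coalescence–branching or branching–coalescence each $\leq \lambda m^2 h^2$, branching–branching $\leq \lambda^2 h^2/2$, and three-or-more transitions absorb into the tail sum), we bound $|\sum_{|A|=j}F^i_{G_h}(A)|\leq (jk)^j/j!$ using the deterministic estimate \eqref{estimcomb} together with $\binom{k}{j}\leq k^j/j!$ on the event $|V_{G_h}|=k$. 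Multiplying the probability bound by this deterministic bound gives \eqref{asymptcoeff13new}--\eqref{asymptcoeff15new}.

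The main obstacle is the bookkeeping in the third step, \emph{namely showing the single-transition contribution to $R^{m,k}_h(i,j)$ is exactly zero in all the excluded-case configurations}: one must carefully verify, for every sub-case, that either the line-count constraint $|V_{G_h}|=k$ is incompatible with one transition, or (when $m=k$) that Lemma \ref{atleast2coal}, applied with a suitable shift, forces $F^i_{G_h}(A)=0$ for all $|A|=j\geq i+1$ after at most one coalescence — and symmetrically that no branching can create the needed set. Once this vanishing is established the rest is routine: the leading cases reduce to the explicit combinatorial identities built into the definitions of $\tau$, $e_{k,j}$, $d_k$ via \eqref{defFjump}, \eqref{defFcoal}, and the $o(h)$ control on multiple transitions, while the error cases follow from combining the $O(h^2)$ probability bound with \eqref{estimcomb}.
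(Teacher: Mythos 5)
Your proposal is correct and follows essentially the same route as the paper: decompose $R^{m,k}_h(i,j)$ according to whether the E-ASG has $0$, $1$, or $\geq 2$ transitions on $[0,h]$, compute the single-transition contributions explicitly via \eqref{defFjump} and \eqref{defFcoal} to produce $\tau(i,j)$, $e_{k,j}$, $-d_k$, check that the single-transition contribution vanishes identically in the excluded configurations, and absorb the $\geq 2$-transition event into the $O(h^2)$ bound by combining its probability with the deterministic estimate \eqref{borneFps}. The only cosmetic differences are that the paper establishes the vanishing after one coalescence directly rather than by citing Lemma \ref{atleast2coal}, and that its $4\lambda m^2$ term accounts for the doubled coalescence rate ($\leq 4m^2$) after a branching.
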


To prove Lemma \ref{asymptcoeffnew} we study the behaviors of the expectations defining the coefficients $R^{m,k}_\epsilon(i,j)$, for all possible choices of $m,k,i,j$, on the events where the number of transitions of the E-ASG on $[0,\epsilon]$ is $0$, $1$, and larger or equal to $2$. We then deduce the estimates \eqref{asymptcoeff1new}-\eqref{asymptcoeff15new} from those behaviors. This involves both the combinatorics arising from the definition of $F^{\cdot}_{\cdot}(\cdot)$, and the small-time behavior of transition probabilities of the E-ASG. Some technical lemmas are required to cover some cases; we state and prove them in Appendix~\ref{A3}. 

\begin{proof} [Proof of Lemma \ref{asymptcoeffnew}]

Let $N_{mult}(\epsilon)$ and $N_{coal}(\epsilon)$ denote respectively the number of multiple branchings and coalescences of the E-ASG on $[0,\epsilon]$. $N_{mult}(\epsilon)+N_{coal}(\epsilon)$ is then the total number of transitions of the E-ASG on $[0,\epsilon]$ (since we assume that $\sigma=0$). For all $p \geq 1$ we set 
\begin{align*}
C_p(\epsilon) & := \{ N_{mult}(\epsilon)+N_{coal}(\epsilon)=p \} \ \text{and} \ C_{\geq p}(\epsilon) := \{ N_{mult}(\epsilon)+N_{coal}(\epsilon) \geq p \}. 
\end{align*}

Let us fix $k, m \geq 1$, $j \in \{ 1,..., k \}$ and $i \in \{ 1, 2,..., m \}$. We can write 
\begin{align}
R^{m,k}_\epsilon(i,j) & = \mathbb{E}_{m} \left [ \mathds{1}_{|V_{G_{\epsilon}}| = k} \sum_{A \in \mathcal{P}(V_{G_{\epsilon}}) ; |A|=j} F^i_{G_{\epsilon}}(A) \right ] \nonumber \\
& = \mathbb{E}_{m} \left [ \mathds{1}_{C_0(\epsilon)} \cdots \right ] + \mathbb{E}_{m} \left [ \mathds{1}_{C_1(\epsilon)} \cdots \right ] + \mathbb{E}_{m} \left [ \mathds{1}_{C_{\geq 2}(\epsilon)} \cdots \right ] \nonumber \\ 
& =: E_0({m},k,i,j,\epsilon) + E_1({m},k,i,j,\epsilon) + E_{\geq 2}({m},k,i,j,\epsilon). \label{asymptcoeff3new}
\end{align}

We first study $E_{\geq 2}(m,k,i,j,\epsilon)$. 
Fix $m,k \geq 1, j \in \{ 1,..., k \}, i \in \{ 1, 2,..., m \}$. Using the definition of $E_{\geq 2}(m,k,i,j,\epsilon)$ in \eqref{asymptcoeff3new}, \eqref{borneFps} with $t=\epsilon$, and \eqref{esteasg>2} from Lemma \ref{esteasg}, we get 
\begin{eqnarray}
E_{\geq 2}(m,k,i,j,\epsilon) \leq \frac{(jk)^j}{j!} \mathbb{E}_m \left [ \mathds{1}_{C_{\geq 2}(\epsilon)} \right ] \leq 16 m^4 \frac{(jk)^j}{j!} (1+\lambda)^2 \epsilon^2. \label{asymptcoeff20new}
\end{eqnarray}
\eqref{asymptcoeff20new} implies in particular that $\epsilon^{-1} E_{\geq 2}(m,k,i,j,\epsilon)$ converges to $0$ as $\epsilon$ goes to $0$. 

In \eqref{asymptcoeff3new}, since $E_{\geq 2}(m,k,i,j,\epsilon)$ can be neglected, we now focus on cases where there is $0$ or $1$ transition on $[0,\epsilon]$. We first justify that $E_0(m,k,i,j,\epsilon)$ is null for some choices of $m,k,i,j$. Let $k, m \geq 1, j \in \{ 1,..., k \}, i \in \{ 1, 2,..., (j+1) \wedge m \}$ with $i \neq j$. 
We have, on $C_0(\epsilon)$, that $G_\epsilon = G_0$ so $F^i_{G_\epsilon}(\cdot) = F^i_{G_0}(\cdot)$ and, by definition of $F^i_{\cdot}(\cdot)$ in Section \ref{enlargedasgencodingfct}, $F^i_{G_0}(A) = \mathds{1}_{A=\{ L_1,..., L_i \}}$ (where we recall that $L_1,..., L_i$ denote the first $i$ lines of $G_0$). This shows that $\mathds{1}_{C_0(\epsilon)} \sum_{A \in \mathcal{P}(V_{G_{\epsilon}}) ; |A|=j} F^i_{G_{\epsilon}}(A) = 0$ if $j \neq i$. Taking expectation $\mathbb{E}_m[.]$ we get 
\begin{eqnarray}
\forall k, m \geq 1, j \in \{ 1,..., k \}, i \in \{ 1, 2,..., (j+1) \wedge m \} \ \text{with} \ i \neq j, \ E_0(m,k,i,j,\epsilon) = 0. \label{asymptcoeff16new}
\end{eqnarray}
Let $k \geq 1, m \neq k, j \in \{ 1,..., k \}, i \in \{ 1, 2,..., m \}$. On $C_0(\epsilon)$ there is no transition of the E-ASG on $[0,\epsilon]$ so $\mathds{1}_{C_0(\epsilon)} \mathds{1}_{|V_{G_{\epsilon}}| = k} = 0$, $\mathbb{P}_m$ almost surely. Taking expectation $\mathbb{E}_m[.]$ we thus get 
\begin{eqnarray}
\forall k \geq 1, m \neq k, j \in \{ 1,..., k \}, i \in \{ 1, 2,..., m \}, \ E_0(m,k,i,j,\epsilon) = 0. \label{asymptcoeff17new}
\end{eqnarray}

We now study $E_0(k,k,j,j,\epsilon)$ where $k \geq 1, j \in \{ 1,..., k \}$. Recall that, on $C_0(\epsilon)$, we have $G_\epsilon = G_0$ and $F^j_{G_\epsilon}(\cdot) = F^j_{G_0}(\cdot)$ and, by definition, $F^j_{G_0}(A) = \mathds{1}_{A=\{ L_1,..., L_j \}}$. Therefore 
\[ \mathds{1}_{C_0(\epsilon)} \mathds{1}_{|V_{G_{\epsilon}}| = k} \sum_{A \in \mathcal{P}(V_{G_{\epsilon}}) ; |A|=j} F^j_{G_{\epsilon}}(A) = \mathds{1}_{C_0(\epsilon)} \mathds{1}_{|V_{G_{0}}| = k}. \] 
Taking expectation $\mathbb{E}_k[.]$ and subtracting $1$ we get 
\[ E_0(k,k,j,j,\epsilon) - 1 = \mathbb{P}_k \left ( C_0(\epsilon) \right ) - 1 = e^{-\epsilon(\lambda + k(k-1))} - 1. \]
In the last equality we used that, under $\mathbb{P}_{k}$, the first transition of the E-ASG appears at rate $\lambda + k(k-1)$. We deduce that 
\begin{eqnarray}
\forall k \geq 1, j \in \{ 1,..., k \}, \ \lim_{\epsilon \rightarrow 0} \frac{E_{0}(k,k,j,j,\epsilon) - 1}{\epsilon} = - \left ( \lambda + k(k-1) \right ) = - d_k. \label{asymptcoeff30new}
\end{eqnarray}

We now study the terms $E_1(m,k,i,j,\epsilon)$. First, let $k \geq 1, m \neq k/2, j \in \{ 1,..., k \}, \\ i \in \{ 1, 2,..., (j-1) \wedge m \}$. On $C_1(\epsilon)$ there is exactly one transition on $[0,\epsilon]$. Under $\mathbb{P}_m$, if this transition is a multiple branching, then $|V_{G_{\epsilon}}| = 2m \neq k$ so $\mathds{1}_{|V_{G_{\epsilon}}| = k} \sum_{A \in \mathcal{P}(V_{G_{\epsilon}}) ; |A|=j} F^i_{G_{\epsilon}}(A) = 0$. 
Recall that, by definition, $F^i_{G_0}(A) = \mathds{1}_{A=\{ L_1,..., L_i \}}$, and that $D(\{ L_1,..., L_i \}) \in \mathcal{P}(V_{G_{\epsilon}})$ denotes the set of sons in $V_{G_{\epsilon}}$ of the lines $L_1,..., L_i$. 
If the transition on $[0,\epsilon]$ is a coalescence then $|D(\{ L_1,..., L_i \})| \in \{i-1,i\}$ so $|D(\{ L_1,..., L_i \})| < j$. By \eqref{defFcoal} and by $F^i_{G_0}(A) = \mathds{1}_{A=\{ L_1,..., L_i \}}$ we get that $F^i_{G_\epsilon}(A) = 0$ if $A \neq D(\{ L_1,..., L_i \})$. In particular $F^i_{G_\epsilon}(A) = 0$ for all $A \in \mathcal{P}(V_{G_{\epsilon}})$ such that $|A|=j$. Therefore $\mathds{1}_{|V_{G_{\epsilon}}| = k} \sum_{A \in \mathcal{P}(V_{G_{\epsilon}}) ; |A|=j} F^i_{G_{\epsilon}}(A) = 0$ in any case on $C_1(\epsilon)$ under $\mathbb{P}_m$. Taking expectation $\mathbb{E}_m[.]$ we get 
\begin{eqnarray}
\forall k \geq 1, m \neq k/2, j \in \{ 1,..., k \}, i \in \{ 1, 2,..., (j-1) \wedge m \}, \ E_1(m,k,i,j,\epsilon) = 0. \label{asymptcoeff18new}
\end{eqnarray}
\eqref{asymptcoeff13new} now follows from \eqref{asymptcoeff3new} together with \eqref{asymptcoeff16new}, \eqref{asymptcoeff18new}, and \eqref{asymptcoeff20new}. 

Now let $k \geq 1, m \neq k+1, j \in \{ 1,..., k \wedge (m-1) \}$. On $C_1(\epsilon)$ there is exactly one transition on $[0,\epsilon]$. Under $\mathbb{P}_m$, if this transition is a coalescence, then $|V_{G_{\epsilon}}| = m-1 \neq k$ so $\mathds{1}_{|V_{G_{\epsilon}}| = k} \sum_{A \in \mathcal{P}(V_{G_{\epsilon}}) ; |A|=j} F^{j+1}_{G_{\epsilon}}(A) = 0$. If this transition is a multiple branching then \eqref{defFjump} shows that, for any $A \in \mathcal{P}(V_{G_{\epsilon}})$, $F^{j+1}_{G_\epsilon}(A)$ is proportional to $F^{j+1}_{G_0}(P(A))$. 
Recall that $P(A) \in \mathcal{P}(V_{G_{0}})$ is the set of parents in $V_{G_0}$ of lines of $A$. 
By definition, $F^{j+1}_{G_0}(P(A)) = \mathds{1}_{P(A) = \{L_1,..., L_{j+1} \}}$, where $L_1,..., L_{j+1}$ denote the first $j+1$ lines of $G_0$. We thus get that $F^{j+1}_{G_\epsilon}(A) \neq 0$ implies $P(A) = \{L_1,..., L_{j+1} \}$ so, in particular, $|A| \geq {j+1}$. This shows that, when $|A|=j$, $F^{j+1}_{G_\epsilon}(A)=0$. Therefore $\mathds{1}_{|V_{G_{\epsilon}}| = k} \sum_{A \in \mathcal{P}(V_{G_{\epsilon}}) ; |A|=j} F^{j+1}_{G_{\epsilon}}(A) = 0$ in any case on $C_1(\epsilon)$ under $\mathbb{P}_m$. Taking expectation $\mathbb{E}_m[.]$ we get 
\begin{eqnarray}
\forall k \geq 1, m \neq k+1, j \in \{ 1,..., k \wedge (m-1) \}, \ E_1(m,k,j+1,j,\epsilon) = 0. \label{asymptcoeff19new}
\end{eqnarray}
\eqref{asymptcoeff14new} now follows from \eqref{asymptcoeff3new} together with \eqref{asymptcoeff16new}, \eqref{asymptcoeff19new}, and \eqref{asymptcoeff20new}. 

Now let $k \geq 1, m \notin \{k/2, k+1\}, j \in \{ 1,..., k \wedge m \}$. On $C_1(\epsilon)$ there is exactly one transition on $[0,\epsilon]$. Under $\mathbb{P}_m$, if this transition is a coalescence, then $|V_{G_{\epsilon}}| = m-1 \neq k$. If this transition is a multiple branching, then $|V_{G_{\epsilon}}| = 2m \neq k$. Therefore $\mathds{1}_{|V_{G_{\epsilon}}| = k} \sum_{A \in \mathcal{P}(V_{G_{\epsilon}}) ; |A|=j} F^{j}_{G_{\epsilon}}(A) = 0$ in any case on $C_1(\epsilon)$ under $\mathbb{P}_m$. Taking expectation $\mathbb{E}_m[.]$ we get 
\begin{eqnarray}
\forall k \geq 1, m \notin \{k/2, k+1\}, j \in \{ 1,..., k \wedge m \}, \ E_1(m,k,j,j,\epsilon) = 0. \label{asymptcoeff28new}
\end{eqnarray}
\eqref{asymptcoeff15new} now follows from \eqref{asymptcoeff3new} together with \eqref{asymptcoeff17new}, \eqref{asymptcoeff28new}, and \eqref{asymptcoeff20new}. Also, \eqref{asymptcoeff26new} follows from \eqref{asymptcoeff3new} together with \eqref{asymptcoeff30new}, \eqref{asymptcoeff28new}, and \eqref{asymptcoeff20new}. 


We now study $E_1({k/2},k,i,j,\epsilon)$ where $k \geq 2$ is even, $j \in \{ 1,..., k \}, i \in \{ 1,...,\lceil j/2 \rceil -1 \}$. On $C_1(\epsilon)$ there is exactly one transition on $[0,\epsilon]$. Under $\mathbb{P}_{k/2}$, if this transition is a coalescence, then $|V_{G_{\epsilon}}| = (k/2)-1 \neq k$ so $\mathds{1}_{|V_{G_{\epsilon}}| = k} \sum_{A \in \mathcal{P}(V_{G_{\epsilon}}) ; |A|=j} F^{i}_{G_{\epsilon}}(A) = 0$. If this transition is a multiple branching then \eqref{defFjump} shows that, for any $A \in \mathcal{P}(V_{G_{\epsilon}})$, $F^{i}_{G_\epsilon}(A)$ is proportional to $F^{i}_{G_0}(P(A))$. By definition of $F^{i}_{\cdot}(\cdot)$, $F^{i}_{G_0}(P(A)) = \mathds{1}_{P(A) = \{L_1,..., L_{i} \}}$. Therefore $F^{i}_{G_\epsilon}(A) \neq 0$ implies $P(A) = \{L_1,..., L_{i} \}$ so $|A| \leq 2i$, since $A$ can only contain lines in $V_{G_{\epsilon}}$ that are children of the lines $L_1,..., L_{i}$ and there are $2i$ such children in $V_{G_{\epsilon}}$. Here we have $i \in \{ 1,...,\lceil j/2 \rceil -1 \}$ so $2i<j$. In particular, $F^{i}_{G_\epsilon}(A) = 0$ whenever $|A|=j$. Therefore $\mathds{1}_{|V_{G_{\epsilon}}| = k} \sum_{A \in \mathcal{P}(V_{G_{\epsilon}}) ; |A|=j} F^{i}_{G_{\epsilon}}(A) = 0$ in any case on $C_1(\epsilon)$ under $\mathbb{P}_{k/2}$. Taking expectation $\mathbb{E}_{k/2}[.]$ we get that $E_1(k/2,k,i,j,\epsilon)=0$. When $i \in \{ 1,...,\lceil j/2 \rceil -1 \}$ we have $2i < j$ so $\tau(i,j)=0$ by Definition \ref{defcoefedo}. Therefore 
\begin{eqnarray}
\forall k \ \text{even} \ \geq 2, j \in \{ 1,..., k \}, i \in \{ 1,...,\lceil j/2 \rceil -1 \}, \ \lim_{\epsilon \rightarrow 0} \frac1{\epsilon} E_{1}({k/2},k,i,j,\epsilon) = \tau(i,j). \label{asymptcoeff31new}
\end{eqnarray}

We now study $E_1({k/2},k,i,j,\epsilon)$ where $k \geq 2$ is even, $j \in \{ 1,..., k \}, i \in \{ \lceil j/2 \rceil,..., j \wedge k/2 \}$. Note that we have 
\begin{align}
\mathds{1}_{C_1(\epsilon)} = \mathds{1}_{N_{mult}(\epsilon) = 0, N_{coal}(\epsilon) = 1} + \mathds{1}_{N_{mult}(\epsilon) = 1, N_{coal}(\epsilon) = 0}. \label{decompc1h}
\end{align}
Decomposing $\mathds{1}_{C_1(\epsilon)}$ in this way in the expectation defining $E_1({k/2},k,i,j,\epsilon)$ we get that $E_1({k/2},k,i,j,\epsilon)$ can be written as the sum of two expectations $E_{1,C}({k/2},k,i,j,\epsilon)$ (containing $\mathds{1}_{N_{mult}(\epsilon) = 0, N_{coal}(\epsilon) = 1}$) and $E_{1,J}({k/2},k,i,j,\epsilon)$ (containing $\mathds{1}_{N_{mult}(\epsilon) = 1, N_{coal}(\epsilon) = 0}$). On $C_1(\epsilon)$, if the transition of the E-ASG on $[0,\epsilon]$ is a coalescence, then $|V_{G_{\epsilon}}| = (k/2) - 1$ so the indicator $\mathds{1}_{|V_{G_{\epsilon}}| = k}$ appearing in the expectation $E_{1,C}({k/2},k,i,j,\epsilon)$ is null. Therefore 
\begin{eqnarray}
E_{1,C}({k/2},k,i,j,\epsilon) = 0. \label{asymptcoeff8new}
\end{eqnarray}
Recall that $(S_n)_{n \geq 1}$ denotes the sequence of weights associated with transitions of the E-ASG. On $C_1(\epsilon)$, if the transition of the E-ASG on $[0,\epsilon]$ is a multiple branching, then the indicator $\mathds{1}_{|V_{G_{\epsilon}}| = k}$ appearing in the expectation $E_{1,J}({k/2},k,i,j,\epsilon)$ equals $1$. Applying Lemma \ref{combjumpeasg} with our choice of $k,i,j$ and with $m=k/2$ we get 
\begin{align*}
E_{1,J}({k/2},k,i,j,\epsilon) = \mathbb{E}_{k/2} \left [ \mathds{1}_{N_{mult}(\epsilon) = 1, N_{coal}(\epsilon) = 0} \binom{i}{j-i} (1+S_1)^{2i-j} \times (-S_1)^{j-i} \right ]. 
\end{align*}
Note from Section \ref{enlargedasgdefnot} and Definition \ref{defquenchedeasg} that, conditionally on the event in the indicator function, $S_1$ is distributed as $\nu(\cdot)/\lambda$. 
Combining with Definition \ref{defcoefedo} we get that the above equals $\mathbb{P}_{k/2} ( N_{mult}(\epsilon) = 1, N_{coal}(\epsilon) = 0 ) \lambda^{-1} \tau(i,j)$. 
Then, using \eqref{esteasg1br} from Lemma \ref{esteasg} we get 
\[ \lim_{\epsilon \rightarrow 0} \frac1{\epsilon} E_{1,J}(k/2,k,i,j,\epsilon) = \tau(i,j). \]
Combining with \eqref{asymptcoeff8new} we get 
\begin{eqnarray}
\forall k \ \text{even} \ \geq 2, j \in \{ 1,..., k \}, i \in \{ \lceil j/2 \rceil,..., j \wedge k/2 \}, \ \lim_{\epsilon \rightarrow 0} \frac1{\epsilon} E_{1}({k/2},k,i,j,\epsilon) = \tau(i,j). \label{asymptcoeff7new}
\end{eqnarray}
\eqref{asymptcoeff1new} now follows from \eqref{asymptcoeff3new} together with \eqref{asymptcoeff17new}, \eqref{asymptcoeff31new}, \eqref{asymptcoeff7new}, and \eqref{asymptcoeff20new}. 

We now study $E_1(k+1,k,j+1,j,\epsilon)$ where $k \geq 1, j \in \{ 1,..., k \}$. Using \eqref{decompc1h} in the expectation defining $E_1(k+1,k,j+1,j,\epsilon)$ we get that the latter is the sum of two expectations $E_{1,C}(k+1,k,j+1,j,\epsilon)$ (containing $\mathds{1}_{N_{mult}(\epsilon) = 0, N_{coal}(\epsilon) = 1}$) and $E_{1,J}(k+1,k,j+1,j,\epsilon)$ (containing $\mathds{1}_{N_{mult}(\epsilon) = 1, N_{coal}(\epsilon) = 0}$). Under $\mathbb{P}_{k+1}$, on $C_1(\epsilon)$, if the transition of the E-ASG on $[0,\epsilon]$ is a multiple branching, then $|V_{G_{\epsilon}}| = 2(k+1)$ so the indicator $\mathds{1}_{|V_{G_{\epsilon}}| = k}$ appearing in the expectation $E_{1,J}(k+1,k,j+1,j,\epsilon)$ is null. Therefore 
\begin{eqnarray}
E_{1,J}(k+1,k,j+1,j,\epsilon) = 0. \label{asymptcoeff22new}
\end{eqnarray}
Under $\mathbb{P}_{k+1}$, on $C_1(\epsilon)$, if the transition of the E-ASG on $[0,\epsilon]$ is a coalescence, then the indicator $\mathds{1}_{|V_{G_{\epsilon}}| = k}$ appearing in the expectation $E_{1,J}(k+1,k,j+1,j,\epsilon)$ equals $1$. 
Using \eqref{defFcoal} and that, by definition, $F^{j+1}_{G_0}(B) = \mathds{1}_{B=\{ L_1,..., L_{j+1} \}}$, we have 
\begin{align*}
\sum_{A \in \mathcal{P}(V_{G_{\epsilon}}) ; |A|=j} F^{j+1}_{G_\epsilon}(A) & = \sum_{B \in \mathcal{P}(V_{G_0}) ; |D(B)| = j} F^{j+1}_{G_0}(B) \\
& = \sum_{B \in \mathcal{P}(V_{G_0}) ; |D(B)| = j} \mathds{1}_{B=\{ L_1,..., L_{j+1} \}} = \mathds{1}_{|D(\{ L_1,..., L_{j+1} \})|=j}. 
\end{align*}
We thus get 
\begin{align*}
E_{1,C}(k+1,k,j+1,j,\epsilon) & = \mathbb{P}_{k+1} \left ( N_{mult}(\epsilon) = 0, N_{coal}(\epsilon) = 1, |D(\{ L_1,..., L_{j+1} \})|=j \right ). 
\end{align*}
$|D(\{ L_1,..., L_{j+1} \})|=j$ means that the two lines involved in the coalescence both belong to $\{ L_1,..., L_{j+1} \}$. Conditionally on $\{N_{mult}(\epsilon) = 0, N_{coal}(\epsilon) = 1\}$, the pair of lines involved is chosen uniformly among the $(k+1)k/2$ possible pairs of lines. In particular, the two lines involved belong to $\{ L_1,..., L_{j+1} \}$ with probability $((j+1)j/2)/((k+1)k/2)$. We thus get 
\[ E_{1,C}(k+1,k,j+1,j,\epsilon) = \mathbb{P}_{k+1} ( N_{mult}(\epsilon) = 0, N_{coal}(\epsilon) = 1) \times (j+1)j/(k+1)k. \]
Then, using \eqref{esteasg1coal} from Lemma \ref{esteasg} we get 
\[ \lim_{\epsilon \rightarrow 0} \frac1{\epsilon} E_{1,C}(k+1,k,j+1,j,\epsilon) = (j+1)j = \tau(j+1,j), \]
where the last equality is from Definition \ref{defcoefedo}. Combining with \eqref{asymptcoeff22new} we get 
\begin{eqnarray}
\forall k \geq 1, j \in \{ 1,..., k \}, \ \lim_{\epsilon \rightarrow 0} \frac1{\epsilon} E_{1}(k+1,k,j+1,j,\epsilon) = \tau(j+1,j). \label{asymptcoeff24new}
\end{eqnarray}
\eqref{asymptcoeff23new} now follows from \eqref{asymptcoeff3new} together with \eqref{asymptcoeff17new}, \eqref{asymptcoeff24new}, and \eqref{asymptcoeff20new}. 

Reasoning similarly as in the proof of \eqref{asymptcoeff24new} we can prove that 
\begin{eqnarray}
\forall k \geq 1, j \in \{ 1,..., k \}, \ \lim_{\epsilon \rightarrow 0} \frac1{\epsilon} E_{1}(k+1,k,j,j,\epsilon) = (k+1)k - j(j-1) = e_{k,j}. \label{asymptcoeff25new}
\end{eqnarray}
\eqref{asymptcoeff27new} now follows from \eqref{asymptcoeff3new} together with \eqref{asymptcoeff17new}, \eqref{asymptcoeff25new}, and \eqref{asymptcoeff20new}. 

\end{proof}


\subsection{A system of differential equations: proof of Theorem \ref{equadiffcoeffnew}} \label{diffeqsyst}

We can now derive rigorously the system of differential equations satisfied by coefficients $R^{m,k}_t(i,j)$. Unfortunately, we cannot use the semigroup property (Proposition \ref{semigroupprop}) to differentiate $R^{m,k}_t(i,j)$ with respect to $t$. This is due to the difficulty in working with infinitely many terms that are not all positive and to the lack of uniformity in the behavior of those terms. 
What we do is establishing a decomposition \eqref{decompladernew} of $\epsilon^{-1}(R^{m,k}_{t+\epsilon}(i,j)-R^{m,k}_t(i,j))$ into a sum of finitely many relevant terms and a remainder term. For the first ones, we proceed as in the proof of Proposition \ref{semigroupprop} to show, in \eqref{reldesgnew} below, that they can be can be written in terms of sums of coefficients $R^{\cdot,\cdot}_{\cdot}(\cdot,\cdot)$ that can be dealt with thanks to Lemmas \ref{asymptcoeffnew} and \ref{sgwelldef}. Lemma \ref{atleast2coal} shows that the integrand, inside the expectation defining the remainder term, is non-zero only on the event where there are at least two coalescences on $[t,t+\epsilon]$. Lemma \ref{esteasg} allows to neglect the probability of this event and, thanks to \eqref{exprL(t,t+h)new} below, the integrand can be re-written in terms of finitely many terms for which we have deterministic bounds (provided in particular by Lemma \ref{sgwelldef}). 
Recall that in this subsection and in the previous one \textbf{we assume that we are in the particular case $\sigma=0$} (i.e. without single branchings) in order to make the proof lighter. This does not change anything about the idea or the difficulty of the proof.

Let us fix $m,k\geq 1, i \in \{ 1,...,m \}, j \in \{ 1,...,k \}$, $t \geq 0$, and $\epsilon > 0$. By definition of $R^{\cdot,\cdot}_{\cdot}(\cdot,\cdot)$, $\frac{d}{dt} R^{m,k}_t(i,j)$ equals 
\[ \lim_{\epsilon \rightarrow 0} \frac1{\epsilon} \mathbb{E}_m \left [ \mathds{1}_{|V_{G_{t+\epsilon}}| = k} \sum_{A \in \mathcal{P}(V_{G_{t+\epsilon}}) ; |A|=j} F^i_{G_{t+\epsilon}}(A) - \mathds{1}_{|V_{G_{t}}| = k} \sum_{B \in \mathcal{P}(V_{G_{t}}) ; |B|=j} F^i_{G_{t}}(B) \right ], \]
provided the limit exists. By \eqref{markpropf1} applied with $r_1 = t+\epsilon, r_2 = t$, the term inside the expectation equals: 
\begin{align}
& \mathds{1}_{|V_{G_{t+\epsilon}}| = k} \sum_{A \in \mathcal{P}(V_{G_{t+\epsilon}}) ; |A|=j} \left ( \sum_{B \in \mathcal{P}(V_{G_{t}})} F^i_{G_{t}}(B) f_{G_t, G_{t+\epsilon}}(B,A) \right ) - \mathds{1}_{|V_{G_{t}}| = k} \sum_{B \in \mathcal{P}(V_{G_{t}}) ; |B|=j} F^i_{G_{t}}(B) \nonumber \\
= & \sum_{B \in \mathcal{P}(V_{G_{t}})} F^i_{G_{t}}(B) \left ( \mathds{1}_{|V_{G_{t+\epsilon}}| = k} \sum_{A \in \mathcal{P}(V_{G_{t+\epsilon}}) ; |A|=j} f_{G_t, G_{t+\epsilon}}(B,A) \right ) - \mathds{1}_{|V_{G_{t}}| = k} \sum_{B \in \mathcal{P}(V_{G_{t}}) ; |B|=j} F^i_{G_{t}}(B) \nonumber \\
= & \sum_{l=1}^{j+1} \sum_{B \in \mathcal{P}(V_{G_{t}}) ; |B|=l} F^i_{G_{t}}(B) \left ( - \mathds{1}_{l=j, |V_{G_{t}}| = k} + \mathds{1}_{|V_{G_{t+\epsilon}}| = k} \sum_{A \in \mathcal{P}(V_{G_{t+\epsilon}}) ; |A|=j} f_{G_t, G_{t+\epsilon}}(B,A) \right ) \nonumber \\
+ & \sum_{B \in \mathcal{P}(V_{G_{t}}) ; |B|\geq j+2} F^i_{G_{t}}(B) \left ( \mathds{1}_{|V_{G_{t+\epsilon}}| = k} \sum_{A \in \mathcal{P}(V_{G_{t+\epsilon}}) ; |A|=j} f_{G_t, G_{t+\epsilon}}(B,A) \right ) \nonumber \\
=: & \left ( \sum_{l=1}^{j+1} M(l,t,t+\epsilon) \right ) + L(t,t+\epsilon). \label{defmltt+h}
\end{align}
Let us also define 
\[ \tilde M(l,t,t+\epsilon) := \sum_{B \in \mathcal{P}(V_{G_{t}}) ; |B|=l} F^i_{G_{t}}(B) \left ( \mathds{1}_{|V_{G_{t+\epsilon}}| = k} \sum_{A \in \mathcal{P}(V_{G_{t+\epsilon}}) ; |A|=j} f_{G_t, G_{t+\epsilon}}(B,A) \right ), \]
and note that $\tilde M(l,t,t+\epsilon) = M(l,t,t+\epsilon)$ for $l \neq j$. The same reasoning as above yields 
\begin{eqnarray}
\mathds{1}_{|V_{G_{t+\epsilon}}| = k} \sum_{A \in \mathcal{P}(V_{G_{t+\epsilon}}) ; |A|=j} F^i_{G_{t+\epsilon}}(A) = \left ( \sum_{l=1}^{j+1} \tilde M(l,t,t+\epsilon) \right ) + L(t,t+\epsilon). \label{exprL(t,t+h)new}
\end{eqnarray}
We now justify that the expectations of the terms $M(\cdot,t,t+\epsilon)$ and $L(t,t+\epsilon)$ are well-defined. 
By \eqref{markpropf3} we have almost surely $|f_{G_t, G_{t+\epsilon}}(B,A)| \leq |A|^{|A|}$. Moreover, on $\{|V_{G_{t+\epsilon}}| = k\}$, there are $\binom{k}{j}$ sets $A \in \mathcal{P}(V_{G_{t+\epsilon}})$ with cardinality $j$. Using also \eqref{borneFps} to bound $|\sum_{B \in \mathcal{P}(V_{G_{t}}) ; |B|=l} F^i_{G_{t}}(B)|$ we get that for all $l \in \{ 1,..., j+1 \}$ we have: 
\begin{align}
| M(l,t,t+\epsilon) | \leq \frac{l^l}{l!} \left ( 1 + \binom{k}{j} j^j \right ) |V_{G_{t}}|^{l} \ \ \ \text{and} \ \ \ | \tilde M(l,t,t+\epsilon) | & \leq \frac{l^l j^j}{l!} \binom{k}{j} |V_{G_{t}}|^{l}. \label{majonbfinitermesnew} 
\end{align}
$\mathbb{E}_m[| M(l,t,t+\epsilon) | ] < \infty$ and $\mathbb{E}_m[| \tilde M(l,t,t+\epsilon) | ] < \infty$ now follow from \eqref{majonbfinitermesnew} together with \eqref{bornemomentGtnew}. The absolute value of the left-hand side of \eqref{exprL(t,t+h)new} is deterministically bounded by $(jk)^j/j!$, according to \eqref{borneFps}, and the expectation of $|\sum_{l=1}^{j+1} \tilde M(l,t,t+\epsilon)|$ is well-defined as we have just shown. We thus get from \eqref{exprL(t,t+h)new} that $\mathbb{E}_m[| L(t,t+\epsilon) | ] < \infty$. Therefore, 
\begin{eqnarray}
\frac{d}{dt} R^{m,k}_t(i,j) = \sum_{l=1}^{j+1} \lim_{\epsilon \rightarrow 0} \frac1{\epsilon} \mathbb{E}_m \left [ M(l,t,t+\epsilon) \right ] + \lim_{\epsilon \rightarrow 0} \frac1{\epsilon} \mathbb{E}_m \left [ L(t,t+\epsilon) \right ], \label{decompladernew}
\end{eqnarray}
provided the limits exist. We are thus left to prove the following four statements: 
\begin{align}
\lim_{\epsilon \rightarrow 0} \frac1{\epsilon} \mathbb{E}_m \left [ L(t,t+\epsilon) \right ] =& 0, \label{decomplader2new} \\
\forall l \in \{ 1,..., j-1 \}, \ \lim_{\epsilon \rightarrow 0} \frac1{\epsilon} \mathbb{E}_m \left [ M(l,t,t+\epsilon) \right ] =& \mathds{1}_{k \ \text{even}, l \leq k/2} \tau(l,j) R^{m,k/2}_t(i,l), \label{decomplader1new} \\
\lim_{\epsilon \rightarrow 0} \frac1{\epsilon} \mathbb{E}_m \left [ M(j+1,t,t+\epsilon) \right ] =& \tau(j+1,j) R^{m,k+1}_t(i,j+1), \label{decomplader4new} \\
\lim_{\epsilon \rightarrow 0} \frac1{\epsilon} \mathbb{E}_m \left [ M(j,t,t+\epsilon) \right ] = -d_k R^{m,k}_t(i,j) + \mathds{1}_{k \ \text{even}, j \leq k/2} & \tau(j,j) R^{m,k/2}_t(i,j) + e_{k,j} R^{m,k+1}_t(i,j). \label{decomplader3new}
\end{align}
Once \eqref{decomplader2new}, \eqref{decomplader1new}, \eqref{decomplader4new}, and \eqref{decomplader3new} are proved Theorem \ref{equadiffcoeffnew} will follow. 

Let us first justify \eqref{decomplader2new}. We let $C(t,t+\epsilon)$ denote the event where there are at least two coalescences of the E-ASG on $(t,t+\epsilon]$. The set $\{ B \in \mathcal{P}(V_{G_{t}}) ; |B|\geq j+2 \}$ is non-empty if and only if $|V_{G_{t}}| \geq j+2$. In this case, for $B \in \mathcal{P}(V_{G_{t}})$ with $|B|\geq j+2$, we see from Lemma \ref{atleast2coal} that the set $\{ A \in \mathcal{P}(V_{G_{t+\epsilon}}) ; |A|=j, f_{G_t, G_{t+\epsilon}}(B,A) \neq 0 \}$ is non-empty only on $C(t,t+\epsilon)$. Therefore $L(t,t+\epsilon) = \mathds{1}_{C(t,t+\epsilon)} L(t,t+\epsilon)$ and, using \eqref{exprL(t,t+h)new}, we get 
\[ L(t,t+\epsilon) = \mathds{1}_{C(t,t+\epsilon)} \left ( \mathds{1}_{|V_{G_{t+\epsilon}}| = k} \sum_{A \in \mathcal{P}(V_{G_{t+\epsilon}}) ; |A|=j} F^i_{G_{t+\epsilon}}(A) - \sum_{l=1}^{j+1} \tilde M(l,t,t+\epsilon) \right ). \]
Then, by \eqref{borneFps} and \eqref{majonbfinitermesnew} we have 
\begin{align*}
|L(t,t+\epsilon)| &\leq \mathds{1}_{C(t,t+\epsilon)} \left ( \frac{(jk)^j}{j!} + \sum_{l=1}^{j+1} \frac{l^l j^j}{l!} \binom{k}{j} |V_{G_{t}}|^{l} \right ) 
\leq C_{k,j} \mathds{1}_{C(t,t+\epsilon)} |V_{G_{t}}|^{j+1}, 
\end{align*}
where we have set $C_{k,j} := (j+2)(j+1)^{j+1}(jk)^{j}/j!$. Taking expectation: 
\begin{align*}
\mathbb{E}_m [|L(t,t+\epsilon)| ] & \leq C_{k,j} \mathbb{E}_m \left [ \mathds{1}_{C(t,t+\epsilon)} |V_{G_{t}}|^{j+1} \right ] \nonumber \\
& = C_{k,j} \sum_{r \geq 1} r^{j+1} \mathbb{P}_m \left ( C(t,t+\epsilon) \big | |V_{G_{t}}| = r \right ) \times \mathbb{P}_m \left ( |V_{G_{t}}| = r \right ). 
\end{align*}
By the Markov property, $\mathbb{P}_m ( C(t,t+\epsilon) | |V_{G_{t}}| = r)=\mathbb{P}_r ( C(0,\epsilon) )$ and, by \eqref{esteasg>2} from Lemma \ref{esteasg}, the later is smaller than $16 r^4 (1+\lambda)^2 \epsilon^2$. Therefore, $\mathbb{E}_m [|L(t,t+\epsilon)| ]$ is smaller than 
\begin{align*}
\epsilon^2 \tilde C_{k,j} \sum_{r \geq 1} r^{j+5} \mathbb{P}_m \left ( |V_{G_{t}}| = r \right ) = \epsilon^2 \tilde C_{k,j} \mathbb{E}_m \left [ |V_{G_{t}}|^{j+5} \right ] \leq \frac{\epsilon^2 \tilde C_{k,j}}{\pi(m)} \sum_{r \geq 1} r^{j+5} \pi(r), 
\end{align*}
where we have set $\tilde C_{k,j} := 16 (1+\lambda)^2 C_{k,j}$ and where the last inequality comes from \eqref{bornemomentGtnew}. This concludes the proof of \eqref{decomplader2new}. 

For $l \in \{ 1,..., j+1 \}$, starting from the definition of $M(l,t,t+\epsilon)$ in \eqref{defmltt+h} and proceeding as in the proof of Proposition \ref{semigroupprop} (in Section \ref{behaviour023}), we can show that 
\begin{eqnarray}
\frac1{\epsilon} \mathbb{E}_m [ M(l,t,t+\epsilon) ] = \sum_{r=l}^{\infty} \frac1{\epsilon} (R^{r,k}_\epsilon(l,j) - \mathds{1}_{l=j, r=k}) R^{m,r}_t(i,l). \label{reldesgnew}
\end{eqnarray}

Let us fix $l \in \{ 1,..., j-1 \}$. From \eqref{reldesgnew} we get 
\[ \frac1{\epsilon} \mathbb{E}_m [ M(l,t,t+\epsilon) ] = \mathds{1}_{k \ \text{even}, l \leq k/2} \frac1{\epsilon} R^{k/2,k}_\epsilon(l,j) R^{m,k/2}_t(i,l) + \sum_{r \geq l ; r \neq k/2} \frac1{\epsilon} R^{r,k}_\epsilon(l,j) R^{m,r}_t(i,l). \]
By \eqref{asymptcoeff1new}, the first term in the right-hand side converges to $\mathds{1}_{k \ \text{even}, l \leq k/2} \tau(l,j) R^{m,k/2}_t(i,l)$ when $\epsilon$ goes to $0$. According to \eqref{asymptcoeff13new}, \eqref{majonouveausgnew} and then to \eqref{bornemomentGtnew} we have 
\begin{align*}
\left | \sum_{r \geq l ; r \neq k/2} \frac1{\epsilon} R^{r,k}_\epsilon(l,j) R^{m,r}_t(i,l) \right | & \leq \epsilon \sum_{r \geq l ; r \neq k/2} 16 r^4 (1+\lambda)^2 \frac{(jk)^j}{j!} \frac{(rl)^l}{l!} \mathbb{P}_m \left ( |V_{G_{t}}| = r \right ) \\
& \leq 16 \epsilon (1+\lambda)^2 \frac{(jk)^j}{j!} \frac{l^l}{l!} \mathbb{E}_m \left [ |V_{G_{t}}|^{l+4} \right ] \\
& \leq 16 \epsilon (1+\lambda)^2 \frac{(jk)^j}{j!} \frac{l^l}{l!} \frac{1}{\pi(m)} \sum_{r \geq 1} r^{l+4} \pi(r) \underset{\epsilon \rightarrow 0}{\longrightarrow} 0. 
\end{align*}
Therefore \eqref{decomplader1new} follows. Let us now apply \eqref{reldesgnew} with $l=j+1$. We get 
\[ \frac1{\epsilon} \mathbb{E}_m [ M(j+1,t,t+\epsilon) ] = \frac1{\epsilon} R^{k+1,k}_\epsilon(j+1,j) R^{m,k+1}_t(i,j+1) + \sum_{r \geq j+1 ; r \neq k+1} \frac1{\epsilon} R^{r,k}_\epsilon(j+1,j) R^{m,r}_t(i,j+1). \]
By \eqref{asymptcoeff23new}, the first term in the right-hand side converges to $\tau(j+1,j) R^{m,k+1}_t(i,j+1)$ when $\epsilon$ goes to $0$. The convergence of the second term to $0$ is proved exactly as in the proof of \eqref{decomplader1new} above, using this time \eqref{asymptcoeff14new} instead of \eqref{asymptcoeff13new}. \eqref{decomplader4new} follows. 

Let us now apply \eqref{reldesgnew} with $l=j$. We get 
\begin{align*}
\frac1{\epsilon} \mathbb{E}_m [ M(j,t,t+\epsilon) ] & = \frac1{\epsilon} \left ( R^{k,k}_\epsilon(j,j) - 1 \right ) R^{m,k}_t(i,j) + \mathds{1}_{k \ \text{even}, j \leq k/2} \frac1{\epsilon} R^{k/2,k}_\epsilon(j,j) R^{m,k/2}_t(i,j) \\
& + \frac1{\epsilon} R^{k+1,k}_\epsilon(j,j) R^{m,k+1}_t(i,j) + \sum_{r \geq j ; r \notin \{k/2,k,k+1\}} \frac1{\epsilon} R^{r,k}_\epsilon(j,j) R^{m,r}_t(i,j). 
\end{align*}
According to \eqref{asymptcoeff26new}, \eqref{asymptcoeff1new}, and \eqref{asymptcoeff27new}, the first, second, and third terms in the right-hand side converge to respectively $-d_k R^{m,k}_t(i,j)$, $\mathds{1}_{k \ \text{even}, j \leq k/2} \tau(j,j) R^{m,k/2}_t(i,j)$, and $e_{k,j} R^{m,k+1}_t(i,j)$ when $\epsilon$ goes to $0$. The convergence of the fourth term to $0$ is proved exactly as in the proof of \eqref{decomplader1new} above, using this time \eqref{asymptcoeff15new} instead of \eqref{asymptcoeff13new}. \eqref{decomplader3new} follows, which concludes the proof. 



\subsection{Behavior at infinity: proof of Theorem \ref{cvcoeff}} \label{behaviourinfty}


We first prove the existence of the limit in \eqref{cvcoeff1new} using a coupling argument. The idea is to take two independent realizations of the E-ASG and glue them together after a common bottleneck. It is shown in \eqref{majoespcolle} below that the probability that this common bottleneck hasn't occurred within time $t$ decays exponentially fast. Using the branching property from Lemma \ref{markpropf} we obtain, in \eqref{couplagedesfnew} below, that the functions $F^{\cdot}_{\cdot}(\cdot)$ are the same for the two realizations of the E-ASG after their common bottleneck. Combining this with \eqref{majoespcolle} and estimates from Lemma \ref{sgwelldef} we obtain \eqref{lesgestcauchynew}, which shows that $(R^{m,k}_t(i,j))_{t \geq 0}$ satisfies the Cauchy property as $t$ goes to infinity and is therefore convergent. 

Let $(G_{\beta})_{\beta \geq 0}$ start with $m \geq 1$ lines at $\beta=0$ and let $i \in \{1,...,m\}$. 
Recall from Section \ref{enlargedasglcp} that the line counting process of $(G_{\beta})_{\beta \geq 0}$ is a Markov process for which the state $1$ is recurrent, which means that $(G_{\beta})_{\beta \geq 0}$ experiences infinitely many bottlenecks. Moreover the state $1$ can be reached with positive probability in any time $t>0$ from any starting position so the event $\{ |V_{G_{t}}|=1 \}$ has positive probability for any $t > 0$. On this event we denote by $L^G_t$ the single line of $G_t$. By \eqref{sommetotale=1} we have $\sum_{A \in \mathcal{P}(V_{G_{t}})} F^{i}_{G_{t}}(A) = 1$ almost surely so, in particular, this also holds on $\{ |V_{G_{t}}|=1 \}$ which induces 
\begin{eqnarray}
1 = \sum_{A \in \mathcal{P}(V_{G_{t}})} F^{i}_{G_{t}}(A) = F^{i}_{G_{t}}(\{L^G_t\}). \label{bottleneckeffectnew}
\end{eqnarray}
Thanks to this we are able, in the following, to build a coupling of two realizations of $(F^{i}_{G_{\beta}}(\cdot))_{{\beta} \geq 0}$ that remain identical after a common bottleneck. 

We fix $m_1, m_2 \geq 1, i_1 \in \{1,...,m_1\}, i_2 \in \{1,...,m_2\}, r > 0$. Let $(G^1_{\beta})_{{\beta} \geq 0}$, $(G^2_{\beta})_{{\beta} \geq 0}$, and $(\tilde G_{\beta})_{{\beta} \geq 0}$ be three independent realizations of the E-ASG with distribution $\mathbb{P}_{m_1}$, $\mathbb{P}_{m_2}$ and $\mathbb{P}_{1}$ respectively. 
We denote by $\mathbb{P}_{m_1, m_2,1}(\cdot)$ the law of the joint process $(G^1_{\beta}, G^2_{\beta}, \tilde G_{\beta})_{{\beta} \geq 0}$ and we set $\mathcal{T} := \inf \{ t \geq 0, \ |V_{G^1_{t}}| = |V_{G^2_{r+t}}| = 1 \}$. A couple of two independent realizations of the line counting process of the E-ASG is an irreducible Markov chain with stationary distribution $\pi \otimes \pi$ so it is recurrent. In particular the state $(1,1)$ is recurrent for the Markov chain $(V_{G^1_t}, V_{G^2_{r+t}})_{t \geq 0}$ so $\mathcal{T}$ is $\mathbb{P}_{m_1, m_2,1}$-almost surely finite. As a by-product of the proof of Lemma 5.2 of \cite{cordvech} (with $\theta=0$ and $\mu=\lambda \delta_1$), the expectation of the hitting time of a given state by the line counting process of the E-ASG is bounded independently from the initial state. In other words, for any $k \geq 1$ there is a finite constant $C_k > 0$ such that 
\[ \forall m \geq 1, \ \mathbb{E}_m [ \inf \{ t \geq 0, \ |V_{G_{t}}| = k \} ] < C_k. \]
It is not difficult to deduce that the same holds for a couple of two independent realizations of the line counting process of the E-ASG. In particular the expectation of the hitting time, by the Markov chain $(V_{G^1_t}, V_{G^2_{r+t}})_{t \geq 0}$, of the state $(1,1)$, is bounded by a constant $C$ that does not depend on the initial position. By successive applications of the Markov property at times $2C, 4C, 6C,...$ and Markov inequality we get $\forall n \geq 1, \mathbb{P}_{m_1,m_2,1} ( \mathcal{T} \geq 2nC ) \leq 2^{-n}$. Therefore, if we set $c := \log (2)/2C > 0$, we get 
\begin{eqnarray}
\forall m_1, m_2 \geq 1, \forall r,t \geq 0, \ \mathbb{P}_{m_1,m_2,1} (\mathcal{T} > t) \leq 2 e^{-c t}. \label{majoespcolle}
\end{eqnarray}

We define $(\hat G^1_{\beta})_{\beta \geq 0}$ as follows: $\hat G^1_{\beta}:=G^1_{\beta}$ if ${\beta} \leq \mathcal{T}$ and, for ${\beta} > \mathcal{T}$, $\hat G^1_{\beta}$ is obtained by \textit{gluing} $\tilde G_{{\beta}-\mathcal{T}}$ to $G^1_{\mathcal{T}}$. Let us define what we mean by gluing. 
Let $N^1_{\mathcal{T}} := \mathsf{depth}(G^1_{\mathcal{T}})$, as defined in Section \ref{enlargedasgdefnot}. For $\beta > \mathcal{T}$ and $i \geq 0$, generation $i$ of $\hat G^1_{\beta}$ is generation $i$ of $G^1_{\mathcal{T}}$ if $i \leq N^1_{\mathcal{T}}$, and it is generation $i-N^1_{\mathcal{T}}$ of $\tilde G_{{\beta}-\mathcal{T}}$ if $i \geq N^1_{\mathcal{T}}$. 
Note that generation $N^1_{\mathcal{T}}$ of $G^1_{\mathcal{T}}$ (which contains only one line) is identified with generation $0$ of $\tilde G_{{\beta}-\mathcal{T}}$ (which also contains only one line). $(\hat G^2_{\beta})_{\beta \geq 0}$ is defined as follows: $\hat G^2_{\beta}:=G^2_{\beta}$ if ${\beta} \leq r+\mathcal{T}$ and, for ${\beta} > r+\mathcal{T}$, $\hat G^2_{\beta}$ is obtained by gluing $\tilde G_{{\beta}-(r+\mathcal{T})}$ to $G^2_{r+\mathcal{T}}$ in the way described above. 
We see from the strong Markov property for the E-ASG, applied at times respectively $\mathcal{T}$ and $r+\mathcal{T}$, that $(\hat G^1_{\beta})_{\beta \geq 0}$ and $(\hat G^2_{\beta})_{\beta \geq 0}$ 
have distribution respectively $\mathbb{P}_{m_1}$ and $\mathbb{P}_{m_2}$. Moreover we see from the definitions of $(\hat G^1_{\beta})_{{\beta} \geq 0}$ and $(\hat G^2_{\beta})_{{\beta} \geq 0}$, and from the definition of the shifting in Section \ref{behaviour021}, that for any $t \geq \mathcal{T}$, 
\begin{eqnarray}
\hat G^1_t \setminus_{\{ L^{\hat G^1}_{\mathcal{T}} \}} \hat G^1_{\mathcal{T}} = \tilde G_{t-\mathcal{T}} = \hat G^2_{r+t} \setminus_{\{ L^{\hat G^2}_{r+\mathcal{T}} \}} \hat G^2_{r+\mathcal{T}}. \label{identaftercut}
\end{eqnarray}
Note also that $|V_{\hat G^1_{\mathcal{T}}}|=|V_{\hat G^2_{r+\mathcal{T}}}|=1$. 
We consider $(\hat G^1_{\beta}, \hat G^2_{\beta})_{{\beta} \geq 0}$ and $u \geq \mathcal{T}$. Applying \eqref{markpropf1} to $F^{i_1}_{\hat G^1_{\cdot}}(\cdot)$ with $r_1 = u$ and $r_2 = \mathcal{T}$ we have 
\[ \forall A \in \mathcal{P}(V_{\hat G^1_{u}}) (=\mathcal{P}(V_{\tilde G_{u-\mathcal{T}}})), \ F^{i_1}_{\hat G^1_{u}}(A) = \sum_{B \in \mathcal{P}(V_{\hat G^1_{\mathcal{T}}})} F^{i_1}_{\hat G^1_{\mathcal{T}}}(B) f_{\hat G^1_{\mathcal{T}}, \hat G^1_u}(B,A). \]
Since $|V_{\hat G^1_{\mathcal{T}}}|=1$ we have $\mathcal{P}(V_{\hat G^1_{\mathcal{T}}}) = \{ \{ L^{\hat G^1}_{\mathcal{T}} \} \}$ (where $L^{\hat G^1}_{\mathcal{T}}$ is the single line in $V_{\hat G^1_{\mathcal{T}}}$, and is identified with $L^{\tilde G}_0$, the single line in $V_{\tilde G_0}$). 
We thus get 
\[ \forall A \in \mathcal{P}(V_{\hat G^1_{u}}) (=\mathcal{P}(V_{\tilde G_{u-\mathcal{T}}})), \ F^{i_1}_{\hat G^1_{u}}(A) = F^{i_1}_{\hat G^1_{\mathcal{T}}}(\{ L^{\hat G^1}_{\mathcal{T}} \}) f_{\hat G^1_{\mathcal{T}}, \hat G^1_u}(\{ L^{\hat G^1}_{\mathcal{T}} \},A). \]
By \eqref{bottleneckeffectnew}, \eqref{defpetitfg}, and \eqref{identaftercut}, 
we get $F^{i_1}_{\hat G^1_{u}}(A) = F^1_{\tilde G_{u-\mathcal{T}}}(A)$ for all $A \in \mathcal{P}(V_{\hat G^1_{u}}) (= \mathcal{P}(V_{\tilde G_{u-\mathcal{T}}}))$. We similarly get $F^{i_2}_{\hat G^2_{r+u}}(A) = F^1_{\tilde G_{u-\mathcal{T}}}(A)$ for all $A \in \mathcal{P}(V_{\hat G^2_{r+u}}) (= \mathcal{P}(V_{\tilde G_{u-\mathcal{T}}}))$. We deduce that for any $k \geq 1$ and $j \in \{ 1,..., k \}$ we have $\mathbb{P}_{m_1, m_2,1}$-almost surely 
\[ \mathds{1}_{|V_{\hat G^1_{u}}| = k} \sum_{A \in \mathcal{P}(V_{\hat G^1_{u}}) ; |A|=j} F^{i_1}_{\hat G^1_{u}}(A) = \mathds{1}_{|V_{\hat G^2_{r+u}}| = k} \sum_{A \in \mathcal{P}(V_{\hat G^2_{r+u}}) ; |A|=j} F^{i_2}_{\hat G^2_{r+u}}(A). \]
In particular we get that we have $\mathbb{P}_{m_1, m_2,1}$-almost surely for all $t \geq 0$, 
\begin{align}
\mathds{1}_{t > \mathcal{T}} \left ( \mathds{1}_{|V_{\hat G^1_{t}}| = k} \sum_{A \in \mathcal{P}(V_{\hat G^1_{t}}) ; |A|=j} F^{i_1}_{\hat G^1_{t}}(A) - \mathds{1}_{|V_{\hat G^2_{r+t}}| = k} \sum_{A \in \mathcal{P}(V_{\hat G^2_{r+t}}) ; |A|=j} F^{i_2}_{\hat G^2_{r+t}}(A) \right ) = 0. \label{couplagedesfnew}
\end{align}
Since $(\hat G^1_{\beta})_{{\beta} \geq 0}$ and $(\hat G^2_{\beta})_{{\beta} \geq 0}$ have distribution respectively $\mathbb{P}_{m_1}$ and $\mathbb{P}_{m_2}$, we have by definition of $R^{\cdot,\cdot}_{\cdot}(\cdot,\cdot)$ that $\mathbb{E}[\mathds{1}_{|V_{\hat G^1_{t}}| = k} \sum_{A \in \mathcal{P}(V_{\hat G^1_{t}}) ; |A|=j} F^{i_1}_{\hat G^1_{t}}(A)]=R^{m_1,k}_t(i_1,j)$ and \\
\noindent $\mathbb{E}[\mathds{1}_{|V_{\hat G^2_{r+t}}| = k} \sum_{A \in \mathcal{P}(V_{\hat G^2_{r+t}}) ; |A|=j} F^{i_2}_{\hat G^2_{r+t}}(A)]=R^{m_2,k}_{r+t}(i_2,j)$. Combining with \eqref{couplagedesfnew} we get that $R^{m_1,k}_t(i_1,j) - R^{m_2,k}_{r+t}(i_2,j)$ equals 
\begin{align*}
\mathbb{E}_{m_1,m_2,1} \left [ \mathds{1}_{\mathcal{T} \geq t} \left ( \mathds{1}_{|V_{\hat G^1_{t}}| = k} \sum_{A \in \mathcal{P}(V_{\hat G^1_{t}}) ; |A|=j} F^{i_1}_{\hat G^1_{t}}(A) - \mathds{1}_{|V_{\hat G^2_{r+t}}| = k} \sum_{A \in \mathcal{P}(V_{\hat G^2_{r+t}}) ; |A|=j} F^{i_2}_{\hat G^2_{r+t}}(A) \right ) \right ]. 
\end{align*}
By \eqref{borneFps}, the absolute value of the integrand in the above expectation is smaller than $\frac{2(jk)^j}{j!} \mathds{1}_{\mathcal{T} \geq t}$. 
Combining with \eqref{majoespcolle} we get 
\begin{align}
|R^{m_1,k}_t(i_1,j) - R^{m_2,k}_{r+t}(i_2,j)| & \leq \frac{2 (jk)^j}{j!} \mathbb{P}_{m_1,m_2,1} \left ( \mathcal{T} \geq t \right ) \leq \frac{4 (jk)^j}{j!} e^{-c t}. \label{lesgestcauchynew}
\end{align}

Finally, for any $m,k\geq 1, i \in \{ 1,...,m \}, j \in \{ 1,...,k \}$, \eqref{lesgestcauchynew} applied with $m_1=m_2=m$ and $i_1=i_2=i$ shows that $(R^{m,k}_t(i,j))_{t \geq 0}$ satisfies the Cauchy property as $t$ goes to $\infty$ and is therefore convergent. It moreover shows that the convergence is exponentially fast in $t$ and uniform in $m,i$. Then, \eqref{lesgestcauchynew} applied with $r=0$ shows that $\lim_{t \rightarrow \infty} R^{m,k}_t(i,j)$ does not depend on $m$ and $i$. This proves the first claim of Theorem \ref{cvcoeff}. 
Then, \eqref{relreccoefnew} follows combining Theorem \ref{equadiffcoeffnew} and the convergence \eqref{cvcoeff1new}. The claim $a^1_1=\pi(1)$ comes from Remark \ref{extrarelation}. 



\subsection{Rigorous relation between Wright-Fisher diffusion and ASG} \label{relwf-asgbis}

We have so far worked on quantities defined from the ASG or the E-ASG. An essential step is to relate them to \eqref{levymodelsdesimp}. The following proposition establishes rigorously the relation, announced in Section \ref{relwf-asg}, between the diffusion \eqref{levymodelsdesimp} and the ASG. 
\begin{prop} \label{h(x)ht(x)} 

For any $x \in [0,1]$, $l \geq 1$ and $T \geq 0$, we have 
\begin{eqnarray}
h^l_T(x) = \mathbb{E} \left [ (X(T))^l | X(0)=x \right ]. \label{momentstoht}
\end{eqnarray}
For any fixed environment $\omega$, $x \in [0,1]$, $l \geq 1$ and $T \geq 0$, we have 
\begin{eqnarray}
h^{l, \omega}_{0,T}(x) = \mathbb{E}^{\omega} \left [ (X(\omega,T))^l \mid X(\omega,0)=x \right ]. \label{quencheddual0}
\end{eqnarray}

\end{prop}

The proof of Proposition \ref{h(x)ht(x)} is technical and thus we shift it to Appendix~\ref{A1}. 

\subsection{Representation of $h(x)$: Conclusion of the proof of Theorem \ref{finalformula}} \label{ccl}

We now have almost all the ingredients required to prove Theorem \ref{finalformula}. Lemma \ref{cvn} below provides some simple estimates that will allow 1) to justify the well-definedness of the series in \eqref{mainformula}, 2) to apply term by term the convergence from Theorem \ref{cvcoeff} in the series representation \eqref{decompattendueent}, which will be done in Proposition \ref{limht} to prove \eqref{mainformula}, 3) to apply Proposition \ref{recpilambda} in order to justify the bound \eqref{boundfiniteapprox}. This will yield Theorem \ref{finalformula}. 
\begin{lemme} \label{cvn}
For any $m,k\geq 1, i \in \{ 1,...,m \}$, $y \in [0,1]$, $t \geq 0$, 
\[ P^{m,k,i}_t (y) := \sum_{j=1}^k R^{m,k}_t(i,j) y^j \in [0, \mathbb{P}_m \left ( |V_{G_{t}}| = k \right )] \subset [0, \pi(k)/\pi(m)]. \] 
For any $k \geq 1$, $y \in [0,1]$, $P_k(y) \in [0, \pi(k)]$. 
In particular, for any $m\geq 1, i \in \{ 1,...,m \}$, $t \geq 0$, the series $\sum_{k=1}^{\infty} P^{m,k,i}_t (\cdot)$ and $\sum_{k=1}^{\infty} P_k(\cdot)$ are normally convergent on $[0,1]$. 
\end{lemme}

\begin{proof}

We fix $m,k\geq 1$, $i \in \{ 1,...,m \}$, $y \in [0,1]$ and $t \geq 0$. Using the definitions of $P^{m,k,i}_t (y)$ and $R^{m,k}_t(i,j)$ (see \eqref{defcoeff}), \eqref{entre0et10mix}, and \eqref{majointemporelle}, we get 
\begin{align*}
P^{m,k,i}_t (y) & = \sum_{j=1}^k R^{m,k}_t(i,j) y^j = \sum_{j=1}^k \mathbb{E}_m \left [ \mathds{1}_{|V_{G_{t}}| = k} \sum_{A \in \mathcal{P}(V_{G_{t}}) ; |A|=j} F^i_{G_{t}}(A) \right ] y^j \\
& = \mathbb{E}_m \left [ \mathds{1}_{|V_{G_{t}}| = k} \sum_{A \in \mathcal{P}(V_{G_{t}})} F^i_{G_{t}}(A) y^{|A|} \right ] \in \left [ 0, \mathbb{P}_m \left ( |V_{G_{t}}| = k \right ) \right ] \subset [0, \pi(k)/\pi(m)]. 
\end{align*}
This proves the first part. By \eqref{cvcoeff1new}, $P^{m,k,i}_t (y)$ converges to $P_k(y)$ as $t$ goes to infinity. Moreover, by the classical result about converge to the equilibrium distribution for continuous-time irreducible positive recurrent Markov chains, $\mathbb{P}_m ( |V_{G_{t}}| = k )$ converges to $\pi(k)$ as $t$ goes to infinity. We thus get the second part of the proposition, letting $t$ go to infinity in the first part. The last statement about normal convergences is an easy consequence of the bounds in the first two parts and of $\sum_{k \geq 1} \pi(k) = 1 < \infty$. 

\end{proof}

\begin{prop} \label{limht}
$\mathbb{P}$-almost surely, $\lim_{t \rightarrow \infty} X(t)$ exists and belongs to $\{0,1\}$. Moreover, for any $i \geq 1$ and $x \in [0,1]$ we have 
\begin{eqnarray}
h(x) = \lim_{t \rightarrow \infty} h^i_t(x) = \sum_{k=1}^{\infty} P_k(x). \label{httoh}
\end{eqnarray}
\end{prop}

\begin{proof}
Let us fix $i \geq 1$, $x \in [0,1]$, and $m\geq i$. Recall from \eqref{decompattendueent} and the definition of $P^{m,k,i}_t (x)$ that we have $h^i_t(x) = \sum_{k=1}^{\infty} P^{m,k,i}_t (x)$. 
From \eqref{cvcoeff1new} we see that for every $k \geq 1$ we have $P^{m,k,i}_t (x) \longrightarrow_{t \rightarrow \infty} P_k(x)$. The bound $|P^{m,k,i}_t (x)| \leq \pi(k)/\pi(m)$ from Lemma \ref{cvn} allows to apply dominated convergence so we get the second equality in \eqref{httoh}. 

By \eqref{levymodelsdesimp}, $X$ is a sub-martingale if $\mathbb{E}[L(1)] \geq 0$, and a super-martingale if $\mathbb{E}[L(1)] \leq 0$. Moreover $X$ is bounded. Therefore, in any case, $X(t)$ converges almost surely to a limit as $t$ goes to infinity. By dominated convergence we get that for any $i \geq 1$, 
\begin{eqnarray}
\lim_{t \rightarrow \infty} \mathbb{E} \left [ X(t)^i \big | X(0)=x \right ] = \mathbb{E} \left [ \left ( \lim_{t \rightarrow \infty} X(t) \right )^i \big | X(0)=x \right ]. \label{limmmt}
\end{eqnarray}
Combining \eqref{limmmt}, the second equality in \eqref{httoh}, and Proposition \ref{h(x)ht(x)}, we get that for any $i \geq 1$, 
\begin{eqnarray}
\mathbb{E} \left [ \left ( \lim_{t \rightarrow \infty} X(t) \right )^i \big | X(0)=x \right ] = \sum_{k=1}^{\infty} P_k(x). \label{h=mmt}
\end{eqnarray}
\eqref{h=mmt} shows in particular that all moments of positive order of $\lim_{t \rightarrow \infty} X(t)$ are equal so this random variable is supported on $\{0,1\}$. 
Combining with \eqref{defh(x)} we get 
\[ h(x) = \mathbb{P} \left ( \lim_{t \rightarrow \infty} X(t) = 1 \big | X(0)=x \right ) = \mathbb{E} \left [ \lim_{t \rightarrow \infty} X(t) \big | X(0)=x \right ] = \sum_{k=1}^{\infty} P_k(x), \]
where we have used \eqref{h=mmt} for the last equality. This concludes the proof. 

\end{proof}

We can now conclude the proof of Theorem \ref{finalformula}. \eqref{mainformula} and the claims about $\lim_{t \rightarrow \infty} X(t)$ are established in Proposition \ref{limht}. The normal convergence of $\sum_{k=1}^{\infty} P_k(y)$ has been proved in Lemma \ref{cvn}. 
From \eqref{mainformula} and the bound $|P_k(x)| \leq \pi(k)$ from Lemma \ref{cvn} we get 
\eqref{boundfiniteapprox}, where the second inequality in \eqref{boundfiniteapprox} is only \eqref{majoqueuepiknew} from Proposition \ref{recpilambda}. 

\begin{remark}
We have proved in Proposition \ref{limht} that $X$ is almost surely eventually absorbed at $\{0,1\}$ but more can actually be proved. Indeed, since it can be seen that the line counting process of the E-ASG comes down from infinity, one can use classical arguments (see for example \cite[Prop. 2.19]{CHS19}) to show that $X$ is actually almost surely absorbed in $\{0,1\}$ in finite time. 
\end{remark}



%
%

\section{Taylor expansion of $h(x)$ near $x=0$} \label{dlh(x)}

\subsection{System of differential equations and asymptotics of coefficients: Proofs of Theorems \ref{equadiffcoeff1indice} and \ref{cvcoeff1indice}} \label{dlh(x)subsec1}

%
%
%
%

We can proceed similarly as in the proofs of Theorems \ref{equadiffcoeffnew} and \ref{cvcoeff} to prove Theorems \ref{equadiffcoeff1indice} and \ref{cvcoeff1indice}. However, we rather choose to show that the latter two can be derived from Theorems \ref{equadiffcoeffnew} and \ref{cvcoeff} via two lemmas. Lemma \ref{reconstitutioncoeff} below relates the coefficient $Q_t(i,j)$ to a series of coefficients $R^{m,k}_t(i,j)$, and Lemma \ref{reconstitutiondercoeff} below allows to differentiate this series term by term. Theorems \ref{equadiffcoeff1indice} and \ref{cvcoeff1indice} will then easily follow. 

\begin{lemme} \label{reconstitutioncoeff}
For any $t \geq 0$ and $i,j \geq 1$, $Q_t(i,j) = \sum_{k=1}^{\infty} R^{i,k}_t(i,j)$ and the convergence of the series holds uniformly in $t \in [0,\infty)$. 
\end{lemme}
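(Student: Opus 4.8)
The plan is to establish the identity $Q_t(i,j) = \sum_{k=1}^{\infty} R^{i,k}_t(i,j)$ by simply decomposing the expectation defining $Q_t(i,j)$ according to the value of the line-counting process $|V_{G_t}|$, and then to control the tail of the series uniformly in $t$ using the deterministic bound on the coefficients $F^i_{G_t}(A)$ from Lemma \ref{lemmecomb} together with the uniform-in-time bound \eqref{majointemporelle} on the line-counting process. First I would recall the definition \eqref{coeff1indice} of $Q_t(i,j)$ under $\mathbb{P}_i$ and insert the partition of unity $1 = \sum_{k=1}^{\infty} \mathds{1}_{|V_{G_t}|=k}$ inside the expectation. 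The term-by-term interchange of the (countable) sum over $k$ with the expectation is justified by the bound \eqref{borneFesp} of Lemma \ref{sgwelldef}, which gives $\mathbb{E}_i[\sum_{A \in \mathcal{P}(V_{G_t});|A|=j}|F^i_{G_t}(A)|] \leq \frac{j^j}{\pi(i)j!}\sum_{l\geq1}l^j\pi(l) < \infty$, so that dominated convergence (or Tonelli, working first with absolute values) applies. This yields $Q_t(i,j) = \sum_{k=1}^{\infty}\mathbb{E}_i[\mathds{1}_{|V_{G_t}|=k}\sum_{A\in\mathcal{P}(V_{G_t});|A|=j}F^i_{G_t}(A)] = \sum_{k=1}^{\infty}R^{i,k}_t(i,j)$ by the very definition \eqref{defcoeff} of $R^{i,k}_t(i,j)$ (with $m=i$), noting that $R^{i,k}_t(i,j)=0$ for $k<j$ so the summand is genuinely indexed by $k\geq 1$.

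For the uniformity of the convergence in $t\in[0,\infty)$, I would bound the tail $\sum_{k\geq K}|R^{i,k}_t(i,j)|$ using \eqref{majonouveausgnew}, which gives $|R^{i,k}_t(i,j)| \leq \frac{(jk)^j}{j!}\mathbb{P}_i(|V_{G_t}|=k)$, and then \eqref{majointemporelle}, which gives $\mathbb{P}_i(|V_{G_t}|=k) \leq \pi(k)/\pi(i)$ uniformly in $t\geq 0$. Hence
\begin{align*}
\sum_{k\geq K}|R^{i,k}_t(i,j)| \leq \frac{j^j}{j!\,\pi(i)}\sum_{k\geq K}k^j\pi(k),
\end{align*}
and the right-hand side is a tail of the convergent series $\sum_{k\geq 1}k^j\pi(k)$ (finite by \eqref{majoqueuepiknew} of Proposition \ref{recpilambda}, since $\pi$ has super-polynomially decaying tails). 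This bound is independent of $t$, so the partial sums $\sum_{k=1}^{K}R^{i,k}_t(i,j)$ converge to $Q_t(i,j)$ uniformly in $t\in[0,\infty)$, which is the second assertion.

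There is no serious obstacle here: the lemma is essentially a bookkeeping statement, and all the analytic input — the deterministic bound $|F^i_{G_T}(B)|\leq |B|^{|B|}$, the moment bound \eqref{bornemomentGtnew}, and the stationary-measure domination \eqref{majointemporelle} — has already been assembled in Sections \ref{combinatorialfuct} and \ref{acotcfnew}. The only point requiring a little care is to make sure the interchange of summation and expectation is performed against absolute values first (so that Tonelli/dominated convergence is unambiguously applicable), and to record that the same estimate \eqref{majonouveausgnew}–\eqref{majointemporelle} that licenses the interchange also delivers the uniformity in $t$, since the dominating bound $\frac{j^j}{j!\,\pi(i)}\sum_{k\geq 1}k^j\pi(k)$ does not depend on $t$.
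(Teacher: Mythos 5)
Your proposal is correct and follows essentially the same route as the paper: the paper writes the difference $Q_t(i,j)-\sum_{k=1}^{n}R^{i,k}_t(i,j)$ directly as $\mathbb{E}_i\bigl[\mathds{1}_{|V_{G_t}|>n}\sum_{A;|A|=j}F^i_{G_t}(A)\bigr]$ and bounds it by $\frac{j^j}{j!\,\pi(i)}\sum_{l>n}l^j\pi(l)$ via \eqref{borneFps} and \eqref{bornemomentGtnew}, which is exactly the $t$-uniform tail estimate you obtain from \eqref{majonouveausgnew} and \eqref{majointemporelle}. The only (immaterial) difference is that you first establish the identity by an interchange of sum and expectation and then bound the tail, whereas the paper does both in one step.
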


\begin{proof}
Using the expressions of the coefficients $R^{m,k}_t(i,j)$ and $Q_t(i,j)$ in Definition \ref{defcoefsg}, \eqref{borneFps} from Lemma \ref{sgwelldef}, and \eqref{bornemomentGtnew} from Lemma \ref{lemmecomb}, we get for $n \geq 1$, 
\begin{align*}
\left | Q_t(i,j) - \sum_{k=1}^{n} R^{i,k}_t(i,j) \right | & = \left | \mathbb{E}_i \left [ \mathds{1}_{|V_{G_{t}}| > n} \sum_{A \in \mathcal{P}(V_{G_{t}}) ; |A|=j} F^i_{G_{t}}(A) \right ] \right | \\
& \leq \frac{j^j}{j!} \mathbb{E}_i \left [ \mathds{1}_{|V_{G_{t}}| > n} |V_{G_{t}}|^{j} \right ] \leq \frac{j^j}{j! \pi(i)} \sum_{l > n} l^{j} \pi(l). 
\end{align*}
By \eqref{majoqueuepiknew}, the right-hand side converges to $0$ as $n$ goes to infinity and the lemma follows. 
\end{proof}

\begin{lemme} \label{reconstitutiondercoeff}
For any $t \geq 0$ and $i,j \geq 1$, 
\[ \sum_{k=1}^{\infty} \frac{d}{dt} R^{i,k}_t(i,j) = - d_j Q_t(i,j) +f_j Q_t(i,j-1) + \sum_{l =1}^{j+1} \tau(l,j) Q_t(i,l), \]
and the convergence of the series in the left-hand side holds uniformly in $t \in [0,\infty)$. 
\end{lemme}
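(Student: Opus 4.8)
The plan is to combine Lemma \ref{reconstitutioncoeff}, which identifies $Q_t(i,j)$ with $\sum_{k \geq 1} R^{i,k}_t(i,j)$, and Theorem \ref{equadiffcoeffnew}, which gives the derivative of each $R^{i,k}_t(i,j)$, and then justify the termwise differentiation of the series. Concretely, I would start from the identity of Theorem \ref{equadiffcoeffnew} with $m=i$, namely
\begin{align*}
\frac{d}{dt} R^{i,k}_t(i,j) &= \tau(j+1,j) R^{i,k+1}_t(i,j+1) + e_{k,j} R^{i,k+1}_t(i,j) + \mathds{1}_{k \geq 2} f_j R^{i,k-1}_t(i,j-1) \\
& + \mathds{1}_{j \leq k-1} f_{k,j} R^{i,k-1}_t(i,j) - d_k R^{i,k}_t(i,j) + \mathds{1}_{k \ \text{even}} \sum_{l =1}^{j \wedge (k/2)} \tau(l,j) R^{i,k/2}_t(i,l),
\end{align*}
and sum over $k \geq 1$. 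The key point is that the summation telescopes: writing $S_j(t) := \sum_{k \geq 1} R^{i,k}_t(i,j) = Q_t(i,j)$, the term $\tau(j+1,j) R^{i,k+1}_t(i,j+1)$ summed over $k$ gives $\tau(j+1,j)(S_{j+1}(t) - R^{i,1}_t(i,j+1))$, but $R^{i,1}_t(i,j+1) = 0$ for $j+1 \geq 2$ since the only subset of a one-element vertex set has cardinality $1 < j+1$ (using \eqref{defcoeff} and the fact that $F^i_{G_t}(A)$ can only be nonzero on sets of cardinality at most $|V_{G_t}|$); similarly the $e_{k,j}$ term gives a sum that I would regroup. The $-d_k$ coefficient depends on $k$, so here I would use $d_k = \lambda + k(k-1) + k\sigma$ and carefully match the $k$-dependent pieces against the shifted indices from the other terms; this is the main bookkeeping step, and the desired cancellation is exactly what produces the claimed $-d_j Q_t(i,j) + f_j Q_t(i,j-1) + \sum_{l=1}^{j+1} \tau(l,j) Q_t(i,l)$. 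One should also treat the $\mathds{1}_{k \text{ even}}$ double sum, which after summing over all even $k$ and interchanging the order of summation becomes $\sum_{l} \tau(l,j) \sum_{k \text{ even}, k \geq 2l} R^{i,k/2}_t(i,l)$, and this inner sum over $k$ reindexes to $\sum_{k' \geq l} R^{i,k'}_t(i,l) = Q_t(i,l)$, contributing $\sum_{l=1}^{\infty} \tau(l,j) Q_t(i,l)$; combined with the fact that $\tau(l,j) = 0$ for $l > j+1$ (since $\tau(l,j) \neq 0$ forces $l-1 \leq j \leq 2l$, hence $l \geq j/2$ and $l \leq j+1$) this truncates to $\sum_{l=1}^{j+1}$.

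For the convergence-in-norm claim, I would bound each $\frac{d}{dt} R^{i,k}_t(i,j)$ by a quantity summable in $k$ uniformly in $t$. Each summand on the right-hand side of the derivative formula is a coefficient of the form (explicit constant polynomial in $k$) times $R^{i,k'}_t(i,j')$ with $k' \in \{k-1,k,k+1,k/2\}$, and by \eqref{majonouveausgnew} we have $|R^{i,k'}_t(i,j')| \leq \frac{(j'k')^{j'}}{j'!}\mathbb{P}_i(|V_{G_t}| = k')$, which by \eqref{majointemporelle} is at most $\frac{(j'k')^{j'}}{j'!}\frac{\pi(k')}{\pi(i)}$. Since $j' \in \{j-1,j,j+1,l\}$ with $l \leq j+1$, the prefactors are polynomial in $k$ with degree bounded in terms of $j$ only, so $\left|\frac{d}{dt}R^{i,k}_t(i,j)\right| \leq C(i,j) \, p_j(k) \, \pi(k)$ for a polynomial $p_j$, and $\sum_k p_j(k)\pi(k) < \infty$ by \eqref{majoqueuepiknew}; this gives a dominating summable sequence independent of $t$, which is precisely uniform convergence of the series. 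The main obstacle I anticipate is not the estimate but the index-shifting algebra in the telescoping step — making sure every boundary term ($k=1$, small $k$, parity constraints on the $\tau$-sum) is accounted for so that exactly the coefficients $d_j$, $f_j$, $\tau(l,j)$ survive and nothing spurious remains; I would do this by the explicit substitution $d_k = \lambda + k(k-1) + k\sigma$ and grouping the $R^{i,k}_t(i,\cdot)$ terms by the second argument and the index $k$ of appearance. Once this lemma is in hand, Theorem \ref{equadiffcoeff1indice} follows immediately by combining Lemma \ref{reconstitutioncoeff} (to differentiate $Q_t(i,j) = \sum_k R^{i,k}_t(i,j)$ termwise, justified by the uniform convergence of the differentiated series) with the identity just proved.

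Finally, for Theorem \ref{cvcoeff1indice}: existence of $b_j = \lim_{t\to\infty} Q_t(i,j)$ and the identity $b_j = \sum_{k=1}^\infty a^k_j$ both follow from Lemma \ref{reconstitutioncoeff} together with Theorem \ref{cvcoeff} (each $R^{i,k}_t(i,j) \to a^k_j$, the convergence in $k$ is uniform in $t$, so one may exchange the limit in $t$ with the sum over $k$), and the independence from $i$ follows from the independence of $a^k_j$ from $m,i$. The relation \eqref{relreccoef} then follows by taking $t \to \infty$ in Theorem \ref{equadiffcoeff1indice}: the left-hand side $\frac{d}{dt}Q_t(i,j)$ must tend to $0$ since $Q_t(i,j)$ converges and its derivative is (by Theorem \ref{equadiffcoeff1indice}) a continuous function converging to $-d_j b_j + f_j b_{j-1} + \sum_{l=1}^{j+1}\tau(l,j)b_l$, and a convergent function whose derivative converges must have derivative tending to $0$; hence that limit is $0$.
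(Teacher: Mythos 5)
Your proposal follows essentially the same route as the paper: take Theorem \ref{equadiffcoeffnew} with $m=i$, sum over $k$, reindex so that the $k$-dependent pieces of $e_{k,j}$, $f_{k,j}$ and $d_k$ cancel, and identify the surviving sums via Lemma \ref{reconstitutioncoeff}. The only structural difference is that the paper works with the partial sums $\sum_{k=1}^{n}$ and is then forced to track two explicit boundary terms, $\left((n+1)n-j(j-1)\right)R^{i,n+1}_t(i,j)$ and $(j-n)\sigma R^{i,n}_t(i,j)$, which it kills using \eqref{majonouveausgnew}, \eqref{majointemporelle} and the super-polynomial decay \eqref{majoqueuepiknew} of $\pi$; you instead manipulate the full infinite series and get uniformity from a Weierstrass-type domination. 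That is legitimate, but you should say explicitly that each constituent series (e.g. $\sum_k (k+1)k\,R^{i,k+1}_t(i,j)$ versus $\sum_k k(k-1)R^{i,k}_t(i,j)$) converges absolutely — exactly by the same $p_j(k)\pi(k)$-type bound you use for uniform convergence — since otherwise the reindexing that produces the cancellation is not justified; this is the same estimate the paper uses on its boundary terms, just deployed at a different point.

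One concrete slip in your bookkeeping: the even-$k$ double sum is $\sum_{l=1}^{j\wedge(k/2)}\tau(l,j)R^{i,k/2}_t(i,l)$, so after interchanging the summations the index $l$ runs only up to $j$, and this term contributes $\sum_{l=1}^{j}\tau(l,j)Q_t(i,l)$ — not $\sum_{l=1}^{j+1}$ as you state. The $l=j+1$ contribution $\tau(j+1,j)Q_t(i,j+1)$ comes solely from the reindexed first term $\tau(j+1,j)R^{i,k+1}_t(i,j+1)$, which you have already counted; as written your accounting would produce $2\tau(j+1,j)Q_t(i,j+1)$. Since $\tau(j+1,j)=(j+1)j\neq 0$, this is not harmless, though it is exactly the kind of index error you flag as the step requiring care, and fixing it recovers the paper's formula. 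Your derivations of Theorems \ref{equadiffcoeff1indice} and \ref{cvcoeff1indice} from the two lemmas match the paper's.
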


The proof of Lemma \ref{reconstitutiondercoeff} is rather straightforward (modulo the use of Theorem \ref{equadiffcoeffnew}) but quite computational so we shift it to Appendix~\ref{A4}. Theorem \ref{equadiffcoeff1indice} now follows easily from Lemmas \ref{reconstitutioncoeff} and \ref{reconstitutiondercoeff}. 
We conclude the proof of Theorem \ref{cvcoeff1indice} as follows. 
The existence of the limit in \eqref{cvcoeff1} and the identity $b_j = \sum_{k=1}^{\infty}a^k_j$ follow from the combination of \eqref{cvcoeff1new} with the uniform convergence from Lemma \ref{reconstitutioncoeff}. $b_j = \sum_{k=1}^{\infty}a^k_j$ shows that $b_j$ does not depend on $i$. Then, \eqref{relreccoef} follows combining Theorem \ref{equadiffcoeff1indice} and the convergence \eqref{cvcoeff1}. 

\subsection{Taylor expansion: Proof of Theorem \ref{taylorexp}} \label{proofoftaylorexp}

Let us fix $n \geq 1$, $T > 0$, and $x \in [0,1]$. Using \eqref{propagationformule1mix} and decomposing on the cardinality of $A \in \mathcal{P}(V_{G_{T}})$, we get that $h^1_T(x)$ can be decomposed as
\begin{align*}
\mathbb{E}_1 \left [\sum_{j=1}^{2n} \sum_{A \in \mathcal{P}(V_{G_{T}}) ; |A|=j} F^1_{G_{T}}(A) \mathds{1}_{E(T,T,A,x)} \right ] + \mathbb{E}_1 \left [\sum_{j>2n} \sum_{A \in \mathcal{P}(V_{G_{T}}) ; |A|=j} F^1_{G_{T}}(A) \mathds{1}_{E(T,T,A,x)} \right ], 
\end{align*}
where $E(T,T,A,x)$ is as in Definition \ref{type0events}. The finiteness of the first expectation follows from \eqref{borneFesp} applied for each $j$ from $1$ to $2n$. The finiteness of the second expectation follows from \eqref{entre0et10mix} together with the finiteness of the first expectation. Let us denote by $S_T(n,x)$ the second expectation. Proceeding as in \eqref{egsuren} we see that the first term equals $\mathbb{E}_1 [\sum_{j=1}^{2n} \sum_{A \in \mathcal{P}(V_{G_{T}}) ; |A|=j} F^1_{G_{T}}(A) x^j ]$ and clearly the later equals $\sum_{j=1}^{2n} Q_T(1,j) x^{j}$. We thus get 
\begin{align}
h^1_T(x) = \sum_{j=1}^{2n} Q_T(1,j) x^{j} + S_T(n,x). \label{taylorexp3}
\end{align}
We now bound the term $S_T(n,x)$ when $x$ is small, uniformly in $T$. Once this is done, it will be possible to justify Theorem \ref{taylorexp} by letting $T$ go to infinity in \eqref{taylorexp3}. 
\begin{prop} \label{taylorexp4}

For any $n \geq 1$ there exists $\epsilon_n > 0$ such that 
\begin{eqnarray}
\forall T > 0, \forall x \in (0,\epsilon_n), \ |S_T(n,x)| \leq x^{n+1/4}. \label{bornereste0}
\end{eqnarray}

\end{prop}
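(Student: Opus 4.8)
The plan is to split the expectation defining $S_T(n,x)$ in \eqref{taylorexp3} according to whether the line count $|V_{G_T}|$ is at most, or larger than, a threshold $M=M(x)$ which I will eventually take of order $x^{-1/2}$. On the event $\{|V_{G_T}|\le M\}$ I will use the crude pointwise bounds on $F^1_{G_T}$, which are enough there because $xM(x)\to 0$; on the event $\{|V_{G_T}|>M\}$ I will use the sure bound \eqref{entre0et10mix} together with the fact, furnished by \eqref{majoqueuepiknew}, that this event has super-polynomially small probability. Throughout I will use Stirling's bound $j^j/j!\le e^j$.

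First I would observe that, since $F^1_{G_T}(\cdot)$ is $\mathcal{F}_T$-measurable and $\mathbb{P}_1(E(T,T,A,x)\mid\mathcal{F}_T)=x^{|A|}$, conditioning on $\mathcal{F}_T$ gives
\[ \mathbb{E}_1\Big[\mathds{1}_{|V_{G_T}|\le M}\!\!\sum_{|A|>2n}\!|F^1_{G_T}(A)|\,\mathds{1}_{E(T,T,A,x)}\Big]=\mathbb{E}_1\Big[\mathds{1}_{|V_{G_T}|\le M}\sum_{j>2n}x^j\!\!\sum_{|A|=j}\!|F^1_{G_T}(A)|\Big]. \]
By \eqref{borneFps} the inner double sum is bounded, on $\{|V_{G_T}|\le M\}$, by $\sum_{j>2n}(xM)^j j^j/j!\le\sum_{j>2n}(e\,xM)^j$. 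Taking $M(x)=\lfloor x^{-1/2}\rfloor$ and $x$ small (so that $e\sqrt x\le 1/2$) this geometric tail is at most $2(e\sqrt x)^{2n+1}=2e^{2n+1}x^{n+1/2}$, which is $\le\tfrac12 x^{n+1/4}$ once $x^{1/4}$ is small enough. This disposes of the first piece, uniformly in $T$.

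For the second piece I would write, on $\{|V_{G_T}|>M\}$,
\[ \sum_{|A|>2n}F^1_{G_T}(A)\mathds{1}_{E(T,T,A,x)}=\sum_{A}F^1_{G_T}(A)\mathds{1}_{E(T,T,A,x)}-\sum_{1\le|A|\le 2n}F^1_{G_T}(A)\mathds{1}_{E(T,T,A,x)}. \]
The first sum lies in $[0,1]$ by \eqref{entre0et10mix}, and the second is bounded in absolute value, via \eqref{borneFps}, by $\sum_{j=1}^{2n}|V_{G_T}|^j j^j/j!\le C_n|V_{G_T}|^{2n}$ for a constant $C_n$ depending only on $n$; hence the whole expression is at most $(1+C_n)|V_{G_T}|^{2n}$. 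Therefore the second piece is at most $(1+C_n)\,\mathbb{E}_1[\mathds{1}_{|V_{G_T}|>M}|V_{G_T}|^{2n}]\le\frac{1+C_n}{\pi(1)}\sum_{k>M}k^{2n}\pi(k)$ by \eqref{majointemporelle} (this is exactly \eqref{bornemomentGtnew}). Using \eqref{majoqueuepiknew} and the elementary fact that, for $\log k$ large in terms of $n$, $2n\log k-C_2(\log k)^2\le-\tfrac{C_2}{2}(\log k)^2$, one obtains $\sum_{k>M}k^{2n}\pi(k)\le\hat C_n e^{-c(\log M)^2}$ for all $M\ge\hat M_n$, with $c>0$ absolute and $\hat C_n,\hat M_n$ depending only on $n$; with $M\asymp x^{-1/2}$ the right-hand side is of order $e^{-(c/4)(\log(1/x))^2}$, which is $o(x^{n+1/4})$ as $x\to0$ since $(\log(1/x))^2\gg\log(1/x)$. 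Choosing $\delta_n$ small enough that $x<\delta_n$ forces $e\sqrt x\le1/2$, $M(x)\ge\hat M_n$, and both "$x$ small" conditions above, and adding the two pieces, gives $|S_T(n,x)|\le x^{n+1/4}$ for all $T>0$ and $x\in(0,\delta_n)$, which is \eqref{bornereste0}.

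The main obstacle is the tail estimate $\sum_{k>M}k^{2n}\pi(k)\le\hat C_n e^{-c(\log M)^2}$: after absorbing the polynomial factor $k^{2n}$ into the exponent one must compare the remaining sum to $\int_M^\infty e^{-\frac{C_2}{2}(\log t)^2}\,dt$, and evaluate the latter by the substitution $t=e^u$ and completing the square in $u-\tfrac{1}{C_2}$, which yields a bound of the form $C'e^{-\frac{C_2}{8}(\log M)^2}$. Everything else is routine bookkeeping with the estimates already provided by Lemmas \ref{lemmecomb} and \ref{sgwelldef}; in particular no new structural property of $F^1_{G_T}$ beyond \eqref{borneFps} and \eqref{entre0et10mix} is needed.
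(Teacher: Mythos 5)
Your proof is correct, but it follows a genuinely different route from the paper's. The paper never introduces a deterministic threshold on $|V_{G_T}|$: it keeps the indicators $\mathds{1}_{E(T,T,A,x)}$ throughout, observes that the tail sum can only be nonzero on the event $\{N_T>2n\}$ where $N_T$ is the number of lines of $V_{G_T}$ assigned type $0$, rewrites $\{N_T\geq 2n+1\}$ as $\{|V_{G_T}|\geq \sum_{i=1}^{2n+1}Y_i\}$ for \textit{iid} geometric($x$) variables $Y_i$ independent of $|V_{G_T}|$, and then splits according to whether all the $Y_i$ are $\leq x^{-1/2}$ (an event of probability $\mathbb{P}(Y_1\leq x^{-1/2})^{2n+1}\lesssim x^{n+1/2}$) or $|V_{G_T}|\geq x^{-1/2}$ (controlled by the tail of $\pi$ via \eqref{bornemomentGtnew} and \eqref{majoqueuepiknew}). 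You instead cut deterministically at $M(x)=\lfloor x^{-1/2}\rfloor$, integrate out the type assignment on the small side to get the explicit factor $x^j$ and a geometric series $\sum_{j>2n}(exM)^j\lesssim x^{n+1/2}$, and on the large side use the almost sure bound \eqref{entre0et10mix} together with the super-polynomial decay of $\sum_{k>M}k^{2n}\pi(k)$. Your version avoids the geometric-variable device entirely and is arguably more transparent, at the modest cost of having to check separately that the two truncated expectations are well defined (which you do, via \eqref{borneFps} and \eqref{borneFesp}) and of carrying out the integral-comparison estimate for $\sum_{k>M}e^{-\frac{C_2}{2}(\log k)^2}$; the paper's version gets the clean polynomial bound $\sum_{k\geq x^{-1/2}}k^{2n}\pi(k)\leq x^{n+1/2}$ directly from $\pi(k)\leq k^{-4n-3}$ for large $k$, without needing the full Gaussian-in-$\log$ tail. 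Both arguments rely on exactly the same two structural inputs, \eqref{borneFps} and \eqref{entre0et10mix}, and both deliver $O(x^{n+1/2})$, comfortably stronger than the required $x^{n+1/4}$.
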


The idea is that the integrand, inside the expectation defining $S_T(n,x)$, is non-zero only if, after applying the type assignment procedure from Definition \ref{typeaspreasg}, the E-ASG contains at least $2n+1$ lines of type $0$ at time $\beta=T$. The probability of this event is controlled using our upper bounds for the tail distribution of the line counting process of the E-ASG and, thanks to \eqref{entre0et10mix}, the integrand is re-written in terms of finitely many terms for which we have suitable bounds. 

\begin{proof} [Proof of Proposition \ref{taylorexp4}]
On $\{ |V_{G_{T}}| \leq 2n \}$, the sum inside the expectation defining $S_T(n,x)$ is empty and therefore null. We apply the type assignment procedure on $[0,T]$ with initial condition $x$ (see Definition \ref{typeaspreasg}) and denote by $N_T$ the number of lines that are assigned type $0$ in $V_{G_{T}}$. Note that for any $k \geq 1$, conditionally on $\{ |V_{G_{T}}| = k \}$, we have $N_T \sim \mathcal{B}(k,x)$, where $\mathcal{B}(k,x)$ denotes the binomial distribution with parameters $k$ and $x$. Note moreover that for any $A \in \mathcal{P}(V_{G_{T}})$ such that $|A| > 2n$ we have $\mathds{1}_{E(T,T,A,x)} = 0$ on $\{ N_T \leq 2n \}$. Therefore, the sum inside the expectation defining $S_T(n,x)$ is null outside the event $\{ |V_{G_{T}}| > 2n, N_T > 2n \}$. We thus have 
\begin{align*}
S_T(n,x) & = \mathbb{E}_1 \left [ \mathds{1}_{|V_{G_{T}}| > 2n, N_T > 2n} \sum_{j>2n} \sum_{A \in \mathcal{P}(V_{G_{T}}) ; |A|=j} F^1_{G_{T}}(A) \mathds{1}_{E(T,T,A,x)} \right ] \\
& = \mathbb{E}_1 \left [ \mathds{1}_{|V_{G_{T}}| > 2n, N_T > 2n} \sum_{A \in \mathcal{P}(V_{G_{T}})} F^1_{G_{T}}(A) \mathds{1}_{E(T,T,A,x)} \right ] \\
& - \sum_{j=1}^{2n} \mathbb{E}_1 \left [ \mathds{1}_{|V_{G_{T}}| > 2n, N_T > 2n} \sum_{A \in \mathcal{P}(V_{G_{T}}) ; |A|=j} F^1_{G_{T}}(A) \mathds{1}_{E(T,T,A,x)} \right ]. 
\end{align*}
Therefore 
\begin{align*}
|S_T(n,x)| & \leq \mathbb{E}_1 \left [ \mathds{1}_{|V_{G_{T}}| > 2n, N_T > 2n} \left | \sum_{A \in \mathcal{P}(V_{G_{T}})} F^1_{G_{T}}(A) \mathds{1}_{E(T,T,A,x)} \right | \right ] \\
& + \sum_{j=1}^{2n} \mathbb{E}_1 \left [ \mathds{1}_{|V_{G_{T}}| > 2n, N_T > 2n} \sum_{A \in \mathcal{P}(V_{G_{T}}) ; |A|=j} \left | F^1_{G_{T}}(A) \right | \right ]. 
\end{align*}
Using \eqref{entre0et10mix} for the first term and \eqref{borneFps} for each of the $2n$ other terms we get 
\begin{align*}
|S_T(n,x)| & \leq \mathbb{E}_1 \left [ \mathds{1}_{|V_{G_{T}}| > 2n, N_T > 2n} \right ] + \sum_{j=1}^{2n} \mathbb{E}_1 \left [ \frac{j^j}{j!} |V_{G_{T}}|^{j} \mathds{1}_{|V_{G_{T}}| > 2n, N_T > 2n} \right ] \\
& \leq (2n+1)^{2n+1} \mathbb{E}_1 \left [ |V_{G_{T}}|^{2n} \mathds{1}_{|V_{G_{T}}| > 2n, N_T > 2n} \right ]. 
\end{align*}
Recall that, conditionally on $|V_{G_{T}}|$, we have $N_T \sim \mathcal{B}(|V_{G_{T}}|,x)$. Therefore the above expectation can be re-written as $\mathbb{E}_1 [ |V_{G_{T}}|^{2n} \mathds{1}_{|V_{G_{T}}| \geq \sum_{i=1}^{2n+1} Y_i} ]$, where $Y_1,Y_2,...,Y_{2n+1}$ are \textit{iid} geometric random variable with parameter $x$, independent of $|V_{G_{T}}|$. We set $J := \sharp \{ i \in \{1,...,2n+1\}, Y_i \leq x^{-1/2} \}$. Note that $J \sim \mathcal{B}(2n+1,\mathbb{P}(Y_1 \leq x^{-1/2}))$ and that $J$ is independent of $|V_{G_{T}}|$. We have 
\begin{align*}
|S_T(n,x)| & \leq (2n+1)^{2n+1} \mathbb{E}_1 \left [ |V_{G_{T}}|^{2n} \mathds{1}_{|V_{G_{T}}| \geq \sum_{i=1}^{2n+1} Y_i} \right ] \\
& \leq (2n+1)^{2n+1} \left ( \mathbb{E}_1 \left [ |V_{G_{T}}|^{2n} \right ] \mathbb{P} \left [ J = 2n+1 \right ] + \mathbb{E}_1 \left [ |V_{G_{T}}|^{2n} \mathds{1}_{|V_{G_{T}}| \geq x^{-1/2}} \right ] \right ) \\
& = (2n+1)^{2n+1} \left ( \mathbb{E}_1 \left [ |V_{G_{T}}|^{2n} \right ] \mathbb{P}(Y_1 \leq x^{-1/2})^{2n+1} + \mathbb{E}_1 \left [ |V_{G_{T}}|^{2n} \mathds{1}_{|V_{G_{T}}| \geq x^{-1/2}} \right ] \right ) \\
& \leq \frac{(2n+1)^{2n+1}}{\pi(1)} \left ( \left ( \sum_{k \geq 1} k^{2n} \pi(k) \right ) \mathbb{P}(Y_1 \leq x^{-1/2})^{2n+1} + \sum_{k \geq x^{-1/2}} k^{2n} \pi(k) \right ), 
\end{align*}
where we have used two times \eqref{bornemomentGtnew} for the last inequality. The sum $\sum_{k \geq 1} k^{2n} \pi(k)$ is finite according to \eqref{majoqueuepiknew}. By \eqref{majoqueuepiknew} again, we see that if $\epsilon_n$ is chosen small enough then for all $x \in (0,\epsilon_n)$ and $k \geq x^{-1/2}$ we have $\pi(k) \leq k^{-4n-3}$. We thus get 
\begin{eqnarray}
|S_T(n,x)| \leq C(n) \left ( \mathbb{P}(Y_1 \leq x^{-1/2})^{2n+1} + x^{n+1/2} \right ), \label{bornereste1}
\end{eqnarray}
where $C(n)$ is a constant depending on $n$ (and not on $T$ or $x$). Moreover we have 
\[ \mathbb{P}(Y_1 \leq x^{-1/2}) = \mathbb{P}(e^{-x^{1/2} Y_1} \geq e^{-1}) \leq e^{1} \mathbb{E} [e^{-x^{1/2} Y_1}] = \frac{e^{1} x e^{-x^{1/2}}}{1-(1-x)e^{-x^{1/2}}} \leq 2e^1x^{1/2}, \]
where the last inequality holds for all $x \in (0,\epsilon_n)$ if $\epsilon_n$ is chosen small enough. Plugging into \eqref{bornereste1} we get $|S_T(n,x)| \leq C'(n) x^{n+1/2}$,
where $C'(n)$ is a constant depending on $n$ (and not on $T$ or $x$). By choosing $\epsilon_n$ even smaller if necessary we obtain \eqref{bornereste0}. Note that $\epsilon_n$ does not depend on $T$. 
\end{proof}

We can now conclude the proof of Theorem \ref{taylorexp}. We fix $n \geq 1$ and $x \in (0,\epsilon_n)$, where $\epsilon_n$ is given by Proposition \ref{taylorexp4}. We have $h^1_T(x) \longrightarrow_{T \rightarrow \infty} h(x)$ according to Proposition \ref{limht} and $\sum_{j=1}^{2n} Q_T(1,j) x^{j} \longrightarrow_{T \rightarrow \infty} \sum_{j=1}^{2n} b_j x^{j}$ according to Theorem \ref{cvcoeff1indice}. Combining with \eqref{taylorexp3} we get that $S_T(n,x)$ has a limit as $T$ goes to infinity, that we denote by $S_{\infty}(n,x)$, and 
\begin{eqnarray}
h(x) = \sum_{j=1}^{n} b_j x^{j} + \sum_{j=n+1}^{2n} b_j x^{j} + S_{\infty}(n,x). \label{dl1}
\end{eqnarray}
Taking the limit as $T$ goes to infinity into \eqref{bornereste0} we get 
\begin{eqnarray}
\forall x \in (0,\epsilon_n), \ |S_{\infty}(n,x)| \leq x^{n+1/4}. \label{dl2}
\end{eqnarray}
The combination of \eqref{dl1} and \eqref{dl2} concludes the proof of Theorem \ref{taylorexp}. 

\appendix 
\section{ } \label{append}

\subsection{Line counting process of the E-ASG: Proof of Proposition \ref{recpilambda} from Section \ref{enlargedasglcp}}\label{A2}

For the line counting process of the E-ASG $(G_{\beta})_{\beta \geq 0}$ we use the notation $(|V_{G_{\beta}}|)_{\beta\geq 0}$ from Section \ref{enlargedasgdefnot}. Recall from Section \ref{enlargedasglcp} that $(|V_{G_{\beta}}|)_{\beta\geq 0}$ is an irreducible, positive recurrent, continuous-time Markov process on $\mathbb{N}=\{1,2,...\}$, and that 
its stationary distribution is denoted by $\pi$. We now justify Proposition \ref{recpilambda}. 
In order to establish the recursion formula \eqref{recpik} for the probabilities $\pi(k)$ we use Siegmund duality. Let us consider the Markov process $(D_{\beta})_{\beta\geq 0}$ with rates
\[ q_D(i,j):=\left\{\begin{array}{ll}
            i(i-1) &\textrm{if $j=i+1$},\\
            \sigma(i-1) &\textrm{if $j=i-1$},\\
            \lambda &\textrm{if $j=\lfloor \frac{i+1}{2} \rfloor$}. 
            \end{array}\right. \]
Note that $1$ is an absorbing state and that the process may explode to $\infty$ in finite time. 
\begin{lemme}[Siegmund duality]\label{sd}
The processes $(|V_{G_{\beta}}|)_{\beta\geq 0}$ and $(D_{\beta})_{\beta\geq 0}$ are Siegmund duals, i.e. for all $\ell, d\in \mathbb{N}$ and $t\geq 0$, we have
\begin{align}
\mathbb{P}\left(|V_{G_{t}}|\geq d\mid |V_{G_{0}}|=\ell\right)& =\mathbb{P}\left(\ell\geq D_t \mid D_0=d\right), \label{duality} \\
\pi([d, \infty))&=\mathbb{P}\left( \exists t \geq 0 \ \text{s.t.} \ D_t= 1 \mid D_0=d\right). \label{limitduality}
\end{align}
\end{lemme}
\begin{proof}
We consider the function $H:\mathbb{N}\times\mathbb{N} \rightarrow \{0,1\}$ defined via $H(\ell,d):=\mathds{1}_{\ell\geq d}$ for $\ell,d\in\mathbb{N}$. Let $\mathcal{G}_1$ and $\mathcal{G}_2$ be the infinitesimal generators of $(|V_{G_{\beta}}|)_{\beta\geq 0}$ and $(D_{\beta})_{\beta\geq 0}$, respectively. By \cite[Prop. 1.2]{Jaku} we only have to show that $\mathcal{G}_1 H(\cdot,d)(\ell)=\mathcal{G}_2 H(\ell,\cdot)(d)$ for all $\ell,d\in\mathbb{N}$ in order to justify \eqref{duality}. We have
\begin{align*}
\mathcal{G}_1 H(\cdot,d)(\ell) = - \ell (\ell-1) \mathds{1}_{\ell=d} + \ell \sigma \mathds{1}_{\ell=d-1} + \lambda \mathds{1}_{\ell< d \leq 2\ell}, 
\end{align*}
and 
\begin{align*}
\mathcal{G}_2 H(\ell,\cdot)(d)=- d (d-1) \mathds{1}_{d=\ell} + (d-1) \sigma \mathds{1}_{d=l+1} + \lambda \mathds{1}_{\lfloor \frac{d+1}{2} \rfloor \leq l, d > \ell} = \mathcal{G}_1 H(\cdot,d)(\ell). 
\end{align*}
\eqref{duality} follows. Then \eqref{limitduality} follows letting $t$ go to infinity into \eqref{duality}. 

\end{proof}

\begin{proof} [Proof of Proposition \ref{recpilambda}]

Using \eqref{limitduality} and applying a first step decomposition to the process $D$, we obtain that for all $k \geq 1$ we have
\begin{align}
& \left ( k(k-1) + \sigma (k-1) + \lambda \right ) \pi([k, \infty)) \nonumber \\
= & k(k-1) \pi([k+1, \infty)) + \sigma (k-1) \pi([k-1, \infty)) + \lambda \pi \left ([\left \lfloor \frac{k+1}{2} \right \rfloor, \infty) \right ). \label{r1}
\end{align}
Using \eqref{r1} we get that, for any $k \geq 2$, $\pi(k)$ equals 
\begin{align*}
& \pi([k, \infty)) - \pi([k+1, \infty)) \\
= & \pi([k, \infty)) - \frac{\left ( k(k-1) + \sigma (k-1) + \lambda \right ) \pi([k, \infty)) - \sigma (k-1) \pi([k-1, \infty)) - \lambda \pi \left ([\lfloor \frac{k+1}{2} \rfloor, \infty) \right )}{k(k-1)} \\
= & \frac{\sigma}{k} \pi(k-1) + \frac{\lambda}{k(k-1)} \left ( \pi\left (\left \lfloor \frac{k+1}{2} \right \rfloor \right ) + ... + \pi(k-1) \right ), 
\end{align*}
which is \eqref{recpik}. In order to prove \eqref{majoqueuepiknew}, let us first prove by induction on $n$ that for any $n \geq 0$, 
\begin{eqnarray}
\forall k \geq 3 \times 2^n, \ \pi(k) \leq \frac{(\sigma(\sigma + \lambda)+\lambda) (\sigma + \lambda)^{n}}{k(k-1)\prod_{i=1}^n(2^{-i}k - 1)}, \label{recmajo}
\end{eqnarray}
with the convention $\prod_{i=1}^0 \cdots = 1$. Using \eqref{recpik} and that $\pi(\cdot)$ is a probability measure we get for all $k \geq 2$, 
\begin{eqnarray}
\pi(k) \leq \frac{\sigma}{k} + \frac{\lambda}{k(k-1)} \leq \frac{\sigma + \lambda}{k}. \label{majoprelpik}
\end{eqnarray}
Using now \eqref{recpik} together with \eqref{majoprelpik}, and that $\pi(\cdot)$ is a probability measure, we get for $k \geq 3$, 
\[ \pi(k) \leq \frac{\sigma}{k} \times \frac{\sigma + \lambda}{k-1} + \frac{\lambda}{k(k-1)} = \frac{\sigma(\sigma + \lambda)+\lambda}{k(k-1)}. \]
\eqref{recmajo} thus holds for $n=0$. Let us now assume that \eqref{recmajo} holds for some $n$. We fix $k \geq 3 \times 2^{n+1}$ and apply \eqref{recmajo} to each term in the right-hand side of \eqref{recpik} which yields 
\begin{align*}
\pi(k) & \leq \frac{\sigma (\sigma(\sigma + \lambda)+\lambda) (\sigma + \lambda)^{n}}{k(k-1)(k-2)\prod_{i=1}^n(2^{-i}(k-1) - 1)} + \frac{\lambda}{k(k-1)} \sum_{j=\lfloor \frac{k+1}{2} \rfloor}^{k-1} \frac{(\sigma(\sigma + \lambda)+\lambda) (\sigma + \lambda)^{n}}{j(j-1)\prod_{i=1}^n(2^{-i}j - 1)} \\
& \leq \frac{(\sigma(\sigma + \lambda)+\lambda) (\sigma + \lambda)^{n}}{k(k-1)\prod_{i=1}^n(2^{-i} \lfloor \frac{k+1}{2} \rfloor - 1)} \left ( \frac{\sigma}{k-2} + \sum_{j=\lfloor \frac{k+1}{2} \rfloor}^{k-1} \frac{\lambda}{j(j-1)} \right ) \\
& \leq \frac{(\sigma(\sigma + \lambda)+\lambda) (\sigma + \lambda)^{n}}{k(k-1)\prod_{i=1}^n(2^{-(i+1)}k - 1)} \times \frac{\sigma+\lambda}{\lfloor \frac{k+1}{2} \rfloor - 1} \\
& \leq \frac{(\sigma(\sigma + \lambda)+\lambda) (\sigma + \lambda)^{n+1}}{k(k-1)\prod_{i=1}^n(2^{-(i+1)}k - 1)} \times \frac{1}{2^{-1}k - 1} = \frac{(\sigma(\sigma + \lambda)+\lambda) (\sigma + \lambda)^{n+1}}{k(k-1)\prod_{i=1}^{n+1}(2^{-i}k - 1)}. 
\end{align*}
The induction is thus proved. Let $\log_2(\cdot) := \log(\cdot)/\log(2)$. We now fix $k \geq 4$ and apply \eqref{recmajo} at $n = \lfloor \log_2(k) \rfloor - 2$: 
\begin{align*}
\pi(k) & \leq \frac{\sigma(\sigma + \lambda)+\lambda}{k(k-1)} \exp \left ( \log(\sigma + \lambda) (\lfloor \log_2(k) \rfloor -2) - \sum_{i=1}^{\lfloor \log_2(k) \rfloor - 2} \log(2^{-i}k - 1) \right ). 
\end{align*}

Note that for $i\leq \lfloor \log_2(k) \rfloor - 2$, $\log(2^{-i}k - 1)$ equals 
\[ \log(2^{\lfloor \log_2(k) \rfloor - 2 -i} \times 2^{2 + (\log_2(k)-\lfloor \log_2(k) \rfloor)} - 1) \geq \log(2^{\lfloor \log_2(k) \rfloor - 2 -i}) = (\lfloor \log_2(k) \rfloor - 2 -i) \log(2). \] 
Therefore the above yields 
\begin{align*}
\pi(k) & \leq \frac{\sigma(\sigma + \lambda)+\lambda}{k(k-1)} \exp \left ( \log(\sigma + \lambda) (\lfloor \log_2(k) \rfloor -2) - \log(2) \sum_{i=1}^{\lfloor \log_2(k) \rfloor - 2} (\lfloor \log_2(k) \rfloor - 2 -i) \right ) \\
& = \frac{\sigma(\sigma + \lambda)+\lambda}{k(k-1)} \exp \left ( \log(\sigma + \lambda) (\lfloor \log_2(k) \rfloor -2) - \frac{\log(2)}{2} (\lfloor \log_2(k) \rfloor - 2)(\lfloor \log_2(k) \rfloor - 3) \right ). 
\end{align*}
Applying the above inequality to all $j \geq k$ and summing we get \eqref{majoqueuepiknew}. 

\end{proof}

\subsection{Technical results for the E-ASG: some lemmas for Section \ref{behaviour01} and after} \label{A3}

\begin{lemme}\label{esteasg}
For any $m \geq 1$, 
\begin{align}
\mathbb{P}_m \left ( \sharp \ \text{of transitions of E-ASG on} \ [0,\epsilon] \geq 2 \right ) & \leq 16 m^4 (1+\lambda + \sigma)^2 \epsilon^2, \label{esteasg>2} \\
\mathbb{P}_m \left ( N_{mult}(\epsilon) = 1, N_{coal}(\epsilon) = 0, N_{sing}(\epsilon) = 0 \right ) & \underset{\epsilon \rightarrow 0}{\sim} \lambda \epsilon, \label{esteasg1br} \\
\mathbb{P}_m \left ( N_{mult}(\epsilon) = 0, N_{coal}(\epsilon) = 1, N_{sing}(\epsilon) = 0 \right ) & \underset{\epsilon \rightarrow 0}{\sim} m(m-1) \epsilon, \label{esteasg1coal} \\
\mathbb{P}_m \left ( N_{mult}(\epsilon) = 0, N_{coal}(\epsilon) = 0, N_{sing}(\epsilon) = 1 \right ) & \underset{\epsilon \rightarrow 0}{\sim} m \sigma \epsilon, \label{esteasg1sbr} 
\end{align}
where $N_{mult}(\epsilon)$, $N_{coal}(\epsilon)$, and $N_{sing}(\epsilon)$ denote respectively the number of multiple branchings, coalescences, and single branchings of the E-ASG on $[0,\epsilon]$ (as in the proof of Lemma \ref{asymptcoeffnew}). 
\end{lemme}

\begin{proof}
From Definition \ref{enlargedasgdef} we see that, under $\mathbb{P}_m$, the first transition of the E-ASG occurs with rate $\lambda + m \sigma + m(m-1)$. Immediately after the first transition, the number of lines in the E-ASG is at most $2m$, so the second transition always occurs with a rate smaller than $\lambda + 2m \sigma + 4m^2$. We thus see that both the first and the second transition occur with a rate smaller than $4 m^2 (1+\lambda + \sigma)$. Therefore, if $e_1$ and $e_2$ denote two independent exponential random variables with parameter $4 m^2 (1+\lambda + \sigma)$, the probability in the left-hand side of \eqref{esteasg>2} is smaller than 
\[ \mathbb{P} (e_1 + e_2 \leq \epsilon) \leq \mathbb{P} (e_1 \leq \epsilon, e_2 \leq \epsilon) \leq (1-e^{-4 m^2 (1+\lambda + \sigma) \epsilon})^2 \leq 16 m^4 (1+\lambda + \sigma)^2 \epsilon^2. \]
This yields \eqref{esteasg>2}. We now prove \eqref{esteasg1br}. Let $T_1 < T_2 <...$ denote the transition times of the E-ASG and let $E_{MB}$ denote the event where the first transition of the E-ASG is a multiple branching. Note that the left-hand side of \eqref{esteasg1br} equals 
\begin{align}
\mathbb{P}_m \left ( T_1 \leq \epsilon, E_{MB}\right ) - \mathbb{P}_m \left ( T_2 \leq \epsilon, E_{MB} \right ). \label{esteasgproof1}
\end{align}
$T_1$ follows an exponential distribution with parameter $\lambda + m \sigma + m(m-1)$ and, conditionally on $T_1$, $E_{MB}$ has probability $\lambda/(\lambda + m \sigma + m(m-1))$. The first term in \eqref{esteasgproof1} thus equals $(1-e^{-(\lambda + m \sigma + m(m-1)) \epsilon}) \times \lambda/(\lambda + m \sigma + m(m-1)) \sim \lambda \epsilon$, while the second term in \eqref{esteasgproof1} is smaller than $16 m^4 (1+\lambda + \sigma)^2 \epsilon^2$ because of \eqref{esteasg>2}. \eqref{esteasg1br} follows. \eqref{esteasg1coal} and \eqref{esteasg1sbr} are proved exactly in the same way. 
\end{proof}

\begin{lemme}\label{combjumpeasg}
Let $m \geq 1, j \in \{ 1,..., 2m \}, i \in \{ \lceil j/2 \rceil,..., j \wedge m \}$, and $G \in \mathbb{G}_m^1$. If generation $1$ of $G$ is a multiple branching generation then 
\begin{eqnarray}
\sum_{A \in \mathcal{P}(V_{G}) ; |A|=j} F^i_{G}(A) = \binom{i}{j-i} \times (1+S_1)^{2i-j} \times (-S_1)^{j-i}, \label{asymptcoeff5new}
\end{eqnarray}
where, as in Section \ref{enlargedasgdefnot}, $S_1$ denotes the weight of generation $1$ of $G$. 
\end{lemme}

\begin{proof}
Assume first that $S_1 < 0$. By \eqref{defFjump} we have 
\[ F^i_{G}(A) := F^i_{\pi_0(G)}(P(A)) \times \mathds{1}_{\beta(A) = 0} \times (1+S_1)^{\alpha(A)} \times (-S_1)^{\gamma(A)}, \]
and by definition of $F^i_{\cdot}(\cdot)$ we have $F^i_{\pi_0(G)}(P(A)) = \mathds{1}_{P(A)=\{ L_1,..., L_i \}}$. We thus get 
\begin{align*}
\sum_{A \in \mathcal{P}(V_{G}) ; |A|=j} F^i_{G}(A) 
= \sum_{A \in \mathcal{P}(V_{G}) ; P(A)=\{ L_1,..., L_i \} ; |A|=j ; \beta(A) = 0} (1+S_1)^{\alpha(A)} \times (-S_1)^{\gamma(A)}. 
\end{align*}
For $A \in \mathcal{P}(V_{G})$ such that $P(A)=\{ L_1,..., L_i \}$ we see by definition of $\alpha(A)$, $\beta(A)$ and $\gamma(A)$ that $\alpha(A)+\beta(A)+\gamma(A)=i$ and $i + \gamma(A) = |A|$. We thus see that, for $A \in \mathcal{P}(V_{G})$ such that $P(A)=\{ L_1,..., L_i \}$, $|A|=j$ is equivalent to $\gamma(A)=j-i$ and there exist such sets $A$ if and only if $j-i \leq i$, which is the case here since $i \in \{ \lceil j/2 \rceil,..., j \wedge m \}$. Therefore, the above equals 
\begin{align*}
\sum_{A \in \mathcal{P}(V_{G}) ; P(A)=\{ L_1,..., L_i \} ; \gamma(A)=j-i ; \beta(A) = 0} (1+S_1)^{2i-j} \times (-S_1)^{j-i}. 
\end{align*}
The number of terms (all equal to $(1+S_1)^{2i-j} \times (-S_1)^{j-i}$) in the above sum is the number of choices for the $j-i$ lines that contribute to $\gamma(A)$, among the lines of $\{ L_1,..., L_i \}$. This number of choices is $\binom{i}{j-i}$ so we get \eqref{asymptcoeff5new}. 

Assume now that $S_1 > 0$. By \eqref{defFjump} we have 
\[ F^i_{G}(A) := F^i_{\pi_0(G)}(P(A)) \times S_1^{\beta(A)} \times (-S_1)^{\gamma(A)}. \]
Let us now use again that $F^i_{\pi_0(G)}(P(A)) = \mathds{1}_{P(A) = \{ L_1,..., L_i \}}$ and that, for $A \in \mathcal{P}(V_{G})$ with $P(A)=\{ L_1,..., L_i \}$, $|A| = j$ is equivalent to $\gamma(A) = j-i$. We get 
\begin{align*}
\sum_{A \in \mathcal{P}(V_{G}) ; |A|=j} F^i_{G}(A) 
& = \sum_{A \in \mathcal{P}(V_{G}) ; P(A)=\{ L_1,..., L_i \} ; \gamma(A)=j-i} S_1^{\beta(A)} \times (-S_1)^{j-i}. 
\end{align*}
For the sets $A$ in the above sum, since $\alpha(A)+\beta(A)+\gamma(A)=i$, the possible values for $\beta(A)$ are $0,..., 2i-j$. We thus get 
\[ \sum_{A \in \mathcal{P}(V_{G}) ; |A|=j} F^i_{G}(A) = \sum_{l=0}^{2i-j} \sum_{\substack{A \in \mathcal{P}(V_{G}) ; P(A)=\{ L_1,..., L_i \} ; \\ \gamma(A)=j-i ; \beta(A)=l}} S_1^{l} \times (-S_1)^{j-i}. \]
The number of terms (all equal to $S_1^{l} \times (-S_1)^{j-i}$) in the second sum is the number of choices for the $j-i$ lines that contribute to $\gamma(A)$, among the lines of $\{ L_1,..., L_i \}$, and then for the $l$ lines that contribute to $\beta(A)$, among the $2i-j$ remaining lines. The total number of choices is $\binom{i}{j-i} \binom{2i-j}{l}$. We thus get 
\[ \sum_{A \in \mathcal{P}(V_{G}) ; |A|=j} F^i_{G}(A) = \binom{i}{j-i} (-S_1)^{j-i} \sum_{l=0}^{2i-j} \binom{2i-j}{l} S_1^{l} = \binom{i}{j-i} (-S_1)^{j-i} (1+S_1)^{2i-j}. \]
\end{proof}

\subsection{Proof of Proposition \ref{h(x)ht(x)} from Section \ref{relwf-asgbis}}\label{A1}

We first prove the quenched identity \eqref{quencheddual0}. For this we need to prove two lemmas: Lemma \ref{classicdual} shows that \eqref{quencheddual0} holds on intervals were there are no jumps, which is a consequence of the classical moment duality. Lemma \ref{betjumps} is a composition property for the function $h^{l,\omega}_{\cdot,\cdot}(\cdot)$, it is a consequence of the quenched renewal structure appearing in Remark \ref{markpropfquenchedrmk}. Then, \eqref{quencheddual0} will be proved by showing iteratively the identity $\mathbb{E}^{\omega} [ (X(\omega,T))^l \mid X(\omega,r)=y ]= h^{l,\omega}_{r,T}(y)$, starting with $r=T$ and decreasing $r$, going successively along 1) intervals without jumps, and 2) jumping times of the environment, until $r=0$. \eqref{momentstoht} will follow by integrating \eqref{quencheddual0} with respect to the environment. 



\begin{lemme} \label{classicdual}
Let $\omega$ be a fixed environment and $0 \leq r < t$. If there are no jumps of $\omega$ on the time interval $(r,t)$ then for any $l \geq 1$ and $x\in[0,1]$ we have 
\begin{eqnarray}
h^{l,\omega}_{r,t-}(x) = \mathbb{E}^{\omega} \left [ (X(\omega,t-))^l \mid X(\omega,r)=x \right ]. \label{classicdual1}
\end{eqnarray}
If moreover $t$ is not a jumping time of $\omega$ we have 
\begin{eqnarray}
h^{l,\omega}_{r,t}(x) = \mathbb{E}^{\omega} \left [ (X(\omega,t))^l \mid X(\omega,r)=x \right ]. \label{classicdual2}
\end{eqnarray}
\end{lemme}

\begin{proof}

Recall that $h^{l,\omega}_{r,t-}(x)$ is defined in Definition \ref{defquenchedasgbtd}. Since, by assumption, $\omega$ has no jumps on $(r,t)$, the only events that may occur in the ASG are single branchings (that favor type $1$) and coalescences. Therefore, in the type assignment procedure from Definition \ref{typeaspr}, all $l$ lines from time $s=t-$ receive type $0$ if and only if all lines from time $s=r$ receive type $0$. We thus get 
\begin{eqnarray}
h^{l,\omega}_{r,t-}(x) = \mathbb{E}^{\omega,t-}_l \left [ x^{N^{\omega,t-}_r} \mid N^{\omega,t-}_{t-} = l \right ], \label{classicdual3}
\end{eqnarray}
where $(N^{\omega,t-}_{s})_{s \in [r,t)}$ denotes the line counting process of the ASG $(A^{\omega,t-}_{s})_{s \in [r,t)}$ (introduced in Definition \ref{defquenchedasg}). Note that $(N^{\omega,t-}_{t-v})_{v \in (0,t-r]}$ is a continuous-time Markov chain with transition rates $q(i,j) = i(i-1)$ for every $i \geq 2$ and $j=i-1$, $q(i,j) = \sigma i$ for every $i \geq 1$ and $j=i+1$, and $q(i,j) = 0$ otherwise. On the other hand, since $\omega$ has no jumps on $(r,t)$, we see from \eqref{levymodelsdesimp} that $(1-X(\omega,s))_{s \in (r,t)}$ satisfies the SDE "$dX(r) = \sigma X(r)(1-X(r)) dr + \sqrt{2X(r)(1-X(r))} dB(r)$". Applying Theorem 2.3 of \cite{cordvech} to $(1-X(\omega,s))_{s \in (r,t)}$ we get 
\[ \mathbb{E}^{\omega} \left [ (X(\omega,t-))^l \mid X(\omega,r)=x \right ] = \mathbb{E}^{\omega,t-}_l \left [ x^{N^{\omega,t-}_r} \mid N^{\omega,t-}_{t-} = l \right ]. \]
Combining with \eqref{classicdual3} we get \eqref{classicdual1}. If moreover $t$ is not a jumping time of $\omega$ we have $X(\omega,t)=X(\omega,t-)$ and $h^{l, \omega}_{r,t}(x)=h^{l, \omega}_{r,t-}(x)$ so \eqref{classicdual2} follows. 

\end{proof}

The following lemma relies on Theorem \ref{propagationformule} (via Remark \ref{propagationformulequenched0} and \eqref{decompattendueentquenched}), on Lemma \ref{markpropf} (via Remark \ref{markpropfquenchedrmk}), and on Lemma \ref{classicdual}. 
\begin{lemme} \label{betjumps} 
Let $\omega$ be a fixed environment and $0 \leq r < t \leq T$. If $\omega$ has no jumps on $(r,t)$ then for any $l \geq 1$ and $x\in[0,1]$ we have 
\begin{align*}
\mathbb{E}^{\omega} \left [ h^{l,\omega}_{t-,T}(X(\omega,t-)) \mid X(\omega,r)=x \right ] = h^{l,\omega}_{r,T}(x). 
\end{align*}
\end{lemme}




\begin{proof}

Recall the definition of the sigma-field $\mathcal{F}^{\omega,T}_{t-}$ in Section \ref{enlargedasgdefnot}. Using \eqref{propagationformulequenched}, \eqref{markpropfquenched}, \eqref{markpropfquenched2}, \eqref{decompattendueentquenched} we get that $h^{l,\omega}_{r,T}(x)$ equals 
\begin{align*}
& \mathbb{E}^{\omega,T}_m \left [ \sum_{A \in \mathcal{P} ( V_{G^{\omega,T}_r} )} F^l_{G^{\omega,T}_r}(A) x^{|A|} \right ] = \mathbb{E}^{\omega,T}_m \left [ \sum_{A \in \mathcal{P}(V_{G^{\omega,T}_r})} \sum_{B \in \mathcal{P}(V_{G^{\omega,T}_{t-}})} F^l_{G^{\omega,T}_{t-}}(B) f_{G^{\omega,T}_{t-},G^{\omega,T}_{r}}(B,A) x^{|A|} \right ] \nonumber \\
= & \mathbb{E}^{\omega,T}_m \left [ \sum_{B \in \mathcal{P}(V_{G^{\omega,T}_{t-}})} F^l_{G^{\omega,T}_{t-}}(B) \mathbb{E}^{\omega,T}_m \left [ \sum_{A \in \mathcal{P}(V_{G^{\omega,T}_r})} f_{G^{\omega,T}_{t-},G^{\omega,T}_{r}}(B,A) x^{|A|} \big | \mathcal{F}^{\omega,T}_{t-} \right ] \right ] \nonumber \\
= & \mathbb{E}^{\omega,T}_m \left [ \sum_{B \in \mathcal{P}(V_{G^{\omega,T}_{t-}})} F^l_{G^{\omega,T}_{t-}}(B) \sum_{k \geq 1} \sum_{j = 1}^k \mathbb{E}^{\omega,T}_m \left [ \mathds{1}_{|V_{G^{\omega,T}_{r}}| = k} \sum_{A \in \mathcal{P}(V_{G^{\omega,T}_r}) ; |A|=j} f_{G^{\omega,T}_{t-},G^{\omega,T}_{r}}(B,A) \big | \mathcal{F}^{\omega,T}_{t-} \right ] x^{j} \right ] \nonumber \\
= & \mathbb{E}^{\omega,T}_m \left [ \sum_{B \in \mathcal{P}(V_{G^{\omega,T}_{t-}})} F^l_{G^{\omega,T}_{t-}}(B) \sum_{k \geq 1} \sum_{j = 1}^k R^{|V_{G^{\omega,T}_{t-}}|,k, \omega}_{r,t-}(|B|,j) x^{j} \right ] = \mathbb{E}^{\omega,T}_m \left [ \sum_{B \in \mathcal{P}(V_{G^{\omega,T}_{t-}})} F^l_{G^{\omega,T}_{t-}}(B) h^{|B|, \omega}_{r,t-}(x) \right ]. 
\end{align*}
By assumption, $\omega$ has no jumps on $(r,t)$. Therefore, combining with Lemma \ref{classicdual} and using Fubini's Theorem and \eqref{propagationformulequenched} we get 
\begin{align*}
h^{l, \omega}_{r,T}(x) & = \mathbb{E}^{\omega,T}_m \left [ \sum_{B \in \mathcal{P}(V_{G^{\omega,T}_{t-}})} F^l_{G^{\omega,T}_{t-}}(B) \mathbb{E}^{\omega} \left [ (X(\omega,t-))^{|B|} \big | X(\omega,r)=x \right ] \right ] \\
& = \mathbb{E}^{\omega} \left [ \mathbb{E}^{\omega,T}_m \left [ \sum_{B \in \mathcal{P}(V_{G^{\omega,T}_{t-}})} F^l_{G^{\omega,T}_{t-}}(B) (X(\omega,t-))^{|B|} \right ] \big | X(\omega,r)=x \right ] \\
& = \mathbb{E}^{\omega} \left [ h^{l,\omega}_{t-,T}(X(\omega,t-)) \big | X(\omega,r)=x \right ]. 
\end{align*}
\end{proof}

We can now prove Proposition \ref{h(x)ht(x)}, relying only on the above two lemmas. 

\begin{proof} [Proof of Proposition \ref{h(x)ht(x)}]

We first prove \eqref{quencheddual0}. If $\omega$ has no jumps on $[0,T]$ the result follows by \eqref{classicdual2} applied with $r=0$ and $t=T$. Let $t^{\omega}_1< ...< t^{\omega}_{N}$ denote the jumping times of $\omega$ on $[0,T]$ if there are $N \geq 1$ such jumping times. For convenience we assume that $T$ is not a jumping time of $\omega$ (the following proof can be easily adapted to the case where it is). Since there is no jump of $\omega$ on $(t^{\omega}_N,T]$ we have by \eqref{classicdual2}, applied with $r=t^{\omega}_N$ and $t=T$, that for all $y \in [0,1]$: 
\begin{eqnarray}
\mathbb{E}^{\omega} \left [ (X(\omega,T))^l \mid X(\omega,t^{\omega}_N)=y \right ]= h^{l,\omega}_{t^{\omega}_N,T}(y). \label{tnt}
\end{eqnarray}
Let us assign \textit{iid} types with law $y \delta_0 + (1-y) \delta_1$ to lines of $V_{G^{\omega,T}_{t^{\omega}_N-}}$ and propagate types as in Definition \ref{typeaspr}. If $\Delta \omega (t^{\omega}_N) > 0$, each line in $V_{G^{\omega ,T}_{t^{\omega}_N}}$ receives type $0$ with probability \\ $\Delta \omega (t^{\omega}_N) (y + (1-y)y) + (1-\Delta \omega (t^{\omega}_N)) y = y + y(1-y)\Delta \omega (t^{\omega}_N)$. If $\Delta \omega (t^{\omega}_N) < 0$, each line in $V_{G^{\omega ,T}_{t^{\omega}_N}}$ receives type $0$ with probability $-\Delta \omega (t^{\omega}_N) y^2 + (1+\Delta \omega (t^{\omega}_N)) y = y + y(1-y)\Delta \omega (t^{\omega}_N)$. Using Definition \ref{defquenchedasgbtd} and \eqref{tnt}, we get that in any case we have 
\begin{align}
h^{l,\omega}_{t^{\omega}_N-,T}(y) = h^{l,\omega}_{t^{\omega}_N,T}(y + y(1-y)\Delta \omega (t^{\omega}_N)) = \mathbb{E}^{\omega} \left [ (X(\omega,T))^l \mid X(\omega,t^{\omega}_N)=y + y(1-y)\Delta \omega (t^{\omega}_N) \right ]. \label{tnt2}
\end{align}
Now, note that according to \eqref{levymodelsdesimp} we have $X(\omega,t^{\omega}_N) = X(\omega,t^{\omega}_N-) + X(\omega,t^{\omega}_N-)(1-X(\omega,t^{\omega}_N-))\Delta \omega(t^{\omega}_N)$ $\mathbb{P}^{\omega}$-almost surely. 
Plugging into \eqref{tnt2} we get 
\begin{eqnarray}
\mathbb{E}^{\omega} \left [ (X(\omega,T))^l \mid X(\omega,t^{\omega}_N-)=y \right ] = h^{l,\omega}_{t^{\omega}_N-,T}(y). \label{tnt3}
\end{eqnarray}
Then, disintegrating on the value of $X(\omega,t^{\omega}_{N}-)$ and using \eqref{tnt3} we get that for any $y \in [0,1]$, $\mathbb{E}^{\omega} [ (X(\omega,T))^l \mid X(\omega,t^{\omega}_{N-1})=y ]$ equals 
\begin{align*}
& \int_0^1 \mathbb{E}^{\omega} \left [ (X(\omega,T))^l \mid X(\omega,t^{\omega}_{N}-)=z \right ] \times \mathbb{P}^{\omega} \left ( X(\omega,t^{\omega}_{N}-) \in dz \mid X(\omega,t^{\omega}_{N-1})=y \right ) \\
= & \int_0^1 h^{l,\omega}_{t^{\omega}_N-,T}(z) \times \mathbb{P}^{\omega} \left ( X(\omega,t^{\omega}_{N}-) \in dz \mid X(\omega,t^{\omega}_{N-1})=y \right ) \\
= & \mathbb{E}^{\omega} \left [ h^{l,\omega}_{t^{\omega}_N-,T}(X(\omega,t^{\omega}_{N}-)) \mid X(\omega,t^{\omega}_{N-1})=y \right ] = h^{l,\omega}_{t^{\omega}_{N-1},T}(y). 
\end{align*}
We have used Lemma \ref{betjumps} for the last equality. Iterating the above two procedures at respectively each jumping time and each interval between two jumping times, we eventually get \eqref{quencheddual0}. The combination of \eqref{quencheddual0} with \eqref{qtoannealed} and \eqref{recoverannealed0} yields \eqref{momentstoht}, concluding the proof. 

\end{proof}

\subsection{Proof of Lemma \ref{reconstitutiondercoeff} from Section \ref{dlh(x)subsec1}} \label{A4}

Using Theorem \ref{equadiffcoeffnew} and the definition of the coefficients $d_j$, $e_{k,j}$, $f_{k,j}$ in Definition \ref{defcoefedo}, we get that, for $i,j,n\geq 1$, $\sum_{k=1}^{n} \frac{d}{dt} R^{i,k}_t(i,j)$ equals 
\begin{align*}
& \sum_{k=1}^{n} \left ( \tau(j+1,j) R^{i,k+1}_t(i,j+1) + e_{k,j} R^{i,k+1}_t(i,j) + \mathds{1}_{k \geq 2} f_j R^{i,k-1}_t(i,j-1) \right. \\
& \left. + \mathds{1}_{j \leq k-1} f_{k,j} R^{i,k-1}_t(i,j) -d_k R^{i,k}_t(i,j) + \mathds{1}_{\{k\text{ is even}\}} \sum_{l =1}^{j \wedge (k/2)} \tau(l,j) R^{i,k/2}_t(i,l) \right ) \\
= & \tau(j+1,j) \left ( \sum_{k=1}^{n} R^{i,k+1}_t(i,j+1) \right ) + \left ( \sum_{k=1}^{n} (k+1)k R^{i,k+1}_t(i,j) \right ) - j(j-1) \left ( \sum_{k=1}^{n} R^{i,k+1}_t(i,j) \right ) \\
+ & f_j \left ( \sum_{k=2}^{n} R^{i,k-1}_t(i,j-1) \right ) + \sigma \left ( \sum_{k=j+1}^{n} (k-1) R^{i,k-1}_t(i,j) \right ) - j\sigma \left ( \sum_{k=j+1}^{n} R^{i,k-1}_t(i,j) \right ) \\
- & \lambda \left ( \sum_{k=1}^{n} R^{i,k}_t(i,j) \right ) - \left ( \sum_{k=1}^{n} k(k-1) R^{i,k}_t(i,j) \right ) \\
- & \sigma \left ( \sum_{k=1}^{n} k R^{i,k}_t(i,j) \right ) + \sum_{l =1}^{j} \tau(l,j) \left ( \sum_{\substack{k \ \text{even} ; \\ l \leq k/2 \leq n/2}} R^{i,k/2}_t(i,l) \right ) \\
= & \tau(j+1,j) \left ( \sum_{k=2}^{n+1} R^{i,k}_t(i,j+1) \right ) - j(j-1) \left ( \sum_{k=2}^{n+1} R^{i,k}_t(i,j) \right ) + f_j \left ( \sum_{k=1}^{n-1} R^{i,k}_t(i,j-1) \right ) \\ 
- & j\sigma \left ( \sum_{k=j}^{n-1} R^{i,k}_t(i,j) \right ) - \lambda \left ( \sum_{k=1}^{n} R^{i,k}_t(i,j) \right ) + (n+1)n R^{i,n+1}_t(i,j) \\ 
- & \sigma \left ( n R^{i,n}_t(i,j) + \sum_{k=1}^{j-1} k R^{i,k}_t(i,j) \right ) + \sum_{l =1}^{j} \tau(l,j) \left ( \sum_{k \ \text{even} ; l \leq k/2 \leq n/2} R^{i,k/2}_t(i,l) \right ). 
\end{align*}
By definition of $R^{m,k}_t(i,j)$ we have $R^{m,k}_t(i,j) = 0$ whenever $k < j$. Therefore the above equals 
\begin{align}
& \tau(j+1,j) \left ( \sum_{k=1}^{n+1} R^{i,k}_t(i,j+1) \right ) + f_j \left ( \sum_{k=1}^{n-1} R^{i,k}_t(i,j-1) \right ) - d_j \left ( \sum_{k=1}^{n} R^{i,k}_t(i,j) \right ) \nonumber \\ 
+ & \sum_{l =1}^{j} \left [ \tau(l,j) \left ( \sum_{k = 1}^{\lfloor n/2 \rfloor} R^{i,k}_t(i,l) \right ) \right ] + \left ( (n+1)n - j(j-1) \right ) R^{i,n+1}_t(i,j) + (j-n) \sigma R^{i,n}_t(i,j). \label{micromacro}
\end{align}
By the combination of \eqref{majonouveausgnew} and \eqref{majointemporelle} we have 
\[ \left | \left ( (n+1)n - j(j-1) \right ) R^{i,n+1}_t(i,j) \right | \leq \left ( (n+1)n + j(j-1) \right ) \frac{j^j (n+1)^j}{j! \pi(i)} \pi(n+1). \]
By \eqref{majoqueuepiknew}, the right-hand side converges to $0$ as $n$ goes to infinity. Therefore the term in the left-hand side converges to $0$, uniformly in $t \in [0,\infty)$, as $n$ goes to infinity. The same holds for $(j-n) \sigma R^{i,n}_t(i,j)$. Applying Lemma \ref{reconstitutioncoeff} to each of the other terms in \eqref{micromacro} we get the result. 

\newpage 
\begin{table}
\caption{Table of notations}
\begin{tabularx}{\textwidth}{@{}XXX@{}}
\toprule
  \textit{Notation} & \textit{Meaning} & \textit{Definition location} \\ 
  $X, (X(t))_{t \geq 0}$ & solution of \eqref{levymodelsdesimp} & Section \ref{firstteps}, Section \ref{diffusion} \\
  $L, (L(t))_{t \geq 0}$ & L\'evy environment & Section \ref{diffusion} \\
  $\sigma, \lambda, \nu$ & parameters of $L$ & Section \ref{diffusion} \\
  $P$ & law of $L$ & Section \ref{diffusion} \\
  $\mathbb{P},\mathbb{E}$ & annealed law of $X$ & Section \ref{diffusion} \\
  $h(x)$ & fixation probability & \eqref{defh(x)} in Section \ref{firstteps} \\
  $\omega, (\omega(t))_{t \geq 0}$ & fixed environment & Section \ref{diffusion} \\
  $(X(\omega,t))_{t \geq 0}$ & quenched version of $X$ & Section \ref{diffusion} \\
  $\mathbb{P}^{\omega},\mathbb{E}^{\omega}$ & quenched law of $X$ & Section \ref{diffusion} \\
  $(A^{\omega,T}_{s})_{s \in [0,T]}, (A_{\beta})_{\beta \geq 0}$ & quenched/annealed ASG & Definition \ref{defquenchedasg} in Section \ref{asg} \\
  $h^l_T(x), h^{l, \omega}_{0,T}(x), h^{l, \omega}_{T_1,T_2}(x)$ & backward type distribution & Definition \ref{defquenchedasgbtd} in Section \ref{relwf-asg} \\
  $(G^{\omega,T}_{s})_{s \in [0,T]}, (G_{\beta})_{\beta \geq 0}$ & quenched/annealed E-ASG & Definition \ref{defquenchedeasg} in Section \ref{enlargedasgdef} \\
  $\mathbb{P}_m, \mathbb{E}_m$ & annealed law of (E-)ASG & Section \ref{asg}, Section \ref{enlargedasgdef} \\
  $\mathbb{P}^{\omega,T}_m, \mathbb{E}^{\omega,T}_m$ & quenched law of (E-)ASG & Section \ref{asg}, Section \ref{enlargedasgdef} \\
  $E(t,T,A,x)$ & type $0$ event & Definition \ref{type0events} in Section \ref{enlargedasgdefnot} \\
  $(\mathcal{F}_t)_{t \geq 0}, (\mathcal{F}^{\omega,t}_{r})_{r \in [0,t]}$ & filtrations for E-ASG & Section \ref{enlargedasgdefnot} \\
  $\mathbb{G}_m$ & set of realizations of E-ASG & Section \ref{enlargedasgdefnot} \\
  $(S_n)_{n \geq 0}$ & weights of E-ASG & Section \ref{enlargedasgdefnot} \\
  $\mathsf{depth}(G)$ & counts generations of $G \in \mathbb{G}_m$ & Section \ref{enlargedasgdefnot} \\
  $\mathbb{G}_m^n$ & elements of $\mathbb{G}_m$ with depth $n$ & Section \ref{enlargedasgdefnot} \\
  $\pi_n(G)$ & projection of $G \in \mathbb{G}_m$ on $\mathbb{G}_m^n$ & Section \ref{enlargedasgdefnot} \\
  $V_G$ & lines in last generation of $G$ & Section \ref{enlargedasgdefnot} \\
  $\pi$ & stationary law for $(|V_{G_{\beta}}|)_{\beta \geq 0}$ & Section \ref{enlargedasglcp} \\
  $\mathcal{P}(V_G)$ & non-empty parts of $V_G$ & Section \ref{enlargedasgdefnot} \\
  $P(A)$ & parents of lines in $A$ & Section \ref{enlargedasgdefnot} \\
  $D(A)$ & sons of lines in $A$ & Section \ref{enlargedasgdefnot} \\
  $F^l_G(\cdot)$ & encoding function & Section \ref{enlargedasgencodingfct} \\
  $N(A), \alpha(A), \beta(A), \gamma(A)$ & quantities used to define $F^l_G(\cdot)$ & Section \ref{enlargedasgencodingfct} \\
  $Q_t(i,j), R^{m,k}_t(i,j), R^{m,k, \omega}_{r,t}(i,j)$ & duality coefficients & Definition \ref{defcoefsg} in Section \ref{mainresults} \\
  $d_j, e_{k,j}, f_j, f_{k,j}, \tau(i,j)$ & ODEs coefficients & Definition \ref{defcoefedo} in Section \ref{mainresults} \\
  $a^k_j, b_j$ & limits of $R^{m,k}_t(i,j)$ and $Q_t(i,j)$ & \eqref{cvcoeff1new} and \eqref{cvcoeff1} in Section \ref{mainresults} \\
  $P_k(x)$ & $\sum_{j=1}^k a^k_j x^{j}$ & Theorem \ref{finalformula} in Section \ref{mainresults} \\
  $\tilde G \setminus_B G$ & shifted E-ASG & Section \ref{behaviour021} \\ 
  $f_{G, \tilde G}(\cdot,\cdot)$ & extended encoding function & \eqref{defpetitfg} in Section \ref{behaviour021} \\ 
  $P^{m,k,i}_t (y)$ & $\sum_{j=1}^k R^{m,k}_t(i,j) y^j$ & Lemma \ref{cvn} in Section \ref{ccl} \\ 
  $S_T(n,x), S_{\infty}(n,x)$ & remainders in \eqref{taylorexp3}, \eqref{dl1} & Section \ref{proofoftaylorexp} \\ \\
\bottomrule
\end{tabularx}
\end{table}

\textbf{Acknowledgements:}
This paper is supported by NSFC grant No. 11688101. The author is grateful to Fernando Cordero for many interesting discussions and to Sebastian Hummel for useful references and help in improving the writing. The author is also grateful to two anonymous referees for their careful reading and valuable suggestions. 

\bibliographystyle{plain}
\bibliography{thbiblio}

\end{document}